\newcommand\fnsurl[1]{{\footnotesize\url{#1}}}
\renewcommand{\SS}{\mathbb{S}}
\newcommand{\Pp}{\mathcal{P}}
\renewcommand{\d}{\mathrm{d}}
\newcommand{\ol}{\overline}
\newcommand{\ul}{\underline}
\renewcommand{\div}{\mathrm{div}}
\DeclareMathOperator{\vol}{vol}
\newcommand{\CLSI}{\alpha} 
\DeclareMathOperator{\Hess}{\nabla^2}
\DeclareMathOperator{\olHess}{\overline{\nabla^2}}
\title
{Mean-Field Langevin Dynamics for Signed Measures via a Bilevel Approach}
 \author{
 Guillaume Wang\thanks{Equal contributions, authors ordered randomly.}\,\;{\normalfont\textsuperscript{1}} \qquad Alireza Mousavi-Hosseini\footnotemark[1]\,\;{\normalfont\textsuperscript{2}} \qquad Lénaïc Chizat{\normalfont\textsuperscript{1}}\\[5pt]
 \textsuperscript{1}École polytechnique fédérale de Lausanne\\
 \textsuperscript{2}University of Toronto and Vector Institute\\[5pt]
 {\small\texttt{guillaume.wang@epfl.ch}, \;\;
 \texttt{mousavi@cs.toronto.edu}, \;\;
 \texttt{lenaic.chizat@epfl.ch}}
}
\begin{document}

\maketitle
\setcounter{footnote}{0}

\begin{abstract}
    Mean-field Langevin dynamics (MLFD) is a class of interacting particle methods that tackle convex optimization over probability measures on a manifold, which are scalable, versatile, and enjoy computational guarantees. However, some important problems -- such as risk minimization for infinite width two-layer neural networks, or sparse deconvolution -- are originally defined over the set of signed, rather than probability, measures. In this paper, we investigate how to extend the MFLD framework to convex optimization problems over signed measures.
    Among two known reductions from signed to probability measures -- the \emph{lifting} and the \emph{bilevel} approaches -- we show that the bilevel reduction leads to stronger guarantees and faster rates (at the price of a higher per-iteration complexity). 
    In particular, we investigate the convergence rate of MFLD applied to the bilevel reduction in the low-noise regime and obtain two results. First, this dynamics is amenable to an annealing schedule, adapted from \cite{suzuki2023feature}, that results in improved convergence rates to a fixed multiplicative accuracy. Second, we investigate the problem of learning a single neuron with the bilevel approach and obtain local exponential convergence rates that depend polynomially on the dimension and noise level (to compare with the exponential dependence that would result from prior analyses). 
\end{abstract}

\section{Introduction} \label{sec:intro}

Let $\MMM(\WWW)$ be the set of finite signed measures on a compact Riemannian manifold without boundaries $\WWW$ and let $G:\MMM(\WWW)\to \RR$ be a convex function, assumed smooth in the sense of Assumption~\ref{assump:G_smooth_bounded} below. In this paper, we investigate optimization methods to solve
\begin{align}\label{eq:pb-signed-measures}
    \min_{\nu \in \MMM(\WWW)} G_\lambda(\nu), && G_\lambda(\nu)\coloneqq G(\nu) +\frac{\lambda}{2} \Vert \nu\Vert_{TV}^2,
\end{align}
where $\Vert \cdot \Vert_{TV}$ is the total variation norm and $\lambda> 0$ the regularization level.%
\footnote{The square exponent on $\Vert \cdot \Vert_{TV}$ might appear unusual, but it is convenient for our subsequent developments. We show in \autoref{sec:apx_intro} that
the regularization path is the same with or without the square.}
This covers for instance risk minimization for infinite-width 2-layer neural networks (2NN)~\cite{bengio2005convex, bach2017breaking} by taking $\WWW = \SS^{d}$ the unit sphere in $\RR^{d + 1}$ or $\WWW=\RR^{d + 1}$ and
\begin{align}\label{eq:2NN-homogeneous}
    G(\nu) = \EE_{(x,y)\sim \rho} \Big[ \ell(h(\nu,x),y) \Big]  &&\text{where}&& h(\nu,x) = \int_\WWW \varphi(\innerprod{x}{w})\d\nu(w).
\end{align}
Here $\varphi:\RR\to \RR$ is the activation function, $h(\nu,\cdot)$ is the predictor parameterized by $\nu$, $G$ is the (population or empirical) risk under the data distribution $\rho\in \PPP(\RR^{d+1} \times \RR)$, and $\ell$ is smooth (uniformly in $y$) and convex in its first argument.
These 2NNs will be our guiding examples throughout, but note that the class of problems covered by Eq.~\eqref{eq:pb-signed-measures} is more general and includes for instance sparse deconvolution via the Beurling-LASSO estimator~\cite{decastro2012exact} or optimal design~\cite{molchanov2004optimisation}.

To tackle such problems, interacting particle methods use the parameterization $\nu=\sum_{i=1}^m r_i\delta_{w_i}$ and apply gradient methods in a well-chosen geometry~\cite{chizat2022sparse, yan2023learning, gadat2023fastpart}. They have recently gained traction thanks to their scalability and flexibility, and in the context of 2NNs, the usual gradient descent algorithm is an instance of such a method. On the downside, \emph{global} convergence guarantees remain difficult to obtain due to the nonconvex nature of the reparameterized problem and existing positive results require either very specific settings~\cite{li2020learning}, or modifications of the dynamics which often limit their scalability\footnote{Such as forcing the particles to remain close to their initial position~\cite{chizat2022sparse}, or adding new particles using a potentially hard linear minimization oracle~\cite{denoyelle2019sliding}.}.

In a related, but slightly different context, mean-field Langevin dynamics (MFLD) solve entropy-regularized problems of the form
\begin{align}\label{eq:pb-proba}
\min_{\mu \in \PPP(\WWW')} F_\beta(\mu),&& F_\beta(\mu)\coloneqq F(\mu) + \beta^{-1} H(\mu),
\end{align}
where $\PPP(\WWW')$ is the space of probability measures on a manifold $\WWW'$ (typically $\RR^d$), $F:\PPP(\WWW') \to \RR$ is a (sufficiently regular) convex functional, $H(\mu)=\int \log (\d\mu/\d\vol) \d \mu$ is the negative differential entropy and $\beta>0$. These dynamics are obtained as the mean-field limit of \emph{noisy} interacting particles dynamics~\cite{mei2018mean, hu2021mean} and converge globally at an exponential rate~\cite{nitanda2022convex, chizat2022mean}, under two key conditions on $F$: (i) a notion of regularity, which we refer to as \emph{displacement smoothness} (see \ref{P1} below) and (ii) a \emph{uniform log-Sobolev inequality (LSI)} condition (see \ref{P2} below). These mean-field, continuous-time guarantees have been further refined into computational guarantees for fully discrete algorithms~\cite{chen2022uniform, suzuki2023mean}. The favorable properties of MFLD naturally lead to the following question: 
\vspace{-0.4em}
\begin{center}
\emph{Can we efficiently solve problems of the form Eq.~\eqref{eq:pb-signed-measures} using MFLD?}
\end{center}

At first, it is not obvious that 
MFLD can be applied at all since it is originally defined only for problems over probability measures.
However, we can find in the literature two general recipes to reduce a problem over $\MMM(\WWW)$ to a problem over $\PPP(\WWW')$, thus amenable to MFLD. The first one is a \emph{lifting} reduction, that
takes $\WWW'=\RR \times \WWW$ 
where the extra dimension serves to encode the signed mass of particles \cite[Section~A.2]{chizat2018global} \cite{chizat2022sparse}. The second one, that takes $\WWW'=\WWW$, is a \emph{bilevel} reduction \cite{bach2021quest,takakura2024meanfield} that uses a variational representation of the regularizer $\Vert \cdot \Vert^2_{TV}$, common in the multiple kernel learning literature~\cite{lanckriet2004learning}. A first task is thus to compare the behavior of MFLD on these two approaches. 
Furthermore, MFLD involves an entropic regularization which is absent from Eq.~\eqref{eq:pb-signed-measures}. A second task is thus to analyze the behavior of MFLD in the large $\beta$ regime, when the regularization vanishes.

In this work, we tackle these two tasks and make the following contributions:
\begin{itemize}
    \item In \autoref{sec:reduc}, we introduce the lifting and bilevel reductions and compare the ``displacement smoothness'' (\ref{P1}) and ``uniform LSI'' (\ref{P2}) properties of the resulting problems. These properties play a central role in the global convergence analysis of MFLD.  Specifically, we consider a large class of lifting reductions and show that none satisfies simultaneously \eqref{P1} and \eqref{P2} unless $\lambda$ is large. In contrast, the bilevel reduction satisfies both under mild assumptions.
    So in the sequel we focus on MFLD applied to the bilevel reduction.
    \item In \autoref{sec:anneal}, we investigate what convergence rates can be obtained for the problem~\eqref{eq:pb-signed-measures} by using MFLD on the bilevel formulation.
    While a classical simulated annealing technique yields convergence in $O(\log\log t/\log t)$, we show that the structure of the bilevel objective is in fact amenable to a more efficient annealing schedule, adapted from~\cite{suzuki2023feature}, that reaches a fixed multiplicative accuracy, say $1.01 \inf G_\lambda$, 
    in time $e^{O(\lambda^{-1} \log \lambda^{-1})}$ instead of $e^{O(\lambda^{-2})}$ for the classical schedule. 
    \item In \autoref{sec:polyLSI}, to obtain a more complete picture, we investigate the problem of learning a single neuron. Here, using a Lyapunov type argument, we show that the \emph{local} convergence rate of MFLD applied to the bilevel formulation scales polynomially in $\beta$ and $d$, at odds with all previous MFLD analyses which had exponential dependencies. 
\end{itemize}
\vspace{-0.2em}
All proofs are deferred to the Appendix.

\subsection{Related work}

\paragraph{Particle methods and mean-field limits.} Interacting particle systems have been studied for decades in various fields, see e.g.~\cite{sznitman1991topics,carmona2013probabilistic, lacker2018mean}. Their more recent connection with the standard training of 2NNs~\cite{nitanda2017stochastic, sirignano2020mean, rotskoff2022trainability, mei2018mean} has suggested new settings of analysis, where convexity of the functional plays a key role, and has led to many developments. In particular, the case of MFLD (under study here) quickly progressed from nonquantitative guarantees~\cite{mei2018mean, hu2021mean}, to mean-field convergence rates~\cite{nitanda2022convex,chizat2022mean} and fully discrete computational guarantees~\cite{chen2022uniform, suzuki2023mean,kook2024sampling} in the span of a few years. Recent progress also address its accelerated (underdamped) version~\cite{chen2024uniform,fu2023mean}, which could also be of interest in our setting.

\paragraph{Multiple kernel learning and bilevel training of NNs.}
The lifting reductions we consider are inspired by the unbalanced optimal transport literature~\cite{liero2018optimal}, while the bilevel reduction comes from the Multiple Kernel Learning (MKL) literature~\cite{chapelle2002choosing,lanckriet2004learning,rakotomamonjy2008simplemkl} (see~\cite{bach2019eta} for an account). While the latter is usually studied with a discrete domain $\WWW$ (see also \cite{poon2021smooth, poon2023smooth} for recent computational considerations), it was suggested for the training of large width 2NN in~\cite{bach2021quest} and used in conjonction with MFLD in~\cite{takakura2024meanfield} (more details below). Relatedly, a recent line of work studies the (noiseless) training of 2NN in a two-timescale regime, where the outer layer is trained at a much faster rate than the inner layer~\citep{berthier2023learning, marion2023leveraging, bietti2023learning}. This implicitly corresponds to optimizing the bilevel objective and leads to improved convergence guarantees.

The work that is closest to ours is~\cite{takakura2024meanfield}, which considers the MFLD on a 2NN with weight decay where the outer layer is optimized at each step. They interpret the resulting dynamics as a kernel learning dynamics and study properties of the learnt kernel and its associated RKHS. While they do not formulate explicitly the problem Eq.~\eqref{eq:pb-signed-measures}, it can be shown that our approaches are equivalent when considering $\WWW= \RR^{d+1}$ in Eq.~\eqref{eq:2NN-homogeneous} (and adding an extra regularization). The details are given in \autoref{subsec:apx_intro:compare_TS24}. Key advantages of our formulation with $\WWW= \SS^d$ are that we cover the case of unbounded homogeneous activation functions (such as ReLU),
and can obtain improved LSI.

\section{Background on guarantees for mean-field Langevin dynamics} \label{sec:bg_MFLD}

The MFLD is defined as the Wasserstein gradient flow $(\mu_t)_{t\in \RR_+}$ in $\PPP(\Omega)$ of an objective of the form Eq.~\eqref{eq:pb-proba}. It is characterized  as the solution to the partial differential equation (PDE) 
\begin{equation} \label{eq:bg_MFLD:MFLD_PDE}
    \partial_t \mu_t = \div (\mu_t \nabla F'[\mu_t]) + \beta^{-1} \Delta \mu_t,\qquad \mu_0\in \PPP(\Omega).
\end{equation}
where $F'[\mu]:\Omega\to\RR$ is the \emph{first variation} of $F$ at $\mu$~\cite[Sec.~7.2]{santambrogio2015optimal}, defined by $\lim_{\epsilon \downarrow 0} \frac{1}{\epsilon} (F(\mu+\epsilon(\mu'-\mu))-F(\mu)) = \int F'[\mu] \d(\mu'-\mu)$ for any $\mu'\in \Pp(\Omega)$. This PDE corresponds to the mean-field limit ($N \to \infty$) of the noisy particle gradient flow $\omega_t\in \Omega^N$:
\begin{equation}
    \forall i \leq N,~ \d\omega_t^i = -N \nabla_{\omega_t^i} F^{(N)}\left( \omega_t^1, ..., \omega_t^N \right) \d t + \sqrt{2 \beta^{-1}} \d B_t^i,\qquad \omega^i_0 \overset{\text{i.i.d.}}{\sim} \mu_0
\end{equation}
where $F^{(N)}\left( \omega^1, ..., \omega^N \right) = F\left( \frac{1}{N} \sum_{i=1}^N \delta_{\omega^i} \right)$
and the $B_t^i$ are $N$ independent Brownian motions on~$\Omega$. The convergence guarantees for MFLD rely on three key properties:
\begin{enumerate}[start=0,label=(P\arabic*),ref=P\arabic*]
    \item \label{P0}
    \textbf{(Convexity)} $F$ is convex and is such that $F_\beta$ admits a minimizer $\mu^*_\beta$.
    \item \label{P1}
    \textbf{(Displacement smoothness)} $F$ is $L$-displacement smooth, in the sense that\footnote{Strictly speaking, \eqref{P1} is only a sufficient condition for displacement smoothness (see details in \autoref{sec:apx_bg_MFLD}). We refer to \eqref{P1} as displacement smoothness in this paper for conciseness only.}
    \begin{align}\label{eq:displacement-smoothness}
            \forall \mu \in \PPP_2(\Omega),~
            \forall \omega \in \Omega,~
            & \max_{\substack{s \in T_\omega \Omega \\ \norm{s}_\omega \leq 1}}~ \abs{\Hess F'[\mu](s, s)} \leq L, \\
            \text{and}~~~~ ~~
            \forall \mu, \mu' \in \PPP_2(\Omega),~
            \forall \omega \in \Omega,~
            & \norm{\nabla F'[\mu] - \nabla F'[\mu']}_\omega
            \leq L~ W_2(\mu, \mu'),
    \end{align}
    where $\Hess$ denotes the Riemannian Hessian.
    \item \label{P2}
    \textbf{(Uniform LSI)} There exists $\CLSI >0$ such that $\forall t\geq 0$, $F_\beta$ satisfies local $\CLSI$-LSI at $\mu_t$, as in \autoref{def:local-LSI}.
\end{enumerate}

\begin{definition}[Local LSI]\label{def:local-LSI}
    We say that a functional $F_\beta = F + \beta^{-1} H$ satisfies local $\CLSI$-LSI at $\mu \in \PPP(\Omega)$ if  $Z \coloneqq \int_\Omega \exp\left( -\beta F'[\mu] \right) \d\omega < \infty$ and the \emph{proximal Gibbs measure} $\hat \mu \coloneqq Z^{-1}\exp(-\beta F'[\mu])\in \PPP(\Omega)$ satisfies $\CLSI$-LSI, that is
    \begin{equation*}
        \forall \mu' \in \PPP(\Omega),~
        \KLdiv{\mu'}{\hat \mu}
        \leq \frac{1}{2\CLSI} I(\mu'|\hat \mu),
    \end{equation*}
    where the relative entropy and relative Fisher Information are respectively defined as
    \begin{align*}
    \KLdiv{\mu'}{\hmu} \coloneqq \int_\Omega \log \left( \frac{\d\mu'}{\d\hmu} \right) \d\mu', && I(\mu'|\hmu)
        \coloneqq \int_\Omega \norm{\nabla \log \frac{\d\mu'}{\d\hmu}(\omega)}_\omega^2 \d\mu'(\omega),
    \end{align*}
    and $\Vert \cdot \Vert_\omega$ denotes the Riemannian metric.
\end{definition}
We review some useful criteria for LSI in \autoref{sec:apx_bg_MFLD}. In particular, the uniform LSI property \eqref{P2} holds for example when training two-layer neural networks with a frozen second layer, under some technical assumptions such as bounded activation function. In fact in that case, the proximal Gibbs measures $\hmu$ even satisfy LSI uniformly for \emph{all} $\mu \in \PPP(\Omega)$ \cite{chizat2022mean,nitanda2022convex}.

Note that the Riemannian gradient $\nabla$ and the Laplace-Beltrami operator $\Delta$ appearing in \eqref{eq:bg_MFLD:MFLD_PDE}, as well as the definition of Brownian motion, depend on the Riemannian metric of $\Omega$.
This dependency is reflected in \eqref{P1} and \eqref{P2}.

The global convergence of MFLD is guaranteed by the following theorem, with a rate.
\begin{theorem}[{\cite[Thm.~3.2]{chizat2022mean}\cite[Thm.~1]{nitanda2022convex}}]\label{thm:bg_MFLD:MFLD_conv}
    Consider $F: \PPP(\Omega) \to \RR$ and $(\mu_t)$ as in \eqref{eq:bg_MFLD:MFLD_PDE}.
    If \eqref{P0}, \eqref{P1} and \eqref{P2} are satisfied then for $t\geq 0$ it holds
    \begin{equation}
       \beta^{-1} H(\mu_t | \mu^*_\beta) \leq F_\beta(\mu_t) - F_\beta(\mu^*_\beta) \leq \exp(-2 \beta^{-1} \CLSI~ t) \Big(F_\beta(\mu_0) - F_\beta(\mu^*_\beta) \Big).
    \end{equation}
\end{theorem}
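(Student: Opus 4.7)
My plan is to run the standard entropy-dissipation argument that has become canonical for MFLD: establish that the excess objective $F_\beta(\mu_t) - F_\beta(\mu^*_\beta)$ is sandwiched between two relative entropies and decreases along the flow at rate at least $2\beta^{-1}\alpha$, then conclude by Gr\"onwall's lemma. Each of the three properties \eqref{P0}--\eqref{P2} plays a distinct role in this scheme.

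First I would compute the dissipation of $F_\beta$ along the MFLD. Since the PDE \eqref{eq:bg_MFLD:MFLD_PDE} is the Wasserstein gradient flow of $F_\beta$, the chain rule formally gives $\tfrac{d}{dt}F_\beta(\mu_t) = -\int \|\nabla F_\beta'[\mu_t]\|^2 \d\mu_t$. A direct computation of the first variation yields $F_\beta'[\mu] = F'[\mu] + \beta^{-1}\log\mu$ up to an additive constant, so upon recalling $\hat\mu_t \propto \exp(-\beta F'[\mu_t])$ we have $\nabla F_\beta'[\mu_t] = \nabla F'[\mu_t] + \beta^{-1}\nabla\log\mu_t = \beta^{-1}\nabla\log(\mu_t/\hat\mu_t)$, so the dissipation equals $\beta^{-2}\,I(\mu_t|\hat\mu_t)$. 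Rigorously justifying this identity -- in particular, showing that $\mu_t$ has a smooth positive density for which the Fisher information is finite -- is the main obstacle; this is where displacement smoothness \eqref{P1} enters, providing the regularity needed to apply the chain rule along the PDE.

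Next I would prove the entropy sandwich
\[
\beta^{-1} H(\mu_t | \mu^*_\beta) \;\leq\; F_\beta(\mu_t) - F_\beta(\mu^*_\beta) \;\leq\; \beta^{-1} H(\mu_t | \hat\mu_t).
\]
For the upper bound, convexity of $F$ (from \eqref{P0}) gives $F_\beta(\nu) \geq F(\mu_t) + \langle F'[\mu_t], \nu-\mu_t\rangle + \beta^{-1}H(\nu)$ for every $\nu \in \PPP(\Omega)$; the right-hand side is strictly convex in $\nu$ with unique minimizer $\hat\mu_t$, and substituting $F'[\mu_t] = -\beta^{-1}\log\hat\mu_t + \text{const}$ identifies the gap between the value at $\nu = \mu_t$ and the minimum as $\beta^{-1} H(\mu_t|\hat\mu_t)$, which upper-bounds $F_\beta(\mu_t) - F_\beta(\mu^*_\beta)$ since $F_\beta(\mu^*_\beta)$ is no smaller than the minimum of the lower bound. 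For the lower inequality, the first-order optimality condition for $\mu^*_\beta$ forces $F'[\mu^*_\beta] + \beta^{-1}\log\mu^*_\beta = \text{const}$, so $\mu^*_\beta$ is its own proximal Gibbs measure; then convexity of $F$ at $\mu^*_\beta$ combined with this identity directly yields $F_\beta(\mu_t) - F_\beta(\mu^*_\beta) \geq \beta^{-1} H(\mu_t|\mu^*_\beta)$, which is exactly the left inequality of the statement.

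Finally, chaining the local LSI \eqref{P2}, $I(\mu_t|\hat\mu_t) \geq 2\alpha H(\mu_t|\hat\mu_t)$, with the right-hand side of the sandwich gives $\tfrac{d}{dt} F_\beta(\mu_t) \leq -2\beta^{-1}\alpha\bigl(F_\beta(\mu_t) - F_\beta(\mu^*_\beta)\bigr)$, and Gr\"onwall's lemma delivers the claimed exponential decay. The delicate point is thus concentrated in the first step (regularity of $\mu_t$ and the dissipation identity under (P1)); the sandwich is pure convexity algebra, and the coupling with LSI is a single inequality.
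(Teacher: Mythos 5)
Your proposal correctly reproduces the standard entropy-sandwich argument (dissipation identity, two relative-entropy bounds via convexity and the proximal Gibbs measure, then LSI plus Grönwall) used in the cited references [Chizat~2022, Thm.~3.2] and [Nitanda et al.~2022, Thm.~1]; the paper itself does not reprove this theorem but cites it, and your derivation matches that canonical proof, including the correct observation that \eqref{P1} is what licenses the chain-rule/regularity step and that the left inequality follows from $\mu^*_\beta$ being its own proximal Gibbs measure.
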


Note that although the $L$-smoothness constant does not appear in \autoref{thm:bg_MFLD:MFLD_conv}, it does appear in the discrete-time guarantees of~\cite{suzuki2023mean}, and is thus an important quantity in practice. In this paper, we limit our analysis to the mean-field dynamics~\eqref{eq:bg_MFLD:MFLD_PDE} because its time-discretization has not yet been studied on Riemannian manifolds. In continuous time, the proof of \autoref{thm:bg_MFLD:MFLD_conv} translates directly to Riemannian manifolds thanks to our definition of \eqref{P1}, see \autoref{sec:apx_bg_MFLD}.

\section{Reductions from signed measures to probability measures} \label{sec:reduc}

In order to apply the MFLD framework to solve our initial problem over signed measures~\eqref{eq:pb-signed-measures}, we must first recast it as an optimization problem over probability measures. In this section we build two such reductions, and discuss the properties (\ref{P0}, \ref{P1} and \ref{P2}) of the resulting problems.

\subsection{Reduction by lifting} \label{subsec:reduc:lifting}

Reductions by lifting consist in representing signed measures as projections of probability measures in the higher dimensional space $\Omega=\RR\times \WWW$. This construction involves the $1$-homogeneous projection operator%
\footnote{
    We could consider more general $p$-homogeneous projections as in~\cite{liero2018optimal}, but we show in~\autoref{subsec:apx_reduc_lifting:equiv} that we can always bring ourselves back to the case $p=1$ up to a change of metric.
}
$\bh: \PPP_1(\Omega) \to \MMM(\WWW)$ characterized by
\begin{equation}
    \forall \varphi \in \CCC(\WWW, \RR),~
    \int_\WWW \varphi(w) (\bh \mu)(\d w) = \int_{\Omega} r \varphi(w) \mu(\d r, \d w),
\end{equation}
where $\PPP_p(\Omega)$ is the subset of $\PPP(\Omega)$ for which $\int \vert r\vert^p \d\mu(\d r,\d w)<+\infty$. For instance, it acts on discrete measures as
$
    \bh \left( \frac{1}{m} \sum_{j=1}^m \delta_{(r_j, w_j)} \right)
    = \frac{1}{m} \sum_{j=1}^m r_j \delta_{w_j}.
$
We also define, for ${ b \in [1, 2] }$ and $\mu\in \PPP_b(\Omega)$,
$
    \Psi_b(\mu) \coloneqq \left( \int_\Omega \vert r\vert^{b} \d\mu(r, w) \right)^{2/b}.
$
The objective functional of the lifted problem is then defined, for $\mu\in \PPP_b(\Omega)$, as
\begin{equation}\label{eq:reduc:lifting:F}
    F_{\lambda,b}(\mu) \coloneqq G(\bh\mu) + \frac{\lambda}{2} \Psi_b(\mu).
\end{equation}

It is equivalent to minimize $G_\lambda$ or $F_{\lambda,b}$, as shown in the following statement.
\begin{proposition}\label{prop:reduc:lifting:valid}
    Let $\nu\in \MMM(\WWW)$. For any $\mu \in \PPP_b(\WWW)$ such that $\bh\mu=\nu$, it holds $F_{\lambda,b}(\mu)\geq G_\lambda(\nu)$, and equality holds for 
    $\mu(\d r,\d w) = \delta_{f(w)}(\d r) \frac{\abs{\nu}(\d w)}{\norm{\nu}_{TV}}$ 
    where $f(w)=\Vert \nu\Vert_{TV}\frac{\d \nu}{\d \vert \nu\vert}(w)$
    (and only for this $\mu$ when $b>1$). 
    In particular, if $G_\lambda$ admits a minimizer then $F_{\lambda,b}$ does too, and it holds
    \begin{equation}
        \min_{\mu\in \PPP_b(\Omega)} F_{\lambda,b}(\mu) = \min_{\nu \in \MMM(\WWW)} G_\lambda(\nu).
    \end{equation}
\end{proposition}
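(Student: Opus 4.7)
The plan is to reduce everything to an analytic inequality $\Psi_b(\mu) \geq \|\nu\|_{TV}^2$ that turns out to follow from two applications of Jensen's inequality; equality analysis then gives both the verification for the prescribed $\mu$ and the uniqueness claim for $b>1$.

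\paragraph{Reduction and setup.} Since $\bh\mu = \nu$ by assumption, we have $G(\bh\mu) = G(\nu)$, so the inequality $F_{\lambda,b}(\mu) \geq G_\lambda(\nu)$ is equivalent to $\Psi_b(\mu) \geq \|\nu\|_{TV}^2$. Disintegrate $\mu$ as $\mu(\d r,\d w) = \mu_w(\d r)\,\pi(\d w)$, where $\pi$ is the $\WWW$-marginal of $\mu$, and set $\bar r(w) \coloneqq \int_{\RR} r \, \d\mu_w(r)$ (well-defined $\pi$-a.e.\ thanks to $\mu \in \PPP_1(\Omega)$). Testing $\bh\mu = \nu$ against arbitrary $\varphi \in \CCC(\WWW)$ shows that $\nu$ is absolutely continuous w.r.t.\ $\pi$ with density $\bar r$, so $\|\nu\|_{TV} = \int_\WWW |\bar r(w)|\,\d\pi(w)$.

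\paragraph{The two Jensen inequalities.} Conditional Jensen gives $|\bar r(w)| \leq \int |r|\,\d\mu_w(r)$, and integrating against $\pi$ yields $\|\nu\|_{TV} \leq \int_\Omega |r|\,\d\mu(r,w)$. Since $\mu$ is a probability measure and $x \mapsto x^b$ is convex for $b \geq 1$, a second application of Jensen gives $\int |r|\,\d\mu \leq \bigl(\int |r|^b\,\d\mu\bigr)^{1/b} = \Psi_b(\mu)^{1/2}$. Squaring and combining the two bounds gives $\|\nu\|_{TV}^2 \leq \Psi_b(\mu)$, which proves the inequality.

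\paragraph{Equality and uniqueness.} For the prescribed $\mu_\star(\d r,\d w) = \delta_{f(w)}(\d r)\,\frac{|\nu|(\d w)}{\|\nu\|_{TV}}$, one checks directly that $\bh\mu_\star = \nu$ (testing against $\varphi$ recovers $\int \varphi \,\d\nu$ via $f\cdot |\nu| = \|\nu\|_{TV}\,\d\nu$) and that $|f(w)| = \|\nu\|_{TV}$ for $|\nu|$-a.e.\ $w$, so $\int |r|^b \,\d\mu_\star = \|\nu\|_{TV}^b$ and hence $\Psi_b(\mu_\star) = \|\nu\|_{TV}^2$. For $b > 1$, strict convexity of $x \mapsto x^b$ forces equality in the second Jensen to require $|r|$ to be $\mu$-a.s.\ equal to some constant $c$; equality in the first forces $r$ to have $\mu_w$-a.s.\ constant sign for $\pi$-a.e.\ $w$. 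When $\nu \neq 0$, tracking these constraints forces $\mu_w = \delta_{c\sigma(w)}$ with $\sigma(w)\in\{-1,+1\}$, and then identifying $\nu = \bar r\,\pi = c\sigma\,\pi$ determines $c = \|\nu\|_{TV}$, $\pi = |\nu|/\|\nu\|_{TV}$, and $\sigma = \d\nu/\d|\nu|$, so $\mu = \mu_\star$. The equality of minima is then immediate.

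The heart of the argument is just two Jensen inequalities, so there is no serious obstacle; the only care required is in the bookkeeping of the equality cases for uniqueness when $b > 1$, together with the harmless verification that the prescribed $\mu_\star$ indeed lifts $\nu$.
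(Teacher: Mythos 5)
Your argument is correct and mirrors the paper's proof (Lemma~\ref{lm:apx_reduc_lifting:Psi_bp} and Proposition~\ref{prop:apx_reduc_lifting:valid_anyp}), which also rests on the same two inequalities: a triangle-inequality bound $\norm{\bh\mu}_{TV} \leq \int|r|\,\d\mu$ followed by Jensen for $t\mapsto t^b$, with the same equality/uniqueness analysis for $b>1$. Your disintegration into $\mu_w(\d r)\,\pi(\d w)$ and conditional Jensen is just a minor repackaging of the paper's dual-norm derivation of the first inequality.
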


It is not difficult to see that $F_{\lambda,b}$ satisfies \eqref{P0} as long as $G_\lambda$ admits a minimizer. In order to study \eqref{P1} and \eqref{P2}, we need to define a Riemannian metric on $\Omega$. Following \cite{chizat2022sparse}, we consider a general class of Riemannian metrics on $\Omega^* \coloneqq \RR^* \times \WWW$, parameterized by $q_r, q_w \in \RR$ and $\Gamma>0$, defined by
\vspace{-0.25em}
\begin{equation} \label{eq:reduc:lifting:metric_Omega*}
    \innerprod{\begin{pmatrix}
        \delta r_1 \\ \delta w_1
    \end{pmatrix}}{\begin{pmatrix}
        \delta r_2 \\ \delta w_2
    \end{pmatrix}}_{(r, w)}
    = \Gamma^{-1} \abs{r}^{q_r} \frac{\delta r_1 \delta r_2}{r^2} + \abs{r}^{q_w} \innerprod{\delta w_1}{\delta w_2}_w.
\end{equation}
This indeed defines an inner product on $T_{(r,w)} \Omega^* \coloneqq \RR \times T_w \WWW$ that varies smoothly, and so equips~$\Omega^*$ with a (disconnected) Riemannian manifold structure \cite{lee2018introduction}. Intuitively, the parameter $\Gamma$ will govern the relative speed of the weight or position variables along gradient flows; larger $\Gamma$ means faster weight updates. 

Two particular cases of this construction appear (sometimes implicitly) in the literature on 2NN: 
\begin{enumerate}[label=(\roman*)]
    \item\label{L1} when $q_r=2$ and $q_w=0$, the metric~\eqref{eq:reduc:lifting:metric_Omega*} extends to the product metric on $\Omega = \RR \times \WWW$. With $\WWW=\RR^{d+1}$, this corresponds to the usual parameterization of 2NNs and is the setting of most previous works applying MFLD to 2NN 
    (with a weight decay regularization on the second layer for $b=2$ and $\lambda>0$).
    \item\label{L2} when $q_r=q_w=1$, $\Omega^*$ is isometric to the union of two copies of the (tipless) metric cone over $\WWW$ \cite{burago2001course} (via the mapping $(r,\omega)\mapsto (\text{sign}(r),\sqrt{\vert r\vert},\omega)$). This is the natural setting for optimization over signed measures; and with $\WWW=\SS^d$, is equivalent to the parameterization of 2NNs with ReLU activation and balanced initialization~\cite[App.~H]{chizat2020implicit}.
\end{enumerate}

\paragraph{Issues caused by the disconnectedness of $\Omega^*$.}
On the level of the equivalence of variational problems, one can check that the statement of \autoref{prop:reduc:lifting:valid} also holds if $\Omega = \RR \times \WWW$ is replaced by $\Omega^* = \RR^* \times \WWW$.
However, when the manifold $\Omega^*$ is truly disconnected,%
\footnote{This issue also occurs in the case $q_r=q_w=1$, even though $\Omega^*$ can be completed into a topologically connected set by adding an element $0$ ``bridging'' the two cones $\Omega_+^*$ and $\Omega_-^*$. Indeed, any particle reaching $0$ remains at $0$ for all subsequent times. Besides, this completion is not itself a manifold, as $0$ is a singularity.}
then $\PPP(\Omega)$ is not connected in the sense of absolutely continuous curves in Wasserstein space.
More precisely, $\Omega^*$ is the disjoint union of $\Omega_+^* = \RR_+^* \times \WWW$ and $\Omega_-^* = \RR_-^* \times \WWW$, 
and one can show that (for certain choices of $q_r, q_w$),
if $(\mu_t)_t$ is a Wasserstein gradient flow
(or any other absolutely continuous curve), 
then $\mu_t(\Omega_+^*) = \mu_0(\Omega_+^*)$ for all $t$.

Moreover, supposing for simplicity that $G_\lambda$ has a unique minimizer $\nu$ and that $b>1$, then
$F_{\lambda,b}$ has a unique minimizer $\mu^*$, and $\mu^*(\Omega_+^*) = \nu_+(\WWW)/\norm{\nu}_{TV}$ where $\nu = \nu_+ - \nu_-$ is the Jordan decomposition of $\nu$.
Therefore, Wasserstein gradient flow for $F_{\lambda,b}$ can only converge to $\mu^*$ if it was initialized such that $\mu_0(\Omega_+^*) = \mu^*(\Omega_+^*)$.
In terms of particle methods, this means that the fraction of the particles $(r_i, w_i)$ initialized with $r_i>0$ must be precisely 
$\mu^*(\Omega_+^*)$.
A similar problem arises if we apply MFLD to $F_{\lambda,b}$, since it is nothing else than Wasserstein gradient flow for $F_{\lambda,b} + \beta^{-1} H$; but it is more tedious to discuss formally, as $F_{\lambda,b} + \beta^{-1} H$ does not have a minimizer in general.

In order to bypass this limitation, one may focus on settings where the ratio $\nu_+(\WWW)/\norm{\nu}_{TV}$ for the optimal $\nu$ is known in advance, e.g., the problem~\eqref{eq:pb-signed-measures} constrained to non-negative measures,
or on choices of $q_r, q_w$ for which $\Omega^*$ can be extended into a connected manifold, such as the product metric $q_r=2, q_w=0$.
However, even in those cases, MFLD on $F_{\lambda,b}$ presents other limitations.

\paragraph{Incompatibility with MFLD.}
We now show that, in spite of the degrees of freedom given by the parameters $q_r,q_w$ and $b$, satisfying both~\eqref{P1} and~\eqref{P2} requires restrictive assumptions. This suggests that the lifting approach is fundamentally incompatible with MFLD.
\pagebreak
\begin{proposition} \label{prop:reduc:lifting:noP1P2}
    Consider $F_{\lambda,b}$ from Eq.~\eqref{eq:reduc:lifting:F} and $\Omega^*$ equipped with the metric~\eqref{eq:reduc:lifting:metric_Omega*}.
    Suppose $G'[\nu]$ is continuous for all $\nu$ and that there exists $\nu$ such that $\nabla^2 G'[\nu]$ is not constant equal to $0$.
    Then
    \begin{itemize}
        \item If $q_r\neq 1$ or $q_w\neq 1$ or $b\neq 1$, then \eqref{P1} does not hold.
        \item If $q_r = q_w=b=1$,
        then for any $\mu \in \PPP_1(\Omega)$, there exists $\lambda_0>0$ such that $F_{\lambda,b} + \beta^{-1} H$ does not satisfy local LSI at~$\mu$ for any $\lambda<\lambda_0$
        (in particular \eqref{P2} does not hold unless $\lambda$ is large enough).
    \end{itemize}
\end{proposition}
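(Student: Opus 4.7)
The plan is to prove Part~1 by a scaling analysis of the Riemannian Hessian of $F'_{\lambda,b}[\mu]$ in the radial direction, and Part~2 by showing that the proximal Gibbs measure simply fails to be a probability measure when $\lambda$ is too small.

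\textbf{Part 1.} I would first compute the first variation,
\[
    F'_{\lambda,b}[\mu](r,w) \;=\; r\,G'[\bh\mu](w) \;+\; \frac{\lambda}{b}\Big(\tint |r'|^b \d\mu(r',w')\Big)^{2/b-1}\! |r|^b.
\]
By the hypothesis $\Hess G'[\nu]\not\equiv 0$ combined with the continuity of $G'$, I can pick $\mu$ and $(r,w)\in\Omega^*$ at which $\Hess_w G'[\bh\mu](w)$, $\nabla_w G'[\bh\mu](w)$ and $G'[\bh\mu](w)$ are all nonzero. From~\eqref{eq:reduc:lifting:metric_Omega*}, a unit tangent vector $s=(s_r,s_w)$ at $(r,w)$ satisfies $|s_r|\leq \sqrt{\Gamma}\,|r|^{1-q_r/2}$ and $\|s_w\|_w\leq |r|^{-q_w/2}$. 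A direct computation of the Christoffel symbols of~\eqref{eq:reduc:lifting:metric_Omega*} (e.g.\ $\Gamma^r_{ab}=-\tfrac{\Gamma q_w}{2}|r|^{1+q_w-q_r}\mathrm{sgn}(r)\,g_{ab,w}$) then yields a bound
\[
    \big|\Hess F'_{\lambda,b}[\mu](s,s)\big| \;\leq\; C\cdot\big(|r|^{1-q_w}+|r|^{1-q_r}+|r|^{b-q_r}\big),
\]
for some constant $C$ depending on $\mu,G,\Gamma$, and with each summand \emph{saturated} by a different choice of $s$. Demanding uniform boundedness as $r\to 0^+$ and $r\to+\infty$ forces all three exponents to vanish, hence $q_w=q_r=1$ and $b=1$. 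Consequently, if any of these equalities fails, \eqref{P1} is violated.

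\textbf{Part 2.} In the remaining case $q_r=q_w=b=1$, the first variation simplifies to $F'_{\lambda,1}[\mu](r,w)=r\,G'[\bh\mu](w)+\lambda M\,|r|$ with $M\coloneqq\int|r|\d\mu$, and the Riemannian volume on $\Omega^*$ coming from~\eqref{eq:reduc:lifting:metric_Omega*} is proportional to $|r|^{(d-1)/2}\d r\,\d\vol_\WWW$ with $d=\dim\WWW$. The partition function is thus
\[
    Z \;=\; \int_\WWW \int_\RR \exp\!\big(-\beta\,r\,G'[\bh\mu](w)-\beta\lambda M\,|r|\big)\,|r|^{(d-1)/2}\d r\,\d\vol_\WWW(w).
\]
Splitting the inner integral at $r=0$ gives two gamma-type integrals with exponential rates $\beta(G'[\bh\mu](w)+\lambda M)$ and $\beta(\lambda M-G'[\bh\mu](w))$; these are simultaneously finite iff $|G'[\bh\mu](w)|<\lambda M$. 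Assuming $G'[\bh\mu]\not\equiv 0$ (which is the generic case under the hypothesis on $G$ and the continuity of $\nu\mapsto G'[\nu]$, the degenerate case being handled by a perturbation argument), picking any $w_0$ with $G'[\bh\mu](w_0)\ne 0$ and setting $\lambda_0\coloneqq |G'[\bh\mu](w_0)|/M>0$ yields, for every $\lambda<\lambda_0$, an open neighbourhood of $w_0$ on which the integrand is $+\infty$, so $Z=+\infty$. The proximal Gibbs measure then fails to exist as a probability measure, and local LSI at $\mu$ is vacuously violated.

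The main obstacle is the Christoffel bookkeeping in Part~1: a naive Euclidean-Hessian computation misses the $|r|^{1-q_r}$ contribution coming from $-\Gamma^r_{ab}\partial_r F'_{\lambda,b}[\mu]$, which is precisely the term that pins down $q_r=1$ independently of the Euclidean cross-term $\partial_r\partial_w F'_{\lambda,b}[\mu]$ (the latter contributes $|r|^{1-(q_r+q_w)/2}$ and is consistent with $q_r=q_w=1$). A sanity-check via cone coordinates---for $q_r=q_w=1$ and $\WWW=\SS^d$, the metric on $\Omega_+^*$ is simply the Euclidean metric on $\RR^{d+1}\setminus\{0\}$---confirms the conclusion.
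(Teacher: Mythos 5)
Your Part~2 is essentially the paper's argument (the divergence of $Z$ when $|G'[\bh\mu](w_0)|>\lambda\,\Psi_1(\mu)^{1/2}$ on an open neighbourhood of $w_0$ forces an exponentially growing tail in $|r|$, so the proximal Gibbs measure is not normalizable), and your refinement of including the Riemannian volume factor $|r|^{(d-1)/2}$ is correct but not load-bearing, since the exponential already diverges. The one loose end you leave open --- the case $G'[\bh\mu]\equiv 0$ --- is equally glossed over in the paper, so it is not a point of disagreement.

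Part~1, however, has a genuine gap. Your bound
\[
\big|\Hess F'_{\lambda,b}[\mu](s,s)\big|\leq C\big(|r|^{1-q_w}+|r|^{1-q_r}+|r|^{b-q_r}\big)
\]
is correct as an \emph{upper} bound, but the claim that ``each summand is saturated by a different choice of $s$'' is false, and the conclusion leans on it. The $|r|^{1-q_w}$ contribution (from $s_w^\top\Hess\tphi\,s_w$, where $\tphi=G'[\bh\mu]$) and the $|r|^{1-q_r}$ contribution (from the Christoffel correction $-\ol\Gamma^0_{ij}\,\partial_r F'\cdot s_w^i s_w^j$) are \emph{both} probed by purely tangential directions $s=(0,s_w)$; they add inside a single quadratic form in $s_w$, namely
\[
|r|^{1-q_w}\Hess\tphi(\hat s_w,\hat s_w)\;+\;\tfrac{\Gamma q_w}{2}\,|r|^{1-q_r}\big(\tphi+\lambda' b|r|^{b-1}\mathrm{sgn}\,r\big)\,g(\hat s_w,\hat s_w).
\]
When $q_r=q_w$ these scale identically, and if $\tphi$ happens to satisfy $\nabla^2\tphi = \Gamma p^2(\tphi+\lambda')g$ pointwise, the two terms cancel for \emph{every} $\hat s_w$. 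Your argument would then (wrongly) certify uniform boundedness of the Hessian with, e.g., $q_r=q_w=2$, $b=1$. This is exactly the degenerate case isolated in \autoref{lm:apx_reduc_lifting:no_smoothness_Riemannian}. The paper rules it out by an additional step that your proof is missing: because $\lambda'=\lambda\,\Psi_1(\mu)^{1/2}$ depends on the lift $\mu$ and not only on $\nu=\bh\mu$, one can pick another lift $\mu'$ of the same $\nu$ with a different $\Psi_1(\mu')$, which changes $\lambda'$ and breaks the ODE; applying the scaling argument to $F'_{\lambda,b}[\mu']$ then forces $p=q_r=q_w$ after all. Without some form of this freedom-of-lift argument, your Part~1 is incomplete. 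Otherwise the route --- explicit Riemannian Hessian with Christoffel corrections, then match $r$-exponents --- is the same as the paper's.
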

When $q_r=q_w=b=1$ and $\lambda$ is large enough, then it can indeed be shown that \autoref{thm:bg_MFLD:MFLD_conv} applies under natural conditions, see for instance
\cite[Sec.~5.1]{chizat2022mean}.

\vspace{0.5em}
\begin{remark} \label{rk:reduc:lifting:reg_TVs}
    For functionals of the form $G_{\lambda,s} = G(\nu) + \frac{\lambda}{s} \norm{\nu}_{TV}^s$, instead of \eqref{eq:pb-signed-measures} which corresponds to $s=2$, one can formulate a similar reduction by posing $\Psi_{b,s}(\mu) = ( \int_\Omega \abs{r}^b \d\mu(r,w) )^{s/b}$ and $F_{\lambda,b,s}(\mu) = G(\bh \mu) + \frac{\lambda}{s} \Psi_{b,s}(\mu)$.
    The statements of \autoref{prop:reduc:lifting:valid} and \autoref{prop:reduc:lifting:noP1P2} hold true with $G_\lambda$ replaced by $G_{\lambda,s}$, and $F_{\lambda,b}$ by $F_{\lambda,b,s}$, for any $1 \leq b \leq s$,
    as can be shown by very simple adaptations of the proofs
    (only the second inequality in the proof of~\autoref{lm:apx_reduc_lifting:Psi_bp}, and the definition of $\lambda'$ in~\eqref{eq:apx_reduc_lifting:F'}, need to be adapted).
    Note that the problem considered in \cite{chizat2022sparse} is of the form $G(\nu) + \lambda \norm{\nu}_{TV}$, and they analyzed Wasserstein gradient flow on $F_{\lambda,1,1}$ with $q_r=q_w=1$ (in particular the issues caused by the disconnectedness of $\Omega^*$ are bypassed thanks to the choice $b=1$). The above discussion shows that applying MFLD to that problem would only yield convergence guarantees for $\lambda$ large enough.
\end{remark}


\subsection{Reduction by bilevel optimization} \label{subsec:reduc:bilevel}

We define the bilevel objective functional $J_\lambda$ for $\eta\in \PPP(\WWW)$ as%
\footnote{
    We use $ \int_\WWW \frac{\abs{\nu}^2}{\eta}$ as a shorthand for $\int_\WWW \big( \frac{\d\nu}{\d\eta}(w) \big)^2 \d\eta(w)$.
}
\begin{align} \label{eq:reduc:bilevel:J}
    J_\lambda(\eta) 
    \coloneqq \inf_{\nu \in \MMM(\WWW)} G(\nu) + \frac{\lambda}{2} \int_\WWW \frac{\abs{\nu}^2}{\eta}.
\end{align}
It can be derived using the variational representation of the squared TV-norm~\cite{lanckriet2004learning,bach2019eta}: 
for any $\nu \in \MMM(\Omega)$, one has
$\norm{\nu}_{TV}^2 = \min_{\eta \in \PPP(\WWW)} \int_\WWW \frac{\abs{\nu}^2}{\eta}$. 
By exchanging infima, it thus holds $\inf_{\nu \in \MMM(\WWW)} G_\lambda(\nu) = \inf_{\eta\in \PPP(\WWW),\nu\in \MMM(\WWW)} G(\nu) + \frac{\lambda}{2} \int \frac{\vert \nu\vert^2}{\eta} = \inf_{\eta\in \PPP(\WWW)} J_\lambda(\eta)$. Moreover, the objective minimized in~\eqref{eq:reduc:bilevel:J} is jointly convex in $(\eta,\nu)$ and partial minimization preserves convexity, so $J_\lambda$ is convex. Let us gather these crucial remarks in a formal statement.
\begin{proposition} \label{prop:reduc:bilevel:valid}
    The bilevel objective $J_\lambda$ is convex and 
    $\inf_{\PPP(\WWW)} J_\lambda = \inf_{\MMM(\WWW)} G_\lambda$.
    Moreover,
    if $G_\lambda$ admits a minimizer $\nu \in \MMM(\WWW)$, then
    $\argmin J_\lambda = \left\{ \frac{\abs{\nu}}{\norm{\nu}_{TV}}, \nu \in \argmin G_\lambda \right\}$.
\end{proposition}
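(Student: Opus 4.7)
The plan hinges on the variational representation of the squared TV norm recalled just before the statement, namely $\norm{\nu}_{TV}^2 = \min_{\eta \in \PPP(\WWW)} \int_\WWW \abs{\nu}^2/\eta$, which I would establish first. For any $\eta \in \PPP(\WWW)$ with $\abs{\nu} \ll \eta$, Cauchy--Schwarz applied to the pair $(1, \d\abs{\nu}/\d\eta)$ in $L^2(\eta)$ yields $\norm{\nu}_{TV}^2 = \bigl(\int \d\abs{\nu}/\d\eta\, \d\eta\bigr)^2 \leq \int (\d\abs{\nu}/\d\eta)^2\, \d\eta = \int_\WWW \abs{\nu}^2/\eta$, with the right-hand side equal to $+\infty$ otherwise. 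Equality holds iff $\d\abs{\nu}/\d\eta$ is $\eta$-a.e.\ constant, which, for $\nu \neq 0$, forces $\eta = \abs{\nu}/\norm{\nu}_{TV}$.

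With this in hand, convexity of $J_\lambda$ follows because $(\nu,\eta) \mapsto \int_\WWW \abs{\nu}^2/\eta$ is jointly convex; a clean way to see this is to write it as $\sup_\phi \int \bigl[2\phi\, \d\nu - \phi^2\, \d\eta\bigr]$, with the supremum taken over continuous $\phi: \WWW \to \RR$, which exhibits it as a supremum of linear functions of $(\nu, \eta)$. The Lagrangian $G(\nu) + \frac{\lambda}{2} \int_\WWW \abs{\nu}^2/\eta$ is thus jointly convex, and partial minimization in $\nu$ preserves convexity. The infimum equivalence follows by swapping the two infima: $\inf_\eta J_\lambda(\eta) = \inf_\nu \inf_\eta \bigl[G(\nu) + \frac{\lambda}{2} \int_\WWW \abs{\nu}^2/\eta\bigr] = \inf_\nu G_\lambda(\nu)$, using the variational representation on the inner infimum. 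For the easy inclusion $\supseteq$ of the argmin characterization, any $\nu \in \argmin G_\lambda$ paired with $\eta \coloneqq \abs{\nu}/\norm{\nu}_{TV}$ satisfies $J_\lambda(\eta) \leq G(\nu) + \frac{\lambda}{2} \int_\WWW \abs{\nu}^2/\eta = G_\lambda(\nu) = \inf G_\lambda = \inf J_\lambda$.

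The main obstacle is the reverse inclusion $\subseteq$: given $\eta^* \in \argmin J_\lambda$, I must produce $\nu^* \in \argmin G_\lambda$ with $\eta^* = \abs{\nu^*}/\norm{\nu^*}_{TV}$. The strategy is to show that the inner problem $\inf_\nu [G(\nu) + \frac{\lambda}{2} \int_\WWW \abs{\nu}^2/\eta^*]$ is attained --- one cannot simply plug in a global minimizer $\nu_0$ of $G_\lambda$, as the resulting Cauchy--Schwarz inequality $\norm{\nu_0}_{TV}^2 \leq \int_\WWW \abs{\nu_0}^2/\eta^*$ provides no extra information. Reparameterizing a candidate $\nu \ll \eta^*$ by its density $f = \d\nu/\d\eta^* \in L^2(\eta^*)$, the inner problem becomes $\inf_{f \in L^2(\eta^*)} [G(f \cdot \eta^*) + \frac{\lambda}{2} \norm{f}_{L^2(\eta^*)}^2]$, which is convex, coercive (the $L^2$ term dominates while $G$ is bounded by Assumption~\ref{assump:G_smooth_bounded}), and weakly lower semi-continuous in $L^2(\eta^*)$ (using convexity of $G$ and the continuity of $f \mapsto f \cdot \eta^*$ into $(\MMM(\WWW), \norm{\cdot}_{TV})$). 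The direct method then yields a minimizer $\nu^*$, and the chain of inequalities $\inf G_\lambda = \inf J_\lambda = J_\lambda(\eta^*) = G(\nu^*) + \frac{\lambda}{2} \int_\WWW \abs{\nu^*}^2/\eta^* \geq G_\lambda(\nu^*) \geq \inf G_\lambda$ forces all of them to be equalities, so $\nu^* \in \argmin G_\lambda$ and the equality case of Cauchy--Schwarz yields $\eta^* = \abs{\nu^*}/\norm{\nu^*}_{TV}$.
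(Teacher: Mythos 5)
Your proof is correct, and on two points more explicit than the paper's argument. For the variational representation $\norm{\nu}_{TV}^2 = \min_\eta \int\abs{\nu}^2/\eta$, you give a self-contained Cauchy--Schwarz derivation, whereas the paper argues via the Lagrangian first-order conditions and defers rigor to a reference. Your supremum-of-linear-functions representation of $(\nu,\eta)\mapsto\int\abs{\nu}^2/\eta$ is likewise a clean way to obtain the joint convexity that the paper merely asserts.

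The substantive divergence is the $\subseteq$ inclusion of the argmin characterization. The paper's proof simply states that it ``follows from the characterization of the minimizers of the inner minimization,'' but that step silently requires the inner infimum over $\nu$ (equivalently over $f \in L^2(\eta^*)$) to be attained for every $\eta^* \in \argmin J_\lambda$ --- a fact the paper only establishes later, in \autoref{prop:apx_reduc_bilevel:J_simplified}, under Assumption~\ref{assump:G_smooth_bounded}. You supply this explicitly via the direct method, and your chain $\inf G_\lambda = J_\lambda(\eta^*) = G(\nu^*) + \tfrac{\lambda}{2}\int\abs{\nu^*}^2/\eta^* \geq G_\lambda(\nu^*) \geq \inf G_\lambda$ then cleanly extracts $\nu^* \in \argmin G_\lambda$ and, via equality in Cauchy--Schwarz, $\eta^* = \abs{\nu^*}/\norm{\nu^*}_{TV}$; your remark that plugging in a global minimizer $\nu_0$ of $G_\lambda$ directly yields no information is the right diagnosis of why the attainment step is genuinely needed. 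Two caveats you share with the paper: the direct-method step invokes regularity of $G$ (coercivity, lower semicontinuity) that the proposition does not formally carry as a hypothesis, and when $0 \in \argmin G_\lambda$ the set $\{\abs{\nu}/\norm{\nu}_{TV}\}$ is ill-defined for that minimizer --- both arguments tacitly exclude this degenerate case, which the paper only rules out later in \autoref{def:anneal:HS:TDelta}.
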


\paragraph{Link between the lifted and bilevel reductions.} 
The equality case in the statement of~\autoref{prop:reduc:lifting:valid} shows that we can restrict the lifted reduction to measures $\mu\in \PPP_b(\Omega)$ of the form $\mu(\d r,\d w)=\delta_{f(w)}(\d r) \eta(\d w)$ for some $f: \WWW \to \RR$ and $\eta \in \PPP(\WWW)$.
Since they satisfy $\bh \mu (\d w) = f(w)\eta(\d w)$, the lifted reduction with $b=2$ thus rewrites
$$
\min_{\eta\in \PPP(\WWW)} \min_{f\in L^2(\eta)} G(f\eta) +\frac{\lambda}{2} \int_\WWW f(w)^2\d\eta(w).
$$
After the change of variable $(\nu,\eta) = (f\eta,\eta)$, the outer objective is precisely $J_\lambda(\eta)$.
Thus, Wasserstein gradient flow on $J_\lambda$ can be seen as a two-timescale optimization dynamics: it is the Wasserstein gradient flow on $F_{\lambda,2}$ in the limit where $\Gamma \to \infty$. 
In the context of 2NN training with the parametrization~\ref{L1}, this amounts to training the output layer infinitely faster than the input layer, as done in~\cite{berthier2023learning, marion2023leveraging, bietti2023learning, takakura2024meanfield}.
This remark allows to implement the bilevel MFLD numerically by discretizing in time the system of SDEs, for fixed large $N$ and $\Gamma$,
\begin{align} \label{eq:reduc:bilevel:twotimescale_SDE}
    \forall i \leq N,~~
    \d r^i_t &= -\Gamma ~ \nabla_{r^i} F_{\lambda,2}'[\mu_t](r^i_t, w^i_t) \d t 
    \qquad\qquad \quad~\,
    = -\Gamma ~ \left( G'[\nu_t](w^i_t) + \lambda r^i_t \right) \d t 
    ~~~~ \\
    \d w^i_t &= -\nabla_{w^i} F_{\lambda,2}'[\mu_t](r^i_t, w^i_t) \d t
    + \sqrt{2 \beta^{-1}} \d B^i_t
    = -r^i_t \nabla G'[\nu_t](w^i_t) \d t
    + \sqrt{2 \beta^{-1}} \d B^i_t
    ~~~~
    \nonumber
\end{align}
where $\mu_t = \frac{1}{N} \sum_{i=1}^N \delta_{(r^i_t, w^i_t)}$ and $\nu_t = \frac{1}{N} \sum_{i=1}^N r^i_t \delta_{w^i_t}$,
and taking $\eta_t = \frac{1}{N} \sum_{i=1}^n \delta_{w^i_t}$.
Notice the absence of noise term on the weight variables $r$; it reflects the fact that MFLD for the bilevel objective is \emph{not} a limit case of MFLD for the lifted objective, as the noise would prevent to reach optimality in the inner problem.
%
\vspace{-0.3em}

\paragraph{Compability with MFLD.} 
We now show that, in contrast to the lifting reduction, the bilevel reduction is amenable to MFLD. The main assumption on~\eqref{eq:pb-signed-measures} is as follows.
\begin{assumption}\label{assump:G_smooth_bounded}
    $G: \MMM(\WWW) \to \RR$ is non-negative and admits second variations, and
    for each $i \in \{0, 1, 2\}$, there exist ${ L_i, B_i<\infty }$ such that
    $\norm{\nabla^i G''[\nu](w,w')}_w \leq L_i$
    and 
    $\norm{\nabla^i G'[\nu]}_w \leq L_i \norm{\nu}_{TV} + B_i$
    for all $\nu \in \MMM(\WWW)$ and $w, w' \in \WWW$.
    Moreover there exists $\tL_2<\infty$ such that $\norm{\nabla_w \nabla_{w'} G''[\nu](w,w')} \leq \tL_2$ for all $\nu, w, w'$.
    %
    %
    Furthermore, $\WWW$ is compact and the uniform probability measure $\tau$ on $\WWW$ satisfies LSI with constant~$\CLSI_\tau$.
\end{assumption}
\vspace{-0.3em}

Concrete settings that satisfy Assumption~\ref{assump:G_smooth_bounded} are discussed in \autoref{sec:polyLSI}.
The following proposition confirms the compatibility with MFLD and gives quantitative bounds on the LSI constant.

\begin{proposition}\label{prop:reduc:bilevel:J'_LSI_holley_stroock}
    Under Assumption~\ref{assump:G_smooth_bounded}, 
    $J_\lambda$ satisfies \eqref{P0}, \eqref{P1} and \eqref{P2}.
    More precisely, for any $\eta \in \PPP(\WWW)$,
    $J_\lambda + \beta^{-1} H$ satisfies local LSI at $\eta$ with the constant
    $\CLSI_{\heta} 
    = \CLSI_{\tau} \exp\left( -\frac{1}{\lambda} L_0 \beta J_\lambda(\eta) \right)$.
    Further, $J_\lambda + \beta^{-1} H$ satisfies $\CLSI$-LSI uniformly along the MFLD trajectory $(\eta_t)_t$ with the constant
    $\CLSI = \CLSI_{\tau} \exp\left( -\frac{1}{\lambda} L_0 \beta \min\left\{ G(0), J_\lambda(\eta_0) + \beta^{-1} \KLdiv{\eta_0}{\tau} \right\} \right)$.
\end{proposition}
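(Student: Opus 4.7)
The plan is to verify properties \eqref{P0}-\eqref{P2} and the uniform-along-trajectory LSI in turn, leveraging the bilevel structure of $J_\lambda$ to obtain an explicit first variation that drives all three. Since the inner problem in \eqref{eq:reduc:bilevel:J} is $\lambda$-strongly convex in $f \coloneqq \d\nu/\d\eta \in L^2(\eta)$, it admits a unique optimum $\nu_\eta = f_\eta \eta$ characterized by the optimality condition $G'[\nu_\eta] + \lambda f_\eta = 0$. An envelope-theorem computation, perturbing $\eta \to \eta + \epsilon \zeta$ while holding $f_\eta$ fixed, then yields the clean formula
\[
J_\lambda'[\eta] \;=\; -\tfrac{\lambda}{2}\, f_\eta^2 \;=\; -\tfrac{1}{2\lambda}\bigl(G'[\nu_\eta]\bigr)^2.
\]
Testing the inner problem with $\nu = 0$ gives $J_\lambda(\eta) \leq G(0)$, and the Cauchy-Schwarz inequality combined with $G \geq 0$ yields the uniform total-variation bound $\|\nu_\eta\|_{TV}^2 \leq \frac{2}{\lambda}\, J_\lambda(\eta) \leq \frac{2\, G(0)}{\lambda}$.

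From here, \eqref{P0} is immediate: convexity is \autoref{prop:reduc:bilevel:valid}, and existence of a minimizer of $J_\lambda + \beta^{-1} H$ follows from weak compactness of $\PPP(\WWW)$ together with joint lower semicontinuity of the two terms. For \eqref{P1}, differentiating the expression for $J_\lambda'[\eta]$ in $w$ gives $\nabla^2 J_\lambda'[\eta] = -\lambda^{-1}\bigl[(\nabla G'[\nu_\eta])^{\otimes 2} + G'[\nu_\eta]\, \nabla^2 G'[\nu_\eta]\bigr]$, which \autoref{assump:G_smooth_bounded} bounds uniformly in $\eta$ via the constants $L_i, B_i$ and the TV bound on $\nu_\eta$. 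The $W_2$-Lipschitzness of $\nabla J_\lambda'[\eta]$ further requires a quantitative stability estimate of the form $\|\nu_\eta - \nu_{\eta'}\|_{TV} \leq C\cdot W_2(\eta, \eta')$ for the parametric inner problem, which follows from its strong convexity but is the most delicate technical step of \eqref{P1}.

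The core of the statement is the LSI \eqref{P2}. The key ingredient is a Polyak-type inequality: $\|G'[\nu]\|_\infty^2 \leq 2 L_0\, G(\nu)$ for every $\nu \in \MMM(\WWW)$. Its proof: the assumption $|G''[\nu](w,w')| \leq L_0$ makes $G$ $L_0$-smooth in TV norm, so for any $w_0 \in \WWW$ and $t > 0$,
\[
G\bigl(\nu - t\,\mathrm{sgn}(G'[\nu](w_0))\,\delta_{w_0}\bigr) \;\leq\; G(\nu) - t\,|G'[\nu](w_0)| + \tfrac{L_0}{2}\, t^2,
\]
and minimizing in $t$ while invoking $G \geq 0$ gives the inequality. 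Applied at $\nu_\eta$, and using $G(\nu_\eta) \leq J_\lambda(\eta)$, it yields $\|G'[\nu_\eta]\|_\infty^2 \leq 2 L_0\, J_\lambda(\eta)$. Now writing the proximal Gibbs measure as $\hat\eta \propto e^{-\beta J_\lambda'[\eta]}\, \tau$ and noting that $-\beta J_\lambda'[\eta] = \frac{\beta}{2\lambda}(G'[\nu_\eta])^2$ takes values in $[0,\, L_0 \beta J_\lambda(\eta)/\lambda]$, the Holley-Stroock perturbation applied to the $\CLSI_\tau$-LSI of $\tau$ yields the claimed local LSI constant $\CLSI_\tau \exp(-L_0 \beta J_\lambda(\eta)/\lambda)$.

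Finally, the uniform LSI constant along the MFLD trajectory follows by bounding $\sup_t J_\lambda(\eta_t)$ in two complementary ways. On one hand, $J_\lambda(\eta_t) \leq G(0)$ pointwise by the $\nu = 0$ test. On the other, since MFLD is the Wasserstein gradient flow of $J_\lambda + \beta^{-1} H$, that functional is non-increasing along $(\eta_t)$; combined with the identity $H(\eta) = \KLdiv{\eta}{\tau} - \log\vol(\WWW)$ and the non-negativity $\KLdiv{\eta_t}{\tau} \geq 0$, this gives $J_\lambda(\eta_t) \leq J_\lambda(\eta_0) + \beta^{-1} \KLdiv{\eta_0}{\tau}$. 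Taking the minimum of the two bounds and substituting into the local LSI bound at $\eta_t$ yields the claimed uniform constant.
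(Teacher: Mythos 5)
Your proof follows the paper's strategy closely for \eqref{P0}, \eqref{P2}, and the uniform-along-trajectory bound: you derive the simplified first variation $J_\lambda'[\eta] = -\tfrac{1}{2\lambda}(G'[\nu_\eta])^2$, prove the Polyak-type inequality $\|G'[\nu]\|_\infty^2 \leq 2L_0 G(\nu)$ exactly as in the paper's Lemma~\ref{lm:apx_reduc_bilevel:bound_G'}, apply Holley--Stroock, use $J_\lambda(\eta) \leq G(0)$ from the $\nu = 0$ test, and exploit that MFLD is the Wasserstein gradient flow of $J_\lambda + \beta^{-1}H$ to get $J_\lambda(\eta_t) \leq J_{\lambda,\beta}(\eta_0)$. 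These parts are correct and essentially identical to the paper's proof.

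However, there is a genuine gap in your treatment of the second condition of \eqref{P1}. The stability estimate you invoke,
\[
\|\nu_\eta - \nu_{\eta'}\|_{TV} \leq C\, W_2(\eta,\eta'),
\]
is false, and does not follow from strong convexity of the inner problem. Take $\eta = \delta_v$ and $\eta' = \delta_{v'}$ with $v \neq v'$: then $\nu_\eta = f_\eta(v)\delta_v$ and $\nu_{\eta'} = f_{\eta'}(v')\delta_{v'}$ have disjoint supports, so $\|\nu_\eta - \nu_{\eta'}\|_{TV} = |f_\eta(v)| + |f_{\eta'}(v')|$, which generically stays bounded away from $0$, while $W_2(\eta,\eta') = \dist_\WWW(v,v')$ can be made arbitrarily small. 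More fundamentally, $\nu_\eta - \nu_{\eta'}$ contains a term $f_{\eta'}(\eta - \eta')$ whose TV norm is controlled by $\|\eta - \eta'\|_{TV}$, which $W_2$ does not dominate. Strong convexity of the inner problem controls $f_\eta$ within a fixed $L^2(\eta)$ space, but the space itself (and the support) changes with $\eta$, so it cannot rescue the TV estimate.

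What the paper actually needs, and proves in Lemmas~\ref{lm:apx_reduc_bilevel:technical_bound} and~\ref{lm:apx_reduc_bilevel:bound_nablaG''}, is the \emph{pointwise} Wasserstein-Lipschitz stability of the globally-defined fixed point $f_\eta$ and its gradient:
\[
\sup_{w\in\WWW} |f_\eta(w) - f_{\eta'}(w)| \leq H\, W_2(\eta,\eta')
\quad\text{and}\quad
\sup_{w\in\WWW} \|\nabla f_\eta(w) - \nabla f_{\eta'}(w)\|_w \leq H'\, W_2(\eta,\eta').
\]
These do hold even though the TV estimate fails, precisely because $f_\eta(w) = -\tfrac{1}{\lambda}G'[f_\eta\eta](w)$ depends on $\eta$ only through the smoothing action of $G'$: the paper differentiates the fixed-point equation with respect to $\eta$ in the Wasserstein sense, bounds the Riemannian gradient $\nabla_{w'}\tfrac{\delta f_\eta(w)}{\delta\eta(w')}$ uniformly, and applies the $W_2$-Lipschitzness lemma~\ref{lm:Wasserstein_Lipschitzness} for each fixed $w$. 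Your argument would be repaired by replacing the TV stability claim with these sup-norm estimates and deriving them along those lines.
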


In view of the negative result of \autoref{prop:reduc:lifting:noP1P2} for the lifting reduction, and the positive result of \autoref{prop:reduc:bilevel:J'_LSI_holley_stroock} for the bilevel reduction,
in the sequel we focus on MFLD applied on $J_{\lambda}$, 
which we will refer to as MFLD-Bilevel.

\section{Global convergence and annealing for MFLD-Bilevel} \label{sec:anneal}

While the bounds from \autoref{prop:reduc:bilevel:J'_LSI_holley_stroock} along with \autoref{thm:bg_MFLD:MFLD_conv} allow to establish global convergence to minimizers of $J_\lambda + \beta^{-1} H$, our aim is to minimize the unregularized bilevel objective $J_\lambda$.
This can be achieved by annealing the temperature parameter $\beta^{-1}$ along the dynamics.
Namely, Theorem~4.1 of~\cite{chizat2022mean}
guarantees that by choosing $\beta_t = c \log(t)$ for an appropriate constant $c$, the annealed MFLD trajectory
\vspace{-0.2em}
\begin{equation}
    \partial_t \eta_t = \div (\eta_t \nabla J_\lambda'[\eta_t]) + \beta_t^{-1} \Delta \eta_t
\end{equation}
satisfies 
$
    J_\lambda(\eta_t) - \inf J_\lambda = O\left( \frac{\log \log t}{\log t} \right)
$.
This is a very slow rate however. 
\vspace{-0.12em}


In this section, we show that the structure of $J_\lambda$ originating from the bilevel reduction can be exploited to go beyond the generic guarantees from \cite[Thm.~4.1]{chizat2022mean}.
Namely, we study in detail an alternative temperature annealing strategy, and we show that it improves upon 
the classical one $\beta_t \sim \log(t)$
in terms of convergence to a fixed multiplicative accuracy.

\subsection{Faster convergence to a fixed multiplicative accuracy}
\label{subsec:anneal:HS}

\begin{definition} \label{def:anneal:HS:TDelta}
    Suppose $0 \not\in \argmin G$, so that $J^*_\lambda \coloneqq \inf J_\lambda > 0$.
    We will say that MFLD-Bilevel with a given temperature annealing schedule $(\beta_t)_{\geq 0}$ \emph{converges to $(1+\Delta)$-multiplicative accuracy in time-complexity $T_\Delta$}, for a fixed positive constant $\Delta$ (say $\Delta=0.01$),
    if $J_\lambda(\eta_{T_\Delta}) \leq (1+\Delta) J^*_\lambda$.
\end{definition}
Note that in machine learning settings where the problem~\eqref{eq:pb-signed-measures} corresponds to learning with overparameterized models, it is realistic to assume $J^*_\lambda$ to be small (as long as the regularization $\lambda$ is small),
and $T_\Delta$ is the time it takes for the annealed MFLD to achieve a suboptimality of at most $\Delta J^*_\lambda$.

For ease of comparison, let us report the time-complexity $T_\Delta$ that can be achieved by simply running MFLD-Bilevel with a constant but well-chosen $\beta$,
based on the bounds from \autoref{prop:reduc:bilevel:J'_LSI_holley_stroock} and \autoref{thm:bg_MFLD:MFLD_conv}.

\begin{proposition}[Baseline ``annealing'' schedule: constant $\beta_t$] \label{prop:anneal:baseline_cstbeta}
    Under Assumption~\ref{assump:G_smooth_bounded},
    let $\Delta>0$ and assume that 
    $\Delta \leq \frac{L_0 L_1 G(0)}{\lambda^2 J_\lambda^*}$.
    Then, MFLD-Bilevel with the temperature schedule 
    $\forall t, \beta_t = \frac{4d}{\Delta J_\lambda^*} \log\big( \frac{C B}{\Delta J_\lambda^*} \big)$
    converges to $(1+\Delta)$-multiplicative accuracy in time
    \begin{equation*} 
        T_\Delta \leq 
        \frac{C'}{\Delta J_\lambda^*} \log\left( \frac{C B}{\Delta J_\lambda^*} \right)
        \cdot
        \exp\left( \frac{C' L_0 G(0)}{\lambda~ \Delta J_\lambda^*} \log\left( \frac{C B}{\Delta J_\lambda^*} \right) \right)
        \cdot
        \log\left(
            \frac{2 G(0)}{\Delta J_\lambda^*}
            + C' \KLdiv{\eta_0}{\tau}
        \right)
    \end{equation*}
    where $B = \mathrm{poly}(L_0, L_1, B_1, G(0), \lambda^{-1})$
    and $C, C'$ are constants dependent on $\WWW$ (and $d$ and $\CLSI_\tau$).
\end{proposition}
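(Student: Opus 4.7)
The plan is to combine Theorem~\ref{thm:bg_MFLD:MFLD_conv} applied to $J_{\lambda,\beta} \coloneqq J_\lambda + \beta^{-1} H$ with a careful estimate of the entropy-regularization bias $J_{\lambda,\beta}^* - J_\lambda^*$, and then tune $\beta$ to balance this bias against the exponential decay of the dynamical gap. I would start from the decomposition
\begin{equation*}
    J_\lambda(\eta_t) - J_\lambda^*
    \leq \underbrace{\big[ J_{\lambda,\beta}(\eta_t) - J_{\lambda,\beta}^* \big]}_{\text{(a) dynamical gap}}
    + \underbrace{\big[ J_{\lambda,\beta}^* - J_\lambda^* \big]}_{\text{(b) bias}}
    + \beta^{-1}\log\vol(\WWW),
\end{equation*}
where the last term comes from $H(\eta_t) = \KLdiv{\eta_t}{\tau} - \log\vol(\WWW) \geq -\log\vol(\WWW)$ on the compact manifold $\WWW$.

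The key step is bounding~(b). I would construct a smoothed test density $\tilde\eta_s \coloneqq P_s \eta^*$ by applying the heat semigroup on $\WWW$ for a small time $s$ to a minimizer $\eta^* \in \argmin J_\lambda$ (well-defined by Proposition~\ref{prop:reduc:bilevel:valid}, using that $G_\lambda$ admits a minimizer under Assumption~\ref{assump:G_smooth_bounded}). The classical heat-kernel upper bound $p_s(w,w') \leq C s^{-d/2}$ for small $s$ on a compact manifold implies $H(\tilde\eta_s) \leq C + \tfrac{d}{2}\log(1/s)$. On the $J_\lambda$ side, an envelope argument on the $\lambda$-strongly convex inner problem gives $J_\lambda'[\eta](w) = -\tfrac{1}{2\lambda} G'[\nu_\eta](w)^2$, where $\nu_\eta$ is the inner minimizer. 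From $J_\lambda(\eta) \leq G(0)$ (take $\nu=0$) one gets $\Vert \nu_\eta \Vert_{TV}^2 \leq 2G(0)/\lambda$, and Assumption~\ref{assump:G_smooth_bounded} then yields a uniform bound on $\nabla J_\lambda'[\eta]$ by some $\ell = \poly(L_0, L_1, B_0, B_1, G(0), \lambda^{-1})$. Hence $J_\lambda$ is $\ell$-Lipschitz in $W_1$; together with $W_1(\tilde\eta_s, \eta^*) \leq W_2(\tilde\eta_s, \eta^*) \lesssim \sqrt{s}$ from heat-flow displacement, this gives $J_\lambda(\tilde\eta_s) - J_\lambda^* \lesssim \ell\sqrt{s}$. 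Using $\tilde\eta_s$ as a competitor for $\inf J_{\lambda,\beta}$ and optimizing over $s \sim (d/(\ell\beta))^2$ yields (b) $\lesssim \beta^{-1}\, d\log(\ell\beta/d)$.

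For the dynamical gap~(a), Theorem~\ref{thm:bg_MFLD:MFLD_conv} combined with the trajectorywise LSI constant $\CLSI \geq \CLSI_\tau \exp(-L_0 \beta G(0)/\lambda)$ from Proposition~\ref{prop:reduc:bilevel:J'_LSI_holley_stroock} (taking the cruder bound $\min\{G(0),\dots\} \leq G(0)$) gives (a) $\leq \exp(-2\beta^{-1}\CLSI\, t)(J_{\lambda,\beta}(\eta_0) - J_{\lambda,\beta}^*)$. Choosing $\beta = \tfrac{4d}{\Delta J_\lambda^*}\log(CB/(\Delta J_\lambda^*))$ with $B$ a polynomial in $\ell$ and $\vol(\WWW)$ ensures that (b) $+ \beta^{-1}\log\vol(\WWW) \leq \tfrac{\Delta}{2} J_\lambda^*$; solving the exponential for (a) $\leq \tfrac{\Delta}{2} J_\lambda^*$ requires $t \geq \tfrac{\beta}{2\CLSI}\log\big( \tfrac{2(J_{\lambda,\beta}(\eta_0) - J_{\lambda,\beta}^*)}{\Delta J_\lambda^*} \big)$. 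Plugging in $\CLSI^{-1} \leq \CLSI_\tau^{-1} \exp(L_0\beta G(0)/\lambda)$, the estimate $J_{\lambda,\beta}(\eta_0) - J_{\lambda,\beta}^* \leq G(0) + \beta^{-1}\KLdiv{\eta_0}{\tau}$, and the chosen value of $\beta$ recovers the stated time complexity, with $d, \CLSI_\tau, \vol(\WWW)$ absorbed into the constants $C, C'$.

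The main obstacle is step~(b): unlike the classical case where the energy is a smooth potential, $J_\lambda$ is defined through an inner infimum, so its continuity requires envelope and strong-convexity arguments, and the $\beta^{-1}\, d\log\beta$ scaling of the bias hinges on the dimension-dependent heat-kernel bound. Tracking the dependencies on $L_0, L_1, B_0, B_1, G(0), \lambda$ through to a single polynomial constant $B$ is the most tedious bookkeeping. The technical condition $\Delta \leq L_0 L_1 G(0) / (\lambda^2 J_\lambda^*)$ likely arises to ensure the logarithm in the formula for $\beta$ is at least~$1$, consistently with the explicit form of $B$.
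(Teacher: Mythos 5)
Your proposal is correct and follows essentially the same strategy as the paper's proof (Proposition~\ref{prop:apx_anneal:baseline_cstbeta__precise}): combine the trajectorywise LSI bound from \autoref{prop:reduc:bilevel:J'_LSI_holley_stroock} with \autoref{thm:bg_MFLD:MFLD_conv} for the dynamical gap, control the entropic bias $\inf J_{\lambda,\beta} - \inf J_\lambda$ by a smoothing argument, and tune $\beta$ to balance the two. The one genuine variation is in the smoothing construction for the bias term: you use the heat semigroup $P_s\eta^*$ with the classical small-time heat-kernel bound $p_s \leq C s^{-d/2}$, whereas the paper (\autoref{lem:apx_anneal:entropy_control}) uses a box-kernel convolution with the uniform measure over geodesic balls, which gives the cleaner bounds $W_\infty(\eta_\epsilon,\eta^*) \leq \epsilon$ and $\KLdiv{\eta_\epsilon}{\tau} \leq d\log(1/\epsilon) + \log C$ without invoking heat-kernel estimates. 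The two yield the same $\beta^{-1} d \log(\beta \ell/d)$ bias scaling after substituting $\epsilon \leftrightarrow \sqrt{s}$, so either works; the box kernel is slightly more elementary and gives $W_\infty$ control directly rather than $W_2$, which makes the interface with the Wasserstein-Lipschitz bound on $J_\lambda$ (your envelope argument, matching \autoref{lm:apx_reduc_bilevel:Jlambda_weaklycont}) a touch cleaner. Your reading of the role of the assumption on $\Delta$ is also essentially right: in the paper it is used to guarantee $\Delta J_\lambda^*/(4B) \leq e^{-1}$, which ensures the inner optimization over the smoothing scale sits at an interior point and the logarithm in the final expression is bounded below.
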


For the annealing schedule $\beta_t \sim \log(t)$,
the time-complexity $T_\Delta$ that can be guaranteed from inspecting the proof of \cite[Thm.~4.1]{chizat2022mean} has the same dependency on $d, \lambda$ and $J_\lambda^*$ as for the baseline $\beta_t = \cst$.

\paragraph{Improved annealing schedule.}
Recall the result of \autoref{prop:reduc:bilevel:J'_LSI_holley_stroock}:
for any $\beta > 0$,
$J_\lambda + \beta^{-1} H$ satisfies local $\CLSI_{\heta}$-LSI at $\eta$ with $\CLSI_{\heta} = \alpha_\tau \exp(-\frac{L_0}{\lambda} \beta J_\lambda(\eta))$.
Informally, if we manage to control $J_\lambda(\eta_t)$ along the annealed MFLD trajectory and show that it decreases, then we can increase $\beta_t$ at the same rate, while retaining the same local LSI constant.
This observation and the resulting annealing procedure
were introduced in~\cite{suzuki2023feature}, in a 2NN classification setting with the logistic loss.
There the optimal value of the loss functional, corresponding to our $J_\lambda^*$, is $0$, and the annealing procedure yields favorable rates for global convergence.
Here we show that this procedure is also applicable for MFLD-Bilevel, as soon as $G$ satisfies the mild Assumption~\ref{assump:G_smooth_bounded},
yielding favorable rates for convergence to a fixed multiplicative accuracy.%
\footnote{
    In fact, the annealing procedure of~\autoref{thm:anneal:anneal_HS_bilevel} would also yield a rate of convergence 
    for any $\JJJ: \PPP(\WWW) \to \RR$ with $\JJJ''[\eta](w,w')$ uniformly bounded and $\inf \JJJ > 0$, instead of $J_\lambda$;
    but the resulting bound on $T_\Delta$ would have an additional factor of $(\inf \JJJ)^{1/2}$ inside the exponential.
    See \autoref{subsec:apx_anneal:general} for a detailed discussion.
}


\begin{theorem}\label{thm:anneal:anneal_HS_bilevel}
    Under Assumption~\ref{assump:G_smooth_bounded},
    there exist constants $B = \mathrm{poly}(L_i, B_i, G(0), \lambda^{-1})$
    and $C_i$ dependent only on $G(0)$, $H(\eta_0)$, $\WWW$ (and $d$ and $\CLSI_\tau$)
    such that the following holds.
    For any $\Delta \leq \frac{B}{J_\lambda^*}$,
    MFLD-Bilevel with the temperature schedule
    $(\beta_t)_{t \geq 0}$
    defined by
    $\forall k \leq K, \forall t \in [t_k, t_{k+1}], \beta_t = 2^k d$
    where $t_0=0$ and
    $K = \lceil 2\log_2(B/(\Delta J_\lambda^*))\rceil$ and
    \begin{equation}
        t_{k+1}-t_k 
        = C_1 2^k~ k
        \cdot \exp\left( \frac{L_0 d}{\lambda} \left(
            \frac{C_3}{\Delta}\log\left( \frac{B}{\Delta J_\lambda^*} \right)
            + C_2
        \right) \right),
    \end{equation} 
    achieves $(1+\Delta)$-multiplicative accuracy,
    with time-complexity
    \begin{equation}
        T_\Delta \leq t_{K+1} \leq
        \frac{C_4}{\Delta J_\lambda^*} \log\left( \frac{B}{\Delta J_\lambda^*} \right)^2
        \cdot \exp\left( \frac{L_0 d}{\lambda} \left(
            \frac{C_3}{\Delta}\log\left(\frac{B}{\Delta J_\lambda^*} \right)
            + C_2
        \right) \right).
    \end{equation}
\end{theorem}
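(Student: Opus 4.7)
My plan is to prove the bound by induction over the annealing stages $k=0,1,\ldots,K$. Since $\beta_t$ is held constant at $\beta_k = 2^k d$ on each interval $[t_k,t_{k+1}]$, the dynamics on that interval is standard MFLD for the fixed-temperature functional $F_{\beta_k} := J_\lambda + \beta_k^{-1} H$, so on each stage I can invoke \autoref{thm:bg_MFLD:MFLD_conv} with the uniform LSI constant supplied by \autoref{prop:reduc:bilevel:J'_LSI_holley_stroock} applied at $\eta_{t_k}$. The invariant I would maintain is a geometric bound on the suboptimality $R_k := J_\lambda(\eta_{t_k}) - J_\lambda^*$ of the form $R_k \leq B'/2^k$ for some $B' = O(G(0))$. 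Initialization holds since $J_\lambda(\eta) \leq G(0)$ for any $\eta$ (evaluate the inner infimum at $\nu=0$), and the termination condition $B'/2^K \leq \Delta J_\lambda^*$ forces $K \sim \log_2(B'/(\Delta J_\lambda^*))$, consistent with the statement.

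To run the induction I need (i) a lower bound on the LSI constant $\alpha_k$ uniformly throughout stage $k$, and (ii) a choice of duration $\Delta t_k$ guaranteeing $R_{k+1} \leq R_k/2$ up to entropic slack. For (i), \autoref{prop:reduc:bilevel:J'_LSI_holley_stroock} reduces the task to bounding $\beta_k J_\lambda(\eta_{t_k}) + \KLdiv{\eta_{t_k}}{\tau}$. The first summand is controlled directly by the invariant: $\beta_k J_\lambda(\eta_{t_k}) \leq 2^k d J_\lambda^* + dB'$. For the KL term, I would exploit that within each previous stage $F_{\beta_{k-1}}$ is monotone decreasing along its own fixed-temperature MFLD, yielding $H(\eta_{t_k}) \leq H(\eta_{t_{k-1}}) + \beta_{k-1}(J_\lambda(\eta_{t_{k-1}}) - J_\lambda(\eta_{t_k}))$; telescoping across stages and Abel-summing with $R_j \leq B'/2^j$ gives $\KLdiv{\eta_{t_k}}{\tau} \leq O(d k B') + \KLdiv{\eta_0}{\tau}$. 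Combining both estimates at the worst stage $k=K$, where $2^K d J_\lambda^* \sim dB'/\Delta$ becomes dominant, yields a lower bound of the form $\alpha_k \geq \CLSI_\tau \exp\bigl(-\tfrac{L_0 d}{\lambda}(\tfrac{C_3}{\Delta}\log(B/(\Delta J_\lambda^*)) + C_2)\bigr)$, matching the exponential factor in the theorem.

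For (ii), using the elementary bounds $|F_{\beta_k}^\ast - J_\lambda^\ast| \leq \beta_k^{-1}\log\vol(\WWW)$ and $F_{\beta_k}(\eta) - J_\lambda(\eta) = \beta_k^{-1}H(\eta)$, \autoref{thm:bg_MFLD:MFLD_conv} translates into the recursion $R_{k+1} \leq e^{-2\alpha_k \Delta t_k / \beta_k}\bigl(R_k + \beta_k^{-1}\KLdiv{\eta_{t_k}}{\tau}\bigr) + 2\beta_k^{-1} \log\vol(\WWW)$. Using the bounds from (i), choosing $\Delta t_k$ so that the first term is at most $B'/2^{k+2}$ gives $\Delta t_k = O(\beta_k k / \alpha_k)$, with the factor $k$ absorbing the logarithm of the initial ratio; this matches the explicit schedule $\Delta t_k = C_1\, 2^k k \exp(\cdots)$ in the statement. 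The residual additive term $\beta_k^{-1}\log\vol(\WWW)$ decays as $2^{-k}$, which is small enough to be absorbed into the invariant for moderate $k$, and is below the final tolerance $\Delta J_\lambda^*$ for $k$ close to $K$. Summing $\Delta t_k$ for $k=0,\ldots,K$ gives a leading polynomial factor $\sim 2^K d\, K^2 \sim \tfrac{1}{\Delta J_\lambda^\ast} \log^2(B/(\Delta J_\lambda^*))$, multiplied by the worst-case LSI exponential, exactly as asserted.

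The main obstacle I anticipate is the entropy bookkeeping in (i): because $\beta$ is varied over time, there is no direct control of $\KLdiv{\eta_{t_k}}{\tau}$ along the annealed trajectory, and the telescoping argument using the stage-wise monotonicity of $F_{\beta_{k-1}}$ is the key mechanism ensuring the KL grows only as $O(dkB')$ rather than blowing up. A related subtlety is that $J_\lambda^\ast > 0$ forces me to separate the ``irreducible'' contribution $\beta_k J_\lambda^\ast$, which grows linearly in $\beta_k$ and becomes dominant near $k=K$, from $\beta_k R_k$, which stays of order $dB'$ under the invariant; tracking these two contributions carefully and verifying that the induction still closes in the final stages is precisely where the $1/\Delta$ dependency inside the exponent originates.
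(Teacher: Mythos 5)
Your high-level plan — stage-wise application of \autoref{thm:bg_MFLD:MFLD_conv} with the LSI constant from \autoref{prop:reduc:bilevel:J'_LSI_holley_stroock}, and telescoping the entropy across stages via the monotone decay of $J_\lambda + \beta_k^{-1} H$ on each fixed-temperature segment — is essentially the route the paper takes (it bounds $\beta_k J_\lambda(\eta_t^k) \leq \beta_k J_{\lambda,\beta_k}(\eta_0^k) \leq 2\beta_{k-1}J_{\lambda,\beta_{k-1}}(\eta^{k-1}_{T_{k-1}})$ rather than splitting $R_k$ and $\KLdiv{\eta_{t_k}}{\tau}$ separately, but the content is the same). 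However, there is a genuine gap in your entropy control.

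You assert the ``elementary bound'' $|F_{\beta_k}^* - J_\lambda^*| \le \beta_k^{-1}\log\vol(\WWW)$. The lower bound $F_{\beta_k}^* \ge J_\lambda^* - \beta_k^{-1}\log\vol(\WWW)$ is fine, but the upper bound does not hold: the minimizer $\eta^*$ of $J_\lambda$ need not have bounded density with respect to $\tau$ (\autoref{prop:reduc:bilevel:valid} shows $\argmin J_\lambda = \{|\nu|/\|\nu\|_{TV}\}$, which is generically atomic), so $H(\eta^*) = +\infty$ and you cannot plug $\eta^*$ into $F_{\beta_k}$. Controlling $\inf F_{\beta_k}$ requires a smoothing argument — this is exactly the content of the box-kernel Lemma in the appendix — and it produces $\inf F_{\beta_k} - J_\lambda^* = O\big(d\beta_k^{-1}\log(\beta_k)\big)$, i.e.\ $O(k/2^k)$ rather than $O(1/2^k)$. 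As a result the geometric invariant $R_k \le B'/2^k$ with $B'$ constant cannot be maintained; the per-stage irreducible error forces something like $R_k \le B'(1+k)/2^k$. This is not a cosmetic change: it is precisely what makes $\beta_K J_\lambda^* \sim dK/\Delta \sim \tfrac{d}{\Delta}\log(B/(\Delta J_\lambda^*))$ rather than $dB'/\Delta$, which is where the $\log(B/(\Delta J_\lambda^*))/\Delta$ factor inside the exponential comes from. Indeed your own intermediate computation gives $\beta_K J_\lambda^* \sim dB'/\Delta$ with no logarithm, which is inconsistent with the $\tfrac{C_3}{\Delta}\log(B/(\Delta J_\lambda^*))$ exponent you claim to recover — the logarithm is asserted, not derived. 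Repairing the invariant (and redoing the Abel summation, which then yields $\KLdiv{\eta_{t_k}}{\tau} \lesssim dk^2 B' + \KLdiv{\eta_0}{\tau}$) closes the argument and recovers the stated bound.
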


Note that assuming that $G$ admits a minimizer $\nu_0$ and that $\min G = 0$, as is typically the case in overparametrized machine learning settings, then by the envelope theorem
$J_\lambda^* = \inf \left( G + \frac{\lambda}{2} \norm{\cdot}_{TV}^2 \right) = \frac{\norm{\nu_0}_{TV}^2}{2} \lambda + o(\lambda)$.
So in the regime of small $\lambda$, ignoring the subexponential factors, the time complexity bound achieved by the annealing schedule of \autoref{thm:anneal:anneal_HS_bilevel} scales as
$\exp\left( c \lambda^{-1} \log \lambda^{-1} \right)$ for a constant $c$.
This improves upon the time complexity bound of the classical annealing procedure $\beta_t \sim \log(t)$ 
(the same as in \autoref{prop:anneal:baseline_cstbeta}),
which scales as
$\exp(c' \lambda^{-2})$.


\section{Local LSI constant at optimality for learning a single neuron} \label{sec:polyLSI}

Devising temperature annealing schemes for global convergence, as illustrated in the previous section, relies on bounds on the local LSI constant at every iterate $\eta_t$ of the (annealed) MFLD.
Such bounds are readily provided by the widely applicable Holley-Stroock perturbation argument, on which for example our \autoref{prop:reduc:bilevel:J'_LSI_holley_stroock} is based, but may be overly pessimistic.
Indeed in this section, we demonstrate that for MFLD-Bilevel, \emph{the LSI constant at convergence can be independent of $\beta$, $\lambda$ and $d$, instead of exponential in $\beta$} as a global analysis would suggest.

More precisely, we are interested in $\CLSI^*$, the best local LSI constant of 
$J_{\lambda,\beta} \coloneqq J_\lambda + \beta^{-1} \KLdiv{\cdot}{\tau}$,
at $\eta_{\lambda,\beta} \coloneqq \argmin J_{\lambda,\beta}$.
In fact the proximal Gibbs measure of the optimum is the optimum itself: $\widehat{\eta_{\lambda,\beta}} = \eta_{\lambda,\beta}$, so $\CLSI^*$ is precisely the LSI constant of $\eta_{\lambda,\beta}$.
A bound on $\CLSI^*$ is of interest,
especially in the regime of large $\beta$ (low entropic regularization),
for two reasons.
Firstly, it directly implies a local convergence bound on MFLD-Bilevel, as shown in the proposition below.
Secondly, characterizing the dependency of $\CLSI^*$ on $\beta$ may open the way to more efficient temperature annealing strategies; but this is out of the scope of this paper.

\begin{proposition} \label{prop:apx_polyLSI:localLSI_localCV}
    Under Assumption~\ref{assump:G_smooth_bounded},
    suppose $\eta_{\lambda,\beta}$ satisfies LSI with some constant $\CLSI_\beta^*$. 
    For any ${ \eps>0 }$,
    there exists a sublevel set of $J_{\lambda,\beta}$ such that, for any initialization $\eta_0$ in this sublevel set, 
    ${
        J_{\lambda,\beta}(\eta_t) - \inf J_{\lambda,\beta} \leq \left( J_{\lambda,\beta}(\eta_0) - \inf J_{\lambda,\beta} \right) e^{-\left( \CLSI_\beta^* \beta^{-1} - \eps \right) t} 
    }$.
    %
\end{proposition}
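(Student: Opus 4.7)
The plan is to adapt the global convergence argument of Theorem~\ref{thm:bg_MFLD:MFLD_conv} by replacing the uniform LSI hypothesis \eqref{P2} with pointwise LSI at the optimum, and showing that the LSI constant of the proximal Gibbs measure $\hat\eta_t$ stays close to $\alpha_\beta^*$ as long as $\eta_t$ remains in a small enough sublevel set of $J_{\lambda,\beta}$, which is automatically forward-invariant since $J_{\lambda,\beta}$ is non-increasing along MFLD.

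First I would recall the standard computations along MFLD-Bilevel: the dissipation identity
\begin{equation*}
    \tfrac{\d}{\d t} J_{\lambda,\beta}(\eta_t) = -\beta^{-2} I(\eta_t \mid \hat\eta_t),
\end{equation*}
the entropy sandwich $J_{\lambda,\beta}(\eta_t) - \inf J_{\lambda,\beta} \leq \beta^{-1} \KLdiv{\eta_t}{\hat\eta_t}$ (from convexity of $J_\lambda$, cf.~\cite{chizat2022mean,nitanda2022convex}), and the lower bound $\beta^{-1}\KLdiv{\eta_t}{\eta_{\lambda,\beta}} \leq J_{\lambda,\beta}(\eta_t) - \inf J_{\lambda,\beta}$ (by convexity of $J_\lambda$ plus the first-order optimality condition $J_\lambda'[\eta_{\lambda,\beta}] + \beta^{-1}\log(\d\eta_{\lambda,\beta}/\d\tau) = \mathrm{const}$).

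Next comes the key step: controlling the LSI constant of $\hat\eta_t$. Since $\hat\eta_{\lambda,\beta} = \eta_{\lambda,\beta}$, one has
\begin{equation*}
    \log \tfrac{\d\hat\eta_t}{\d\eta_{\lambda,\beta}} = \beta \bigl( J_\lambda'[\eta_{\lambda,\beta}] - J_\lambda'[\eta_t] \bigr) + \mathrm{const},
\end{equation*}
so by Holley--Stroock, $\hat\eta_t$ satisfies LSI with constant at least $\alpha_\beta^* \exp(-2\beta \|J_\lambda'[\eta_t] - J_\lambda'[\eta_{\lambda,\beta}]\|_\infty)$. Using Assumption~\ref{assump:G_smooth_bounded} and the envelope formula for $J_\lambda'$ coming from the inner minimization in \eqref{eq:reduc:bilevel:J}, I would establish a Lipschitz estimate $\|J_\lambda'[\eta_t] - J_\lambda'[\eta_{\lambda,\beta}]\|_\infty \leq C W_2(\eta_t, \eta_{\lambda,\beta})$ with $C$ depending on $\lambda, \beta$ and the constants of the assumption. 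Combined with Talagrand's inequality $W_2(\eta_t, \eta_{\lambda,\beta})^2 \leq (2/\alpha_\beta^*) \KLdiv{\eta_t}{\eta_{\lambda,\beta}}$ (which follows from LSI at $\eta_{\lambda,\beta}$), this yields a bound of the form $\|J_\lambda'[\eta_t] - J_\lambda'[\eta_{\lambda,\beta}]\|_\infty \leq C'\sqrt{J_{\lambda,\beta}(\eta_t) - \inf J_{\lambda,\beta}}$.

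Given $\epsilon > 0$, I then choose $r_\epsilon > 0$ small enough that on the sublevel set $S_\epsilon = \{\eta : J_{\lambda,\beta}(\eta) \leq \inf J_{\lambda,\beta} + r_\epsilon\}$, the chain above guarantees that $\hat\eta_t$ satisfies LSI with constant at least $\alpha_\beta^* - \beta\epsilon/2$. Since $J_{\lambda,\beta}$ is non-increasing along MFLD, $\eta_0 \in S_\epsilon$ implies $\eta_t \in S_\epsilon$ for all $t \geq 0$, and combining the dissipation identity, the entropy sandwich, and the uniform lower bound on the LSI constant gives
\begin{equation*}
    \tfrac{\d}{\d t}\bigl( J_{\lambda,\beta}(\eta_t) - \inf J_{\lambda,\beta} \bigr) \leq -\bigl(\alpha_\beta^* \beta^{-1} - \epsilon\bigr) \bigl( J_{\lambda,\beta}(\eta_t) - \inf J_{\lambda,\beta} \bigr),
\end{equation*}
and Grönwall's lemma concludes. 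The main technical obstacle is establishing the Lipschitz estimate on $\eta \mapsto J_\lambda'[\eta]$ in $L^\infty$ with explicit dependence on $\lambda$ and $\beta$: via the envelope theorem, $J_\lambda'[\eta]$ is determined by the inner minimizer $\nu_\eta^*$, whose regularity in $\eta$ needs to be extracted from Assumption~\ref{assump:G_smooth_bounded}; this is what governs how small $r_\epsilon$ has to be taken and hence the radius of the local-convergence basin.
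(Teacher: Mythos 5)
Your proposal matches the paper's proof in every essential step: you bound the $L^\infty$-distance between $J_\lambda'[\eta]$ and $J_\lambda'[\eta_{\lambda,\beta}]$ by $W_2(\eta, \eta_{\lambda,\beta})$ (the paper does this explicitly via the representation $J_\lambda'[\eta]=-\tfrac{\lambda}{2}|f_\eta|^2$ and the Lipschitz estimates on $f_\eta$ from Lemmas~\ref{lm:apx_reduc_bilevel:bound_nablaG'} and~\ref{lm:apx_reduc_bilevel:bound_nablaG''}), transfer this to a bounded perturbation of $\log(\d\heta/\d\eta_{\lambda,\beta})$, control $W_2$ by relative entropy via Otto--Villani/Talagrand, control relative entropy by the suboptimality gap via the standard sandwich inequality, and close the loop using forward-invariance of sublevel sets and Holley--Stroock. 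This is precisely the paper's argument, and the final Grönwall step is the standard route to Theorem~\ref{thm:bg_MFLD:MFLD_conv}.
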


\begin{figure}[t]
    \centering
    \begin{subfigure}[b]{0.48\textwidth}
        \centering
        \includegraphics[width=\textwidth]{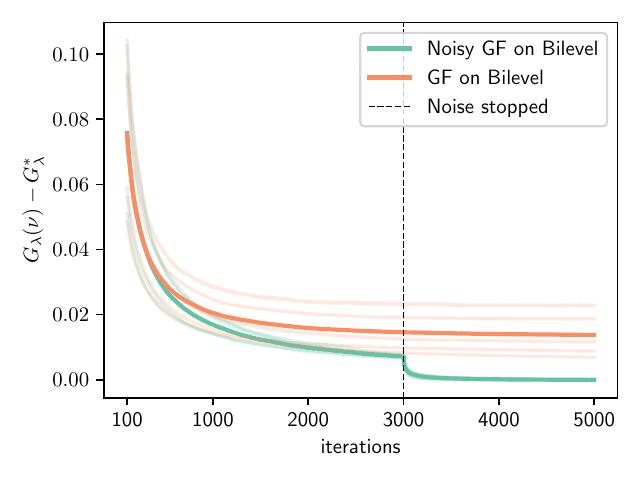}
        \caption{A comparison of Wasserstein GF on the bilevel objective with (i.e.\ MFLD) and without noise.}
        \label{fig:noise_vs_no_noise}
    \end{subfigure} 
    \hfill
    \begin{subfigure}[b]{0.48\textwidth}
        \centering
        \includegraphics[width=\textwidth]{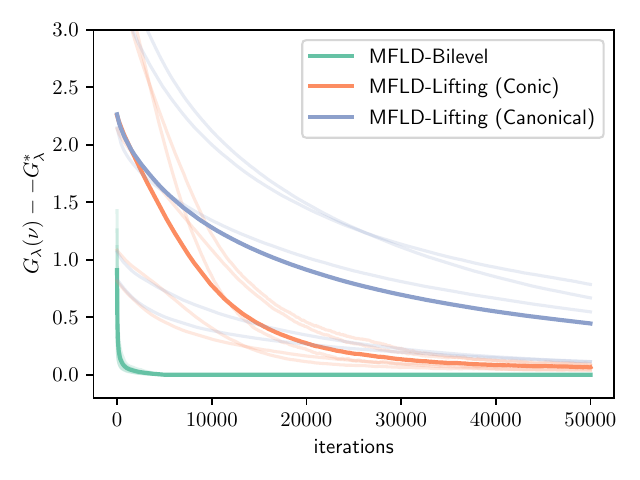}
        \caption{A comparison of MFLD applied to the Bilevel vs.\ Lifted formulations.}\label{fig:bileveL_vs_lifting}
    \end{subfigure}
    \caption{The regularized training loss $G_\lambda(\nu)$~\eqref{eq:pb-signed-measures} of a 2NN with the ReLU activation, learning a teacher 2NN with the 4th degree Hermite polynomial as its activation. 
    In both plots, $d=10$ and $\lambda = \beta^{-1} = 10^{-3}$.
    The implementation details are provided in~\autoref{app:experiment}. Plots are averaged over 5 experiments. $G_\lambda^*$ is the best value achieved at each experiment.
    In~Fig.~(\ref{fig:bileveL_vs_lifting}), ``Conic'' refers to using the metric~\eqref{eq:reduc:lifting:metric_Omega*} with $q_r=1,q_w=1$, while ``Canonical'' refers to the choice of $q_r=2,q_w=0$.}
    \label{fig:experiment}
\end{figure}

For the local LSI analysis, we focus on a specific setting of \eqref{eq:pb-signed-measures}, namely, least-squares regression using a 2NN with a normalization constraint on the first-layer weights, and a single-neuron teacher network.
See \autoref{fig:experiment} for an illustrative numerical experiment.
Note that Assumption~\ref{assump:poly_LSI:2NN}, with additional bounded-moment assumptions on $\varphi$ and $\rho$, is a special case of Assumption~\ref{assump:G_smooth_bounded}, as shown in~\autoref{prop:apx_polyLSI:assump1_implies_assump2}.

\begin{assumption}\label{assump:poly_LSI:2NN}
    $\WWW = \bbS^d$ is the Euclidean sphere in $\RR^{d+1}$
    and there exist
    $\rho$ a covariate distribution over $\RR^{d+1}$,
    $y \in L^2_\rho(\RR^{d+1})$ a fixed target function,
    and $\varphi: \RR \to \RR$ a $\CCC^2$~activation function such that
    $G(\nu) = \frac{1}{2} \EE_{x \sim \rho} \abs{\hy_\nu(x) - y(x)}^2$
    where $\hy_\nu(x) = \int_\WWW \varphi(\innerprod{w}{x}) \d\nu(w)$.
\end{assumption}

Under the above assumption, we show in~\autoref{prop:apx_polyLSI:L2loss_J_simplified} a simplified expression for the bilevel objective and its first variation,
\begin{align}
    J_\lambda(\eta) = \frac{\lambda}{2}\langle y, (K_\eta + \lambda \id)^{-1}y\rangle_{L^2_\rho},
    &&
    J'_\lambda[\eta](w) = -\frac{\lambda}{2}\langle \varphi(\innerprod{w}{\cdot}), (K_\eta + \lambda \id)^{-1}y\rangle_{L^2_\rho}^2,
\end{align}
where $K_\eta$ is the integral operator in $L^2_\rho$ of the kernel $k_\eta(x,x') = \int\varphi(\langle w, x\rangle)\varphi(\langle w, x'\rangle)\d\eta(w)$
and $\id$ is the identity operator on $L^2_\rho$.
Additionally, we make the following assumption on 
the data distribution 
$\rho$ and on the response~$y$.
\begin{assumption}\label{assump:poly_LSI:single_index}
    $\rho$ is rotationally invariant and
    the labels come from a single-index model: $y = \varphi(\innerprod{v}{x})$ for some fixed $v \in \WWW$.
\end{assumption}

With the above assumptions, we can state the main theorem of this section.
\begin{theorem} \label{thm:poly_LSI}
    Under Assumptions~\ref{assump:poly_LSI:2NN} and~\ref{assump:poly_LSI:single_index},
    there exists a function $g: [-1,+1] \to \RR_+$ such that
    $J_\lambda'[\delta_v](w) = -\lambda g(\innerprod{w}{v})$ for any $w \in \bbS^d$.
    Suppose that $\lambda \leq 1$ and that there exist constants $c_i, C_i > 0$ such that for all $r \in [-1,+1]$,
    \begin{align}
        & c_1 \leq g'(r) \leq C_1,
        &
        & g''(r) \geq -C_2,
        &
        & \abs{g''(r)(1-r^2)^{1/2}} \leq C_3,
        &
        &\abs{g'''(r)(1-r^2)^{3/2}} \leq C_4.
    \end{align}

    Then there exist constants $\CLSI_v$, $D_0$ (dependent only on the $c_i, C_i$) such that for any $\beta \geq D_0 d \lambda^{-1}$,
    $\widehat{\delta_v} \propto e^{-\beta J_\lambda'[\delta_v]} \tau$ satisfies $\CLSI_v$-LSI.
    Furthermore, if additionally
    $\frac{1}{d^2} \EE_{x \sim \rho} \norm{x}^4, \norm{\varphi^{(i)}}_{L^4(\rho)} < \infty$ for $i \in \{0,1,2\}$ where $\norm{\varphi}_{L^p(\rho)}^p \coloneqq \int \vert\varphi(\langle w, x\rangle)\vert^p\d\rho(x)$ (independent of $w$ as $\rho$ is rotationally invariant),
    then there exists a constant $\CLSI^*$ dependent only on those constants and on the $c_i, C_i$ such that, provided that $\beta \geq \mathrm{poly}(d, \lambda^{-1})$, $\eta_{\lambda,\beta}$ satisfies $\CLSI^*$-LSI.
\end{theorem}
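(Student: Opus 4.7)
My plan is to first establish LSI for $\widehat{\delta_v}$, then transfer to $\eta_{\lambda,\beta}$ by a perturbation argument. Since $\widehat{\delta_v} \propto e^{\beta\lambda g(\langle w, v\rangle)}\tau$ is rotationally symmetric around $v$, I would parametrize $\bbS^d \setminus \{\pm v\}$ by spherical coordinates $(\theta, u) \in (0,\pi) \times \bbS^{d-1}$ with $w = \cos\theta\, v + \sin\theta\, u$. The round metric decomposes as the warped product $d\theta^2 + \sin^2\theta\, g_{\bbS^{d-1}}$, so $\widehat{\delta_v} = \nu(d\theta) \otimes \sigma(du)$ with $d\nu/d\theta \propto e^{\beta\lambda g(\cos\theta)}\sin^{d-1}\theta$ and $\sigma$ uniform on $\bbS^{d-1}$. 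Writing the gradient as $|\nabla f|^2 = (\partial_\theta f)^2 + \sin^{-2}\theta\,|\nabla^{\bbS^{d-1}}_u f|^2$, combining the conditional entropy decomposition with the Bakry--Émery LSI for $\sigma$ (constant $\geq d-2$, since $\mathrm{Ric}_{\bbS^{d-1}} = (d-2)g$) and Cauchy--Schwarz on the marginal piece, I get $\CLSI_v \geq (d-2)\alpha_\nu/(d-2+\alpha_\nu)$, where $\alpha_\nu$ is the 1D LSI constant of $\nu$. The cancellation between the $\sin^{-2}\theta$ in the ambient gradient and the $\sin^2\theta$ factor from the scaled conditional metric is what makes this decomposition useful, and it reduces the problem to showing $\alpha_\nu \gtrsim 1$.

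The crux and main obstacle is this 1D LSI for $\nu \propto e^{-V(\theta)}d\theta$ with $V(\theta) = -\beta\lambda g(\cos\theta) - (d-1)\log\sin\theta$ in the regime $\beta\lambda \geq D_0 d$. The first-order condition gives a unique mode $\theta_*$ with $\beta\lambda g'(\cos\theta_*)\sin^2\theta_* = (d-1)\cos\theta_*$, so $\sin^2\theta_* \asymp d/(\beta\lambda c_1) \leq 1/(D_0 c_1)$. Intuitively, after the rescaling $s = \theta\sqrt{\beta\lambda g'(1)}$, a Taylor expansion of $g(\cos\theta)$ and $\log\sin\theta$ approximates $\nu$ by the chi distribution with density $\propto s^{d-1} e^{-s^2/2}$, which satisfies LSI with constant $1$ (being the pushforward of the standard Gaussian on $\RR^d$ under $x \mapsto |x|$); the hypotheses $|g''(r)(1-r^2)^{1/2}| \leq C_3$ and $|g'''(r)(1-r^2)^{3/2}| \leq C_4$ are precisely what is needed to make this expansion uniform. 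The difficulty is that $V$ is \emph{not} globally convex: the negative $g''$ term can beat $(d-1)/\sin^2\theta$ in some range of $\theta$, so Bakry--Émery alone fails. My approach would therefore combine (a) local strong log-concavity $V''(\theta) \gtrsim \beta\lambda c_1$ on a neighborhood of $\theta_*$, derived from the identity $V''(\theta) = -\beta\lambda g''(\cos\theta)\sin^2\theta + \beta\lambda g'(\cos\theta)\cos\theta + (d-1)/\sin^2\theta$ and the bound $|g''(\cos\theta)\sin^2\theta| \leq C_3\sin\theta$; (b) sub-Gaussian tail decay of $\nu$ outside this neighborhood from $g' \geq c_1$; and (c) a Lyapunov-function criterion of Cattiaux--Guillin--Wu type to assemble local LSI and tail decay into a global 1D LSI, yielding $\alpha_\nu \gtrsim \beta\lambda c_1 \geq c_1 D_0$, bounded below independently of $\beta,\lambda,d$.

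For the transfer from $\widehat{\delta_v}$ to $\eta_{\lambda,\beta}$, I would apply the Holley--Stroock lemma to the density ratio $d\eta_{\lambda,\beta}/d\widehat{\delta_v} \propto \exp(-\beta(J_\lambda'[\eta_{\lambda,\beta}] - J_\lambda'[\delta_v]))$, whose perturbation factor is $\exp(\beta\cdot\mathrm{osc}(J_\lambda'[\eta_{\lambda,\beta}] - J_\lambda'[\delta_v]))$; the goal is thus to show $\|J_\lambda'[\eta_{\lambda,\beta}] - J_\lambda'[\delta_v]\|_\infty = O(1/\beta)$. Using the closed-form $J_\lambda'[\eta](w) = -\tfrac{\lambda}{2}\langle \varphi(\langle w,\cdot\rangle), (K_\eta + \lambda\id)^{-1}y\rangle_{L^2_\rho}^2$ together with the additional moment assumptions on $\rho$ and the $\varphi^{(i)}$, the map $\eta \mapsto J_\lambda'[\eta]$ is Lipschitz from $W_2$ to $\|\cdot\|_\infty$ with constant $\poly(d, \lambda^{-1})$. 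An \emph{a priori} bound $W_2(\eta_{\lambda,\beta}, \delta_v) \leq \poly(d,\lambda^{-1})\,\beta^{-1/2}$, obtained from the energy inequality $J_{\lambda,\beta}(\eta_{\lambda,\beta}) \leq J_{\lambda,\beta}(\widehat{\delta_v})$ combined with the concentration of $\widehat{\delta_v}$ near $v$ (a byproduct of the previous step), then gives $\|J_\lambda'[\eta_{\lambda,\beta}] - J_\lambda'[\delta_v]\|_\infty = O(\beta^{-1/2}\poly(d,\lambda^{-1}))$. For $\beta \geq \poly(d,\lambda^{-1})$ with a sufficiently large polynomial, this makes the Holley--Stroock factor $O(1)$, yielding the universal LSI constant $\CLSI^*$ for $\eta_{\lambda,\beta}$ and completing the proof.
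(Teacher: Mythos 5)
Your Part 1 (LSI for $\widehat{\delta_v}$) is a genuinely different route from the paper's, and it looks plausible. The paper does not use the warped-product decomposition $d\theta^2 + \sin^2\theta\, g_{\bbS^{d-1}}$ and the entropy chain rule; instead it works directly on $\bbS^d$, proving a Poincar\'e inequality by a Lyapunov condition $\tfrac{\beta}{2}\Delta f - \tfrac{\beta^2}{4}\|\nabla f\|^2 \leq -\theta + b\bmone_U$ combined with a local Bakry--\'Emery inequality on the geodesic ball $U$ (convexity of $J_\lambda'[\delta_v]$ on $U$), then bootstraps Poincar\'e to LSI using a curvature bound $\beta\nabla^2 f + \mathrm{Ric} \succcurlyeq -\beta K$. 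Your reduction to a 1D problem for $\nu(d\theta)\propto e^{\beta\lambda g(\cos\theta)}\sin^{d-1}\theta\, d\theta$ would also work, but note it needs $d\geq 3$ for $\mathrm{Ric}_{\bbS^{d-1}}>0$, and the 1D Lyapunov argument you sketch to handle the non-convex region of $V$ is where essentially all of the work lies; the paper's direct Lyapunov argument on $\bbS^d$ is set up precisely to deliver the same conclusion without a detour through spherical coordinates.

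Your Part 2 (transfer to $\eta_{\lambda,\beta}$) has a real gap. You correctly identify $d\eta_{\lambda,\beta}/d\widehat{\delta_v} \propto \exp\bigl(-\beta(J_\lambda'[\eta_{\lambda,\beta}]-J_\lambda'[\delta_v])\bigr)$ and observe that Holley--Stroock requires $\|J_\lambda'[\eta_{\lambda,\beta}]-J_\lambda'[\delta_v]\|_\infty = O(\beta^{-1})$. But the bound your own argument produces is $O(\beta^{-1/2}\mathrm{poly}(d,\lambda^{-1}))$ (with an extra $\sqrt{\log\beta}$ in the paper's version, Lemma~\ref{lm:apx_polyLSI:bound_W1_eta*_deltav}), because $W_2(\eta_{\lambda,\beta},\delta_v)$ cannot be smaller than the intrinsic width $(\beta\lambda)^{-1/2}$ of the Gibbs measure. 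Plugging this into Holley--Stroock gives a factor $\exp\bigl(O(\beta^{1/2}\mathrm{poly})\bigr)$, which is \emph{unbounded} as $\beta\to\infty$: there is no threshold $\beta\geq\mathrm{poly}(d,\lambda^{-1})$ that makes it $O(1)$, since increasing $\beta$ only makes it worse. The paper's proof avoids Holley--Stroock altogether at this step. Instead, it shows (Lemma~\ref{lem:apx_polyLSI:thmproof:eta*:approx_Lyapunov_eigenspec}, building on Proposition~\ref{prop:apx_polyLSI:bound_hh_Jlipschitz}) that the Lyapunov quantity $\tfrac{1}{2}\Delta J_\lambda' - \tfrac{\beta}{4}\|\nabla J_\lambda'\|^2$ and the Hessian eigenvalues of $J_\lambda'$ are $W_1$-Lipschitz in $\eta$, and then re-checks all four conditions of Theorem~\ref{thm:apx_polyLSI:poly_LSI_abstractg} for $J_\lambda'[\eta_{\lambda,\beta}]$ directly. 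The key is that those conditions have margins of order $\lambda$ (for the curvature conditions) and $\beta\lambda^2$ (for the Lyapunov conditions), which comfortably absorb a perturbation of size $O(\beta^{-1/2}\mathrm{poly})$ once $\beta\geq\mathrm{poly}(d,\lambda^{-1})$; this is a fundamentally more robust way to transfer than the Holley--Stroock perturbation, which is blind to where the mass of the measure actually sits. To salvage your approach you would need some localized or weighted version of the perturbation argument, not the $L^\infty$ Holley--Stroock lemma.
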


The proof is based on the observation that $\eta_{\lambda,\beta} \approx \argmin J_\lambda = \delta_v $ the Dirac measure at $v$, for certain regimes of $\beta$ and $\lambda$, in the Wasserstein metric.
Thus we show that $J_\lambda'[\delta_v]$ is amenable to a Lyapunov type argument inspired from \cite{menz2014poinacre,li2023riemannian}, and then transfer its properties to $J_\lambda'[\eta_{\lambda,\beta}]$.

We now verify the assumptions of \autoref{thm:poly_LSI} for a class of smooth, non-negative, and monotone activations which includes some popular practical choices such as the Softplus $\varphi(z) = \ln(1 + e^z)$ and sigmoid $\varphi(z) = 1/(1 + e^{-z})$. While we only consider smooth activations here for simplicity, certain non-smooth activations such as a leaky version of ReLU can also satisfy the conditions of~\autoref{thm:poly_LSI}.
\begin{proposition}\label{prop:smooth_activation}
    Suppose Assumptions~\ref{assump:poly_LSI:2NN} and~\ref{assump:poly_LSI:single_index} hold, and $b_1(d+1) \leq \EE[\norm{x}^2] \leq \EE[\norm{x}^{12}]^{1/6} \leq b_2(d+1)$ for constants $b_1,b_2 > 0$. 
    Let $m \coloneqq 2b_2^{3/2}/b_1$. Suppose $\varphi$ and $\varphi'$ are non-negative, $\inf_{\abs{z} \leq m}\varphi(z)\land \varphi'(z) > 0$
    and $\norm{\varphi^{(i)}}_{L^4(\rho)} < \infty$ for $i \leq 3$. Then, $\varphi$ satisfies the assumptions of \autoref{thm:poly_LSI} with constants that only depend on $b_1$, $b_2$, and $\varphi$.
\end{proposition}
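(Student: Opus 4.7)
Exploiting the rank-one structure $K_{\delta_v}: f \mapsto \langle \phi_v, f\rangle_{L^2_\rho} \phi_v$ with $\phi_v(x) = \varphi(\langle v, x\rangle)$, together with $y = \phi_v$ from the single-index assumption, gives $(K_{\delta_v} + \lambda\id)^{-1}y = \phi_v/(a+\lambda)$ where $a = \|\phi_v\|_{L^2_\rho}^2$. Substituting into~\autoref{prop:apx_polyLSI:L2loss_J_simplified} yields $J_\lambda'[\delta_v](w) = -\lambda g(\langle v, w\rangle)$ with
\begin{equation*}
    g(r) = \frac{k(r)^2}{2(a+\lambda)^2}, \qquad k(r) = \EE_{x\sim\rho}[\varphi(\langle w, x\rangle)\varphi(\langle v, x\rangle)],
\end{equation*}
where $k$ depends only on $r=\langle w, v\rangle$ by rotational invariance of $\rho$. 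Since $\lambda \leq 1$ and $a$ is bounded above and below by positive constants depending only on $b_1, b_2, \varphi$, the desired bounds on $g^{(j)}$ reduce to bounds on $k^{(j)}$ together with uniform lower bounds on $k$ and $k'$.

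\textbf{Regularity via angular parameterization.} Set $h(\theta) = k(\cos\theta) = \EE[\varphi(A\cos\theta + B\sin\theta)\varphi(A)]$ where $A = \langle v, x\rangle$ and $B = \langle u, x\rangle$ for a fixed $u \perp v$. The joint law of $(A,B)$ is invariant under $B \mapsto -B$ by rotational invariance, so $h$ is an \emph{even} function of $\theta$. Differentiating under the expectation and applying Hölder with the $L^4(\rho)$-bounds on $\varphi^{(i)}$ and the moment bounds on $\|x\|$ yields uniform bounds on $h^{(j)}$ for $j \leq 3$. The chain rule expresses $k'(r), k''(r), k'''(r)$ in terms of $h^{(j)}(\theta)$ divided by powers of $\sin\theta$; the even symmetry of $h$ (which makes $h'(\theta)/\theta$ extend to a smooth even function) forces the leading $1/\sin^j\theta$ singularities at $\theta \in \{0,\pi\}$ to cancel. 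A short Taylor expansion at $\theta = 0$ then shows that $k(r), k'(r), k''(r)$ are uniformly bounded on $[-1,+1]$, while $|k''(r)|(1-r^2)^{1/2}$ and $|k'''(r)|(1-r^2)^{3/2}$ are also uniformly bounded. This yields the required upper bound on $g'$, the lower bound $g'' \geq -C_2$, and the weighted bounds on $g''$ and $g'''$.

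\textbf{Lower bounds and main obstacle.} Let $E = \{A^2+B^2 \leq m^2\}$. On $E$, Cauchy-Schwarz gives $|\langle w, x\rangle| \leq \sqrt{A^2+B^2} \leq m$, so the integrand of $k$ is at least $(\inf_{|z|\leq m}\varphi)^2 > 0$; Markov with $\EE[A^2+B^2] \leq 2b_2$ combined with $m^2 = 4b_2^{3}/b_1^{2}$ yields $\Pr(E) \geq 1/2$, hence $k(r)$ is uniformly bounded below. For $k'$, apply the rotation $(A,B) \mapsto (A\cos\theta + B\sin\theta, -A\sin\theta + B\cos\theta)$, which preserves the joint law of $(A,B)$ by rotational invariance, and then symmetrize over $B \mapsto -B$ to obtain
\begin{equation*}
    h'(\theta) = \tfrac{1}{2} \EE[\varphi'(A) B (\varphi(A\cos\theta - B\sin\theta) - \varphi(A\cos\theta + B\sin\theta))],
\end{equation*}
whose integrand is pointwise $\leq 0$ for $\theta \in [0, \pi]$ since $\varphi$ is non-decreasing and $\varphi' \geq 0$. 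Restricting to $E$ (on which all arguments lie in $[-m,m]$) and invoking $\inf_{|z|\leq m}\varphi' > 0$ via the fundamental theorem of calculus yields $-h'(\theta) \geq (\inf_{|z|\leq m}\varphi')^2 \sin\theta \cdot \EE[B^2 \mathbf{1}_E]$, and hence $k'(r) \geq (\inf_{|z|\leq m}\varphi')^2 \EE[B^2 \mathbf{1}_E]$ uniformly in $r$. The \emph{main obstacle} is the quantitative estimate $\EE[B^2 \mathbf{1}_E] \geq c(b_1, b_2) > 0$: naive Chebyshev bounds on $\EE[B^2 \mathbf{1}_{E^c}]$ using only the fourth moment of $B$ are insufficient when $b_1 \approx b_2$, and one must leverage the full $L^{12}$-control on $\|x\|$ jointly with the precise value $m = 2b_2^{3/2}/b_1$ to rule out rotationally-invariant laws where mass accumulates at radii $\|x\| \gg \sqrt{d+1}$ and forces $(A,B)$ outside the positivity window with overwhelming probability.
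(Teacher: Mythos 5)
The gap you flag at the end is genuine, and the remedy is not the one you suggest (leveraging the $L^{12}$-control more cleverly): with the stated $m = 2b_2^{3/2}/b_1$, Markov gives $\PP(E^c) \leq b_1^2/(2b_2^2)$ and one computes $\EE[B^4] \leq 3 b_2^2$, so Cauchy--Schwarz yields $\EE[B^2\bmone_{E^c}] \leq \sqrt{3/2}\,b_1$, which already exceeds $\EE[B^2]\geq b_1$; H\"older with higher moments of $B$ (using the $L^{12}$ control) only makes the prefactor grow. The paper sidesteps this obstruction entirely via a derivative-transfer identity (\autoref{lem:q_derivese}), proved using spherical harmonics, the Hecke--Funk formula, and the Gegenbauer derivative recursion:
\begin{equation*}
  q^{(j)}(\langle w, v\rangle) = \frac{1}{(d+1)(d+3)\cdots(d+2j-1)}\,\EE_{x\sim\rho}\!\left[\norm{x}^{2j}\varphi^{(j)}(\langle w, x\rangle)\,\varphi^{(j)}(\langle v, x\rangle)\right],
\end{equation*}
where $q$ is your $k$. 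This identity does two things your angular route cannot reproduce. For the upper bounds, $|q^{(j)}|$ is controlled directly by $\norm{\varphi^{(j)}}_{L^4(\rho)}^2$ and moments of $\norm{x}$ via Cauchy--Schwarz, with no Taylor cancellations near $\theta\in\{0,\pi\}$; in your route, the expansion of $k''(r) = -h''(\theta)/\sin^2\theta + h'(\theta)\cos\theta/\sin^3\theta$ at $\theta=0$ actually requires control of $h^{(4)}(0)$, hence of $\varphi^{(4)}$, which is not in the hypotheses. For the lower bound on $q'$, the integrand of the identity is a \emph{product} $\norm{x}^2\,\varphi'(\langle w, x\rangle)\,\varphi'(\langle v, x\rangle)\geq 0$, so one can condition on $\norm{x}$ and run Markov on the \emph{conditional} law of $\bar x = x/\norm{x}$ at threshold $m/\norm{x}$; the Markov error carries an extra $\norm{x}^2$, which combined with the outer $\norm{x}^2$ is controlled by $\EE[\norm{x}^4]\leq b_2^2(d+1)^2$, and the constants close precisely because $b_1\leq b_2$. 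Your symmetrized $h'(\theta)$ forces the restriction to the \emph{bivariate} event $\{A^2+B^2\leq m^2\}$ on which this conditional factorization is unavailable, and that is exactly where the moment estimate fails. Your reduction to the scalar kernel and the observation that the integrand of $h'$ has a definite sign are correct and match the paper; the missing piece is the spherical-harmonics lemma.
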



\section{Conclusion} \label{sec:ccl}

In this paper, we investigated how mean-field Langevin dynamics (MFLD), an optimization dynamics over probability measures with global convergence guarantees, can be leveraged to solve convex optimization problems over signed measures of the form \eqref{eq:pb-signed-measures}. For a large class of objectives $G$, we highlighted that MFLD with a lifting approach necessarily runs into some issues, whereas the bilevel approach always inherits the guarantees of MFLD, leading to convergence guarantees for $G_\lambda$ via annealing. Finally, turning to a 2-layer NN learning task which can be stated as an instance of \eqref{eq:pb-signed-measures}, we showed that the local LSI constant of MFLD-Bilevel can scale much more favorably with $d$ and $\beta$ than a generic analysis would suggest.

Another approach to tackle \eqref{eq:pb-signed-measures} could be to build noisy particle dynamics directly in the space of signed measures, complementing the MFLD updates with, for instance, a birth-death process. A challenge then is to build such dynamics that can be efficiently discretized. It is also an interesting question for future works to find other settings to which MFLD can be extended, beyond signed measures.


    


\printbibliography
\addcontentsline{toc}{section}{\refname} 

\newpage
\appendix

\section{Details for \autoref{sec:intro} (introduction)} \label{sec:apx_intro}

\subsection{Using \texorpdfstring{$\norm{\cdot}_{TV}^2$ vs.\ $\norm{\cdot}_{TV}$}{||.||TV2 vs.\ ||.||TV} as the regularization term} \label{subsec:apx_intro:TVsq}

The optimization problems we consider in this paper are of the form~\eqref{eq:pb-signed-measures}, that is, for ease of reference,
\begin{align}
    &\min_{\nu \in \MMM(\WWW)} G_\lambda(\nu), &
    G_\lambda(\nu) &\coloneqq G(\nu) + \frac{\lambda}{2} \norm{\nu}_{TV}^2.
\end{align}
Note the regularization term $\frac{\lambda}{2} \norm{\nu}_{TV}^2$.
This is to be contrasted with the more usual form of optimization problems
\begin{align}
    &\min_{\nu \in \MMM(\WWW)} \tG_{\tlambda}(\nu), &
    \tG_{\tlambda}(\nu) &\coloneqq G(\nu) + \tlambda \norm{\nu}_{TV},
\end{align}
which uses $\norm{\nu}_{TV}$ as the regularization.

On the level of variational problems, these two classes of problems are equivalent, in the sense that
\begin{align}
    \{0\} \cup \bigcup_{\lambda \geq 0}
    \argmin G_\lambda
    = \{0\} \cup \bigcup_{\tlambda \geq 0}
    \argmin \tG_{\tlambda}
\end{align}
where ``$0$'' refers to the zero measure on $\WWW$.
Indeed, note that by convexity, the argmins are determined by the respective first-order optimality conditions, so that
\begin{align}
    \bigcup_{\lambda \geq 0}
    \argmin G_\lambda
    &= \left\{
        \nu \in \MMM(\WWW); ~
        \forall w, G'[\nu](w) + \lambda \norm{\nu}_{TV} \frac{\nu(\d w)}{\abs{\nu(\d w)}} = 0, ~~ 
        \lambda \in \RR_+
    \right\} \\
    \bigcup_{\tlambda \geq 0}
    \argmin \tG_{\tlambda}
    &= \left\{
        \nu \in \MMM(\WWW); ~
        \forall w, G'[\nu](w) + \tlambda \frac{\nu(\d w)}{\abs{\nu(\d w)}} = 0, ~~ 
        \tlambda \in \RR_+
    \right\}.
\end{align}
To see that the set on the first line is contained in the second, let $\nu \in \argmin G_\lambda$, then $\nu$ satisfies the first-order optimality condition for $\tG_{\tlambda}$ with $\tlambda = \lambda \norm{\nu}_{TV}$.
Conversely, if $\nu \in \argmin \tG_{\tlambda}$ then either $\nu=0$ or $\nu \in \argmin G_\lambda$ with $\lambda = \frac{\tlambda}{\norm{\nu}_{TV}}$.

In terms of optimization convergence guarantees, when using the reduction by lifting, the problems with $\norm{\cdot}_{TV}$ vs.\ with $\norm{\cdot}_{TV}^2$ regularization give rise to similar analyses, as discussed in \autoref{rk:reduc:lifting:reg_TVs}.
However when using the reduction by bilevel optimization, it seems that only the problem with $\norm{\cdot}_{TV}^2$ regularization is amenable to a precise analysis. This is perhaps most apparent in our derivation of the simplified expression for the bilevel objective, \autoref{prop:apx_reduc_bilevel:J_simplified}.

\subsection{Detailed comparison with \texorpdfstring{\citet{takakura2024meanfield}}{Takakura and Suzuki (2004)}}
\label{subsec:apx_intro:compare_TS24}

In this subsection, we show that the learning dynamics considered by~\cite[Sec.~2,~3]{takakura2024meanfield} is an instance of a variant of MFLD applied to the bilevel reduction of \eqref{eq:pb-signed-measures}.
We do this by recalling their setting (in the case of single-task learning for simplicity) in notations that are compatible with ours.
\begin{itemize}
    \item For a set of first-layer weights $w_i \in \WWW \coloneqq \RR^d$ and second-layer weights $a_i \in \RR$ (for $1 \leq i \leq N$), and an activation function $\varphi: \RR \to \RR$, the associated 2NN is defined as
    $x \mapsto \frac{1}{N} \sum_{i=1}^N a_i \varphi(w_i^\top x)$.
    \item For $\mu \in \PPP(\RR \times \WWW)$, the associated infinite-width 2NN is
    $x \mapsto \int_{\RR \times \WWW} a \varphi(w^\top x) \d\mu(a, w)$.
    Note that in our notation of \autoref{subsec:reduc:lifting}, 
    this also writes
    $x \mapsto \int_{\WWW} \varphi(w^\top x) \d[\bh \mu](w)$.
    \item Consider a data distribution $\rho(\d x, \d y) \in \PPP(\RR^d_x \times \RR_y)$.
    We may define the Hilbert space of predictors $\HHH = L^2_{\rho^x}(\RR^d_x)$, and the ``single first-layer neuron predictor'' mapping $\phi: \WWW \to \HHH$ by $\phi(w)(x) = \varphi(w^\top x)$.
    The predictor associated to an infinite-width 2NN parametrized by~$\mu$ is then
    $\int_{\RR \times \WWW} a \phi(w) \d\mu(a,w)$.
    \item Consider a loss function $\ell(\hy, y): \RR_y \times \RR_y \to \RR$, inducing a risk functional over predictors given by
    $R(h) = \EE_{(x,y) \sim \rho}[\ell(h(x), y)]$.
    We may define the unregularized risk functional over (infinite-width) 2NN weights by
    \begin{align}
        \LLL(\mu) = R\left( \int_{\RR \times \WWW} a \phi(w) \d\mu(a,w) \right)
        = R\left( \int_\WWW \phi(w) \d[\bh\mu](w) \right).
    \end{align}
    Accordingly, let the operator $\Phi: \MMM(\WWW) \to \HHH$ such that $\Phi \nu = \int_\WWW \phi(w) \d\nu(w)$, and
    \begin{equation}
        G(\nu) = R(\Phi \nu) = R\left( \int_\WWW \phi(w) \d\nu(w) \right).
    \end{equation}
    Then the unregularized risk is $\LLL(\mu) = G(\bh \mu)$.
    \item The regularized risk functional considered in \cite[Sec.~2.1]{takakura2024meanfield} is
    \begin{align} \label{eq:apx_intro:compare_TS24:FFF}
        \FFF(\mu) 
        &= 
        R\left( \int_{\RR \times \WWW} a \phi(w) \d\mu(a,w) \right)
        + \frac{\lambda}{2} \int_{\RR \times \WWW} a^2 \d\mu(a, w)
        + \frac{1}{2 \sigma^2} \int_{\RR \times \WWW} \norm{w}^2 \d\mu(a,w) \\
        &= G(\bh \mu)
        + \frac{\lambda}{2} \int_{\RR \times \WWW} a^2 \d\mu(a, w)
        + \frac{1}{2 \sigma^2} \int_{\RR \times \WWW} \norm{w}^2 \d\mu(a,w).
    \end{align}
    (More precisely, ``$F(f, \eta)$'' in their notation corresponds to our $\FFF\left( \delta_{f(w)}(\d a) \eta(\d w) \right)$,
    their ``$\ol\lambda_a$'' corresponds to our $\lambda$ ,
    and their ``$\ol\lambda_w$'' corresponds to our $1/\sigma^2$.)
    Note that, in our notation of \autoref{subsec:reduc:lifting},
    \begin{equation}
        \FFF(\mu) = F_{\lambda,2}(\mu) 
        + \frac{1}{2 \sigma^2} \int_{\RR \times \WWW} \norm{w}^2 \d\mu(a,w).
    \end{equation}
    \item The bilevel limiting functional, which is the main object of study of \cite[Sec.~2.1]{takakura2024meanfield}, is then defined as the mapping $\GGG: \PPP(\WWW) \to \RR$ such that
    \begin{align}
        \GGG(\eta) &= \inf_{f: \WWW \to \RR} \FFF\left( \delta_{f(w)}(\d r) \eta(\d w) \right),
        ~~~~\text{corresponding precisely to} \\
        \GGG(\eta) &= J_\lambda(\eta)
        + \frac{1}{2 \sigma^2} \int_{\WWW} \norm{w}^2 \d\eta(w)
    \end{align}
    in our notation of \autoref{subsec:reduc:bilevel}
    (see the paragraph ``Link between the lifted and bilevel reductions'').
    Interestingly, the convexity of $\GGG$ is almost immediate with our presentation, as it is expressed as a partial minimization of a convex function, whereas the proof of the convexity of $\GGG$ in \cite{takakura2024meanfield} is quite involved.

    They also introduce a functional ``$U$'' which corresponds precisely to our $J_\lambda(\eta)$, and which is an important auxiliary object in their analysis.
    \item The learning dynamics studied from Section~2.3 onwards in \cite{takakura2024meanfield} (except for the label noise procedure in Section~5), is precisely MFLD for $\GGG(\eta)$:
    \begin{align} 
        \partial_t \eta_t &= \beta^{-1} \Delta \eta_t + \div\left( \eta_t \nabla \GGG'[\eta_t] \right) \\
        &= \beta^{-1} \Delta \eta_t + \div\left( \eta_t \left( \nabla J_\lambda'[\eta_t] + \frac{1}{\sigma^2} w \right) \right)
    \label{eq:apx_intro:compare_TS24:MFL+confining}
    \end{align}
    (and their constant ``$\lambda$'' corresponds to our $\beta^{-1}$).
\end{itemize}

\paragraph{``MFL + confining'' dynamics.}
The PDE~\eqref{eq:apx_intro:compare_TS24:MFL+confining} can be interpreted as a variant of MFLD for $J_\lambda$ in two ultimately equivalent ways: one is as the MFLD PDE~\eqref{eq:bg_MFLD:MFLD_PDE} with an added ``confining'' term $-\frac{1}{\sigma^2} w$,
which intuitively encourages the noisy particles to remain close to the origin.
Another is as Wasserstein gradient flow for the regularized functional
\begin{align}
    J_{\lambda,\beta,\sigma}
    &= J_\lambda + \beta^{-1} H
    + \frac{1}{2\sigma^2} \int_\WWW \norm{w}^2 \d\eta(w) \\
    &= J_\lambda + \beta^{-1} \KLdiv{\cdot}{\beta^{-1/2} \sigma \gamma}
    ~~~~\text{where}~~~~
    \beta^{-1/2} \sigma \gamma \coloneqq \NNN(0, \beta^{-1} \sigma^2 I_d),
\end{align}
whereas MFLD for $J_\lambda$ is the Wasserstein gradient flow for the functional regularized by entropy only,
$J_{\lambda,\beta} 
= J_\lambda + \beta^{-1} \KLdiv{\cdot}{\tau}
= J_\lambda + \beta^{-1} H + \cst$.
Unsurprisingly in view of this second interpretation, the distribution $\beta^{-1/2} \sigma \gamma$ plays a similar role in the analysis of convergence of~\eqref{eq:apx_intro:compare_TS24:MFL+confining}~\cite[Lemma~3.5]{takakura2024meanfield}, as played by the uniform measure $\tau$ in our paper: 
the local LSI property of $J_{\lambda,\beta,\sigma}$ (resp.\ $J_{\lambda,\beta}$) is obtained by applying the Holley-Stroock argument using $\beta^{-1/2} \sigma \gamma$ (resp.\ $\tau$) as a reference measure.

Note that the additional confining term $-\frac{1}{\sigma^2} w$ in \eqref{eq:apx_intro:compare_TS24:MFL+confining} cannot be captured straightforwardly by any additional penalty term on the objective $G$ from \eqref{eq:pb-signed-measures}.
Indeed, informally, the three terms in \eqref{eq:apx_intro:compare_TS24:FFF} each have a different homogeneity in the variable $a$.
Rather, the confining term in $\sigma$ should be viewed as corresponding to another regularization term added to \eqref{eq:pb-proba}, besides the entropy one in~$\beta^{-1}$.

In short, while our work considers MFLD i.e.\ Wasserstein gradient flow for $F + \beta^{-1} H$ as the main ``algorithmic primitive'', 
the work of \cite{takakura2024meanfield} considers a MFL+confining dynamics, i.e.\ Wasserstein gradient flow for $F + \beta^{-1} \KLdiv{\cdot}{\beta\sigma^2 \gamma}$.

\paragraph{Summary of differences.}
On a technical level, the learning dynamics considered by \cite{takakura2024meanfield} corresponds to a special case of a variant of the MFLD-bilevel we consider from \autoref{subsec:reduc:bilevel} onwards.
Namely, they focus on instances of the problem~\eqref{eq:pb-signed-measures} where $G$ has a particular form, corresponding to learning with 2NN;
and they consider $\WWW = \RR^d$ and use an additional confining term $-\frac{1}{\sigma^2} w$ in the MFLD dynamics,
while we consider settings where $\WWW$ is a compact Riemannian manifold,
and no additional confining term is needed.

We also emphasize that, while our work and that of \cite{takakura2024meanfield} cover some similar settings, our focus is quite different.
In that work, the key object of interest is the kernel that is learned by MFLD in a 2NN setting ($(x, x') \mapsto \int \varphi(x^\top w) \varphi(x^\top w') \d\eta(w)$ in the notation of our second bullet point above).
By contrast, our main motivation is a general optimization question: how to use MFLD as an algorithmic primitive for problems of the form \eqref{eq:pb-signed-measures}.
In particular we do not assume a particular form for $G$ except in \autoref{sec:polyLSI}, and we pay special attention to the bounds on the local LSI constants of $J_\lambda$ along the MFLD trajectory, instead of using the global uniform LSI bound (compare \autoref{prop:reduc:bilevel:J'_LSI_holley_stroock} and \cite[Lemma~3.5]{takakura2024meanfield}).

\section{Details for \autoref{sec:bg_MFLD} (background about MFLD)} \label{sec:apx_bg_MFLD}

\subsection{The displacement smoothness property}

For MFLD (Eq.~\eqref{eq:bg_MFLD:MFLD_PDE}) to be well-posed, we require that $F$ is $L$-smooth along Wasserstein geodesics for some $L < +\infty$.
More precisely, for any constant-speed Wasserstein geodesic $(\mu_t)_{t \in [0,1]} \subset \PPP_2(\Omega)$ with $W_2(\mu_0, \mu_1)=1$, 
$t \mapsto F(\mu_t)$ should be $L$-smooth in the usual sense of continuous optimization.
This property ensures that the PDE defining MFLD has a unique solution \cite[App.~A]{chizat2022mean}, and is also helpful to ensure convergence of explicit time-discretization schemes~\cite{suzuki2023mean}.
The following proposition gives a practical sufficient condition.

\begin{proposition} \label{prop:apx_bg_MFLD:def_Wlipschitz}
    Suppose $F: \PPP_2((\Omega, g)) \to \RR$ is twice differentiable in the Wasserstein sense.
    Let $0 \leq L < \infty$.
    Suppose that
    $F$ satisfies \eqref{P1}, i.e.,
    \begin{align}
        \forall \mu \in \PPP_2(\Omega),~
        \forall \omega \in \Omega,~
        & \max_{\substack{s \in T_\omega \Omega \\ \norm{s}_\omega \leq 1}}~ \abs{\Hess F'[\mu](s, s)} \leq L \\
        \text{and}~~~~ ~~
        \forall \mu, \mu' \in \PPP_2(\Omega),~
        \forall \omega \in \Omega,~
        & \norm{\nabla F'[\mu] - \nabla F'[\mu']}_\omega
        \leq L~ W_2(\mu, \mu')
    \end{align}
    where $\Hess$ denotes the Riemannian Hessian.
    Then $F$ is $2L$-smooth along Wasserstein geodesics.
\end{proposition}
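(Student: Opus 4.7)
The plan is to fix a constant-speed Wasserstein geodesic $(\mu_t)_{t\in[0,1]}$ with $W_2(\mu_0,\mu_1)=1$, set $\phi(t)\coloneqq F(\mu_t)$, and show that $\phi'$ is $2L$-Lipschitz on $[0,1]$, which is equivalent to $2L$-smoothness. First I would invoke the standard displacement-interpolation structure of Wasserstein geodesics on a Riemannian manifold: there exists an optimal coupling $\gamma \in \PPP(\Omega\times\Omega)$ and, for $\gamma$-a.e.\ $(x,y)$, a minimizing geodesic $s\mapsto \gamma_s(x,y)$ with $\mu_t=(\gamma_t)_\#\gamma$, constant speed $|\dot\gamma_s(x,y)|_{\gamma_s}$ in $s$, and $\int|\dot\gamma_s|^2\dd\gamma=W_2(\mu_0,\mu_1)^2=1$. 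The Wasserstein chain rule then yields
\begin{equation*}
    \phi'(t) = \int \langle \nabla F'[\mu_t](\gamma_t(x,y)),\ \dot\gamma_t(x,y) \rangle_{\gamma_t(x,y)} \dd\gamma(x,y).
\end{equation*}

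The core of the argument is a pointwise-in-$(x,y)$ comparison of $\phi'(t+h)$ and $\phi'(t)$. Since $\dot\gamma_{t+h}$ and $\dot\gamma_t$ lie in different tangent spaces, I parallel-transport along $\gamma$; because $\gamma$ is a geodesic, $P_{t\leftarrow t+h}(\dot\gamma_{t+h})=\dot\gamma_t$, and parallel transport preserves the inner product, so the integrand difference equals
\begin{equation*}
    \bigl\langle P_{t\leftarrow t+h}\bigl(\nabla F'[\mu_{t+h}](\gamma_{t+h})\bigr) - \nabla F'[\mu_t](\gamma_t),\ \dot\gamma_t \bigr\rangle_{\gamma_t}.
\end{equation*}
Inserting the intermediate quantity $\nabla F'[\mu_t](\gamma_{t+h})$ splits the bracketed vector into a ``change of measure'' piece and a ``displacement in space'' piece. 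The first has norm $\leq L\,W_2(\mu_{t+h},\mu_t)=L|h|$ by the Lipschitz inequality in \eqref{P1} (using that parallel transport is an isometry). The second, being the increment of the vector field $\omega\mapsto \nabla F'[\mu_t](\omega)$ along $\gamma$ between $s=t$ and $s=t+h$, equals $\int_t^{t+h} P_{t\leftarrow s}\bigl(\nabla_{\dot\gamma_s}\nabla F'[\mu_t]\bigr)\dd s$ by the fundamental theorem of calculus for vector fields along curves; its norm is at most $L|h|\,|\dot\gamma_t|$ by the Hessian bound in \eqref{P1} (which gives $|\nabla_{\dot\gamma_s}\nabla F'[\mu_t]|\leq L|\dot\gamma_s|$) together with the constancy of $|\dot\gamma_s|$ in $s$.

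Integrating against $\gamma$ and applying Cauchy--Schwarz concludes:
\begin{equation*}
    |\phi'(t+h)-\phi'(t)| \leq L|h|\int|\dot\gamma_t|\dd\gamma + L|h|\int|\dot\gamma_t|^2\dd\gamma \leq L|h|+L|h|=2L|h|,
\end{equation*}
where we used $\int|\dot\gamma_t|^2\dd\gamma=1$ and, for the first integral, Cauchy--Schwarz against the constant function~$1$ (so that $\int|\dot\gamma_t|\dd\gamma\leq 1$).

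The main obstacle is not a single hard estimate but the careful orchestration of the Riemannian-Wasserstein machinery: the displacement-interpolation representation of $W_2$-geodesics on $\Omega$, the Wasserstein chain rule on a manifold, and the identification of the covariant derivative of the vector field $\nabla F'[\mu_t]$ along a geodesic with $\Hess F'[\mu_t](\dot\gamma_s,\cdot)^\sharp$. These are standard on compact Riemannian manifolds; compactness of $\WWW$ together with the uniform bounds in \eqref{P1} suffices to justify differentiation under the integral and the application of the fundamental theorem of calculus above.
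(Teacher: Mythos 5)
Your proof is correct, and it takes a genuinely different route from the paper's. The paper works in the Eulerian (Benamou--Brenier/Otto) framework: it represents the geodesic by a velocity potential $\phi_t$ solving the Hamilton--Jacobi equation $\partial_t \phi_t = -\tfrac12 \|\nabla\phi_t\|^2$, computes $f''(t)$ by differentiating twice, and relies on an explicit cancellation (the cross terms involving $\Hess\phi_t$ and $\nabla\partial_t\phi_t$ vanish). You instead use the Lagrangian (displacement-interpolation) representation $\mu_t = (\gamma_t)_\#\gamma$, compute only $\phi'(t)$, and bound $|\phi'(t+h)-\phi'(t)|$ directly by parallel-transporting along the particle geodesics and splitting the increment into a change-of-measure piece (controlled by the Wasserstein-Lipschitz condition) and a spatial-displacement piece (controlled by the Hessian bound via the fundamental theorem of calculus for vector fields along curves). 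The splitting plays the same role as the paper's decomposition of $f''$, and both applications of Cauchy--Schwarz use $\int\|\dot\gamma_t\|^2\d\gamma=1$. Your route has the advantage of avoiding the Hamilton--Jacobi cancellation step and requiring only first-order differentiability of $t\mapsto F(\mu_t)$ (you never differentiate twice), while the paper's route sits more squarely within classical Otto calculus and extends naturally to non-geodesic absolutely continuous curves. One small imprecision: you invoke compactness of $\WWW$, but the statement is about a general Riemannian manifold $\Omega$, which need not be compact (e.g.\ the lifted space $\RR^*\times\WWW$); what is actually needed to justify differentiation under the integral and the FTC step is the uniform boundedness built into~\eqref{P1} together with $\mu_t\in\PPP_2(\Omega)$, which you have.
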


The first condition can be stated as
$F'[\mu]: \Omega \to \RR$ having Lipschitz-continuous gradients in the Riemannian sense~\cite[Coroll.~10.47]{boumal2023introduction},
whereas the second condition can be interpreted as a displacement Lipschitz-continuity of $\mu \mapsto F'[\mu](\omega)$ for each $\omega$ uniformly.

\begin{proof}
    Let a constant-speed Wasserstein geodesic $(\mu_t)_{t \in [0,1]} \subset \PPP_2(\Omega)$ with $W_2(\mu_0, \mu_1)=1$, 
    and pose $f(t) = F(\mu_t)$.
    We want to show that $f$ is $2L$-smooth in the usual sense of continuous optimization,
    for which it suffices to show that $\forall t,~ \abs{f''(t)} \leq 2L$.

    By~\cite[Eq.~(13.6)]{villani2009optimal} there exist functions $\phi_t: \Omega \to \RR$ such that
    $\begin{cases}
        \partial_t \mu_t = -\div(\nabla \phi_t \mu_t) \\
        \partial_t \phi_t = -\frac{1}{2} \norm{\nabla \phi_t}^2
    \end{cases}$
    and $\int \d\mu_t~ \norm{\nabla \phi_t}^2 = W_2^2(\mu_0, \mu_1) = 1$ for all $t$.
    So we can compute explicitly:
    \begin{align*}
        & f'(t) = \frac{d}{dt} F(\mu_t)
        = \int \d\mu_t~ \innerprod{\nabla F'[\mu_t]}{\nabla \phi_t} \\
        & f''(t) = \int \d(\partial_t \mu_t)~ \innerprod{\nabla F'[\mu_t]}{\nabla \phi_t}
        + \int \d\mu_t~ \frac{d}{dt} \innerprod{\nabla F'[\mu_t]}{\frac{d}{dt} \nabla \phi_t} \\
        &= 
        \int \d\mu_t~ \innerprod{\nabla \Big[ \innerprod{\nabla{F'[\mu_t]}}{\nabla \phi_t} \Big]}{\nabla \phi_t} 
        + \int \d\mu_t~ 
        \left( 
            \innerprod{\nabla F'[\mu_t]}{\frac{d}{dt} \nabla \phi_t}
            + \innerprod{\frac{d}{dt} \nabla F'[\mu_t]}{\nabla \phi_t} 
        \right)
        \\
        &=
        \int \d\mu_t~ \Hess F'[\mu_t](\nabla \phi_t, \nabla \phi_t) \\
        &~~ + \int \d\mu_t~ \Hess \phi_t\left( \nabla F'[\mu_t], \nabla \phi_t \right)
        + \int \d\mu_t~ \innerprod{\nabla F'[\mu_t]}{\nabla \partial_t \phi_t} \\
        &~~ + \int \d\mu_t~ \innerprod{\frac{d}{dt} \nabla F'[\mu_t]}{\nabla \phi_t}.
    \end{align*}
    Now the first line can be bounded using the first condition of \eqref{P1}:
    writing $s_t(\omega) = \frac{\nabla \phi_t(\omega)}{\norm{\nabla \phi_t(\omega)}}$ for all $t$ and $\omega$,
    \begin{equation}
        \abs{\int \d\mu_t~ \Hess F'[\mu_t](\nabla \phi_t, \nabla \phi_t)}
        = \abs{\int \d\mu_t~ \norm{\nabla \phi_t}^2 \Hess F'[\mu_t](s_t, s_t)}
        \leq L \cdot \int \d\mu_t~ \norm{\nabla \phi_t}^2
        = L.
    \end{equation}
    Moreover, one can show by direct computation that the second line is zero, using that $\partial_t \phi_t = -\frac{1}{2} \norm{\nabla \phi_t}^2$.
    For the third line, we have
    \begin{align}
        \abs{\int \d\mu_t~ \innerprod{\frac{d}{dt} \nabla F'[\mu_t]}{\nabla \phi_t}}
        \leq \int \d\mu_t~ \norm{\nabla \phi_t} \cdot \sup_{t \in [0,1]} \sup_{\omega \in \Omega} \norm{\frac{d}{dt} \nabla F'[\mu_t](\omega)}
    \end{align}
    since 
    $\left( \int \d\mu_t~ \norm{\nabla \phi_t} \right)^2
    \leq \int \d\mu_t~ \norm{\nabla \phi_t}^2 = 1$.
    Finally, let us show that the second condition of \eqref{P1} implies a bound on the last quantity:
    for all $\omega \in \Omega$,
    by applying the assumption to $\mu = \mu_t$ and $\mu' = \mu_s$,
    \begin{align}
        \frac{\norm{\nabla F'[\mu_s](\omega) - \nabla F'[\mu_t](\omega)}_\omega}{s-t}
        \leq \frac{L~ W_2(\mu_s, \mu_t)}{s-t}
        = L
    \end{align}
    since $(\mu_t)_t$ is a constant-speed geodesic with $W_2(\mu_0, \mu_1)=1$.
    So by letting $s \to t$ we obtain that 
    $\norm{\frac{d}{dt} \nabla F'[\mu_t](\omega)} \leq L$ for all $t \in [0,1]$, $\omega \in \Omega$.
    Thus we have shown
    $\abs{f''(t)} \leq 2L$, and so $F$ is $2L$-smooth along Wasserstein geodesics.
\end{proof}

\subsection{Classical sufficient conditions for LSI}

For ease of reference we reproduce here a classical sufficient condition for a probability measure $\mu \in \PPP(\Omega)$ to satisfy LSI.

\begin{lemma}[{Holley-Stroock bounded perturbation argument \cite{holley1986logarithmic}}]
    Let $\mu, \mu_0 \in \PPP(\Omega)$ such that $\mu$ is absolutely continuous w.r.t.\ $\mu_0$.
    Suppose that $\mu_0$ satisfies LSI with constant $\CLSI$ and that $-M \leq \log \frac{\d\mu}{\d\mu_0}(\omega) + c \leq M$ for all $\omega \in \support(\mu_0)$, for some $c \in \RR$ and $M \geq 0$.
    Then $\mu$ satisfies LSI with constant $\CLSI e^{-M}$.
\end{lemma}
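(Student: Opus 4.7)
The plan is to follow the standard Holley-Stroock bounded perturbation argument: sandwich the entropy and Fisher information of $\mu$ between those of $\mu_0$, using that the density $\rho \coloneqq \d\mu/\d\mu_0$ is pinched within a factor $e^{2M}$ on $\mathrm{supp}(\mu_0)$ (since by hypothesis $-M-c \leq \log\rho \leq M-c$), and then invoke the LSI for $\mu_0$. Throughout, let $f: \Omega \to \RR$ be a smooth nonnegative function; to establish $\CLSI e^{-M}$-LSI for $\mu$ it suffices to bound $\int f^2 \log(f^2/\int f^2 \d\mu) \d\mu$ by $\tfrac{2}{\CLSI e^{-M}} \int \|\nabla f\|^2 \d\mu$.

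The main subtlety is in the entropy comparison. One \emph{cannot} simply write the entropy as $\int f^2 \log f^2 \cdot \rho \d\mu_0$ and bound $\rho$ pointwise, since $f^2 \log f^2$ changes sign at $f^2=1$ and the normalizations of $\mu$ and $\mu_0$ differ. The standard fix is the variational characterization
\begin{equation*}
    \mathrm{Ent}_\mu(f^2) \;=\; \inf_{t > 0} \int_\Omega \Phi_t(f^2) \d\mu, \qquad \Phi_t(x) = x\log(x/t) - x + t,
\end{equation*}
where the integrand $\Phi_t$ is \emph{pointwise} nonnegative as the Bregman divergence of $x \mapsto x \log x$. Using $\d\mu = \rho \d\mu_0$ and the pointwise upper bound on $\rho$,
\begin{equation*}
    \mathrm{Ent}_\mu(f^2) \;\leq\; \int \Phi_t(f^2) \rho \d\mu_0 \;\leq\; \bigl(\sup \rho\bigr) \int \Phi_t(f^2) \d\mu_0 \qquad \forall t > 0,
\end{equation*}
and taking the infimum over $t$ on the right-hand side gives $\mathrm{Ent}_\mu(f^2) \leq (\sup \rho) \cdot \mathrm{Ent}_{\mu_0}(f^2)$.

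The second step compares the Fisher informations in the opposite direction, which is straightforward because $\|\nabla f\|^2 \geq 0$ pointwise: $\int \|\nabla f\|^2 \d\mu_0 = \int \|\nabla f\|^2 \rho^{-1} \d\mu \leq (\inf \rho)^{-1} \int \|\nabla f\|^2 \d\mu$. Chaining this with the LSI for $\mu_0$ applied to $f$ and with the first step yields
\begin{equation*}
    \mathrm{Ent}_\mu(f^2) \;\leq\; \frac{\sup \rho}{\inf \rho} \cdot \frac{2}{\CLSI} \int \|\nabla f\|^2 \d\mu \;\leq\; \frac{2 e^{2M}}{\CLSI} \int \|\nabla f\|^2 \d\mu,
\end{equation*}
i.e.\ $\mu$ satisfies LSI with constant $\CLSI e^{-2M}$, which is even stronger than the stated $\CLSI e^{-M}$ (since $M \geq 0$). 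The only real obstacle is the first step: once one remembers to deploy the variational formula to bypass the mismatch between $\int \d\mu_0$ and $\int \d\mu = 1$, the rest is routine pointwise comparison of the density $\rho$.
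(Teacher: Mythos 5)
Your entropy and Fisher-information comparisons are the standard Holley-Stroock argument and are correct; chaining them with the LSI for $\mu_0$ does give $\mathrm{Ent}_\mu(f^2) \leq e^{2M}\frac{2}{\CLSI}\int\|\nabla f\|^2\,\d\mu$, i.e.\ $\mu$ satisfies LSI with constant $\CLSI e^{-2M}$. The gap is in your last sentence: you claim $\CLSI e^{-2M}$ is ``even stronger'' than $\CLSI e^{-M}$, but this is backwards. In the paper's convention (Definition~\ref{def:local-LSI}), $\CLSI$-LSI reads $\KLdiv{\mu'}{\hat\mu} \leq \frac{1}{2\CLSI}I(\mu'|\hat\mu)$, so a \emph{larger} $\CLSI$ is the stronger statement (and it is what yields the faster rate $e^{-2\beta^{-1}\CLSI t}$ in Theorem~\ref{thm:bg_MFLD:MFLD_conv}). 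Since $M \geq 0$ gives $e^{-2M} \leq e^{-M}$, what you have proved is \emph{weaker} than the lemma's claim, so the proposal does not establish the lemma as stated.

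For context, the mismatch may be partly attributable to the statement itself: the hypothesis $-M \leq \log\frac{\d\mu}{\d\mu_0} + c \leq M$ only controls the oscillation of $\log\frac{\d\mu}{\d\mu_0}$ by $2M$, and Holley-Stroock's dependence is $e^{-\mathrm{osc}}$, which is exactly the $e^{-2M}$ you derived; the stated $e^{-M}$ would be the right conclusion if ``$M$'' were the oscillation itself. In the one place the lemma is invoked (Proposition~\ref{prop:reduc:bilevel:J'_LSI_holley_stroock}), the perturbing potential $-\beta J_\lambda'[\eta]$ is one-sided (since $J_\lambda'[\eta] \leq 0$), so its oscillation coincides with its sup-norm and the downstream constant comes out correct regardless. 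But none of this excuses the error in your conclusion, which is purely a reversal of the direction of comparison: after correctly obtaining $e^{-2M}$, you needed to notice that this is a \emph{weaker} LSI constant and either flag the discrepancy or sharpen the entropy/Fisher bounds, rather than declare victory.
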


\section{Details for \autoref{subsec:reduc:lifting} (reduction by lifting)} \label{sec:apx_reduc_lifting}

\subsection{Proof of \autoref{prop:reduc:lifting:valid}}

Here we present a slightly stronger version of \autoref{prop:reduc:lifting:valid} that uses the $p$-homogeneous projection operator for arbitrary $p>0$,
in preparation for the next subsection, where we show that one can restrict attention to the case $p=1$ as done in the main text.

Recall that we let $\Omega = \RR \times \WWW$.
For any $p>0$, we denote by $\bh^p: \PPP(\Omega) \to \MMM(\WWW)$ the signed $p$-homogeneous projection operator \cite{liero2018optimal} defined by
\begin{equation}
    \forall \varphi \in \CCC(\WWW, \RR),~
    \int_\WWW \varphi(w) (\bh^p \mu)(\d w) = \int_{\Omega} \sign(r) \abs{r}^p \varphi(w) \mu(\d r, \d w).
\end{equation}
More concretely, for atomic measures, 
$
    \bh^p \left( \frac{1}{m} \sum_{j=1}^m \delta_{(r_j, w_j)} \right)
    = \frac{1}{m} \sum_{j=1}^m \sign(r_j) \abs{r_j}^p \delta_{w_j}
$.

\begin{lemma} \label{lm:apx_reduc_lifting:Psi_bp}
    For $b \in [1,2]$ and $p>0$, let
    $\Psi_{b,p}: \PPP(\Omega) \to \RR \cup \{+\infty\}$ defined by
    $
        \Psi_{b,p}(\mu) \coloneqq \left( \int_\Omega \abs{r}^{pb} \d\mu(r, w) \right)^{2/b}
    $
    if $\mu \in \PPP_{pb}(\Omega)$,
    and $+\infty$ otherwise.
    Then
    \begin{equation}
        \min_{\mu ~\text{s.t.}~ \bh^p \mu = \nu} \Psi_{b,p}(\mu) = \norm{\nu}_{TV}^2.
    \end{equation}
    Moreover,
    if $b=1$ then the set of minimizers is
    \begin{equation}
        \left\{
            \mu \in \PPP(\WWW);~~
            \bh^p \mu = \nu 
            ~\text{and}~
            \forall w,
            \support(\mu(\cdot|w)) \subset \RR_+
            ~\text{or}~
            \support(\mu(\cdot|w)) \subset \RR_-
        \right\},
    \end{equation}
    and if $b>1$ there is a unique minimizer which is
    $\delta_{f(w)}(\d r) \frac{\abs{\nu}(\d w)}{\norm{\nu}_{TV}}$
    where $f(w) = \norm{\nu}_{TV}^{1/p} \frac{\d\nu}{\d\abs{\nu}}(w)$.
\end{lemma}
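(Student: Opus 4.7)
The plan is to prove the inequality $\Psi_{b,p}(\mu) \geq \norm{\nu}_{TV}^2$ via a disintegration-and-Jensen argument, then characterize the equality cases.

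First I would disintegrate any $\mu \in \PPP_{pb}(\Omega)$ satisfying $\bh^p \mu = \nu$ as $\mu(\d r, \d w) = \mu(\d r | w) \eta(\d w)$, where $\eta$ is the marginal of $\mu$ on $\WWW$. The constraint $\bh^p\mu = \nu$ translates to $\nu \ll \eta$ with density $\frac{\d\nu}{\d\eta}(w) = \int_\RR \sign(r)|r|^p \mu(\d r|w)$, so
\begin{equation*}
    \norm{\nu}_{TV} = \int_\WWW \abs{\tint_\RR \sign(r)|r|^p \mu(\d r|w)} \eta(\d w).
\end{equation*}
Then, because $\mu(\cdot|w)$ is a probability measure and $b \geq 1$, I would apply two nested Jensen-type inequalities pointwise in $w$:
\begin{equation*}
    \abs{\tint_\RR \sign(r)|r|^p \mu(\d r|w)} \leq \tint_\RR |r|^p \mu(\d r|w) \leq \bigl( \tint_\RR |r|^{pb} \mu(\d r|w) \bigr)^{1/b}.
\end{equation*}

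Integrating over $w$ and applying Hölder's inequality with exponents $(b, b/(b-1))$ against the probability measure $\eta$ yields
\begin{equation*}
    \norm{\nu}_{TV} \leq \int_\WWW \bigl( \tint_\RR |r|^{pb} \mu(\d r|w) \bigr)^{1/b} \eta(\d w) \leq \bigl( \tint_\Omega |r|^{pb} \d\mu \bigr)^{1/b} = \Psi_{b,p}(\mu)^{1/2},
\end{equation*}
establishing the lower bound. For the upper bound, I would plug in the candidate minimizer $\mu^\star(\d r, \d w) = \delta_{f(w)}(\d r) \frac{\abs{\nu}(\d w)}{\norm{\nu}_{TV}}$ with $f(w) = \norm{\nu}_{TV}^{1/p} \frac{\d\nu}{\d\abs{\nu}}(w)$ and check by direct computation that $\bh^p \mu^\star = \nu$ and $\Psi_{b,p}(\mu^\star) = \norm{\nu}_{TV}^2$.

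The main technical step will be characterizing equality. Equality in the first inequality (removing the signs) requires $\mu(\cdot|w)$-a.e.\ constant sign, hence for $b=1$, where the Hölder step is an identity, equality holds exactly when each conditional $\mu(\cdot|w)$ is supported in $\RR_+$ or in $\RR_-$, giving the full set of minimizers claimed. When $b>1$, equality in the second (Jensen) step forces $|r|^p$ to be $\mu(\cdot|w)$-a.e.\ constant, so combined with the sign condition the conditional is a Dirac at some $r_\star(w)$; equality in Hölder then forces $w \mapsto |r_\star(w)|^{pb}$ to be $\eta$-a.e.\ constant. Reading off $\frac{\d\nu}{\d\eta}(w) = \sign(r_\star(w))|r_\star(w)|^p$ gives $|\d\nu/\d\eta|$ constant, which pins down $\eta = \abs{\nu}/\norm{\nu}_{TV}$ and $r_\star(w) = f(w)$, recovering uniqueness. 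The only delicate point is handling the degenerate sets where $\d\nu/\d\eta$ vanishes and ensuring the equality conditions are stated correctly up to $\eta$-null sets; I would treat this by restricting to $\{w : \d\nu/\d\eta(w) \neq 0\}$ and noting that the conditionals elsewhere are unconstrained by the first-variation argument but forced to $\delta_0$ by minimality when $b>1$.
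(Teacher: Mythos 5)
Your proof is correct and rests on the same two ingredients as the paper's own argument: the triangle inequality (equivalently the TV--dual pairing) to drop the signs, followed by Jensen's inequality for the convex map $t\mapsto t^b$ to pass from $|r|^p$ to $|r|^{pb}$; the equality analysis (constant sign per $w$-slice, then $|r|^p$ constant $\mu$-a.e., which forces the marginal $\eta=\abs{\nu}/\norm{\nu}_{TV}$) is also the same. The only difference is cosmetic: you route the argument through a disintegration $\mu(\d r,\d w)=\mu(\d r\,|\,w)\,\eta(\d w)$ and apply Jensen twice (once in $r$, once in $w$ -- your ``Hölder with exponents $(b,b/(b-1))$'' step is just Lyapunov/Jensen for $t\mapsto t^{1/b}$), whereas the paper applies Jensen once to the joint integral over $\Omega$, so the two are interchangeable.
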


\begin{proof}
    For any $\mu \in \PPP(\Omega)$ such that $\bh^p = \nu$,
    \begin{align}
        \norm{\bh^p \mu}_{TV}
        &= \max_{\phi: \WWW \to [-1,1]} \int_\Omega \sign(r) \abs{r}^p \phi(w) \d\mu(r, w)
        \leq \int_\Omega \abs{r}^p \d\mu(r, w) \\
        \text{so}~~
        \norm{\nu}_{TV}^2
        = \norm{\bh^p \mu}_{TV}^2
        &\leq \left( \left( \int_\Omega \abs{r}^p \d\mu(r, w) \right)^b \right)^{2/b}
        \leq \left( \int_\Omega \abs{r}^{pb} \d\mu(r, w) \right)^{2/b}
        = \Psi_{b,p}(\mu),
    \end{align}
    where the first inequality follows from the triangle inequality,
    and the second inequality follows from Jensen's inequality since $t \mapsto t^b$ is convex on $\RR_+$.
    Note that the first inequality above holds with equality if and only if there exists $\phi: \WWW \to [-1,1]$ such that $\sign(r) \phi(w) \geq 0$ for all $(r, w) \in \support(\mu)$,
    i.e., if the conditional distribution $\mu(\d r|w)$ is either supported on $\RR_+$ or supported on $\RR_-$ for each $w$.
    Conversely, the value $\norm{\nu}_{TV}^2$ is attained by letting $\mu(\d r, \d w) = \delta_{f(w)}(\d r) \frac{\abs{\nu}(\d w)}{\norm{\nu}_{TV}}$
    where $f(w) = \norm{\nu}_{TV}^{1/p} \frac{\d\nu}{\d\abs{\nu}}(w)$.
    This proves that
    $
        \min_{\mu: \bh^p \mu = \nu} \Psi_{b,p}(\mu) = \norm{\nu}_{TV}^2
    $.

    For $b=1$, $t \mapsto t^b=t$ is linear, so equality always holds in Jensen's inequality.
    So the set of minimizers is all of 
    $\left\{
        \mu \in \PPP(\WWW);~~
        \bh^p \mu = \nu 
        ~\text{and}~
        \forall w,
        \support(\mu(\cdot|w)) \subset \RR_+
        ~\text{or}~
        \support(\mu(\cdot|w)) \subset \RR_-
    \right\}$.
    
    For $b>1$, $t \mapsto t^b$ is strictly convex, the second inequality above holds with equality if and only if there exists a constant $c$ such that $\abs{r}^p = c$ for all $(r, w) \in \support(\mu)$.
    So for $\mu$ to be a minimizer, the conditional distribution $\mu(\d r | w)$ must be concentrated on $\{c^{1/p}, -c^{1/p}\}$ for each $w$.
    Moreover, for the first inequality above to hold, the conditional distribution at each $w$ must be either supported on $\RR_+$ or suported on $\RR_-$, 
    so there exists a function $f: \WWW \to \{c^{1/p}, -c^{1/p}\}$ such that $\mu(\d r, \d w) = \delta_{f(w)}(\d r) \mu^w(\d w)$ where $\mu^w \in \PPP(\WWW)$ denotes the marginal distribution.
    Since $\bh^p \mu = \nu$, then for all fixed $w$,
    $\int_{\RR_+} \sign(r) \abs{r}^p \mu(\d r, \d w) = \sign(f(w)) c \mu^w(\d w) = \nu(\d w)$.
    So 
    $\sign(f(w)) = \sign(\frac{\d\nu}{\d\mu^w}(w)) = \frac{\d\nu}{\d\abs{\nu}}(w)$
    and
    $\mu^w(\d w) = \frac{1}{c} \abs{\nu}(\d w)$ since $\mu^w$ is a probability measure so non-negative,
    and integrating on both sides over $\Omega$ shows that $c = \norm{\nu}_{TV}$.
    Hence the only minimizer is $\mu(\d r, \d w) = \delta_{f(w)}(\d r) \frac{\abs{\nu}(\d w)}{\norm{\nu}_{TV}}$
    where $f(w) = c^{1/p} \frac{\d\nu}{\d\abs{\nu}}(w)$.
\end{proof}

\autoref{prop:reduc:lifting:valid} follows directly as a special case of the following proposition with $p=1$.

\begin{proposition} \label{prop:apx_reduc_lifting:valid_anyp}
    Let any $p>0$ and $b \in [1,2]$ and let $\Psi_{b,p}: \PPP(\Omega) \to \RR \cup \{+\infty\}$ as in the lemma above.
    Consider the optimization problem over probability measures,
    with $\lambda>0$,
    \begin{equation} \label{eq:apx_reduc_lifting:F_anyp}
        \min_{\mu \in \PPP(\Omega)} F_{\lambda,b,p}(\mu)
        ~~~~\text{where}~~~~
        F_{\lambda,b,p}(\mu) = G(\bh^p \mu) + \frac{\lambda}{2} \Psi_{b,p}(\mu).
    \end{equation}
    Then $\min_{\PPP(\Omega)} F_{\lambda,b,p} = \min_{\MMM(\WWW)} G_\lambda$.
    
    Moreover, if $b>1$ then
    $\argmin F = \left\{ \delta_{\norm{\nu}_{TV}^{1/p} \frac{\d\nu}{\d\abs{\nu}}(w)}(\d r) \frac{\nu(\d w)}{\norm{\nu}_{TV}} ; \nu \in \argmin G \right\}$,
    and
    otherwise $\argmin F = \left\{ \mu ;~ \bh^p \mu \in \argmin G ~\text{and}~ \forall w, \support(\mu) \subset \RR_+ ~\text{or}~ \support(\mu) \subset \RR_+ \right\}$.
    Furthermore, $F$ is convex.
\end{proposition}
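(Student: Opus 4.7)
The plan is to reduce the result to Lemma~\ref{lm:apx_reduc_lifting:Psi_bp} via a nested minimization. Since $F_{\lambda,b,p}(\mu)$ depends on $\mu$ only through $\bh^p \mu$ and $\Psi_{b,p}(\mu)$, I would split the minimization over $\PPP(\Omega)$ by first fixing $\nu = \bh^p \mu \in \MMM(\WWW)$, writing
\begin{equation*}
    \inf_{\mu \in \PPP(\Omega)} F_{\lambda,b,p}(\mu)
    = \inf_{\nu \in \MMM(\WWW)} \left( G(\nu) + \frac{\lambda}{2} \inf_{\mu :\, \bh^p\mu = \nu} \Psi_{b,p}(\mu) \right).
\end{equation*}
The lemma replaces the inner infimum by $\norm{\nu}_{TV}^2$, so the right-hand side collapses to $\inf_{\nu} G_\lambda(\nu)$, establishing the first claim. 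Non-emptiness of each fibre $\{\mu : \bh^p \mu = \nu\}$ for $\nu \in \MMM(\WWW)$ (witnessed by the explicit minimizer given in the lemma, which lies in $\PPP_{pb}(\Omega)$ since $\norm{\nu}_{TV}<\infty$) ensures no degeneration to $+\infty$.

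For the description of $\argmin F_{\lambda,b,p}$, I would note that $\mu^*$ is optimal if and only if $\nu^* := \bh^p \mu^* \in \argmin G_\lambda$ \emph{and} $\mu^*$ attains the minimum $\norm{\nu^*}_{TV}^2$ of $\Psi_{b,p}$ within its fibre. The second condition is exactly what the second half of Lemma~\ref{lm:apx_reduc_lifting:Psi_bp} characterizes: when $b > 1$, this pins down $\mu^*$ to the unique product form given in the lemma, matching the formula in the proposition (after absorbing the sign of $\d\nu^*/\d\abs{\nu^*}$ appropriately between the weight variable and the base measure); when $b = 1$, it only requires $\bh^p\mu^* = \nu^*$ together with each conditional $\mu^*(\cdot\mid w)$ being supported on $\RR_+$ or on $\RR_-$.

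Convexity of $F_{\lambda,b,p}$ is immediate: $\mu \mapsto G(\bh^p \mu)$ is convex as the composition of the linear operator $\bh^p$ with the convex functional $G$, and $\Psi_{b,p}(\mu) = \bigl( \int \abs{r}^{pb} \d\mu \bigr)^{2/b}$ is the composition of the affine functional $\mu \mapsto \int \abs{r}^{pb} \d\mu$ with $t \mapsto t^{2/b}$, which is convex on $\RR_+$ since $2/b \geq 1$ (as $b \leq 2$). The proof has no substantive obstacle because Lemma~\ref{lm:apx_reduc_lifting:Psi_bp} already packages all the analytical content; the only care required is in tracking the equality cases of both the inner and outer minimization to recover the stated argmin formulas, and in the $b>1$ case, a short bookkeeping step to match the lemma's expression with the proposition's.
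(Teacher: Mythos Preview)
Your proposal is correct and follows essentially the same approach as the paper's proof: the paper also decomposes the minimization over $\PPP(\Omega)$ into a nested minimization (first over $\nu \in \MMM(\WWW)$, then over the fibre $\{\mu : \bh^p\mu=\nu\}$), applies Lemma~\ref{lm:apx_reduc_lifting:Psi_bp} to evaluate the inner infimum as $\norm{\nu}_{TV}^2$, and reads off the argmin description from the lemma's equality-case characterization. Your justification of the convexity of $\Psi_{b,p}$ is slightly more explicit than the paper's (which simply asserts it), but otherwise the arguments coincide.
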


\begin{proof}
    The fact that $\min_{\PPP(\Omega)} F_{\lambda,b,p} = \min_{\MMM(\WWW)} G_\lambda$ can be seen directly as follows:
    \begin{align}
        \min_{\mu \in \PPP(\Omega)} F(\mu)
        &= \min_{\mu \in \PPP(\Omega)} G(\bh^p \mu) + \frac{\lambda}{2} \Psi_{b,p}(\mu) \\
        &= \min_{\nu \in \MMM(\Omega)} \left[ \min_{\mu \in \PPP(\Omega): \bh^p=\nu} G(\bh^p \mu) + \frac{\lambda}{2} \Psi_{b,p}(\mu) \right] \\
        &= \min_{\nu \in \MMM(\Omega)} G(\nu) + \frac{\lambda}{2} \left[ \min_{\mu \PPP(\Omega): \bh^p=\nu} \Psi_{b,p}(\mu) \right] \\
        &= \min_{\nu \in \MMM(\Omega)} G(\nu) + \frac{\lambda}{2} \norm{\nu}_{TV}^2 
        ~= \min_{\nu \in \MMM(\Omega)} G_\lambda(\nu)
    \end{align}
    where we used the lemma above at the fourth equality.
    The characterization of $\argmin F$ in terms of $\argmin G$ follows from the characterization of the minimizers of the inner minimization $\left[ \min_{\mu \in \PPP(\Omega): \bh^p=\nu} \Psi_b(\mu) \right]$ in the third line, which is given by the lemma above.
    
    Furthermore, $F_{\lambda,b,p}$ is convex since $G$ and $\Psi_{b,p}$ are.
\end{proof}

\subsection{Equivalence of using \texorpdfstring{$(cp, cq_r, cq_w, \Gamma/c^2)$ for any $c>0$}{(cp,cqr,cqw) for any c>0} by reparametrizing} \label{subsec:apx_reduc_lifting:equiv}

\paragraph{Equivalence of Riemannian structures on $\Omega^*$ for $(cq_r, cq_w, \Gamma/c^2)$ for $c>0$.}

Recall that we consider equipping $\Omega^* = \RR^* \times \WWW$ with a Riemannian metric of the form~
\eqref{eq:reduc:lifting:metric_Omega*},
reproduced here for ease of reference:
\begin{equation}
    \innerprod{\begin{pmatrix}
        \delta r_1 \\ \delta w_1
    \end{pmatrix}}{\begin{pmatrix}
        \delta r_2 \\ \delta w_2
    \end{pmatrix}}_{(r, w)}
    = \Gamma^{-1} \abs{r}^{q_r} \frac{\delta r_1 \delta r_2}{r^2} + \abs{r}^{q_w} \innerprod{\delta w_1}{\delta w_2}_w,
    ~~\text{i.e.},~~
    g_{(r,w)} = \begin{bmatrix}
        \Gamma^{-1} \abs{r}^{q_r-2} & 0 \\
        0 & \abs{r}^{q_w} g_w
    \end{bmatrix}.
\end{equation}

The following proposition shows that, in fact, different choices of $q_r, q_w$ and $\Gamma$ lead to the same geometry, up to a reparametrization of the form $(a, w) = (r^\alpha, w)$ (for $r>0$).
Namely it is equivalent to use the metric with exponents $(q_r, q_w)$ or with $\left( \frac{q_r}{\alpha}, \frac{q_w}{\alpha} \right)$,
up to adjusting $\Gamma$.

\begin{proposition} \label{prop:apx_reduc_lifting:equiv_metrics}
    For any $q_r, q_w$, denote by
    $g_{[q_r,q_w,\Gamma]}$ the metric 
    $g_{(r,w)} = \begin{bmatrix}
        \Gamma^{-1} \abs{r}^{q_r-2} & 0 \\
        0 & \abs{r}^{q_w} g_w
    \end{bmatrix}$
    on $\Omega^* = \RR^* \times \WWW$.
    Then for any $q_r, q_w \in \RR$ and $\Gamma, \alpha>0$, the map 
    $T_\alpha: \left( \Omega^*, g_{[q_r,q_w, \Gamma]} \right) \to 
    \left(
        \Omega^*,
        g_{\left[
            \frac{q_r}{\alpha}, \frac{q_w}{\alpha}, \alpha^2 \Gamma 
        \right]} 
    \right)$
    defined by $T_\alpha(r, w)=(\sign(r) \abs{r}^{\alpha}, w)$ is an isometry.
\end{proposition}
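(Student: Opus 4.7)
The plan is to verify directly that $T_\alpha$ is a smooth diffeomorphism and that its differential pulls back the target metric to the source metric. Smoothness and invertibility on $\Omega^*$ are routine: on each connected component $\RR^*_\pm \times \WWW$, the map $r \mapsto \sign(r)|r|^\alpha$ is a smooth bijection from $\RR^*_\pm$ to itself for any $\alpha>0$, with smooth inverse $a \mapsto \sign(a)|a|^{1/\alpha}$, and the action on $w$ is the identity. So the only substantive point is the computation of the pullback metric.

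First I would compute the differential. At a point $(r,w) \in \Omega^*$, the tangent map $dT_\alpha|_{(r,w)}: \RR \times T_w \WWW \to \RR \times T_w \WWW$ sends $(\delta r, \delta w)$ to $(\alpha |r|^{\alpha-1} \delta r, \delta w)$. Let $(a,w) = T_\alpha(r,w)$, so that $|a| = |r|^\alpha$. Writing the target metric at $(a,w)$ in the coordinates of the proposition, namely
\begin{equation*}
    g^{\mathrm{tgt}}_{(a,w)} = \begin{bmatrix} (\alpha^2\Gamma)^{-1} |a|^{q_r/\alpha-2} & 0 \\ 0 & |a|^{q_w/\alpha} g_w \end{bmatrix},
\end{equation*}
and substituting $|a|^{q_r/\alpha - 2} = |r|^{q_r - 2\alpha}$ and $|a|^{q_w/\alpha} = |r|^{q_w}$, the pullback $(T_\alpha^* g^{\mathrm{tgt}})_{(r,w)}$ applied to $(\delta r_1, \delta w_1), (\delta r_2, \delta w_2)$ equals
\begin{equation*}
    (\alpha^2\Gamma)^{-1} |r|^{q_r - 2\alpha} \bigl(\alpha |r|^{\alpha-1}\bigr)^2 \delta r_1 \delta r_2 + |r|^{q_w} \innerprod{\delta w_1}{\delta w_2}_w = \Gamma^{-1} |r|^{q_r - 2} \delta r_1 \delta r_2 + |r|^{q_w} \innerprod{\delta w_1}{\delta w_2}_w.
\end{equation*}
The right-hand side is exactly $g_{[q_r,q_w,\Gamma]}$ at $(r,w)$, in view of the identity $|r|^{q_r-2} = |r|^{q_r}/r^2$.

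There is no real obstacle here; the only thing to be mindful of is to keep track carefully of the exponents and of the factor $\alpha^2$ appearing from $(\alpha |r|^{\alpha-1})^2$, which is precisely what is absorbed by rescaling $\Gamma \mapsto \alpha^2 \Gamma$. The computation also makes transparent why the proposition is stated only on $\Omega^* = \RR^* \times \WWW$ rather than on all of $\Omega$: the map $r \mapsto \sign(r)|r|^\alpha$ is only a diffeomorphism away from $r=0$ (for $\alpha \neq 1$), consistent with the earlier discussion of the disconnectedness of $\Omega^*$.
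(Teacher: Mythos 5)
Your proof is correct and takes essentially the same approach as the paper's: a direct computation of the pullback metric under $T_\alpha$, using $dT_\alpha = \mathrm{diag}(\alpha|r|^{\alpha-1},\, \mathrm{id})$ and the identities $|a|^{q_r/\alpha-2} = |r|^{q_r-2\alpha}$, $|a|^{q_w/\alpha} = |r|^{q_w}$. The paper organizes the computation as deriving the induced metric on the target from the change of variables $da/a = \alpha\, dr/r$ and restricts attention to $\RR_+^*\times\WWW$ before invoking symmetry, whereas you compute the pullback of the target metric uniformly in terms of $|r|$; these are the same algebra presented in opposite directions.
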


\begin{proof}
    Since $\Omega^*$ is a disjoint manifold: $\Omega^* = \RR_+^* \times \WWW ~\cup~ \RR_-^* \times \WWW$, 
    and since $T_\alpha(\RR_+^* \times \WWW) = \RR_+^* \times \WWW$,
    it suffices to check that the restricted map 
    $T^+_\alpha: \left( \RR_+^* \times \WWW, g_{[q_r,q_w, \Gamma]} \right) \to 
    \left(
        \RR_+^* \times \WWW,
        g_{\left[
            \frac{q_r}{\alpha}, \frac{q_w}{\alpha}, \alpha^2 \Gamma 
        \right]} 
    \right)$
    is an isometry
    (as well as the analogous statement for the restricted map
    $T^-_\alpha$,
    but it will follow analogously).
    
    Indeed, denote by $\tg$ the metric on $\RR_+^* \times \WWW$ induced by $T^+_\alpha$.
    It is given by, for
    $(a, w) = T^+_\alpha(r, w) = (r^\alpha, w)$, so $\frac{da}{a} = \alpha \frac{dr}{r}$,
    \begin{align}
        & \begin{pmatrix}
            \delta r_1 \\ \delta w_1
        \end{pmatrix}
        \cdot g_{(r,w)}
        \begin{pmatrix}
            \delta r_2 \\ \delta w_2
        \end{pmatrix}
        = 
        \begin{pmatrix}
            \delta a_1 \\ \delta w_1
        \end{pmatrix}
        \cdot \tg_{(a,w)}
        \begin{pmatrix}
            \delta a_2 \\ \delta w_2
        \end{pmatrix}
        = 
        \begin{pmatrix}
            \alpha a \frac{1}{r} \delta r_1 \\ \delta w_1
        \end{pmatrix}
        \cdot \tg_{(a,w)}
        \begin{pmatrix}
            \alpha a \frac{1}{r} \delta r_2 \\ \delta w_2
        \end{pmatrix} \\
        & \text{so}~~
        \tg_{(a,w)}
        = 
        \begin{bmatrix}
            \frac{r}{\alpha a} & 0 \\
            0 & 1
        \end{bmatrix}
        g_{(r,w)}
        \begin{bmatrix}
            \frac{r}{\alpha a} & 0 \\
            0 & 1
        \end{bmatrix} \\
        &\qquad\qquad =
        \begin{bmatrix}
            \frac{r^2}{\alpha^2 a^2} \Gamma^{-1} r^{q_r-2} & 0 \\
            0 & r^{q_w} g_w
        \end{bmatrix}
        = \begin{bmatrix}
            \Gamma^{-1} \alpha^{-2} a^{q_r/\alpha-2} & 0 \\
            0 & a^{q_w/\alpha} g_w
        \end{bmatrix}.
    \end{align}
    So $\tg$ is precisely $g_{\left[
        \frac{q_r}{\alpha}, \frac{q_w}{\alpha}, \alpha^2 \Gamma 
    \right]}$ on $\RR_+^* \times \WWW$,
    which proves the claim.
\end{proof}

\paragraph{Equivalence of the Wasserstein gradient flow of $F_{\lambda,b,p}$ for $(cp, cq_r, cq_w, \Gamma/c^2)$ for any $c>0$.}

\begin{proposition} \label{prop:apx_reduc_bilevel:equiv_WGF}
    Let $T: (\Omega_1, g_{[1]}) \to (\Omega_2, g_{[2]})$ an isometry between Riemannian manifolds.
    Let $F: \PPP(\Omega_1) \to \RR$ (sufficiently regular) and $(\mu_t)_t$ a Wasserstein gradient flow for $F$, i.e.,
    $\partial_t \mu_t = -\div(\mu_t \nabla F'[\mu_t])$ (where $\nabla$ denotes Riemannian gradient in $(\Omega_1, g_{[1]})$).
    Then, $(\tmu)_t \coloneqq (T_\sharp \mu_t)_t$ is a Wasserstein gradient flow for $\tF: \PPP(\Omega_2) \to \RR$ defined by $\tF(\tmu) = F(T^{-1}_\sharp \tmu)$.
\end{proposition}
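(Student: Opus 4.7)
The plan is to show, by direct computation and using the naturality of Riemannian gradients under isometries, that the pushforward of the vector field driving the continuity equation for $(\mu_t)$ is precisely the vector field driving the corresponding flow for $(\tilde\mu_t)$, and that this latter field coincides with $-\nabla \tilde F'[\tilde\mu_t]$.

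First I would compute the first variation of $\tilde F$. From $\tilde F(\tilde\mu) = F(T^{-1}_\sharp \tilde\mu)$ one gets, for any perturbation $\tilde\mu' - \tilde\mu$,
\begin{equation*}
    \int_{\Omega_2} \tilde F'[\tilde\mu]\, d(\tilde\mu' - \tilde\mu)
    = \int_{\Omega_1} F'[\mu]\, d\bigl( T^{-1}_\sharp(\tilde\mu' - \tilde\mu) \bigr)
    = \int_{\Omega_2} \bigl( F'[\mu] \circ T^{-1} \bigr)\, d(\tilde\mu' - \tilde\mu),
\end{equation*}
where $\mu = T^{-1}_\sharp \tilde\mu$. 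Hence $\tilde F'[\tilde\mu] = F'[\mu] \circ T^{-1}$, up to an additive constant that plays no role.

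Second, I would use the fact that under an isometry, Riemannian gradients push forward via the differential: for any smooth $f: \Omega_1 \to \RR$ and $w \in \Omega_1$,
\begin{equation*}
    \nabla_{T(w)}\bigl( f \circ T^{-1} \bigr) = dT(w)\bigl[ \nabla_w f \bigr].
\end{equation*}
This is immediate from the definition of the gradient via $g$, since $T^*g_{[2]} = g_{[1]}$. Applying this with $f = F'[\mu_t]$ yields $\nabla \tilde F'[\tilde\mu_t] \circ T = dT \bigl[ \nabla F'[\mu_t] \bigr]$.

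Third, I would push forward the continuity equation. The standard change-of-variables formula for the continuity equation on manifolds (or, equivalently, an integration against a test function $\varphi \in C^\infty_c(\Omega_2)$, using $\int \varphi\, d\tilde\mu_t = \int \varphi \circ T\, d\mu_t$ and differentiating in $t$) gives that if $\partial_t \mu_t + \div(\mu_t v_t) = 0$ with $v_t = -\nabla F'[\mu_t]$, then
\begin{equation*}
    \partial_t \tilde\mu_t + \div\bigl( \tilde\mu_t\, \tilde v_t \bigr) = 0,
    \qquad \tilde v_t \circ T = dT[v_t].
\end{equation*}
Combining with the previous step, $\tilde v_t \circ T = -dT[\nabla F'[\mu_t]] = -\nabla \tilde F'[\tilde\mu_t] \circ T$, so $\tilde v_t = -\nabla \tilde F'[\tilde\mu_t]$, which is the Wasserstein gradient flow equation for $\tilde F$ on $(\Omega_2, g_{[2]})$.

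The only delicate point is the pushforward of the continuity equation on a Riemannian manifold under a diffeomorphism which is also an isometry; but this is standard and follows from the test-function formulation since $T$ (being a diffeomorphism) preserves smoothness of test functions and the chain rule gives $\nabla_w(\varphi \circ T) = dT(w)^*[\nabla_{T(w)} \varphi]$, which pairs correctly with the vector field $v_t$ in the weak formulation.
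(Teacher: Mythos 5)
Your proof is correct and takes essentially the same approach as the paper's: compute $\tilde F'[\tilde\mu] = F'[\mu]\circ T^{-1}$ via the chain rule, use the isometry to transport gradients between the two manifolds, and push forward the continuity equation via integration against test functions. The only cosmetic difference is that the paper works in explicit coordinates with the metric matrices $g^{-1}$ and differentials $D$, whereas you phrase the argument intrinsically through $dT$ and $\nabla$.
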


\begin{proof}
    First note that $g_{[2]}$ is given by, for all $y = T(x) \in \Omega_2$, so $dy = DT(x) dx$ where $D$ denotes the differential,
    \begin{gather}
        \delta y^\top ~g_{[2] y}~ \delta y'
        = \delta x^\top ~g_{[1] x}~ \delta x'
        = \delta y^\top ((DT(x))^{-1})^\top ~g_{[1] x}~ (DT(x))^{-1} \delta y' \\
        \text{so}~~~~
        g_{[1] x}^{-1} = (DT(x))^{-1}) ~g_{[2] T(x)}^{-1}~ ((DT(x))^{-1})^\top.
    \end{gather}
    Also note that $\tF'[\tmu](y) = F'[T^{-1}_\sharp \tmu](T^{-1}(y))$, as one can check directly by computing 
    $\lim_{\eps \to 0} \frac{1}{\eps} \left[ \tF(\tmu+\eps\tnu) - \tF(\tmu) \right]
    = \lim_{\eps \to 0} \frac{1}{\eps} \left[ F(T^{-1}_\sharp \tmu + \eps T^{-1}_\sharp \tnu) - F(T^{-1}_\sharp \tmu) \right]$.
    In particular $D \tF'[\tmu](y) = DF'[T^{-1}_\sharp \tmu](T^{-1}(y)) (DT(T^{-1}(y)))^{-1}$.
    Then for any $\varphi: \Omega_2 \to \RR$,
    \begin{align}
        \frac{d}{dt} \int_{\Omega_2} \varphi \d\tmu_t
        &= \frac{d}{dt} \int_{\Omega_1} \varphi(T(x)) \d\mu_t(x) \\
        &= \int_{\Omega_1} D\varphi(T(x)) DT(x) ~g_{[1]}^{-1}~ DF'[\mu_t](x) \d\mu_t(x) \\
        &= \int_{\Omega_1} D\varphi(y) ~g_{[2]}^{-1}~ D\tF'[\tmu_t](y) \d\tmu_t(y).
    \end{align}
    That is, $\partial_t \tmu_t = -\div(\tmu_t g_{[2]}^{-1} D\tF'[\tmu_t])$, i.e., $(\tmu_t)_t$ is a Wasserstein gradient flow for $\tF$.
\end{proof}

\begin{proposition} \label{prop:apx_reduc_bilevel:equiv_WGF_Flambdabp}
    Consider the functionals $F_{\lambda,b,p}$ over $\PPP(\Omega)$ from \autoref{prop:apx_reduc_lifting:valid_anyp}
    and the Riemannian metrics $g_{[q_r,q_w,\Gamma]}$ over $\Omega^*$ from \autoref{prop:apx_reduc_lifting:equiv_metrics},
    where $\Omega = \RR \times \WWW$ and $\Omega^* = \RR^* \times \WWW$.
    
    Fix $q_r, q_w \in \RR$, $\Gamma, p,\lambda>0$ and $b \in [1,2]$.
    Let $(\mu_t)_t$ the Wasserstein gradient flow for $F_{\lambda,b,p}$ over $(\Omega^*, g_{[q_r, q_w, \Gamma]})$, starting from some $\mu_0 \in \PPP(\Omega^*)$.
    
    Let $\alpha > 0$ and $T_\alpha: \Omega^* \to \Omega^*$ defined by $T_\alpha(r, w) = (\sign(r) \abs{r}^\alpha, w)$.
    Then $(\tmu_t)_t \coloneqq ((T_\alpha)_\sharp \mu_t)_t$
    coincides with
    the Wasserstein gradient flow for $F_{\tlambda,\tb,\tp}$ over $(\Omega^*, g_{[\tq_r, \tq_w, \tGamma]})$ starting from $\tmu_0 = (T_\alpha)_\sharp \mu_0$,
    where
    \begin{align}
        \tp &= \frac{p}{\alpha},
        &
        \tq_r &= \frac{q_r}{\alpha},
        &
        \tq_w &= \frac{q_w}{\alpha},
        &
        \tGamma &= \alpha^2 \Gamma,
        &
        \tlambda &= \lambda,
        &
        \tb &= b.
    \end{align}
\end{proposition}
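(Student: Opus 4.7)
The plan is to derive this proposition directly by combining the two preceding results: \autoref{prop:apx_reduc_lifting:equiv_metrics}, which asserts that $T_\alpha$ is an isometry between $(\Omega^*, g_{[q_r,q_w,\Gamma]})$ and $(\Omega^*, g_{[\tq_r,\tq_w,\tGamma]})$ with the parameters given above, and \autoref{prop:apx_reduc_bilevel:equiv_WGF}, which says that pushing a Wasserstein gradient flow forward by an isometry yields the Wasserstein gradient flow of the pushed-forward functional. Applying these two in sequence immediately gives that $(\tmu_t)_t = ((T_\alpha)_\sharp \mu_t)_t$ is the Wasserstein gradient flow on $(\Omega^*, g_{[\tq_r,\tq_w,\tGamma]})$ of the functional $\tF(\tmu) \coloneqq F_{\lambda,b,p}((T_\alpha^{-1})_\sharp \tmu)$. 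So the only content to verify is the identity $\tF = F_{\tlambda,\tb,\tp}$ with $\tlambda=\lambda$, $\tb=b$, $\tp=p/\alpha$.

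For this, I would decompose $F_{\lambda,b,p} = G \circ \bh^p + \frac{\lambda}{2} \Psi_{b,p}$ and analyze each piece under the change of variable $(r,w) = T_\alpha^{-1}(a,w) = (\sign(a)|a|^{1/\alpha}, w)$. For the first term, testing against an arbitrary $\varphi \in \CCC(\WWW,\RR)$,
\begin{equation*}
    \int_\WWW \varphi(w)\, \d[\bh^p ((T_\alpha^{-1})_\sharp \tmu)](w)
    = \int \sign(r)|r|^p \varphi(w)\, \d[(T_\alpha^{-1})_\sharp\tmu](r,w)
    = \int \sign(a)|a|^{p/\alpha}\varphi(w)\, \d\tmu(a,w),
\end{equation*}
which shows $\bh^p \circ (T_\alpha^{-1})_\sharp = \bh^{\tp}$, so $G(\bh^p (T_\alpha^{-1})_\sharp \tmu) = G(\bh^{\tp}\tmu)$. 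For the second term, the same change of variable gives $\int |r|^{pb}\d[(T_\alpha^{-1})_\sharp\tmu] = \int |a|^{pb/\alpha}\d\tmu = \int |a|^{\tp\tb}\d\tmu$, so $\Psi_{b,p}((T_\alpha^{-1})_\sharp \tmu) = \Psi_{\tb,\tp}(\tmu)$. Combining these yields $\tF(\tmu) = F_{\tlambda,\tb,\tp}(\tmu)$ with the stated parameter identifications.

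There isn't really a hard step here; the argument is a routine unwinding. The only care needed is in tracking the signs and absolute values through the change of variable in $\bh^p$ (since the exponent $p$ is applied to $|r|$ but the sign is preserved), and in checking that the ``$\alpha^2$'' factor between $\Gamma$ and $\tGamma$ indeed comes out of \autoref{prop:apx_reduc_lifting:equiv_metrics} rather than being introduced at the level of the functional — which it does, since $F_{\lambda,b,p}$ itself does not depend on the Riemannian metric. One minor subtlety is that \autoref{prop:apx_reduc_bilevel:equiv_WGF} is stated for $T$ between two distinct manifolds, whereas here we are using $T_\alpha$ on $\Omega^*$ equipped with two different metrics; but this is purely cosmetic and the same proof applies verbatim.
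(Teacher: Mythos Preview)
Your proposal is correct and follows essentially the same route as the paper's proof: invoke \autoref{prop:apx_reduc_lifting:equiv_metrics} to get that $T_\alpha$ is an isometry, apply \autoref{prop:apx_reduc_bilevel:equiv_WGF} to push the gradient flow forward, and then verify $F_{\lambda,b,p}\circ (T_\alpha^{-1})_\sharp = F_{\tlambda,\tb,\tp}$ by checking the identities $\bh^p\circ (T_\alpha^{-1})_\sharp = \bh^{p/\alpha}$ and $\Psi_{b,p}\circ (T_\alpha^{-1})_\sharp = \Psi_{b,p/\alpha}$ via the change of variables. The paper does exactly this, including the same decomposition and the same test-function computation for $\bh^p$.
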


\begin{proof}
    The proposition follows from an application of \autoref{prop:apx_reduc_bilevel:equiv_WGF} with $T = T_\alpha$,
    $\Omega_1 = (\Omega^*, g_{[q_r,q_w,\Gamma]})$,
    $\Omega_2 = (\Omega^*, g_{[q_r',q_w',\Gamma']})$
    and $F = F_{\lambda,b,p}$.
    Indeed the fact that $T_\alpha$ is an isometry from $\Omega_1$ to $\Omega_2$ was shown in \autoref{prop:apx_reduc_lifting:equiv_metrics}.
    It only remains to show that 
    $F \circ T^{-1}_\sharp = F_{\tlambda,\tb,\tp}$.
    And indeed for any $\tmu \in \PPP(\Omega^*)$,
    \begin{align}
        F_{\lambda,b,p}((T_\alpha)^{-1}_\sharp \tmu)
        = F_{\lambda,b,p}((T_{\alpha^{-1}})_\sharp \tmu)
        = G\left( \bh^p (T_{\alpha^{-1}})_\sharp \tmu \right)
        + \frac{\lambda}{2} \Psi_{b,p}\left( (T_{\alpha^{-1}})_\sharp \tmu\right),
    \end{align}
    and
    $\bh^p (T_{\alpha^{-1}})_\sharp \tmu
    = \bh^{p/\alpha} \tmu$,
    since for any $\varphi: \WWW \to \RR$,
    \begin{align}
        \int_\WWW \varphi \d\left[ \bh^p (T_{\alpha^{-1}})_\sharp \tmu \right]
        &= \int_\RR \int_\WWW \varphi(w) \sign(r) \abs{r}^p \left[ (T_{\alpha^{-1}})_\sharp \tmu \right](\d r, \d w) \\
        &= \int_\RR \int_\WWW \varphi(w) \sign(\tilde{r}) \abs{\tilde{r}}^{p/\alpha} \tmu(\d \tilde{r}, \d w)
        = \int_\WWW \varphi \d\left[ \bh^{p/\alpha} \tmu \right],
    \end{align}
    and 
    \begin{align}
        \Psi_{b,p}((T_{\alpha^{-1}})_\sharp \tmu)
        = \left( 
            \abs{r}^{pb}
            \d\left[ (T_{\alpha^{-1}})_\sharp \tmu \right]
        \right)^{2/b}
        = \left( 
            \abs{\tilde{r}}^{pb/\alpha}
            \d\tmu(r, w)
        \right)^{2/b}.
    \end{align}
    This confirms that $F \circ T^{-1}_\sharp = F_{\tlambda,\tb,\tp}$ and concludes the proof.
\end{proof}

Thus, it is equivalent to consider the lifting reduction with the hyperparameters $(p, q_r, q_w, \Gamma)$ or with $\left( c p, c q_r, c q_w, \Gamma/c^2 \right)$ for any $c>0$.

\begin{remark}
    The choice $p=q_r=q_w$ plays a special role, as Wasserstein gradient flows $(\mu_t)_t$ on $\PPP(\RR_+^* \times \WWW)$ for functionals of the form $\mu \mapsto G(\bh^p \mu)$ then correspond to gradient flows $(\nu_t)_t$ on $\MMM_+(\WWW)$ for $G$ in the Wasserstein-Fisher-Rao geometry~\cite[Prop.~2.1]{chizat2022sparse}, via $\nu_t = \bh^p \mu_t$.
    This correspondence is lost however for functionals of the form of $F_{\lambda,b,p}$ as in \autoref{prop:apx_reduc_lifting:valid_anyp} with $\lambda \neq 0$.
\end{remark}

\paragraph{Equivalence of MFLD of $F_{\lambda,b,p}$ for $(cp, cq_r, cq_w, \Gamma/c^2)$ for any $c>0$.}
Since MFLD for $F_{\lambda,b,p}$ is the Wasserstein gradient flow of $F_{\lambda,b,p} + \beta^{-1} H$,
then by \autoref{prop:apx_reduc_bilevel:equiv_WGF}, by proceeding similarly as in the proof of \autoref{prop:apx_reduc_bilevel:equiv_WGF_Flambdabp},
it suffices to check that $\tmu \mapsto H((T_{\alpha^{-1}})_\sharp \tmu)$ is equal to $H$ itself, up to an additive constant.
And indeed, since $T_{\alpha^{-1}}$ is invertible, by data processing inequality for differential entropy, we have $H((T_{\alpha^{-1}})_\sharp \tmu) = H(\tmu)$ for all $\tmu \in \PPP(\Omega^*)$.

\subsection{Proof of \autoref{prop:reduc:lifting:noP1P2}}

\begin{lemma}
    Let $F_{\lambda,b,p}$ defined in \eqref{eq:apx_reduc_lifting:F_anyp} and $\Omega = \WWW \times \RR$.
    For any $\mu \in \PPP(\Omega)$,
    \begin{equation} \label{eq:apx_reduc_lifting:F'}
        F_{\lambda,b,p}'[\mu](r, w)
        = \sign(r) \abs{r}^p G'[\bh^p \mu](w) + \lambda' \abs{r}^{pb}
    \end{equation}
    where
    $\lambda' = \lambda \frac{1}{b} \Psi_{b,p}(\mu)^{1-\frac{b}{2}}$.
\end{lemma}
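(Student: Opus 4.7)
The plan is to directly compute the first variation term by term, using linearity of $\bh^p$ for the $G$-part and the chain rule for the $\Psi_{b,p}$-part.

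First I would decompose $F_{\lambda,b,p}(\mu) = G(\bh^p \mu) + \frac{\lambda}{2} \Psi_{b,p}(\mu)$ and compute the first variation of each piece. For the first piece, I would exploit the linearity of the operator $\bh^p: \PPP(\Omega) \to \MMM(\WWW)$: for any signed perturbation $\xi$ (with zero total mass), $\bh^p(\mu + \epsilon \xi) = \bh^p \mu + \epsilon \bh^p \xi$, so by the definition of the first variation of $G$ and the defining identity of $\bh^p$,
\begin{equation*}
    \left. \frac{d}{d\epsilon} \right|_{\epsilon=0} G(\bh^p(\mu + \epsilon \xi))
    = \int_\WWW G'[\bh^p \mu](w) \, d(\bh^p \xi)(w)
    = \int_\Omega \sign(r) \abs{r}^p G'[\bh^p \mu](w) \, d\xi(r,w),
\end{equation*}
which identifies $(G \circ \bh^p)'[\mu](r,w) = \sign(r) \abs{r}^p G'[\bh^p \mu](w)$.

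Next I would handle $\Psi_{b,p}$ by writing it as a composition $\Psi_{b,p} = \phi^{2/b}$, where $\phi(\mu) \coloneqq \int_\Omega \abs{r}^{pb} d\mu$ is linear in $\mu$ and has first variation $\phi'[\mu](r,w) = \abs{r}^{pb}$. A one-line chain rule then gives $\Psi_{b,p}'[\mu](r,w) = \frac{2}{b} \phi(\mu)^{2/b - 1} \abs{r}^{pb}$, and since $\phi(\mu) = \Psi_{b,p}(\mu)^{b/2}$ we have $\phi(\mu)^{2/b - 1} = \Psi_{b,p}(\mu)^{1 - b/2}$, so
\begin{equation*}
    \Psi_{b,p}'[\mu](r,w) = \frac{2}{b} \Psi_{b,p}(\mu)^{1 - b/2} \abs{r}^{pb}.
\end{equation*}

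Combining the two contributions with the factor $\lambda/2$ immediately yields the claimed formula with $\lambda' = \frac{\lambda}{b} \Psi_{b,p}(\mu)^{1 - b/2}$. There is no real obstacle beyond bookkeeping; the only point worth checking is the exponent arithmetic $(b/2)(2/b - 1) = 1 - b/2$ when passing from $\phi$ back to $\Psi_{b,p}$, and the fact that first variations are defined up to an additive constant which is irrelevant to the stated identity.
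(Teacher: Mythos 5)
Your proof is correct and follows essentially the same route as the paper's: decompose $F_{\lambda,b,p}$, exploit linearity of $\bh^p$ together with the defining identity of $\bh^p$ for the $G$-part, and apply the chain rule to $\Psi_{b,p} = \phi^{2/b}$ for the regularizer. The only cosmetic difference is that you perturb along a zero-mass signed measure $\xi$ while the paper perturbs toward a probability measure $\mu'$; both yield the same first variation up to an irrelevant additive constant, as you note.
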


\begin{proof}
    For any $\mu' \in \PPP(\Omega)$,
    \begin{multline}
        \lim_{\eps \to 0} \frac{1}{\eps} \left[ (G \circ \bh^p)(\mu+\eps \mu') - (G \circ \bh^p)(\mu) \right]
        = \lim_{\eps \to 0} \frac{1}{\eps} \left[ G(\bh^p \mu + \eps \bh^p \mu') - G(\bh^p \mu) \right] \\
        = \int_\WWW G'[\bh^p \mu](w) \d\left[ \bh^p \mu' \right](w) 
        = \int_{\RR \times \WWW} \sign(r) \abs{r}^p G'[\bh^p \mu](w) \d\mu'(r, w)
    \end{multline}
    and so
    $\left( G \circ \bh^p \right)'[\mu](r, w) = \sign(r) \abs{r}^p G'[\bh^p \mu](w)$.
    Moreover
    \begin{align}
        \Psi_{b,p}(\mu)
        &= \left( \int_\Omega \abs{r}^{pb} \d\mu(r, w) \right)^{\frac{2}{b}} \\
        \Psi_{b,p}'[\mu](r, w)
        &= \frac{2}{b} \left( \int_\Omega \abs{r}^{\prime pb} \d\mu(r', w') \right)^{\frac{2}{b}-1} \abs{r}^{p b}
        = \frac{2}{b} \Psi_{b,p}(\mu)^{1-\frac{b}{2}} \abs{r}^{pb}.
    \end{align}
    Summing the results of these two calculations gives the first variation of $F_{\lambda,b,p} = G \circ \bh^p + \frac{\lambda}{2} \Psi_{b,p}$.
\end{proof}

\begin{lemma} \label{lm:apx_reduc_lifting:no_smoothness_Riemannian}
    Let $f: \RR_+^* \times \WWW \to \RR$ defined by
    $f(r, w) = r^p \tphi(w) + \lambda' r^{pb}$,
    for some $p,\lambda'>0$, $b \in [1,2]$, and $\tphi: \WWW \to \RR$.
    Assume that $\Hess \tphi$ is not constant equal to $0$.
    
    Consider $\RR_+^* \times \WWW$ equipped with the Riemannian metric~\eqref{eq:reduc:lifting:metric_Omega*}.
    If $f$ has Lipschitz-continuous Riemannian gradients, then necessarily $b=1$ and $p=q_r=q_w$, or $b=1$ and $p=q_r/2=q_w/2$ and $\nabla^2 \tphi(w) = \Gamma p^2 \left( \tphi(w) + \lambda' \right) g_w$ for all $w$.
\end{lemma}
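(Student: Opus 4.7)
My plan is to compute the Riemannian Hessian $\Hess f$ in an orthonormal frame adapted to the warped product metric~\eqref{eq:reduc:lifting:metric_Omega*}, track the powers of $r$ appearing in each matrix entry, and then exploit the fact that Lipschitz-continuity of the Riemannian gradient on this non-compact manifold is equivalent to a uniform bound on $\Hess f$. Since every entry is a sum of monomials $c_\alpha(w)\, r^\alpha$, boundedness on $\RR_+^*$ will force each exponent $\alpha$ whose coefficient does not vanish to equal $0$, or force several monomials to cancel exactly. The assumption $\Hess \tphi \not\equiv 0$ will be used to preclude trivial coefficient cancellations involving $\tphi$ or its Hessian.

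\textbf{Frame, Christoffel symbols, and Hessian entries.}
I would introduce the orthonormal frame $e_r = \Gamma^{1/2} r^{1-q_r/2} \partial_r$ and $e_w^i = r^{-q_w/2} E_w^i$, where $(E_w^i)_i$ is an orthonormal frame of $(\WWW, g_w)$. Applying the Koszul formula to the block-diagonal warped metric with $g_{rr} = \Gamma^{-1} r^{q_r-2}$ and $g_{ww} = r^{q_w} g_w$, I would verify that $\nabla^{LC}_{e_r} e_r = 0$ (direct check), $\nabla^{LC}_{e_r} e_w^i = 0$, and that $\nabla^{LC}_{E_w^i} E_w^j$ has a $\partial_r$-component proportional to $r^{q_w+1-q_r} \delta_{ij}$ coming from $\partial_r g_{ww}$, plus the intrinsic piece $\nabla^{g_w}_{E_w^i} E_w^j$. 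Plugging these into $\Hess f(X,Y) = X(Yf) - (\nabla^{LC}_X Y) f$ for $f(r,w) = r^p \tphi(w) + \lambda' r^{pb}$ then yields
\begin{align*}
    \Hess f(e_r, e_r) &= \Gamma\, p(p - q_r/2)\, r^{p-q_r} \tphi + \Gamma \lambda'\, pb(pb - q_r/2)\, r^{pb-q_r}, \\
    \Hess f(e_r, e_w^i) &= \Gamma^{1/2} (p - q_w/2)\, r^{p - (q_r+q_w)/2}\, E_w^i \tphi, \\
    \Hess f(e_w^i, e_w^j) &= r^{p-q_w} (\nabla^{g_w})^2 \tphi (E_w^i, E_w^j) + \tfrac{\Gamma q_w p}{2}\, \delta_{ij}\, r^{p-q_r} \tphi + \tfrac{\Gamma q_w \lambda' pb}{2}\, \delta_{ij}\, r^{pb-q_r}.
\end{align*}

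\textbf{Case analysis.}
From the first line, boundedness forces either $p = q_r$ (first term constant) or $p = q_r/2$ (first term identically zero). If $p = q_r$, then the coefficient $p^2 b(b - \tfrac12)$ of the second term is nonzero for $b \in [1,2]$, so I must have $pb = q_r$, i.e., $b = 1$; combining with the third line, whose leading term $r^{p-q_w}(\nabla^{g_w})^2 \tphi$ cannot be cancelled by the $\delta_{ij}$-only terms in general, this forces $p = q_w$, giving the first case $b=1,\,p=q_r=q_w$. If instead $p = q_r/2$, the second-term coefficient becomes $p^2 b(b-1)$, and matching exponents in the third line requires the three monomials there to share the same power of $r$, which forces $q_w = 2p = q_r$ (so $p = q_r/2 = q_w/2$) and $b = 1$; the common coefficient of $r^{-p}$ in the third line must then vanish, which yields exactly the equation $\nabla^2 \tphi = \Gamma p^2 (\tphi + \lambda') g_w$. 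In both resulting cases the mixed entry $\Hess f(e_r, e_w^i)$ is automatically bounded, since either its $r$-exponent or its coefficient $(p - q_w/2)$ vanishes.

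\textbf{Main obstacle.}
The main difficulty I anticipate is the bookkeeping in the case analysis: one has to verify rigorously that no combination of $(p, q_r, q_w, b)$ outside the listed two can make all three Hessian components simultaneously bounded, especially in subcases where $\tphi$ is diagonal (i.e., $\nabla^2 \tphi$ proportional to $g_w$), which can introduce spurious cancellations that must be ruled out using the full freedom of $w$. The other delicate point is getting the exponents in $\nabla^{LC}_{E_w^i} E_w^j$ exactly right: the $r^{q_w+1-q_r}$ power feeding into the $\tfrac{\Gamma q_w p}{2} r^{p-q_r} \tphi$ term is what makes the exponent-matching argument in the second case produce precisely the claimed identity $\nabla^2 \tphi = \Gamma p^2(\tphi + \lambda') g_w$.
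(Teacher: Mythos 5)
Your orthonormal-frame computation of the three Hessian blocks agrees exactly with the paper's matrix $\tH_{KL}=\sqrt{\olg}_{IK}\,\olHess f^{IJ}\,\sqrt{\olg}_{JL}$ (the paper keeps coordinate components and conjugates by $\sqrt{\olg}$ afterwards; the two packagings are equivalent), and the reduction of Lipschitz-continuity to exponent-matching is the same.

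However, the case analysis in your $p=q_r/2$ branch has a genuine gap — one that the paper's proof shares. You assert that boundedness of the $(e_w^i,e_w^j)$ block ``requires the three monomials there to share the same power of~$r$,'' but this misses the configuration in which the $\Hess\tphi$- and $\tphi$-monomials carry a common \emph{nonzero} exponent and cancel each other, while the $\lambda'$-monomial carries exponent~$0$. Concretely, take $b=2$, $q_r=q_w=2p$ and $\Hess\tphi=-\Gamma p^2\tphi\,g_w$: the $(e_w^i,e_w^j)$ block reduces to the constant $2\Gamma p^2\lambda'\,\delta_{ij}$; the $(e_r,e_r)$ block is bounded because its $\tphi$-coefficient $p(p-q_r/2)$ vanishes while its $\lambda'$-term has exponent $pb-q_r=0$; and the cross block vanishes because $p-q_w/2=0$. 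This configuration is realizable (e.g.\ $\WWW=\bbS^d$, $\tphi(w)=\innerprod{w}{v}$, which satisfies $\Hess\tphi=-\tphi\,g_w$ and is not identically flat), so it is a valid additional case with $b=2$ that escapes both your conclusion and the lemma's. To close the gap one needs either to add the third alternative to the conclusion, or to hypothesize that $\Hess\tphi$ is not everywhere proportional to $\tphi\,g_w$ (ruling out Obata-type cancellations). A smaller point, also inherited from the paper: tracing your own formula for $\Hess f(e_w^i,e_w^j)$ through the cancellation at the common exponent $-p$ gives $\Hess\tphi=-\Gamma p^2(\tphi+\lambda')g_w$, with a minus sign, so the $+$ in the second listed case is a sign slip.
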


The proof of~\autoref{lm:apx_reduc_lifting:no_smoothness_Riemannian} is technical, so it is deferred to the next section.

\begin{proof}[{Proof of \autoref{prop:reduc:lifting:noP1P2}}]
    Let us prove the first item in the proposition.
    Suppose by contraposition that $F_{\lambda,b}$ does satisfy \eqref{P1}.
    Let any $\nu \in \MMM(\WWW)$ such that $\nabla^2 G'[\nu]$ is not constant equal to $0$, 
    and consider some $\mu \in \PPP(\Omega)$ to be chosen such that $\bh \mu = \nu$.
    Then by the first condition of \eqref{P1},
    $f \coloneqq \restr{F_{\lambda,b}'[\mu]}{\RR_+^* \times \WWW}$ the restriction of $F_{\lambda,b}'[\mu]$ to $\RR_+^* \times \WWW$ must have Lipschitz-continuous Riemannian gradients.
    More explicitly, by \eqref{eq:apx_reduc_lifting:F'},
    $f(r, w) = r G'[\nu](w) + \lambda'_\mu r^b$ where $\lambda'_\mu = \frac{\lambda}{b} \Psi_b(\mu)^{1-\frac{b}{2}}$.
    So by \autoref{lm:apx_reduc_lifting:no_smoothness_Riemannian},
    necessarily $b=1$, and so $\lambda'_\mu = \lambda \Psi_1(\mu)^{1/2}$.
    If $\tphi \coloneqq G'[\nu]$ satisfies $\nabla^2 \tphi(w) = \Gamma p^2 \left( \tphi(w) + \lambda'_\mu \right) g_w$ for all $w$, pick any other $\mu'$ such that $\bh \mu' = \nu$ and $\Psi_1(\mu') \neq \Psi_1(\mu)$ -- the existence of such a $\mu'$ follows from the first step in the proof of \autoref{lm:apx_reduc_lifting:Psi_bp}.
    Then by applying the above reasoning to $\restr{F_{\lambda,b}'[\mu']}{\RR_+^* \times \WWW}$ instead of $f$, since $\lambda'_{\mu'} \neq \lambda'_\mu$, we also have by \autoref{lm:apx_reduc_lifting:no_smoothness_Riemannian} that $p=q_r=q_w$.
    This shows that if $F_{\lambda,b}$ satisfies \eqref{P1} then $(q_r, q_w, b)=(1,1,1)$, which was the announced necessary condition.

    We now turn to the second item of the proposition.
    Suppose that $q_r=q_w=b=1$.
    For any $\mu \in \PPP_1(\Omega)$, denote
    \begin{align}
        \lambda_{0\mu} = \sup_{w \in \WWW} \frac{\abs{G'[\bh \mu](w)}}{\Psi_1(\mu)^{1/2}}.
    \end{align}
    Let us show that if $\lambda < \lambda_{0\mu}$, then $F_{\lambda,1}$ does not satisfy local LSI at $\mu$.
    Suppose that $\lambda < \lambda_{0\mu}$, i.e., 
    there exists $w_0 \in \WWW$ such that
    \begin{equation}
        \Psi_1(\mu)^{1/2} \lambda < \abs{G'[\bh \mu](w_0)}.
    \end{equation}
    Let us distinguish cases between $G'[\bh \mu](w_0) \geq 0$ or $G'[\bh \mu](w_0)<0$.
    
    First suppose $G'[\bh \mu](w_0) \geq 0$, so that
    $\Psi_1(\mu)^{1/2} \lambda < G'[\bh \mu](w_0)$.
    By continuity of $G'[\bh\mu]$, let $N \subset \WWW$ an open neighborhood of $w_0$ such that 
    $\forall w \in N, \Psi_1(\mu)^{1/2} \lambda < G'[\bh \mu](w)$.
    Then, since 
    $F_{\lambda,1}'[\mu](r,w) = \abs{r} \left( 
        \sign(r)
        G'[\bh\mu](w)
        + \lambda \Psi_1(\mu)^{1/2}
    \right)$
    by \eqref{eq:apx_reduc_lifting:F'},
    \begin{align}
        \forall r \in \RR_-, \forall w \in N,~
        F_{\lambda,1}'[\mu](r,w) &= \abs{r} \left( 
            -G'[\bh\mu](w)
            + \lambda \Psi_1(\mu)^{1/2}
        \right)
        \leq 0 \\
        \text{and so}~~~~
        \int_{\RR} \int_\WWW e^{-\beta F_{\lambda,1}'[\mu](r,w)} \d r \d w
        &\geq \int_{\RR_-} \int_N e^{-\beta F_{\lambda,1}'[\mu](r,w)} \d r \d w \\
        &\geq \int_{\RR_-} \int_N 1~ \d r \d w
        ~= +\infty.
    \end{align}
    This contradicts the exponential integrability condition in the definition of local LSI, and so $F_{\lambda,1}$ does not satisfy local LSI at $\mu$.

    Likewise, now suppose that $G'[\bh \mu](w_0) < 0$, so that
    $\Psi_1(\mu)^{1/2} \lambda < -G'[\bh \mu](w_0)$.
    By continuity of $G'[\bh\mu]$, let $N \subset \WWW$ an open neighborhood of $w_0$ such that 
    $\forall w \in N, \Psi_1(\mu)^{1/2} \lambda < -G'[\bh \mu](w)$.
    Then
    \begin{align}
        \forall r \in \RR_+, \forall w \in N,~
        F_{\lambda,1}'[\mu](r,w) &= \abs{r} \left( 
            G'[\bh\mu](w)
            + \lambda \Psi_1(\mu)^{1/2}
        \right)
        \leq 0 \\
        \text{and so}~~~~
        \int_{\RR} \int_\WWW e^{-\beta F_{\lambda,1}'[\mu](r,w)} \d r \d w
        &\geq \int_{\RR_+} \int_N e^{-\beta F_{\lambda,1}'[\mu](r,w)} \d r \d w \\
        &\geq \int_{\RR_+} \int_N 1~ \d r \d w
        ~= +\infty.
    \end{align}
    As in the previous case, we conclude that $F_{\lambda,1}$ does not satisfy local LSI at $\mu$.
\end{proof}

\subsection{Proof of~\autoref{lm:apx_reduc_lifting:no_smoothness_Riemannian} via computing the Hessians under the lifted Riemannian geometry} \label{subsec:apx_reduc_lifting:compute_hessians}

We start by a general lemma.
We use $D$ to denote differentials, and for a function $f: \RR_+^* \times \WWW \to \RR$, we will write $D_r f = \frac{\partial f(r, w)}{\partial r}$ and $D_w f = \frac{\partial f(r, w)}{\partial w}$.
\begin{lemma}
    Let $(\WWW, g)$ a Riemannian manifold.
    Let $\Omega_+^* = \RR_+^* \times \WWW$ and consider 
    \begin{equation}
        \olg_{(r,w)} = \begin{bmatrix}
            \alpha(r)^{-1} & 0 \\
            0 & \beta(r)^{-1} g_w
        \end{bmatrix}
    \end{equation}
    for smooth positive functions $\alpha, \beta: \RR_+^* \to \RR_+^*$.
    This defines a smooth Riemannian metric $\olg$ on $\Omega_+^*$.
    
    Denote by $\olg_{(r,w)}$, $\ol\nabla$, $\ol\Gamma$, $\olHess$ the Riemannian metric, gradient, Christoffel symbols, resp.\ Hessian on~$\Omega_+^*$,
    and by $g_w, \nabla, \Gamma, \Hess$ the corresponding objects on the original space $\WWW$.

    Let $f: \Omega^*_+ \to \RR$ a smooth scalar field.
    Write for convenience $f_r(w) = f(r, w)$, so that for example $\nabla f_r(w) = g_w^{-1} D_w f(r, w)$, and note that $D_r \nabla f_r(w) = \nabla D_r f_r(w)$.
    Fix a local coordinate chart on $\WWW$. This induces a local coordinate chart on $\Omega^*_+$ by adding the index~$0$ for the variable $r$.
    Then the Riemannian Hessian $f$ at $(r, w)$ is given in coordinates by
    \begin{align}
        \olHess f^{00} &= \alpha(r)^2 D_{rr}^2 f + \frac{1}{2} \alpha(r) \alpha'(r) D_r f \\
        \olHess f^{i0} = \olHess f^{0i} &= \alpha(r) \beta(r) \nabla D_r f_r(w)^i + \frac{1}{2} \alpha(r) \beta'(r) \nabla f_r(w)^i \\
        \olHess f^{ij} &= \beta(r)^2 \Hess f_r(w)^{ij} - \frac{1}{2} \alpha(r) \beta'(r) \cdot D_r f \cdot (g_w^{-1})^{ij}.
    \end{align}
\end{lemma}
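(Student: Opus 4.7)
The proof is a direct coordinate calculation, so the plan is essentially to just set up the right formulas carefully and grind. I would proceed as follows.

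First, I read off the inverse metric from the block-diagonal form: $\olg^{00} = \alpha(r)$, $\olg^{ij} = \beta(r)\, g_w^{ij}$, and $\olg^{0i}=\olg^{i0}=0$. Next, I compute the Christoffel symbols of $\olg$ via the standard formula
\[
    \ol\Gamma^k_{ij} = \tfrac{1}{2} \olg^{kl}\bigl(\partial_i \olg_{jl} + \partial_j \olg_{il} - \partial_l \olg_{ij}\bigr),
\]
splitting into six cases according to whether each of the three indices is the ``radial'' index $0$ or a ``horizontal'' index. Using that $\olg_{00}$ only depends on $r$, that $\olg_{ij} = \beta(r)^{-1} g_{ij}$, and that $\olg_{0i} \equiv 0$, the nonvanishing symbols are
\[
    \ol\Gamma^0_{00} = -\tfrac{\alpha'(r)}{2\alpha(r)},
    \quad
    \ol\Gamma^0_{ij} = \tfrac{\alpha(r)\beta'(r)}{2\beta(r)^2}\, g_{ij},
    \quad
    \ol\Gamma^k_{0j} = -\tfrac{\beta'(r)}{2\beta(r)}\, \delta^k_j,
    \quad
    \ol\Gamma^k_{ij} = \Gamma^k_{ij}.
\]
The last identity is the key simplification: the ``horizontal-horizontal-horizontal'' Christoffel symbols of $\olg$ agree with those of $g$ on $\WWW$ because the conformal factor $\beta(r)^{-1}$ is independent of $w$ and so its $w$-derivatives drop out.

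Then I plug these into the covariant Hessian formula $(\olHess f)_{ij} = \partial_i\partial_j f - \ol\Gamma^k_{ij}\,\partial_k f$, yielding
\[
    (\olHess f)_{00} = D_{rr}^2 f + \tfrac{\alpha'(r)}{2\alpha(r)} D_r f,
    \qquad
    (\olHess f)_{0j} = \partial_j D_r f + \tfrac{\beta'(r)}{2\beta(r)} \partial_j f,
\]
\[
    (\olHess f)_{ij} = (\Hess f_r)_{ij} - \tfrac{\alpha(r)\beta'(r)}{2\beta(r)^2}\, g_{ij}\, D_r f,
\]
where in the last line I used that $\partial_i\partial_j f - \Gamma^k_{ij}\partial_k f$ is precisely the Riemannian Hessian of the partial function $f_r = f(r,\cdot)$ on $(\WWW,g)$.

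Finally, I raise both indices using $\olg^{ij}$ as above. For the $00$ component I get a factor $\alpha(r)^2$; for the $0j$ component I get a factor $\alpha(r)\beta(r)$ and recognize $g_w^{jk}\partial_k D_r f = \nabla D_r f_r(w)^j$; for the $ij$ component I get a factor $\beta(r)^2$, turning $(\Hess f_r)_{kl}$ into $\Hess f_r(w)^{ij}$ and converting $g_{kl}$ into $(g_w^{-1})^{ij}$ with the overall sign and $\alpha\beta'$ prefactor as claimed. The stated three identities fall out immediately.

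There is no real obstacle; the only place to be careful is the bookkeeping of indices and signs in the Christoffel computation, especially to avoid confusing $\ol\Gamma^0_{ij}$ (which involves $\alpha\beta'/\beta^2$, originating from $-\partial_0 \olg_{ij}$) with $\ol\Gamma^k_{0j}$ (which involves $\beta'/\beta$, with a minus sign originating from $+\partial_0 \olg_{jl}$ and an inverse metric $\beta$ that partially cancels). Once those are right, raising indices is a one-line substitution.
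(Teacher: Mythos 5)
Your proposal is correct and follows essentially the same route as the paper's proof: compute the Christoffel symbols of the block-diagonal metric by cases, assemble the lower-index Hessian $\partial_I\partial_J f - \ol\Gamma^K_{IJ}\partial_K f$, and raise both indices with $\olg^{IJ}$. All intermediate quantities match the paper's, including the key observation that $\ol\Gamma^k_{ij}=\Gamma^k_{ij}$ because the $w$-block is a conformal rescaling of $g$ by an $r$-only factor.
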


\begin{proof}
    We will use uppercase letters for indes ranging over $[0, d]$ and lowercase for $[1, d]$, with the index $0$ corresponding to the variable $r$; for example $\ol\nabla f(r, w)^0 = \alpha(r) D_r f(r, w)$.
    We will use Einstein summation notation freely.
    With slight abuse of notation we denote $(g^{ij})_{ij} = g^{-1}$ for the inverse matrix of the metric $(g_{ij})_{ij} = g$, and likewise for $\olg^{IJ}, \olg_{IJ}$, so that for example $\olg^{00} = \alpha(r)$.
    
    We start by using that \cite[Example~4.22, Eq.~(5.10)]{lee2018introduction}
    \begin{align}
        \olHess f(r, w)^{IJ} &= \olg^{IK} \olg^{JL} \left[ \frac{\partial^2 f}{\partial \omega^K \partial \omega^L} - \ol\Gamma^M_{KL} \frac{\partial f}{\partial^M \omega} \right] \\
        \text{and}~~~~
        \ol\Gamma^M_{IJ} &= \frac{1}{2} \olg^{MK} \left[
            \frac{\partial \olg_{KI}}{\partial \omega^J} 
            + \frac{\partial \olg_{KJ}}{\partial \omega^I}
            - \frac{\partial \olg_{IJ}}{\partial \omega^K}
        \right]
    \end{align}
    where $\omega = (r, w)$,
    and that the analogous formulas hold for $f_r: \WWW \to \RR$ for all $r$ and for $\Gamma^m_{ij}$ the Christoffel symbols of $\WWW$.
    
    By direct computations, we find that for all $i, j, m \in [1, d]$,
    \begin{align}
        \ol\Gamma^0_{00} &= -\frac{1}{2} \frac{\alpha'(r)}{\alpha(r)} &
        \ol\Gamma^0_{i0} &= \ol\Gamma^0_{0i} = 0 &
        \ol\Gamma^0_{ij} &= \frac{1}{2} \alpha(r) \frac{\beta'(r)}{\beta(r)^2} g_{ij} \\
        \ol\Gamma^m_{00} &= 0 & 
        \ol\Gamma^m_{i0} &= \ol\Gamma^m_{0i} = -\frac{1}{2} \frac{\beta'(r)}{\beta(r)} \delta^m_i &
        \ol\Gamma^m_{ij} &= \Gamma^m_{ij}.
    \end{align}
    So by direct computations, we find that
    \begin{align}
        \olHess f^{00} &= \alpha(r)^2 D_{rr}^2 f + \frac{1}{2} \alpha(r) \alpha'(r) D_r f \\
        \olHess f^{i0} = \olHess f^{0i} &= \alpha(r) \beta(r) \nabla D_r f_r(w)^i + \frac{1}{2} \alpha(r) \beta'(r) \nabla f_r(w)^i \\
        \olHess f^{ij} &= \beta(r)^2 \Hess f_r(w)^{ij} - \frac{1}{2} \alpha(r) \beta'(r) \cdot D_r f \cdot g^{ij},
    \end{align}
    as announced.
\end{proof}

\begin{corollary}
    Let $f: \Omega_+^* = \RR_+^* \times \WWW \to \RR$ defined by
    $f(r, w) = r^p \tphi(w) + \lambda' r^{pb}$,
    for some $p>0$, $b \in [1,2]$, $\lambda' \geq 0$ and $\tphi: \WWW \to \RR$.
    
    Consider $\Omega_+^*$ equipped with the Riemannian metric~\eqref{eq:reduc:lifting:metric_Omega*}.
    Then the Riemannian Hessian of $f$ is given in coordinates by
    \begin{align}
        \olHess f^{00}
        &= \Gamma^2 p (p-q_r/2) r^{2 - 2 q_r + p} \tphi(w) 
        + \Gamma^2 pb \lambda' (pb - q_r/2) r^{2 - 2 q_r + pb} \\
        \olHess f^{i0} = \olHess f^{0i} 
        &= \Gamma (p - q_w/2) r^{1-q_r-q_w + p} \nabla \tphi(w)^i \\
        \olHess f^{ij} 
        &= r^{p-2q_w} \Hess \tphi(w)^{ij} + \frac{1}{2} \Gamma q_w r^{-q_r - q_w} \cdot \left( p r^p \tphi(w) + pb \lambda' r^{pb} \right) (g_w^{-1})^{ij}.
    \end{align}
\end{corollary}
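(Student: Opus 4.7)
The corollary is a direct specialization of the preceding lemma: it suffices to identify $\alpha$ and $\beta$ from the metric~\eqref{eq:reduc:lifting:metric_Omega*}, compute the relevant derivatives of the specific function $f(r,w) = r^p \tphi(w) + \lambda' r^{pb}$, and plug everything into the three coordinate formulas. So my plan is essentially a bookkeeping exercise with careful handling of the exponents of $r$.

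First, I would match the parametrization. Restricting the metric~\eqref{eq:reduc:lifting:metric_Omega*} to $\Omega^*_+ = \RR_+^* \times \WWW$ (where $|r| = r$), the $(r,r)$-block reads $\Gamma^{-1} r^{q_r-2}$ and the $\WWW$-block reads $r^{q_w} g_w$, so in the notation of the previous lemma we have $\alpha(r) = \Gamma r^{2-q_r}$ and $\beta(r) = r^{-q_w}$. Therefore $\alpha'(r) = \Gamma(2-q_r) r^{1-q_r}$ and $\beta'(r) = -q_w r^{-q_w-1}$.

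Next, I would compute the derivatives of $f$. Writing $f_r(w) = r^p \tphi(w) + \lambda' r^{pb}$, one obtains
\begin{align*}
    D_r f &= p r^{p-1} \tphi(w) + pb\lambda' r^{pb-1}, \\
    D_{rr}^2 f &= p(p-1) r^{p-2} \tphi(w) + pb(pb-1) \lambda' r^{pb-2}, \\
    \nabla f_r(w) &= r^p \nabla \tphi(w), \qquad
    \nabla D_r f_r(w) = p r^{p-1} \nabla \tphi(w), \\
    \Hess f_r(w) &= r^p \Hess \tphi(w),
\end{align*}
since the $\lambda' r^{pb}$ term is constant in $w$.

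Finally, I would substitute into the three formulas from the lemma. For $\olHess f^{00}$, collecting the $\tphi$ terms gives a coefficient $\Gamma^2 r^{2-2q_r+p} \cdot p\bigl[(p-1)+\tfrac{1}{2}(2-q_r)\bigr] = \Gamma^2 p(p-q_r/2)r^{2-2q_r+p}$, and the $\lambda'$ terms yield the analogous expression with $p$ replaced by $pb$. For the off-diagonal, the two contributions combine into $\Gamma[p - q_w/2]\, r^{1-q_r-q_w+p} \nabla \tphi(w)^i$ after factoring. For $\olHess f^{ij}$, the first term is $r^{p-2q_w} \Hess \tphi(w)^{ij}$, and the $\beta'$ term contributes $+\tfrac{1}{2}\Gamma q_w r^{-q_r-q_w}(pr^p \tphi(w) + pb\lambda' r^{pb})(g_w^{-1})^{ij}$ once one combines $\alpha(r) \beta'(r) = -\Gamma q_w r^{1-q_r-q_w-1}$ with $D_r f$.

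There is no conceptual obstacle: the only thing that can go wrong is an arithmetic slip in the exponents of $r$ or the algebraic collection of the coefficients involving $p, q_r, q_w$. I would double-check by verifying homogeneity in $r$ of each block (e.g., $\olHess f^{ij}$ should have degree $p - 2q_w$ on both summands, as $-q_r - q_w + p = p - 2q_w$ requires $q_r = q_w$ in general — so these are actually different degrees, consistent with the stated formula), and by the sanity check that for $q_r = q_w = 0$ and $\Gamma = 1$ the metric reduces to the product Euclidean one and the formulas simplify to the usual Hessian.
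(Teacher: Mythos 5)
Your proof is correct and follows exactly the same route as the paper's: identify $\alpha(r) = \Gamma r^{2-q_r}$ and $\beta(r) = r^{-q_w}$, compute the needed derivatives of $f$, and substitute into the three coordinate formulas of the preceding lemma. One minor slip in your proposed sanity check: the metric reduces to the product Euclidean metric for $q_r = 2$, $q_w = 0$, $\Gamma = 1$ (not $q_r = q_w = 0$), and indeed with those values the three formulas collapse to the ordinary $D^2_{rr}f$, $\nabla D_r f$, and $\Hess f_r$.
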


\begin{proof}
    Continuing with the same notations as in the proof of the lemma above, we have
    \begin{align*}
        D_r f &= p r^{p-1} \tphi(w) + pb \lambda' r^{pb-1} &
        D_{rr}^2 f &= p (p-1) r^{p-2} \tphi(w) + pb (pb-1) \lambda' r^{pb-2} \\
        \nabla f_r(w)^i &= r^p \nabla \tphi(w)^i &
        \Hess f_r(w)^{ij} &= r^p \Hess \tphi(w)^{ij} \\
        & & \nabla D_r f_r(w)^i &= p r^{p-1} \nabla \tphi(w)^i
    \end{align*}
    and so
    \begin{align*}
        \olHess f^{00} &= \alpha(r)^2 \left(
            p (p-1) r^{p-2} \tphi(w) 
            + pb (pb-1) \lambda' r^{pb-2}
        \right)
        + \frac{1}{2} \alpha(r) \alpha'(r) \left(
             p r^{p-1} \tphi(w) 
             + pb \lambda' r^{pb-1}
         \right) \\
         &= \alpha(r) p \left(
            \alpha(r) (p-1) 
            + \frac{1}{2} r \alpha'(r)
         \right) r^{p-2} \tphi(w)
         + \alpha(r) pb \lambda' \left(
            \alpha(r) (pb-1)
            + \frac{1}{2} r \alpha'(r)
        \right) r^{pb-2} \\
        \olHess f^{i0} &= \olHess f^{0i} 
        = \alpha(r) \beta(r) \cdot
            p r^{p-1} \nabla \tphi(w)^i
        + \frac{1}{2} \alpha(r) \beta'(r) \cdot
            r^p \nabla \tphi(w)^i \\
        &= \alpha(r) \left( \beta(r) p + \frac{1}{2} r \beta'(r) \right) r^{p-1} \nabla \tphi(w)^i \\
        \olHess f^{ij} &= \beta(r)^2 \cdot
            r^p \Hess \tphi(w)^{ij}
        - \frac{1}{2} \alpha(r) \beta'(r) \cdot \left(
            p r^{p-1} \tphi(w) + pb \lambda' r^{pb-1}
        \right) \cdot g^{ij}.
    \end{align*}
    By substituting $\alpha(r)^{-1} = \Gamma^{-1} r^{q_r-2}$ and $\beta(r)^{-1} = r^{q_w}$, 
    i.e.\ $\alpha(r) = \Gamma r^{2-q_r}$ and $\beta(r) = r^{-q_w}$,
    we obtain the announced formulas.
\end{proof}

\begin{proof}[Proof of~\autoref{lm:apx_reduc_lifting:no_smoothness_Riemannian}]
    Continuing with the same notations as in the proofs of the lemma and of the corollary above,
    note that
    $f: \Omega_+^* = \RR_+^* \times \WWW \to \RR$ having Lipschitz-continuous gradients in the Riemannian sense
    is equivalent to~\cite[Coroll.~10.47]{boumal2023introduction}
    \begin{equation}
        \sup_{\omega \in \Omega^*_+}~
        \sup_{\substack{s \in T_\omega \Omega^*_+ \\ \norm{s}_\omega = 1}}~
        \norm{\olHess f(\omega)^{IJ} \olg_{JK} s^K}_\omega < \infty.
    \end{equation}
    Rewriting everything in coordinates, this means that the matrix 
    $\tH(\omega) = \left( \sqrt{\olg}_{IK} ~ \olHess f(\omega)^{IJ} ~ \sqrt{\olg}_{JL} \right)_{KL} \in \RR^{(d+1) \times (d+1)}$ 
    must be bounded, uniformly in $\omega \in \Omega^*_+$,
    where $(\sqrt{\olg}_{IJ})_{IJ} = \sqrt{\olg}$ denotes the square root of the positive-definite matrix $\olg$ (pointwise for each~$\omega$).
    Concretely, for all $i, j \in [1,d]$,
    \begin{align}
        \sqrt{\olg}_{00} &= \alpha(r)^{-1/2} = \Gamma^{-1/2} r^{q_r/2-1}, 
        & \sqrt{\olg}_{i0} &= 0,
        & \sqrt{\olg}_{ij} &= \beta(r)^{-1/2} \sqrt{g}_{ij} = r^{q_w/2} \sqrt{g}_{ij}
    \end{align}
    and
    \begin{align}
        \tH(\omega)_{00} &= \olg_{00} \olHess f^{00} \\
        &= \Gamma p (p-q_r/2) r^{-q_r + p} \tphi(w) 
        + \Gamma pb \lambda' (pb - q_r/2) r^{-q_r + pb} \\
        \tH(\omega)_{j0} &= \sqrt{\olg}_{00} \sqrt{\olg}_{ji} \olHess f^{i0} \\
        &= \Gamma^{1/2} (p - q_w/2) r^{-q_r/2 - q_w/2 + p} \cdot \sqrt{g}_{ji} \nabla \tphi(w)^i \\
        \tH(\omega)_{kl} &= \sqrt{\olg}_{ki} \sqrt{\olg}_{lj} \olHess f^{ij} \\
        &= r^{p-q_w} \cdot \sqrt{g}_{ki} \sqrt{g}_{lj} \Hess \tphi(w)^{ij}
        + \Gamma \frac{1}{2} q_w r^{-q_r} \cdot \left( p r^p \tphi(w) + pb \lambda' r^{pb} \right) \delta_{kl}.
    \end{align}
    (Note that here the indes do not respect the covariant/contravariant convention, i.e.,  ``$\sqrt{\olg}_{IK}$'' and ``$\tH(\omega)_{KL}$'' do not stand for covariant tensors: we really manipulate everything in coordinates explicitly.)
    
    Now, note that the desired condition means that $\tH(\omega)_{KL}$ should remain bounded both for $r \to +\infty$ and $r \to 0$. 
    That is, the exponents of $r$ in the non-zero terms must all be $0$.
    Thus, since we assume that $\lambda' \neq 0$,
    and that $\nabla^2 \tphi$ is not constant equal to $0$
    and so in particular $\tphi$ and $\nabla \tphi$ are not constant,
    \begin{itemize}
        \item Uniform boundedness of the second term in $\tH(\omega)_{kl}$ implies that $b=1$.
        Indeed $\lambda' \neq 0$, and the first term (in $\nabla^2 \tphi$) cannot cancel out both the term in $\tphi(w) r^{p-q_r}$ and the term in $\lambda' r^{pb-q_r}$ if they scale differently with $r$.
        This also implies that either $p=q_w=q_r$ or that $q_w=q_r$ and
        $\nabla^2 \tphi(w)^{ij} = \frac{1}{2} \Gamma q_w p \left( \tphi(w) + \lambda' \right) g^{ij}$ for all $w$.
        \item Uniform boundedness of $\tH(\omega)_{00}$ implies that $p=q_r$ or $p=q_r/2$.
        \item Uniform boundedness of $\tH(\omega){j0}$ implies that $p = \frac{q_r+q_w}{2}$ or $p=q_w/2$.
        We saw in the first point that $q_r=q_w$, so equivalently $p=q_r=q_w$ or $p=q_r/2=q_w/2$.
    \end{itemize}
    Thus we get that $f$ can have Lipschitz-continuous Riemannian gradients only if $b=1$ and $p=q_r=q_w$, or if $b=1$ and $p=q_r/2=q_w/2$ and $\nabla^2 \tphi(w) = \Gamma p^2 \left( \tphi(w) + \lambda' \right) g_w$ for all $w$.
\end{proof}

\section{Details for \autoref{subsec:reduc:bilevel} (reduction by bilevel optimization)} \label{sec:apx_reduc_bilevel}

\subsection{Proof of \autoref{prop:reduc:bilevel:valid}}

In preparation for the proof of~\autoref{prop:reduc:bilevel:valid}, let us first provide a formal proof of the variational representation of the squared-TV norm mentioned at the beginning of~\autoref{subsec:reduc:bilevel},
with a characterization of the set of minimizers. See~\cite[App.~1]{chizat2017unbalanced} for the rigorous justification of these arguments in the more general context of minimization of convex and positively $1$-homogeneous integral functionals over the space of signed measures.

\begin{lemma}
[{``$\eta$-trick'' for the squared TV-norm}]
    We have
    \begin{align}
        \norm{\nu}_{TV}^2
        = \left( \int_\WWW \abs{\nu(dw)} \right)^2
        = \inf_{\eta \in \PPP(\WWW)} \int_\WWW \frac{\abs{\nu(dw)}^2}{\eta(dw)}
        = \inf_{\substack{\eta \in \PPP(\WWW),~ f: \WWW \to \RR \\ \text{s.t.}~ f\eta = \nu}} \int_\WWW \abs{f}^2 d\eta.
    \end{align}
    Moreover the infimum in the third expression is attained at (and only at)
    $\eta(dw) = \frac{\abs{\nu(dw)}}{\norm{\nu}_{TV}}$,
    and the infimum in the fourth expression is attained at (and only at) the same $\eta$ and
    $f = \frac{\nu(dw)}{\abs{\nu(dw)}} \norm{\nu}_{TV}$.
\end{lemma}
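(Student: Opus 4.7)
The plan is to prove the chain of equalities by (i) reducing the fourth infimum, over pairs $(\eta,f)$, to the third, over $\eta$ alone; (ii) establishing the second equality via Cauchy--Schwarz in $L^2(\eta)$; and (iii) extracting the characterization of minimizers from the Cauchy--Schwarz equality case.

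For step (i), any admissible pair $(\eta,f)$ with $f\eta = \nu$ forces $\nu \ll \eta$ and $f = \d\nu/\d\eta$ $\eta$-almost everywhere, so $\int \abs{f}^2 \dd\eta = \int (\d\nu/\d\eta)^2 \dd\eta$, which matches the shorthand $\int_\WWW \abs{\nu}^2/\eta$ used in the lemma. Conversely, when $\nu \ll \eta$ the choice $f = \d\nu/\d\eta$ is admissible and achieves the same value, while when $\nu \not\ll \eta$ no admissible $f$ exists and both expressions equal $+\infty$. Hence the two infima agree pointwise in $\eta$, reducing the fourth equality to the third.

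For step (ii), I restrict attention to $\eta$ with $\nu \ll \eta$ (otherwise the right-hand side is $+\infty$) and set $f = \d\nu/\d\eta$, so $\norm{\nu}_{TV} = \int_\WWW \abs{f} \dd\eta$. Applying Cauchy--Schwarz in $L^2(\eta)$ to the pair $(\abs{f},1)$ gives
\begin{equation*}
\norm{\nu}_{TV}^2 = \Big( \int_\WWW \abs{f} \dd\eta \Big)^2 \leq \int_\WWW f^2 \dd\eta \cdot \int_\WWW 1 \dd\eta = \int_\WWW \frac{\abs{\nu}^2}{\eta},
\end{equation*}
which is the claimed inequality direction.

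For step (iii), equality in Cauchy--Schwarz holds iff $\abs{f}$ is constant $\eta$-almost everywhere; integrating the identity $\abs{\nu} = \abs{f}\,\eta$ then pins this constant to $\norm{\nu}_{TV}$ (assuming $\nu \neq 0$, the degenerate case being trivial but without uniqueness). Matching total masses of $\abs{\nu}$ and $\norm{\nu}_{TV}\,\eta$ then forces $\eta = \abs{\nu}/\norm{\nu}_{TV}$ on all of $\WWW$, which rules out the strict containment $\mathrm{supp}(\eta) \supsetneq \mathrm{supp}(\abs{\nu})$, where $\abs{f}$ would vanish on a set of positive $\eta$-mass and break the constant-modulus condition. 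Substituting back recovers $f = \norm{\nu}_{TV}\cdot \d\nu/\d\abs{\nu}$ as the unique optimal density, matching the claimed minimizer. The main subtlety to watch out for is precisely this support-containment analysis; aside from it, the argument is a direct application of Cauchy--Schwarz.
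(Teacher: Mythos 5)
Your proof is correct, but it takes a genuinely different route from the paper. The paper derives the minimizer of $\inf_{\eta} \int |\nu|^2/\eta$ by writing the Lagrangian $\mathcal{L}(\eta;\lambda) = \int |\nu|^2/\eta + \lambda\left(\int \d\eta - 1\right)$ and reading off the KKT condition $\lambda = (\d\nu/\d\eta)^2$ on $\support(\eta)$; this is quick but implicitly invokes duality for a constrained variational problem over measures, which the authors justify by pointing to an external reference (\cite{chizat2017unbalanced}). You instead prove the lower bound directly via Cauchy--Schwarz applied to $(|f|,1)$ in $L^2(\eta)$, exploiting $\int \d\eta = 1$, and extract uniqueness from the equality case ($|f|$ constant $\eta$-a.e.). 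Your route is more elementary and self-contained: no strong duality needs to be established, the uniqueness of the minimizer is an immediate consequence of the Cauchy--Schwarz equality condition, and the extraneous-support issue (mass of $\eta$ outside $\support(|\nu|)$ forcing the constant to be zero) is handled cleanly rather than being swept into the dual optimality condition. It is also worth noting that your argument is parallel in spirit to the paper's own proof of the analogous result in the lifting setting (\autoref{lm:apx_reduc_lifting:Psi_bp}), which uses a Jensen/Cauchy--Schwarz style inequality with the same equality-case analysis; so your proof makes the two appendix proofs more uniform. One small stylistic remark: you should state explicitly that the candidate $\eta^* = |\nu|/\norm{\nu}_{TV}$ actually attains the value $\norm{\nu}_{TV}^2$ (since $|\d\nu/\d\eta^*| \equiv \norm{\nu}_{TV}$), so that the lower bound is tight; this is implicit in your ``substituting back'' but should be spelled out for a complete proof. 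The $\nu=0$ degenerate case you flag is indeed outside the scope of the uniqueness claim and the paper treats it implicitly the same way.
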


\begin{proof}
    The infimum in the third expression is the value of a convex constrained minimization problem, whose Lagrangian is
    $\LLL(\eta; \lambda) = \int \frac{\abs{\nu}^2}{\eta} + \lambda \left( \int \d\eta - 1 \right)$. 
    The dual optimality condition implies $\forall w \in \support(\eta), \lambda = \frac{\d\nu}{\d\eta}(w)^2$, 
    so the infinimum is attained at $\eta(\d w) = \frac{\abs{\nu(\d w)}}{\norm{\nu}_{TV}}$, with optimal value $\norm{\nu}_{TV}^2$.
    
    The optimality condition for the infimum in the fourth expression follows directly from the one for the third expression and from the constraint $f \eta = \nu$.
\end{proof}

\begin{proof}[{Proof of \autoref{prop:reduc:bilevel:valid}}]
    By the lemma above,
    \begin{align}
        \inf_{\eta \in \PPP(\WWW)} J_\lambda(\eta) 
        &= \inf_{\eta \in \PPP(\WWW), f: \WWW \to \RR}~ G(f\eta) + \frac{\lambda}{2} \int_\WWW \abs{f}^2 d\eta \\
        &= \inf_{\nu \in \MMM(\WWW)}
        ~\inf_{\substack{\eta \in \PPP(\WWW),~ f: \WWW \to \RR \\ \text{s.t.}~ f\eta = \nu}}~
        G(f\eta) + \frac{\lambda}{2} \int_\WWW \abs{f}^2 d\eta \\
        &= \inf_{\nu \in \MMM(\WWW)} 
        G(\nu) + \frac{\lambda}{2} \left[ \inf_{\substack{\eta \in \PPP(\WWW),~ f: \WWW \to \RR \\ \text{s.t.}~ f\eta = \nu}}~ \int_\WWW \abs{f}^2 d\eta \right] \\
        &= \inf_{\nu \in \MMM(\WWW)} G(\nu) + \frac{\lambda}{2} \norm{\nu}_{TV}^2
        ~= \inf_{\nu \in \MMM(\WWW)} G_\lambda(\nu).
    \end{align}
    Hence the equality of the optimal values.
    The claimed characterization of $\argmin J_\lambda$ in terms of $\argmin G_\lambda$ follows from the characterization of the minimizers of the inner minimization $\left[ \inf_{\substack{\eta \in \PPP(\WWW),~ f: \WWW \to \RR \\ \text{s.t.}~ f\eta = \nu}}~ \frac{\lambda}{2} \int_\WWW \abs{f}^2 d\eta \right]$ in the third line, which is given by the lemma above.
    
    Furthermore, $J_\lambda$ is convex as the partial minimization of
    $(\eta, \nu) \mapsto G(\nu) + \frac{\lambda}{2} \int \frac{\abs{\nu}^2}{\eta}$, which is jointly convex.
\end{proof}

\subsection{Proof of the explicit form of the two-timescale SDE~(\ref{eq:reduc:bilevel:twotimescale_SDE})}
\label{subsec:apx_reduc_bilevel:twotimescale}

For ease of reference, we recall here the two-timescale SDE~\eqref{eq:reduc:bilevel:twotimescale_SDE}:
\begin{align}
    \forall i \leq N,
    \begin{cases}
        \d r^i_t = -\Gamma ~ \nabla_{r^i} F_{\lambda,2}'\left[ \frac{1}{N} \sum_{j=1}^N \delta_{(r^j_t, w^j_t)} \right](r^i_t, w^i_t) \d t \\
        \d w^i_t = -\nabla_{w^i} F_{\lambda,2}'\left[ \frac{1}{N} \sum_{j=1}^N \delta_{(r^j_t, w^j_t)} \right](r^i_t, w^i_t) \d t
        + \sqrt{2 \beta^{-1}} \d B^i_t.
    \end{cases}
\end{align}

By \eqref{eq:apx_reduc_lifting:F'} with $b=2$ and $p=1$,
\begin{align}
    F_{\lambda,2}'[\mu](r, w)
    &= r G'[\bh \mu](w) + \frac{\lambda}{2} \abs{r}^2 \\
    \text{so}~~~~
    \nabla_r F_{\lambda,2}'[\mu](r, w)
    &= G'[\bh \mu](w) + \lambda r \\
    \text{and}~~~~
    \nabla_w F_{\lambda,2}'[\mu](r, w)
    &= r \nabla G'[\bh \mu](w).
\end{align}
Finally, by definition
$\bh \left[ \frac{1}{N} \sum_{j=1}^N \delta_{(r^j, w^j)} \right]
= \frac{1}{N} \sum_{j=1}^N r^j \delta_{w^j}$.
Hence the second part of~\eqref{eq:reduc:bilevel:twotimescale_SDE}.

\subsection{Proof of \autoref{prop:reduc:bilevel:J'_LSI_holley_stroock} (\texorpdfstring{$J_\lambda$}{Jlambda} satisfies \ref{P0}, \ref{P1} and \ref{P2})}
\label{subsec:apx_reduc_bilevel:pf_J'_LSI_holley_stroock}

\paragraph{Simplifying the expression of the bilevel objective.}
The following expressions will be useful throughout our analyses of the bilevel problem~\eqref{eq:reduc:bilevel:J}.
\begin{proposition} \label{prop:apx_reduc_bilevel:J_simplified}
    We have that
    $J_\lambda(\eta) = G(f_\eta \eta) + \frac{\lambda}{2} \int \abs{f_\eta}^2 \d\eta$
    where $f_\eta$ is the unique solution of the fixed-point equation
    \begin{equation}\label{eq:apx_reduc_bilevel:f_eta}
        \forall w \in \WWW,~
        f_\eta(w) = -\frac{1}{\lambda} G'[f_\eta \eta](w).
    \end{equation}
    Furthermore,
    \begin{equation} \label{eq:apx_reduc_bilevel:J'}
        J_\lambda'[\eta](w) 
        = -\frac{\lambda}{2} 
        \abs{f_\eta}^2(w).
    \end{equation}
\end{proposition}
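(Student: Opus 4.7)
My strategy is three steps: reformulate the inner infimum via the substitution $\nu = f\eta$; derive the fixed-point equation from the first-order optimality condition in $f$; then apply an envelope theorem to extract $J_\lambda'[\eta]$.

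\emph{Step 1 (reformulation).} If $\nu$ has a nontrivial singular part with respect to $\eta$ then $\int \abs{\nu}^2/\eta = +\infty$, so the infimum in~\eqref{eq:reduc:bilevel:J} is unchanged by restricting to $\nu = f\eta$ with $f \in L^2(\eta)$, giving
\[
J_\lambda(\eta) = \inf_{f \in L^2(\eta)} H_\eta(f), \qquad H_\eta(f) \coloneqq G(f\eta) + \frac{\lambda}{2} \int f^2 \d\eta.
\]

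\emph{Step 2 (fixed-point equation).} The functional $H_\eta$ is strictly convex in $f$ (sum of the convex map $f \mapsto G(f\eta)$ and a strictly convex quadratic) and coercive in $L^2(\eta)$ since $G \geq 0$ by Assumption~\ref{assump:G_smooth_bounded}, so it admits a unique minimizer $f_\eta$. Computing the first variation in an arbitrary direction $g \in L^2(\eta)$ and setting it to zero yields
\[
\int \bigl( G'[f_\eta \eta](w) + \lambda f_\eta(w) \bigr) g(w) \d\eta(w) = 0 \qquad \forall g \in L^2(\eta),
\]
so $f_\eta(w) = -\frac{1}{\lambda} G'[f_\eta \eta](w)$ for $\eta$-a.e.\ $w$. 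Since the right-hand side is continuous on all of $\WWW$ under the smoothness in Assumption~\ref{assump:G_smooth_bounded}, and modifying $f_\eta$ outside $\support(\eta)$ leaves $f_\eta \eta$ unchanged, I can take the fixed-point identity as the pointwise definition of $f_\eta$ on $\WWW$, establishing~\eqref{eq:apx_reduc_bilevel:f_eta}.

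\emph{Step 3 (envelope theorem).} View $J_\lambda(\eta) = L(\eta, f_\eta)$ with $L(\eta, f) \coloneqq G(f\eta) + \frac{\lambda}{2} \int f^2 \d\eta$ depending jointly on $(\eta, f)$. By Step~2 the partial $f$-variation of $L$ at $(\eta, f_\eta)$ vanishes, so by the envelope theorem
\[
J_\lambda'[\eta](w) = \frac{\delta L}{\delta \eta(w)}(\eta, f_\eta) = f_\eta(w) \, G'[f_\eta \eta](w) + \frac{\lambda}{2} f_\eta(w)^2.
\]
Substituting $G'[f_\eta \eta] = -\lambda f_\eta$ collapses this to $J_\lambda'[\eta](w) = -\frac{\lambda}{2} f_\eta(w)^2$, as claimed.

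\emph{Main obstacle.} The bulk of the technical effort lies in Step~2: rigorously establishing existence and uniqueness of $f_\eta$ in $L^2(\eta)$ under the general smoothness/growth conditions of Assumption~\ref{assump:G_smooth_bounded}, and carefully justifying the pointwise extension of $f_\eta$ off $\support(\eta)$. Once $f_\eta$ is in hand, the envelope calculation of Step~3 is essentially a one-line substitution, although making it fully rigorous in this infinite-dimensional measure-theoretic setting requires a minor supplementary argument---either differentiability of $\eta \mapsto f_\eta$, or a convexity-based envelope inequality shown to be attained.
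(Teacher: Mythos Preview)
Your proposal is correct and follows essentially the same approach as the paper: reformulate the infimum over $\nu$ as one over $f\in L^2(\eta)$, obtain the unique minimizer via strong convexity and the first-order condition, extend $f_\eta$ off $\support(\eta)$ by the formula $-\frac{1}{\lambda}G'[f_\eta\eta]$, and then apply the envelope theorem with the same substitution $G'[f_\eta\eta]=-\lambda f_\eta$. The paper's proof differs only in presentation---it names the $L^2(\eta)$ minimizer $\tilde f_\eta$ before defining the pointwise extension $f_\eta$, and it justifies the envelope theorem by invoking continuity of $\eta\mapsto \tilde f_\eta$ (which is exactly the ``minor supplementary argument'' you anticipated).
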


\begin{proof}
    Consider the optimization problem defining $J_\lambda(\eta)$, for a fixed $\eta$,
    \begin{align}
        \min_{f \in L^2_\eta(\WWW)} G(f \eta) + \frac{\lambda}{2} \int_\WWW \abs{f}^2 \d\eta.
    \end{align}
    This problem is convex since $G$ is, and strongly convex in $L^2_\eta(\WWW)$ thanks to the term in $\lambda$. So there exists a unique solution which we denote by $\tf_\eta \in L^2_\eta(\WWW)$, and it is characterized by the first-order optimality condition:
    \begin{align}
        G'[\tf_\eta~ \eta]~ \eta
        + \lambda \tf_\eta~ \eta = 0
        ~~\text{in}~ \MMM(\WWW).
    \end{align}
    Now let $f_\eta = -\frac{1}{\lambda} G'[\tf_\eta \eta]$, which is defined over all of $\WWW$.
    Then $f_\eta$ satisfies the fixed-point equation \eqref{eq:apx_reduc_bilevel:f_eta} by construction.
    Conversely, for any solution $g_\eta$ of \eqref{eq:apx_reduc_bilevel:f_eta}, its restriction to $\support(\eta)$ viewed as an element $\tg_\eta$ of $L^2_\eta(\WWW)$ must in particular satisfy
    $G'[\tg_\eta \eta] \eta
    + \lambda \tg_\eta \eta = 0$ in $\MMM(\WWW)$,
    and so
    $\tg_\eta = \tf_\eta$,
    and so $g_\eta = -\frac{1}{\lambda} G'[\tg_\eta \eta] = -\frac{1}{\lambda} G'[\tf_\eta \eta] = f_\eta$.
    
    Furthermore, by differentiability of $G$ then $\eta \mapsto \tf_\eta$ is continuous (in the total variation sense). So in turn, $\eta \mapsto f_\eta(w)$ the unique solution of \eqref{eq:apx_reduc_bilevel:f_eta} is continuous for each $w$ (in the total variation sense).
    So by the envelope theorem, since for any fixed $f$ the first variation of $\eta \mapsto G(f \eta) + \frac{\lambda}{2} \int \abs{f}^2 \d\eta$ is $w \mapsto f(w) G'[f \eta](w) + \frac{\lambda}{2} \abs{f(w)}^2$,
    \begin{align}
        J_\lambda'[\eta](w) &= 
        f_\eta(w) G'[f_\eta \eta](w) + \frac{\lambda}{2} \abs{f_\eta(w)}^2 \\
        &= -\frac{\lambda}{2} \abs{f_\eta(w)}^2
        = -\frac{1}{2\lambda} \abs{G'[f_\eta \eta]}^2(w),
    \end{align}
    which is precisely Eq.~\eqref{eq:apx_reduc_bilevel:J'}.
\end{proof}

We remark that the above manipulations rely crucially on the fact that the optimization problem~\eqref{eq:pb-signed-measures} is over 
signed measures and not just non-negative measures
-- as otherwise we would additionally need to constrain $f \geq 0$ --, and on the regularization term being $\norm{\nu}_{TV}^2$ instead of $\norm{\nu}_{TV}$.

\paragraph{Preliminary estimates.}

\begin{lemma} \label{lm:apx_reduc_bilevel:bound_G'}
    Under Assumption~\ref{assump:G_smooth_bounded}, for any $\nu \in \MMM(\WWW)$, we have
    \begin{align}
        \sup_{w \in \WWW} \abs{G'[\nu](w)}^2 
        \leq 2 L_0 G(\nu).
    \end{align}
\end{lemma}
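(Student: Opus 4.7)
The plan is to combine the bound on the second variation with non-negativity of $G$, mimicking the classical ``$\|\nabla f\|^2 \le 2L(f-f^*)$'' inequality for $L$-smooth convex functions, but with a Dirac measure playing the role of the descent direction.

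First, I would establish a descent-type lemma: for any $\nu, \xi \in \MMM(\WWW)$,
\begin{equation*}
    G(\nu + \xi) \leq G(\nu) + \int_\WWW G'[\nu] \d\xi + \frac{L_0}{2} \norm{\xi}_{TV}^2.
\end{equation*}
This follows from a second-order Taylor expansion of the smooth scalar function $t \mapsto G(\nu + t \xi)$ along the segment from $\nu$ to $\nu + \xi$, together with the pointwise bound $\abs{G''[\mu](w,w')} \leq L_0$ from Assumption~\ref{assump:G_smooth_bounded}, which yields $\abs{\int\int G''[\mu](w,w') \d\xi(w) \d\xi(w')} \leq L_0 \norm{\xi}_{TV}^2$ for any intermediate $\mu$.

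Second, fix any $w \in \WWW$ and apply this inequality with $\xi = -\alpha \delta_w$ for $\alpha \in \RR$ to be chosen. Since $\norm{\delta_w}_{TV} = 1$, this yields
\begin{equation*}
    G(\nu - \alpha \delta_w) \leq G(\nu) - \alpha G'[\nu](w) + \frac{L_0}{2} \alpha^2.
\end{equation*}
Minimizing the right-hand side over $\alpha \in \RR$ at $\alpha^* = G'[\nu](w)/L_0$ gives
\begin{equation*}
    G(\nu - \alpha^* \delta_w) \leq G(\nu) - \frac{\abs{G'[\nu](w)}^2}{2 L_0}.
\end{equation*}

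Finally, since $G \geq 0$ by Assumption~\ref{assump:G_smooth_bounded}, the left-hand side is non-negative, so rearranging and taking the supremum over $w \in \WWW$ yields the claimed bound. The argument is essentially routine; the only subtlety worth being careful about is the passage from the pointwise bound on the second variation to the TV-norm quadratic bound, but this is immediate by duality since $\xi$ is an arbitrary signed measure with $\abs{\int\int G''[\mu] \d\xi \d\xi} \leq L_0 \abs{\xi}(\WWW)^2 = L_0 \norm{\xi}_{TV}^2$.
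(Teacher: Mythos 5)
Your proof is correct and follows essentially the same route as the paper: establish the quadratic upper bound from $\abs{G''} \le L_0$, perturb $\nu$ by a scalar multiple of $-\delta_w$, and use non-negativity of $G$. The paper directly substitutes the optimal step $\nu' = \nu - \frac{1}{L_0}G'[\nu](w_0)\delta_{w_0}$ rather than writing out the minimization over $\alpha$, but that is a purely cosmetic difference.
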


\begin{proof}
    We follow the proof technique of \cite[Appendix~D]{guille2021study}.
    Let $w_0 \in \WWW$ and $\nu' = \nu - \frac{1}{L_0} G'[\nu](w_0) \delta_{w_0}$.
    By mean-value theorem there exists $\theta \in (0, 1)$ such that $G(\nu') - G(\nu) = \int G'[\nu + \theta (\nu' - \nu)] \d(\nu'-\nu)$,
    and so 
    \begin{align}
        \inf G \leq G(\nu') 
        &\leq G(\nu) + \int G'[\nu] \d(\nu'-\nu)
        + \frac{L_0}{2} \norm{\nu'-\nu}_{TV}^2 \\
        &= G(\nu) - \frac{1}{L_0} G'[\nu](w_0)^2
        + \frac{1}{2L_0} G'[\nu](w_0)^2
        = G(\nu) - \frac{1}{2L_0} G'[\nu](w_0)^2.
    \end{align}
    Hence,
    since $G$ is non-negative by Assumption~\ref{assump:G_smooth_bounded},
    \begin{equation}
        \forall w \in \WWW,~
        \frac{1}{2L_0} G'[\nu](w)^2
        \leq G(\nu) - \inf G
        \leq G(\nu)
        \rqedhere
    \end{equation}
\end{proof}

\begin{lemma} \label{lm:apx_reduc_bilevel:bound_nablaG'}
    Under Assumption~\ref{assump:G_smooth_bounded},
    let $\eta \in \PPP(\WWW)$ and let $f_\eta$ as in \eqref{eq:apx_reduc_bilevel:f_eta}.
    Then
    \begin{equation}
        \sup_{\WWW} \abs{f_\eta}
        \leq \frac{1}{\lambda} \sqrt{2 L_0 J_\lambda(\eta)}
    \end{equation}
    and for each $i \in \{1, 2\}$,
    \begin{equation}
        \sup_{w \in \WWW} \norm{\nabla^i f_\eta}_w
        \leq \frac{L_i}{\lambda^2} \sqrt{2 L_0 J_\lambda(\eta)}
        + \frac{B_i}{\lambda}.
    \end{equation}
    Moreover, $J_\lambda(\eta) \leq G(0)$ for all $\eta \in \PPP(\WWW)$.
\end{lemma}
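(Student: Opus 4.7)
The argument is essentially a direct chain of substitutions into the fixed-point equation~\eqref{eq:apx_reduc_bilevel:f_eta} combined with the preceding estimate \autoref{lm:apx_reduc_bilevel:bound_G'} and Assumption~\ref{assump:G_smooth_bounded}. The only mildly subtle part is keeping track of the argument with respect to which one differentiates.

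The plan is as follows. First, I would establish the moreover-clause $J_\lambda(\eta) \leq G(0)$ upfront, since it relies on a different kind of argument: by the very definition of $J_\lambda$ as an infimum (and using the envelope/optimization reformulation from \autoref{prop:reduc:bilevel:valid}), we may plug in the candidate $f \equiv 0$, which yields $G(0 \cdot \eta) + \frac{\lambda}{2} \int |0|^2 d\eta = G(0)$. This bound will not be used in what follows but justifies that $J_\lambda(\eta)$ is finite.

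Next, for the sup bound on $f_\eta$, I use the fixed-point identity $f_\eta(w) = -\frac{1}{\lambda} G'[f_\eta \eta](w)$ to get $\sup_\WWW |f_\eta| = \frac{1}{\lambda} \sup_\WWW |G'[f_\eta \eta]|$. Applying \autoref{lm:apx_reduc_bilevel:bound_G'} with $\nu = f_\eta \eta$ gives $\sup_\WWW |G'[f_\eta \eta]|^2 \leq 2 L_0 G(f_\eta \eta)$. Finally, by \autoref{prop:apx_reduc_bilevel:J_simplified}, we have
\begin{equation*}
    G(f_\eta \eta) \leq G(f_\eta \eta) + \frac{\lambda}{2} \int |f_\eta|^2 d\eta = J_\lambda(\eta),
\end{equation*}
which yields the claimed bound $\sup_\WWW |f_\eta| \leq \frac{1}{\lambda}\sqrt{2 L_0 J_\lambda(\eta)}$.

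For the higher-order bounds, I differentiate the fixed-point identity in the spatial variable $w$: for $i \in \{1,2\}$,
\begin{equation*}
    \nabla^i f_\eta(w) = -\frac{1}{\lambda}\, \nabla^i G'[f_\eta \eta](w),
\end{equation*}
where the right-hand side is the $i$-th covariant derivative of the function $G'[f_\eta \eta]: \WWW \to \RR$ at fixed measure argument $f_\eta \eta$. Taking Riemannian norms and applying the bound $\|\nabla^i G'[\nu]\|_w \leq L_i \|\nu\|_{TV} + B_i$ from Assumption~\ref{assump:G_smooth_bounded}, I obtain $\|\nabla^i f_\eta\|_w \leq \frac{1}{\lambda}\left(L_i \|f_\eta \eta\|_{TV} + B_i\right)$. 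Since $\|f_\eta \eta\|_{TV} \leq \sup_\WWW |f_\eta| \cdot \eta(\WWW) = \sup_\WWW |f_\eta|$, the first part of the lemma plugs in to give $\|f_\eta \eta\|_{TV} \leq \frac{1}{\lambda}\sqrt{2 L_0 J_\lambda(\eta)}$, hence
\begin{equation*}
    \sup_w \|\nabla^i f_\eta\|_w \leq \frac{L_i}{\lambda^2}\sqrt{2 L_0 J_\lambda(\eta)} + \frac{B_i}{\lambda},
\end{equation*}
as claimed. I do not anticipate any substantive obstacle here; the only point to handle with care is distinguishing the first variation $G'[\cdot]$ (in the measure) from the spatial gradient $\nabla G'[\cdot]$ (in $w$), so that the two sides of the identity are differentiated consistently.
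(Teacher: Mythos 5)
Your proof is correct and follows essentially the same route as the paper: plug $f\equiv 0$ for the $J_\lambda(\eta)\le G(0)$ bound, use the fixed-point identity together with \autoref{lm:apx_reduc_bilevel:bound_G'} and $G(f_\eta\eta)\le J_\lambda(\eta)$ for the first bound, and differentiate the fixed-point identity in $w$ and use $\|f_\eta\eta\|_{TV}\le\sup_\WWW|f_\eta|$ for the gradient bounds.
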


\begin{proof}
    For the first inequality, by definition
    $G'[f_\eta \eta] = -\lambda f_\eta$ for all $w \in \WWW$,
    so
    \begin{align}
        \lambda^2 \abs{f_\eta(w)}^2 
        = \abs{G'[f_\eta \eta](w)}^2
        \leq 2 L_0 G(f_\eta \eta)
        \leq 2 L_0 \left( G(f_\eta \eta) + \frac{\lambda}{2} \int \abs{f_\eta}^2 \d\eta \right)
        = 2 L_0 J_\lambda(\eta)
    \end{align}
    where the first inequality follows from \autoref{lm:apx_reduc_bilevel:bound_G'}.

    For the second part,
    by Assumption~\ref{assump:G_smooth_bounded},
    $
        \forall \nu \in \MMM(\WWW),~
        \sup_w \norm{\nabla^i G'[\nu]}_w
        \leq L_i \norm{\nu}_{TV} + B_i
    $,
    so
    \begin{align}
        \lambda \norm{\nabla^i f_\eta}_w 
        = \norm{\nabla^i G'[f_\eta \eta]}_w 
        &\leq B_i + L_i \norm{f_\eta \eta}_{TV}
        = B_i + L_i \int \abs{f_\eta} \d\eta \\
        &\leq B_i + L_i \sup_{\WWW} \abs{f_\eta}
        \leq B_i + L_i \frac{1}{\lambda} \sqrt{2 L_0 J_\lambda(\eta)}
    \end{align}
    by the first part of the lemma.

    Finally, the uniform bound on $J_\lambda(\eta)$ follows by taking $f=0$ in the infimum defining $J_\lambda$:
    $J_\lambda(\eta) = \inf_{f \in L^2_\eta} G(f \eta) + \frac{\lambda}{2} \int \abs{f}^2 \d\eta
    \leq G(0)$.
\end{proof}

\begin{lemma} \label{lm:apx_reduc_bilevel:Jlambda_weaklycont}
    Under Assumption~\ref{assump:G_smooth_bounded},
    $J_\lambda: \PPP(\WWW) \to \RR$ is weakly continuous
    and
    \begin{equation}
        \forall \eta, \eta' \in \PPP(\WWW),~
        \abs{J_\lambda(\eta) - J_\lambda(\eta')} \leq B W_2(\eta, \eta')
    \end{equation}
    where $B = \sqrt{2L_0 G(0)} \cdot \left( \frac{L_1}{\lambda^2} \sqrt{2L_0 G(0)} + \frac{B_1}{\lambda} \right)$.
\end{lemma}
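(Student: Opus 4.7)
The plan is to deduce both claims from a uniform bound on the Lipschitz constant of $J_\lambda'[\eta]$ as a function on $\WWW$. Starting from the explicit formula~\eqref{eq:apx_reduc_bilevel:J'}, $J_\lambda'[\eta](w) = -\frac{\lambda}{2} \abs{f_\eta}^2(w)$, differentiation gives $\nabla J_\lambda'[\eta](w) = -\lambda\, f_\eta(w)\, \nabla f_\eta(w)$. Combining the pointwise bounds from \autoref{lm:apx_reduc_bilevel:bound_nablaG'} with the crude inequality $J_\lambda(\eta) \leq G(0)$ (also proven there), one obtains
\begin{equation*}
    \norm{\nabla J_\lambda'[\eta](w)}_w
    \leq \lambda \cdot \tfrac{1}{\lambda}\sqrt{2L_0 G(0)} \cdot \Big( \tfrac{L_1}{\lambda^2}\sqrt{2L_0 G(0)} + \tfrac{B_1}{\lambda} \Big) = B,
\end{equation*}
uniformly in $\eta \in \PPP(\WWW)$ and $w \in \WWW$.

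To obtain the $W_2$-Lipschitz bound, I would fix any $\eta, \eta' \in \PPP(\WWW)$ and join them by a constant-speed Wasserstein geodesic $(\eta_t)_{t\in[0,1]}$ with associated velocity field $v_t$ satisfying the continuity equation $\partial_t \eta_t = -\div(\eta_t v_t)$ and $\int \norm{v_t}^2 \d\eta_t = W_2^2(\eta, \eta')$ for all $t$. By the chain rule for differentiable functionals on Wasserstein space (well-defined thanks to the regularity of $J_\lambda'$ established above),
\begin{equation*}
    \tfrac{d}{dt} J_\lambda(\eta_t) = \int_\WWW \innerprod{\nabla J_\lambda'[\eta_t]}{v_t}_w \d\eta_t(w),
\end{equation*}
and then Cauchy-Schwarz combined with the uniform bound $\norm{\nabla J_\lambda'[\eta_t]}_w \leq B$ gives $\abs{\tfrac{d}{dt}J_\lambda(\eta_t)} \leq B \cdot W_2(\eta, \eta')$. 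Integrating over $t\in[0,1]$ yields the desired inequality.

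Weak continuity then follows for free: since $\WWW$ is compact (Assumption~\ref{assump:G_smooth_bounded}), $W_2$ metrizes weak convergence on $\PPP(\WWW)$, so any $W_2$-Lipschitz functional is weakly continuous. The main subtlety, and the only non-routine step, is ensuring that the chain rule applies; this is where the uniform-in-$\eta$ smoothness of $f_\eta$ provided by \autoref{lm:apx_reduc_bilevel:bound_nablaG'} is essential, together with the weak continuity of $\eta \mapsto f_\eta$ (which can be extracted from the fixed-point equation~\eqref{eq:apx_reduc_bilevel:f_eta} combined with strong convexity of the inner problem in the proof of \autoref{prop:apx_reduc_bilevel:J_simplified}). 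An alternative, fully elementary route that avoids invoking the Wasserstein chain rule would be to first prove a $W_1$-Lipschitz bound by Kantorovich--Rubinstein duality applied to the $B$-Lipschitz function $J_\lambda'[\eta]$, then pass to $W_2$ via $W_1 \leq \mathrm{diam}(\WWW)^{1/2} W_2^{1/2}$ or a direct interpolation argument.
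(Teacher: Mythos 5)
Your proposal follows essentially the same route as the paper: compute $\nabla J_\lambda'[\eta](w) = -\lambda f_\eta(w)\nabla f_\eta(w)$, bound it uniformly by $B$ via \autoref{lm:apx_reduc_bilevel:bound_nablaG'}, and integrate along a Wasserstein geodesic (the paper packages this last step as \autoref{lm:Wasserstein_Lipschitzness}, which uses McCann interpolation of a $W_1$-optimal coupling and gives the slightly stronger $W_1$-Lipschitz bound, whereas you integrate along a $W_2$-geodesic with its velocity field). One small correction in your proposed alternative route: the inequality $W_1 \leq \mathrm{diam}(\WWW)^{1/2} W_2^{1/2}$ is not valid in general (Cauchy--Schwarz gives $W_2^2 \leq \mathrm{diam}(\WWW)\, W_1$, i.e.\ the opposite direction); what you actually need there is the elementary $W_1 \leq W_2$, which follows directly from Jensen's inequality and makes the Kantorovich--Rubinstein variant cleaner than stated.
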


\begin{proof}
    For any $\eta \in \PPP(\WWW)$, letting $f_\eta$ as in \eqref{eq:apx_reduc_bilevel:f_eta}, we have
    $J_\lambda'[\eta](w) = -\frac{\lambda}{2} \abs{f_\eta}^2(w)$ so
    \begin{align}
        \nabla J_\lambda'[\eta](w) &= -\lambda f_\eta(w) \nabla f_\eta(w) \\
        \norm{\nabla J_\lambda'[\eta](w)}_w
        &\leq \lambda \sup_\WWW \abs{f_\eta} \cdot \sup_\WWW \norm{\nabla f_\eta} \\
        &\leq \lambda \cdot \frac{1}{\lambda} \sqrt{2L_0 G(0)} \cdot \left( \frac{L_1}{\lambda^2} \sqrt{2L_0 G(0)} + \frac{B_1}{\lambda} \right)
        \eqqcolon B
        < \infty
    \end{align}
    by \autoref{lm:apx_reduc_bilevel:bound_nablaG'},
    uniformly in $\eta \in \PPP(\WWW)$ and $w \in \WWW$.
    So by \autoref{lm:Wasserstein_Lipschitzness} below, we have
    $\abs{J_\lambda(\eta) - J_\lambda(\eta')} \leq B W_2(\eta, \eta')$ for all $\eta, \eta' \in \PPP(\WWW)$.
    Moreover $W_2$ metrizes weak convergence,
    so $J_\lambda$ is weakly continuous.
\end{proof}

\begin{lemma} \label{lm:apx_reduc_bilevel:technical_bound}
    Under Assumption~\ref{assump:G_smooth_bounded},
    let $w' \in \WWW$ and $\eta \in \PPP(\WWW)$.
    Let $h: \WWW \to \RR$ and suppose that
    \begin{equation}
        \forall w \in \WWW,~
        \lambda h(w) + \int G''[f_\eta \eta](w, w'') d\eta(w'') h(w'')
        = -G''[f_\eta \eta](w, w') f_\eta(w').
    \end{equation}
    Then $\sup_{w \in \WWW} \abs{h(w)} \leq \left( 1 + \frac{L_0}{\lambda} \right) \frac{L_0}{\lambda} \sqrt{2L_0 G(0)}$.

    Alternatively, suppose that there exists $s \in T_{w'} \WWW$ with $\norm{s}_{w'}=1$ such that
    \begin{equation}
        \forall w \in \WWW,~
        \lambda h(w) + \int G''[f_\eta \eta](w, w'') d\eta(w'') h(w'')
        = -\innerprod{s'}{\nabla_{w'} \left[ G''[f_\eta \eta](w, w') f_\eta(w') \right]}_{w'}.
    \end{equation}
    Then $\sup_{w \in \WWW} \abs{h(w)} \leq \left( 1 + \frac{L_0}{\lambda} \right) \cdot \left( \left( 1 + \frac{L_0}{\lambda} \right) \frac{L_1}{\lambda} \sqrt{2L_0 G(0)} + \frac{L_0 B_1}{\lambda} \right)$.
\end{lemma}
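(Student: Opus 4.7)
The plan is to exploit the positivity of the integral operator
$T_\eta h(w) \coloneqq \int G''[f_\eta \eta](w, w'') h(w'') \d\eta(w'')$,
in terms of which the lemma's equation reads $(\lambda \Id + T_\eta) h = g$, where $g(w)$ stands for $-G''[f_\eta \eta](w, w') f_\eta(w')$ in the first case and for $-\innerprod{s}{\nabla_{w'} [G''[f_\eta \eta](w, w') f_\eta(w')]}_{w'}$ in the second. Since $G$ is convex (Assumption~\ref{assump:G_smooth_bounded}), its second variation $G''[\nu]$ is a positive semidefinite bilinear form on $\MMM(\WWW)$; testing against the signed measure $h \eta$ gives the crucial inequality
\begin{equation*}
\langle h, T_\eta h \rangle_{L^2(\eta)}
= \int \int G''[f_\eta \eta](w, w') h(w) h(w') \d\eta(w) \d\eta(w') \geq 0.
\end{equation*}

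From here, the first step would be to derive an $L^2$ estimate: pairing the equation with $h$ in $L^2(\eta)$, dropping the nonnegative cross term and applying Cauchy-Schwarz yields $\norm{h}_{L^2(\eta)} \leq \norm{g}_{L^2(\eta)}/\lambda \leq \norm{g}_\infty/\lambda$, and hence $\norm{h}_{L^1(\eta)} \leq \norm{g}_\infty/\lambda$ by Jensen's inequality. Next I would return to the pointwise equation $\lambda h(w) = g(w) - T_\eta h(w)$; the uniform bound $\abs{G''[\nu](w, w')} \leq L_0$ from Assumption~\ref{assump:G_smooth_bounded} then controls $\abs{T_\eta h(w)} \leq L_0 \norm{h}_{L^1(\eta)}$, yielding
\begin{equation*}
\sup_w \abs{h(w)} \leq \norm{g}_\infty (1 + L_0/\lambda)/\lambda.
\end{equation*}

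It then remains to bound $\norm{g}_\infty$ in each case using \autoref{lm:apx_reduc_bilevel:bound_nablaG'}. For the first equation, $\abs{g(w)} \leq L_0 \sup_\WWW \abs{f_\eta} \leq L_0 \sqrt{2 L_0 G(0)}/\lambda$, which combined with the previous display yields the announced constant. For the second equation, the Riemannian product rule decomposes $g(w)$ into one term bounded by $L_1 \sup \abs{f_\eta}$ and another bounded by $L_0 \sup \norm{\nabla f_\eta}$; substituting the estimates from~\autoref{lm:apx_reduc_bilevel:bound_nablaG'} for both $\sup \abs{f_\eta}$ and $\sup \norm{\nabla f_\eta}$ produces the second claim.

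The main subtlety is verifying the positivity $\langle h, T_\eta h \rangle_{L^2(\eta)} \geq 0$ cleanly: this relies on the convexity of $G$ extending to signed perturbations of the form $h \eta \in \MMM(\WWW)$, which would not follow from convexity on probability measures alone. Fortunately $G$ is defined and assumed convex on all of $\MMM(\WWW)$ from the outset, so $G''[\nu]$ really is a PSD kernel for every $\nu$ and the argument goes through.
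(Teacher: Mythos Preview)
Your proposal is correct and follows essentially the same route as the paper: both exploit the positive semidefiniteness of the integral operator (from convexity of $G$) to obtain an $L^2(\eta)$ bound $\norm{h}_{L^2(\eta)} \leq \lambda^{-1}\norm{g}_\infty$, then bootstrap to a pointwise bound via the equation $\lambda h(w) = g(w) - T_\eta h(w)$. The only cosmetic difference is that you pass through $\norm{h}_{L^1(\eta)}$ (via Jensen) to control $\abs{T_\eta h(w)} \leq L_0 \norm{h}_{L^1(\eta)}$, whereas the paper applies Cauchy--Schwarz directly, $\abs{T_\eta h(w)} \leq \norm{G''[f_\eta\eta](w,\cdot)}_{L^2(\eta)} \norm{h}_{L^2(\eta)} \leq L_0 \norm{h}_{L^2(\eta)}$; both yield the same final bound.
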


\begin{proof}
    Let $\GGG: L^2_\eta(\WWW) \to L^2_\eta(\WWW)$ the operator
    \begin{equation}
        (\GGG \tilde{h})(w) = \int G''[f_\eta \eta](w, w'') d\eta(w'') \tilde{h}(w'').
    \end{equation}
    $\GGG$ is well-defined as a bounded operator, since Assumption~\ref{assump:G_smooth_bounded} implies that ${ \abs{G''[f_\eta \eta](w,w')} \leq L_0 }$.
    Note that $G''[f_\eta \eta](w, w'')$ is symmetric in $w$ and $w''$, and that by convexity of $G$, ${G''[f_\eta \eta](w, w'') \geq 0}$ for all $w, w''$.
    Consequently, $\GGG$ is a symmetric positive-semi-definite operator from $L^2_\eta(\WWW)$ to itself.
    
    On the other hand, let $V_1(\cdot) = -G''[f_\eta \eta](\cdot,w') f_\eta(w')$. By \autoref{lm:apx_reduc_bilevel:bound_nablaG'} we have
    \begin{align}
        \norm{V_1}_{L^2_\eta} 
        \leq \sup_\WWW \abs{V_1}
        &\leq \sup_{\WWW \times \WWW} \abs{G''[f_\eta \eta]} \cdot \sup_{\WWW} \abs{f_\eta}
        \leq L_0 \cdot \frac{1}{\lambda} \sqrt{2 L_0 G(0)}
        \eqqcolon \olV_1.
    \end{align}
    Also let $V_2(\cdot) = -\innerprod{s'}{\nabla_{w'} \left[ G''[f_\eta \eta](\cdot, w') f_\eta(w') \right]}_{w'}$.
    Then by \autoref{lm:apx_reduc_bilevel:bound_nablaG'},
    \begin{align}
        \norm{V_2}_{L^2_\eta} 
        \leq \sup_\WWW \abs{V_2}
        &\leq \sup_{w, w'} \norm{\nabla_{w'} G''[f_\eta \eta](w, w')} \cdot \sup_\WWW \abs{f_\eta}
        + \sup_{\WWW \times \WWW} \abs{G''[f_\eta \eta]} \cdot \sup_\WWW \norm{\nabla f_\eta} \\
        &\leq L_1 \cdot \frac{1}{\lambda} \sqrt{2 L_0 G(0)}
        + L_0 \cdot \left( \frac{L_1}{\lambda^2} \sqrt{2L_0 G(0)} + \frac{B_1}{\lambda} \right) \\
        &= \left( 1 + \frac{L_0}{\lambda} \right) \frac{L_1}{\lambda} \sqrt{2L_0 G(0)}
        + \frac{L_0 B_1}{\lambda}
        \eqqcolon \olV_2.
    \end{align}

    Denote by $\tilde{h}$ the restriction of $h$ to $\support(\eta)$ viewed as an element of $L^2_\eta(\WWW)$. Then, denoting by $\id$ the identity operator on $L^2_\eta(\WWW)$, we may rewrite the assumption as
    $(\lambda \id + \GGG) \tilde{h} = V_j$ for $j=1$ or $2$,
    and so
    \begin{align}
        \sqrt{\int \abs{h}^2 \d\eta} = \norm{\tilde{h}}_{L^2_\eta} = \norm{(\lambda \id + \GGG)^{-1} V_j}_{L^2_\eta}
        \leq \lambda^{-1} \norm{V_j}_{L^2_\eta}
        \leq \lambda^{-1} \olV_j
    \end{align}
    since $\GGG$ is positive-semi-definite and $\lambda>0$.
    Thus for any $w \in \WWW$, we get the point-wise bound
    \begin{align}
        \lambda h(w)
        &= V_j(w)
        - \int d\eta(w'') G''[f_\eta \eta](w,w'') h(w'') \\
        \lambda \abs{h(w)}
        &\leq \abs{V_j(w)}
        + \int d\eta(w'') \abs{G''[f_\eta \eta](w,w'')} \abs{h(w'')} \\
        &\leq \olV_j
        + \norm{G''[f_\eta \eta](w, \cdot)}_{L^2_\eta}
        \norm{h}_{L^2_\eta} \\
        &\leq \olV_j + L_0 \cdot \lambda^{-1} \olV_j.
        \rqedhere
    \end{align}
\end{proof}

\begin{lemma} \label{lm:apx_reduc_bilevel:bound_nablaG''}
    Under Assumption~\ref{assump:G_smooth_bounded},
    let $\eta, \eta' \in \PPP(\WWW)$ and let $f_\eta, f_{\eta'}$ as in \eqref{eq:apx_reduc_bilevel:f_eta}.
    Then there exist constants $H, H'$ dependent only on $\lambda^{-1}, G(0)$ and $L_0, L_1, B_1, \tL_2$ such that
    \begin{equation}
        \sup_{\WWW} \abs{f_\eta - f_{\eta'}}
        \leq H W_2(\eta, \eta')
        \quad\quad \text{and} \quad\quad
        \sup_{w \in \WWW} \norm{\nabla f_\eta - \nabla f_{\eta'}}_w
        \leq H' W_2(\eta, \eta').
    \end{equation}
\end{lemma}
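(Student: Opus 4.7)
The plan is to subtract the two fixed-point equations~\eqref{eq:apx_reduc_bilevel:f_eta} for $\eta$ and $\eta'$ and derive, via the fundamental theorem of calculus, an integral equation of exactly the form handled by \autoref{lm:apx_reduc_bilevel:technical_bound}. Introducing the averaged kernel $K(w, w') \coloneqq \int_0^1 G''[\mu_t](w, w')\, dt$ along the segment $\mu_t \coloneqq t f_\eta \eta + (1-t) f_{\eta'} \eta'$ in $\MMM(\WWW)$, I would first write $G'[f_\eta \eta] - G'[f_{\eta'} \eta'] = \int K(\cdot, w')\, d(f_\eta \eta - f_{\eta'} \eta')(w')$, and then split $f_\eta \eta - f_{\eta'} \eta' = h\eta + f_{\eta'}(\eta - \eta')$ with $h \coloneqq f_\eta - f_{\eta'}$. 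This produces
\begin{equation*}
    \lambda h(w) + \int K(w, w') h(w')\, d\eta(w') = S(w), \qquad S(w) \coloneqq -\int K(w, w') f_{\eta'}(w')\, d(\eta - \eta')(w').
\end{equation*}
Since $G$ is convex, $K$ is a symmetric positive-semidefinite kernel, so the integral operator on the left has spectrum bounded below by $\lambda$ on $L^2_\eta(\WWW)$; the same inversion argument as in the proof of \autoref{lm:apx_reduc_bilevel:technical_bound} then gives $\sup_w \abs{h(w)} \leq \lambda^{-1}(1 + L_0/\lambda) \sup_w \abs{S(w)}$.

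To bound $\sup_w \abs{S(w)}$ I would appeal to Kantorovich-Rubinstein duality: using $\abs{K} \leq L_0$, $\norm{\nabla_{w'} K}_{w'} \leq L_1$ (by symmetry of $G''$ in its two arguments), and the $L^\infty$ bounds on $f_{\eta'}$ and $\nabla f_{\eta'}$ from \autoref{lm:apx_reduc_bilevel:bound_nablaG'}, the integrand $w' \mapsto K(w, w') f_{\eta'}(w')$ is Riemannian Lipschitz in $w'$ with a constant $M$ depending only on $\lambda^{-1}, G(0), L_0, L_1, B_1$. Kantorovich-Rubinstein then yields $\sup_w \abs{S(w)} \leq M \cdot W_1(\eta, \eta') \leq M \cdot W_2(\eta, \eta')$, establishing the first inequality.

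For the gradient estimate I would apply $\nabla_w$ to the fixed-point equation before subtracting, obtaining the analogous equation for $\nabla h$ in which $K(w, w')$ is replaced throughout by $\nabla_w K(w, w')$. The diagonal term then uses $\norm{\nabla_w K}_w \leq L_1$ together with the $L^\infty$ bound on $h$ just established, and the source term requires the Riemannian Lipschitz constant in $w'$ of $w' \mapsto \nabla_w K(w, w') f_{\eta'}(w')$; by the Leibniz rule, bounding the latter calls for both $\norm{\nabla_w K}_w \leq L_1$ and the mixed bound $\norm{\nabla_w \nabla_{w'} K} \leq \tL_2$ from Assumption~\ref{assump:G_smooth_bounded}. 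This is the only nonroutine step and the main obstacle: without the hypothesis $\tL_2 < \infty$, the source term in the gradient equation would not be Lipschitz in $w'$ uniformly in $w$, so Kantorovich-Rubinstein could not be applied and the $W_2$ rate would be lost. Given this bound, the same semidefiniteness inversion as above yields the second inequality with $H'$ of the announced form.
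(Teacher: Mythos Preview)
Your approach is correct but takes a different route from the paper's. The paper linearizes in $\eta$: it differentiates the fixed-point equation to obtain an integral equation for the first variation $\frac{\delta f_\eta(w)}{\delta \eta(w')}$ with kernel $G''[f_\eta\eta]$ and right-hand side $-G''[f_\eta\eta](w,w')f_\eta(w')$, invokes \autoref{lm:apx_reduc_bilevel:technical_bound} verbatim to bound that first variation and (after one more $\nabla_{w'}$) its gradient in $w'$, and finally applies \autoref{lm:Wasserstein_Lipschitzness} to convert the uniform bound on $\nabla_{w'}\frac{\delta f_\eta(w)}{\delta\eta(w')}$ into a $W_2$-Lipschitz estimate for $\eta\mapsto f_\eta(w)$ (and similarly for $\eta\mapsto\langle s,\nabla f_\eta(w)\rangle$). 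Your finite-difference argument with the averaged kernel $K=\int_0^1 G''[\mu_t]\,dt$ bypasses both auxiliary lemmas: you obtain the Wasserstein dependence directly via Kantorovich--Rubinstein on the source term, and you only reuse the PSD-inversion \emph{mechanism} from the proof of \autoref{lm:apx_reduc_bilevel:technical_bound}, not its specific statement. Your route is somewhat more self-contained and sidesteps the existence of the first variation (which the paper relegates to a footnote); the paper's route is more modular. One minor imprecision in your write-up: after applying $\nabla_w$, the $\eta$-integral on the left still involves $h$, not $\nabla h$, so no semidefiniteness inversion is needed at that stage---you simply divide by $\lambda$ once the two right-hand terms are bounded, exactly as your own handling of the ``diagonal term'' already indicates.
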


\begin{proof}
    For each $w \in \WWW$, we denote the first variation of $\eta \mapsto f_\eta(w)$ by $w' \mapsto \frac{\delta f_\eta(w)}{\delta \eta(\d w')}$.
    Let us show that this quantity is uniformly bounded.%
    \footnote{The rigorous proof that the first variation $\frac{\delta f_\eta(w)}{\delta \eta(\d w')}$ is well-defined for all $w, w' \in \WWW$ and $\eta \in \PPP(\WWW)$ would follow from the same derivations as for the uniform bound, so we omit it here.}
    By definition, for any $w \in \WWW$ and $\eta \in \PPP(\WWW)$ and $w' \in \WWW$,
    \begin{align}
        \lambda f_\eta(w)
        &+ G'[f_\eta \eta](w)
        = 0 \\
        \text{so}~~~~
        \lambda \frac{\delta f_\eta(w)}{\delta \eta(w')}
        &+ G''[f_\eta \eta](w, w') f_\eta(w')
        + \int \left(G''[f_\eta \eta](w, \cdot) \right) \d\left(\eta \frac{\delta f_\eta(\cdot)}{\delta \eta(w')} \right) = 0 \\
        \lambda \frac{\delta f_\eta(w)}{\delta \eta(w')}
        & + \int G''[f_\eta \eta](w,w'') \eta(\d w'') \frac{\delta f_\eta(w'')}{\delta \eta(w')} 
        = - G''[f_\eta \eta](w, w') f_\eta(w').
        \label{eq:apx_reduc_bilevel:technical_bound_firstvarfeta}
    \end{align}
    So by \autoref{lm:apx_reduc_bilevel:technical_bound} applied to $h = \frac{\delta f_\eta(\cdot)}{\delta \eta(w')}$,
    we indeed have that $\frac{\delta f_\eta(w)}{\delta \eta(w')}$ is bounded by a constant uniformly in $w, w'$ and $\eta$.
    
    Let us now show that 
    \begin{align}
        \sup_{w \in \WWW} \sup_{\eta \in \PPP(\WWW)} \sup_{w' \in \WWW} \norm{\nabla_{w'} \frac{\delta f_{\eta}(w)}{\delta \eta(\d w')}}_{w'}
        \leq H
    \end{align}
    for a constant $H$ depending only on $\lambda^{-1}, L_0, L_1, B_1, G(0)$.
    Indeed, it suffices to show that for any $s' \in T_{w'} \WWW$ such that $\norm{s'}_{w'} = 1$, 
    $\abs{\innerprod{s'}{\nabla_{w'} \frac{\delta f_{\eta}(w)}{\delta \eta(\d w')}}_{w'}} \leq H$.
    Now, starting from \eqref{eq:apx_reduc_bilevel:technical_bound_firstvarfeta} -- which holds for all $w, w', \eta$ -- and differentiating with respect to $w'$ in the direction $s'$,
    we get that
    \begin{multline}
        \lambda \innerprod{s'}{\nabla_{w'} \frac{\delta f_\eta(w)}{\delta \eta(w')}}_{w'}
        + \int G''[f_\eta \eta](w,w'') \eta(\d w'') \innerprod{s'}{\nabla_{w'} \frac{\delta f_\eta(w'')}{\delta \eta(w')}}_{w'} \\
        = - \innerprod{s'}{\nabla_{w'} \left[ G''[f_\eta \eta](w, w') f_\eta(w') \right]}_{w'}
    \end{multline}
    and so $h(w) = \innerprod{s'}{\nabla_{w'} \frac{\delta f_{\eta}(w)}{\delta \eta(\d w')}}_{w'}$ satisfies the conditions of \autoref{lm:apx_reduc_bilevel:technical_bound},
    which proves the claim.

    Next let us show that
    \begin{align}
        \sup_{w \in \WWW} \sup_{\substack{s \in T_w \WWW \\ \norm{s}_w = 1}} \sup_{\eta \in \PPP(\WWW)} \sup_{w' \in \WWW} \norm{\nabla_{w'} \frac{\delta \innerprod{s}{\nabla f_{\eta}(w)}_w}{\delta \eta(\d w')}}_{w'}
        \leq H'
    \end{align}
    for a constant $H'$ depending only on $\lambda^{-1}, L_0, L_1, B_1, G(0)$ and $\tL_2$.
    Indeed, starting from \eqref{eq:apx_reduc_bilevel:technical_bound_firstvarfeta} and differentiating with respect to $w'$ in the direction $s'$, and differentiating with respect to $w$ in the direction $s$, we get
    \begin{multline}
        \lambda \innerprod{s'}{\nabla_{w'} \frac{\delta \innerprod{s}{\nabla f_\eta(w)}_w}{\delta \eta(w')}}_{w'}
        + \int \nabla_w G''[f_\eta \eta](w,w'') \eta(\d w'') \innerprod{s'}{\nabla_{w'} \frac{\delta f_\eta(w'')}{\delta \eta(w')}}_{w'} \\
        = - \innerprod{s}{\nabla_w \left\{ \innerprod{s'}{\nabla_{w'} \left[ G''[f_\eta \eta](w, w') f_\eta(w') \right]}_{w'} \right\}}_w
    \end{multline}
    and so
    \begin{align*}
        \lambda \norm{\nabla_{w'} \frac{\delta \innerprod{s}{\nabla f_{\eta}(w)}_w}{\delta \eta(\d w')}}_{w'}
        &\leq 
        \norm{\nabla_w \nabla_{w'} G''[f_\eta \eta]}
        \cdot \abs{f_\eta(w')}
        + \norm{\nabla_w G''[f_\eta \eta]}_w
        \cdot \norm{\nabla f_\eta(w')}_{w'} \\
        &\qquad + \sup_{w'' \in \WWW} \norm{\nabla_w G''[f_\eta \eta](w,w'')}_w
        \cdot 
        \sup_{w'' \in \WWW} \norm{\nabla_{w'} \frac{\delta f_{\eta}(w'')}{\delta \eta(\d w')}}_{w'} \\
        &\leq \tL_2 \cdot \frac{1}{\lambda} \sqrt{2L_0 G(0)}
        + L_1 \cdot \left( \frac{L_1}{\lambda^2} \sqrt{2L_0 G(0)} + \frac{B_1}{\lambda} \right)
        + L_1 \cdot H
        \eqqcolon H'
    \end{align*}
    by Assumption~\ref{assump:G_smooth_bounded}.
    
    Now fix $w \in \WWW$.
    By \autoref{lm:Wasserstein_Lipschitzness} below applied to $F(\eta) = f_\eta(w)$, 
    we have that
    \begin{align}
        \abs{f_\eta(w) - f_{\eta'}(w)}
        &\leq 
        \sup_{\eta'' \in \PPP(\WWW)} \sup_{w' \in \WWW} \norm{\nabla_{w'} \frac{\delta f_{\eta''}(w)}{\delta \eta''(\d w')}}_{w'}
        W_2(\eta, \eta')
        \leq H W_2(\eta, \eta').
    \end{align}
    Likewise, fix any $w \in \WWW$ and let $s = \frac{\nabla f_{\eta'}(w) - \nabla f_\eta(w)}{\norm{\nabla f_{\eta'}(w) - \nabla f_\eta(w)}_w} \in T_w \WWW$.
    Then by \autoref{lm:Wasserstein_Lipschitzness} below applied to $F(\eta) = \innerprod{s}{\nabla f_\eta(w)}_w$,
    \begin{align}
        \norm{\nabla f_{\eta'}(w) - \nabla f_\eta(w)}
        &= \innerprod{s}{\nabla f_{\eta'}(w)}_w
        - \innerprod{s}{\nabla f_{\eta}(w)}_w 
        \leq H' W_2(\eta, \eta').
        \rqedhere
    \end{align}
\end{proof}

\begin{lemma} \label{lm:Wasserstein_Lipschitzness}
    Let $\WWW$ a compact Riemannian manifold and $F: \PPP(\WWW) \to \RR$ such that
    \begin{equation}
        \forall \eta \in \PPP(\WWW), \forall w \in \WWW,~ 
        \norm{\nabla F'[\eta](w)}_w \leq B.
    \end{equation}
    Then 
    \begin{equation}
        \forall \eta, \eta' \in \PPP(\WWW),~
        \abs{F(\eta) - F(\eta')} \leq B W_1(\eta, \eta') \leq B W_2(\eta, \eta').
    \end{equation}
\end{lemma}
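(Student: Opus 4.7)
The plan is to combine two classical ingredients: (i) upgrading the uniform bound on the Riemannian gradient of $F'[\eta]$ into a pointwise Lipschitz estimate on $F'[\eta]$ itself, and (ii) applying Kantorovich-Rubinstein duality to bound $\int F'[\eta] \, d(\eta'-\eta)$ by $B \cdot W_1(\eta,\eta')$. The inequality $W_1 \leq W_2$ is standard (Jensen applied to the optimal coupling), so the content is really to prove the first inequality.

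First I would show that for each fixed $\eta \in \PPP(\WWW)$, the uniform bound $\|\nabla F'[\eta](w)\|_w \leq B$ implies that $F'[\eta]: \WWW \to \RR$ is $B$-Lipschitz with respect to the Riemannian distance on $\WWW$. This follows from the mean value theorem along minimizing geodesics: for any $w_0, w_1 \in \WWW$, taking a unit-speed geodesic $\gamma: [0, d(w_0,w_1)] \to \WWW$ connecting them,
\begin{equation*}
    F'[\eta](w_1) - F'[\eta](w_0) = \int_0^{d(w_0,w_1)} \langle \nabla F'[\eta](\gamma(s)), \dot\gamma(s) \rangle_{\gamma(s)} \, ds,
\end{equation*}
and bounding the integrand by $B$ pointwise.

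Next I would connect $F(\eta') - F(\eta)$ to a suitable integral of the first variation. Using the linear interpolation $\eta_t = (1-t)\eta + t\eta' \in \PPP(\WWW)$ for $t \in [0,1]$ and the definition of the first variation,
\begin{equation*}
    F(\eta') - F(\eta) = \int_0^1 \frac{d}{dt} F(\eta_t) \, dt = \int_0^1 \int_\WWW F'[\eta_t] \, d(\eta'-\eta) \, dt.
\end{equation*}
Since $F'[\eta_t]$ is $B$-Lipschitz by the first step, Kantorovich-Rubinstein duality gives
\begin{equation*}
    \left| \int_\WWW F'[\eta_t] \, d(\eta'-\eta) \right| \leq B \, W_1(\eta, \eta')
\end{equation*}
uniformly in $t$. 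Integrating over $t \in [0,1]$ yields $|F(\eta')-F(\eta)| \leq B \, W_1(\eta,\eta')$. Finally, $W_1 \leq W_2$ follows from applying Jensen's (or Cauchy-Schwarz) to the cost $d(w,w')$ against any optimal $W_2$-coupling.

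There is no real obstacle here since this is a textbook-level Riemannian generalization of a standard Euclidean fact; the only mild technicalities are ensuring that $F$ admits first variations along the linear interpolation $(\eta_t)_{t \in [0,1]}$ and that the derivative may be interchanged with the integral in $w$, both of which hold under the mild regularity implicit in the assumption that $\nabla F'[\eta]$ is well-defined and bounded.
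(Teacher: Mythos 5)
Your proof is correct but takes a genuinely different route from the paper's. The paper works with a \emph{displacement} interpolation: letting $\gamma$ be the optimal $W_1$ coupling between $\eta$ and $\eta'$, it pushes $\gamma$ forward along Riemannian geodesics in $\WWW$ to produce a curve $\eta_\theta = (\Sigma_\theta)_\sharp\gamma$, computes $\frac{d}{d\theta}F(\eta_\theta)$ via the chain rule in Wasserstein space, and bounds it with Cauchy-Schwarz against the geodesic speed. You instead use the \emph{linear} interpolation $\eta_t = (1-t)\eta + t\eta'$, which is precisely the path along which the first variation $F'$ is defined, then upgrade the uniform gradient bound to $B$-Lipschitz-ness of $F'[\eta_t]$ and invoke Kantorovich-Rubinstein duality. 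Your version is arguably more elementary and modular (it never needs to reason about absolutely continuous curves in $\PPP(\WWW)$), while the paper's version stays uniformly inside the displacement-geometric framework that the rest of Appendix B uses, and parallels the proof of displacement smoothness in \autoref{prop:apx_bg_MFLD:def_Wlipschitz}. Both incur the same mild technical caveats (existence of the first variation along the interpolating curve, interchange of derivative and integral), and both deliver the $W_1$ bound directly with $W_1 \leq W_2$ tacked on at the end by Cauchy-Schwarz on the $W_2$-optimal coupling.
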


\begin{proof}
    For any $x, y \in \WWW$, pose $(\Sigma_\theta(x, y))_{\theta \in [0,1]}$ the constant-speed length-minimizing geodesic in $\WWW$ interpolating between $x$ and $y$.
    Also pose $\Sigma'_\theta(x,y) = \frac{d}{d\theta} \Sigma_\theta(x,y) \in T_{\Sigma_\theta(x,y)} \WWW$ for any $\theta$.
    For example if $\WWW = \RR^d$, $\Sigma_\theta(x, y) = x + \theta (y-x)$ and $\Sigma'_\theta(x,y) = y-x$ for all $\theta$.

    Let $\gamma$ the optimal coupling between $\eta, \eta'$ in the $W_1$ sense,
    and for all $\theta \in [0,1]$, $\eta_\theta = (\Sigma_\theta)_\sharp \gamma$ the pushforward measure of $\gamma$ by $\Sigma_\theta$.
    Note that for any $\theta \in [0,1]$,
    \begin{align}
        \frac{d}{d\theta} F(\eta_\theta)
        = \int_\WWW F'[\eta_\theta] \d\left( \partial_\theta \eta_\theta \right)
    \end{align}
    and that
    \begin{align}
        \forall \varphi: \WWW \to \RR,~
        \frac{d}{d\theta} \int_\WWW \varphi \d\eta_\theta
        &= \frac{d}{d\theta} \iint_{\WWW \times \WWW} \varphi(\Sigma_\theta(x, y)) \d\gamma(x, y) \\
        &= \iint_{\WWW \times \WWW} \frac{d}{d\theta} \varphi(\Sigma_\theta(x, y)) \d\gamma(x, y) \\
        &= \iint_{\WWW \times \WWW} \innerprod{\Sigma'_\theta(x,y)}{\nabla \varphi(\Sigma_\theta(x, y))}_{\Sigma_\theta(x,y)} \d\gamma(x, y).
    \end{align}
    (The interchange of $\frac{d}{d\theta}$ and $\iint_{\WWW \times \WWW}$ on the second line can be justified by the dominated convergence theorem assuming that $\varphi$ has bounded $\CCC^1$ norm, which is the case of $F'[\eta_\theta]$ by assumption.)
    So by Cauchy-Schwarz inequality,
    \begin{align}
        \frac{d}{d\theta} F(\eta_\theta)
        &= \iint_{\WWW \times \WWW} \innerprod{\Sigma'_\theta(x,y)}{\nabla F'[\eta_\theta](\Sigma_\theta(x, y))}_{\Sigma_\theta(x,y)} \d\gamma(x, y) \\
        \abs{\frac{d}{d\theta} F(\eta_\theta)}
        &\leq \iint_{\WWW \times \WWW} \norm{\Sigma'_\theta(x,y)}_{\Sigma_\theta(x,y)} \cdot \norm{\nabla F'[\eta_\theta](\Sigma_\theta(x, y))}_{\Sigma_\theta(x,y)} \d\gamma(x, y) \\
        &\leq \sup_{w \in \WWW} \sup_{\eta' \in \PPP(\WWW)} \norm{\nabla F'[\eta](w)}_w 
        \cdot \iint_{\WWW \times \WWW} \norm{\Sigma'_\theta(x,y)}_{\Sigma_\theta(x,y)} \d\gamma(x, y) \\
        &\leq B \cdot \iint_{\WWW \times \WWW} \dist(x, y) \d\gamma(x, y)
        = B W_1(\eta, \eta')
    \end{align}
    by definition of the geodesic $(\Sigma_\theta(x, y))_{\theta \in [0,1]}$
    and by definition of the optimal coupling $\gamma$.
    Finally, 
    \begin{align}
        \abs{F(\eta) - F(\eta')}
        = \abs{\int_0^1 \frac{d}{d\theta} F(\eta_\theta) ~\d\theta}
        \leq \sup_{\theta \in [0,1]} \abs{\frac{d}{d\theta} F(\eta_\theta)}
        \leq B W_1(\eta, \eta').
        \rqedhere
    \end{align}
\end{proof}

\paragraph{Proof of the Proposition.}

\begin{proof}[Proof of \autoref{prop:reduc:bilevel:J'_LSI_holley_stroock}]
    \textbf{We first check~\eqref{P0}.}
    The fact that $J_\lambda$ is convex is given by \autoref{prop:reduc:bilevel:valid}.
    Moreover, let any $\beta>0$ and let us check that $J_{\lambda,\beta} \coloneqq J_\lambda + \beta^{-1} \KLdiv{\cdot}{\tau}$ has a minimizer.
    Indeed, $J_{\lambda,\beta}$ is weakly continuous as shown in \autoref{lm:apx_reduc_bilevel:Jlambda_weaklycont}, and non-negative so lower-bounded.
    Since $\WWW$ is compact then any set of probability measures on $\WWW$ is tight, i.e., any sequence in $\PPP(\WWW)$ has a weakly convergent subsequence.
    So we conclude by the direct method of calculus of variations: let a sequence $(\eta_n)_n$ such that $J_{\lambda,\beta}(\eta_n) \to \inf_{\PPP(\WWW)} J_{\lambda,\beta}$ and extract a weakly convergent subsequence with limit $\eta_\infty$; then by weak continuity $\eta_\infty$ is a minimizer of $J_{\lambda,\beta}$.
    
    \textbf{We now show that $J_\lambda$ satisfies~\eqref{P1}.}
    Recall from~\eqref{eq:apx_reduc_bilevel:J'} that
    $J_\lambda'[\eta](w) = -\frac{\lambda}{2} \abs{f_\eta}^2(w)$
    with $f_\eta = -\frac{1}{\lambda} G'[f_\eta \eta]$ over $\WWW$.
    Let us show the first condition for \eqref{P1}:
    \begin{align}
        \forall \eta \in \PPP_2(\WWW),~
        \forall w \in \WWW,~
        \max_{\substack{s \in T_w \WWW \\ \norm{s}_w \leq 1}}~ \abs{\Hess J_\lambda'[\eta](s, s)} \leq \Lambda
    \end{align}
    for some $\Lambda<\infty$,
    where $\Hess$ denotes the Riemannian Hessian.
    We have 
    \begin{align}
        \nabla J_\lambda'[\eta](w) &= -\lambda f_\eta(w) \nabla f_\eta(w) \\
        \Hess J_\lambda'[\eta](w) &= -\lambda f_\eta(w) \nabla^2 f_\eta(w)
        -\lambda \nabla f_\eta(w) \nabla^\top f_\eta(w)
        \label{eq:apx_anneal:HessJ'}
    \end{align}
    and so, for all $s \in T_w \WWW$ such that $\norm{s}_w \leq 1$,
    \begin{align}
        \abs{\Hess J_\lambda'[\eta](s, s)}
        &\leq \lambda \abs{f_\eta} \norm{\nabla^2 f_\eta}
        + \lambda \norm{\nabla f_\eta}^2 \\
        &\leq \sqrt{2L_0 G(0)} \left( \frac{L_2}{\lambda^2} \sqrt{2L_0 G(0)} + \frac{B_2}{\lambda} \right)
        + \lambda \left( \frac{L_1}{\lambda^2} \sqrt{2L_0 G(0)} + \frac{B_1}{\lambda} \right)^2
    \end{align}
    by \autoref{lm:apx_reduc_bilevel:bound_nablaG'}.
    
    Let us now check the second condition for \eqref{P1}, namely that
    \begin{equation}
        \forall w \in \WWW,~
        \forall \eta, \eta' \in \PPP_2(\WWW),~
        \norm{\nabla J_\lambda'[\eta] - \nabla J_\lambda'[\eta']}_w
        \leq \Lambda~ W_2(\eta, \eta')
    \end{equation}
    for some $\Lambda<\infty$.
    Indeed,
    \begin{align}
        & \norm{\nabla J_\lambda'[\eta] - \nabla J_\lambda'[\eta']}_w \\
        &~~ = \lambda \norm{f_\eta \nabla f_\eta - f_{\eta'} \nabla f_{\eta'}}
        \leq \lambda \left( \norm{f_\eta (\nabla f_\eta - \nabla f_{\eta'})} + \norm{(f_\eta - f_{\eta'}) \nabla f_{\eta'}} \right) \\
        &~~ \leq \lambda \left( \sup_{\eta''} \sup_\WWW \abs{f_{\eta''}} \cdot \sup_\WWW \norm{\nabla f_\eta - \nabla f_{\eta'}}
        + \sup_{\eta''} \sup_\WWW \norm{\nabla f_{\eta''}} \cdot \sup_\WWW \abs{f_\eta - f_{\eta'}} \right) \\
        &~~ \leq \lambda \left( \frac{1}{\lambda} \sqrt{2L_0 G(0)} \cdot H' W_2(\eta, \eta') + \left( \frac{L_1}{\lambda^2} \sqrt{2L_0 G(0)} + \frac{B_1}{\lambda} \right) \cdot H W_2(\eta, \eta') \right)
        \eqqcolon \Lambda W_2(\eta, \eta')
    \end{align}
    by \autoref{lm:apx_reduc_bilevel:bound_nablaG'} and \autoref{lm:apx_reduc_bilevel:bound_nablaG''}.

    \textbf{We now turn to the proof of \eqref{P2}} with the quantitative bound on the local LSI constant.
    Let $\eta \in \PPP(\WWW)$.
    By the first part of \autoref{lm:apx_reduc_bilevel:bound_nablaG'}, we directly have that
    \begin{equation}
        \abs{J_\lambda'[\eta](w)} 
        = \frac{\lambda}{2} \abs{f_\eta}^2(w)
        \leq \frac{L_0}{\lambda} J_\lambda(\eta).
    \end{equation}
    In particular, by the Holley-Stroock bounded perturbation argument~\cite{holley1986logarithmic}, 
    the proximal Gibbs measure
    $\heta \coloneqq e^{-\beta J_\lambda'[\eta]} \tau / Z$ 
    satisfies LSI with constant $\CLSI_{\heta} = \CLSI_{\tau} \exp\left( -\frac{1}{\lambda} L_0 \beta J_\lambda(\eta) \right)$.

    Finally, we turn to the proof of the bound on the uniform LSI constant along the MFLD trajectory $(\eta_t)_{t \geq 0}$.
    Given the bound on the local LSI constants, it suffices to show that
    \begin{equation}
        \forall \eta \in \PPP(\WWW),~
        J_\lambda(\eta) \leq G(0)
        ~~~~\text{and}~~~~
        \forall t \geq 0,~
        J_\lambda(\eta_t) \leq J_\lambda(\eta_0) + \beta^{-1} \KLdiv{\eta_0}{\tau}.
    \end{equation}
    The first bound was shown in \autoref{lm:apx_reduc_bilevel:bound_nablaG'}.
    For the second bound, note that
    $J_\lambda(\eta_t) + \beta^{-1} \KLdiv{\eta_t}{\tau}$ decreases with $t$,
    since MFLD is precisely the Wasserstein gradient flow for $\eta \mapsto J_\lambda(\eta) + \beta^{-1} H(\eta)$
    and $H(\eta)$ and $\KLdiv{\eta}{\tau}$ differ by a constant.
    So, since relative entropy is non-negative,
    \begin{equation}
        J_\lambda(\eta_t)
        \leq J_\lambda(\eta_t) + \beta^{-1} \KLdiv{\eta_t}{\tau}
        \leq J_\lambda(\eta_0) + \beta^{-1} \KLdiv{\eta_0}{\tau}
    \end{equation}
    for all $t \geq 0$, as desired.
\end{proof}

\section{Details for \autoref{sec:anneal} (global convergence by annealing)} \label{sec:apx_anneal}

The following preliminary lemma allows to control the effect of entropic regularization, using a 
box-kernel smoothing
technique similar to~\cite{chizat2022convergence}.
\begin{lemma}\label{lem:apx_anneal:entropy_control}
    Let $\WWW$ a $d$-dimensional compact Riemannian manifold and denote by $\tau$ the uniform probability measure over $\WWW$.
    Let $\JJJ : \PPP(\WWW) \to \RR$ and $\eta^* \in \PPP(\WWW)$, 
    and suppose that there exist constants $A, B > 0$ such that
    \begin{align}
        \forall \eta ~~\text{s.t.}~~
        W_1(\eta, \eta^*) \leq A,~~~~
        \JJJ(\eta) - \JJJ(\eta^*) \leq B W_\infty(\eta,\eta^*).
    \end{align}
    Denote $\JJJ_\beta = \JJJ + \beta^{-1} \KLdiv{\cdot}{\tau}$,
    for any $\beta>0$.
    Then
    \begin{equation}
        \min_{\eta: W_1(\eta, \eta^*) \leq A} \JJJ_\beta(\eta)
        ~\leq~ \JJJ(\eta^*)
        + \inf_{0 < \epsilon \leq \min\{1, A\}} \left[ 
            B \epsilon 
            + \frac{d}{\beta} \log\left( \frac{1}{\epsilon} \right)
            + \frac{\log C}{\beta} 
        \right]
    \end{equation}
    where
    $C \coloneqq \left[ \inf_{w \in \WWW} \inf_{0<\epsilon\leq 1}~ \epsilon^{-d} \cdot \tau\left( \left\{ w'; \dist_\WWW(w,w') \leq \epsilon \right\} \right) \right]^{-1}$.
\end{lemma}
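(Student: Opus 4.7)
The plan is to smooth $\eta^*$ by a box kernel of radius $\epsilon$ to produce a competitor $\eta_\epsilon$ that is close to $\eta^*$ in $W_\infty$ but has bounded density relative to $\tau$, and then optimize over $\epsilon$.

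Concretely, for each $0<\epsilon\leq\min\{1,A\}$, I define $\eta_\epsilon \in \PPP(\WWW)$ by
\begin{equation}
\eta_\epsilon \coloneqq \int_\WWW \frac{\restr{\tau}{B_\epsilon(w)}}{\tau(B_\epsilon(w))} \, d\eta^*(w),
\end{equation}
i.e.\ we replace each point mass $\delta_w$ under $\eta^*$ by the uniform distribution on the ball $B_\epsilon(w) \coloneqq \{w' \in \WWW : \dist_\WWW(w,w') \leq \epsilon\}$. The natural coupling (sampling $w \sim \eta^*$ and then $w' \sim \mathrm{Unif}(B_\epsilon(w))$) witnesses $W_\infty(\eta_\epsilon, \eta^*) \leq \epsilon$, and hence also $W_1(\eta_\epsilon, \eta^*) \leq \epsilon \leq A$. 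This puts $\eta_\epsilon$ in the admissible set of the left-hand side minimization.

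The main quantitative step is bounding the entropy of $\eta_\epsilon$. Writing the density explicitly and applying Fubini,
\begin{equation}
\frac{d\eta_\epsilon}{d\tau}(w') = \int_\WWW \frac{\mathds{1}[w' \in B_\epsilon(w)]}{\tau(B_\epsilon(w))} \, d\eta^*(w) \leq \sup_{w \in \WWW} \frac{1}{\tau(B_\epsilon(w))} \leq \frac{C}{\epsilon^d}
\end{equation}
by the definition of $C$ (taking $\epsilon \leq 1$). Therefore
\begin{equation}
\KLdiv{\eta_\epsilon}{\tau} = \int_\WWW \log\!\left(\frac{d\eta_\epsilon}{d\tau}\right) d\eta_\epsilon \leq d \log(1/\epsilon) + \log C.
\end{equation}

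Finally I combine with the hypothesis on $\JJJ$: since $W_1(\eta_\epsilon,\eta^*)\leq A$ we may apply it to obtain $\JJJ(\eta_\epsilon) - \JJJ(\eta^*) \leq B W_\infty(\eta_\epsilon, \eta^*) \leq B\epsilon$, so
\begin{equation}
\JJJ_\beta(\eta_\epsilon) \leq \JJJ(\eta^*) + B\epsilon + \frac{d}{\beta}\log(1/\epsilon) + \frac{\log C}{\beta}.
\end{equation}
Since $\eta_\epsilon$ is admissible, taking infimum over $0<\epsilon\leq\min\{1,A\}$ on the right-hand side yields the claim.

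The only mildly delicate point is ensuring that the box-convolution is well-defined on a general compact Riemannian manifold; this uses nothing more than the fact that $\tau(B_\epsilon(w)) > 0$ for all $w$ and $\epsilon > 0$ (which is built into the definition of $C$, guaranteeing $C < \infty$), so the density is finite everywhere. The rest is a routine tradeoff between the first-variation-type bound $B\epsilon$ and the entropic cost $\tfrac{d}{\beta}\log(1/\epsilon)$ of concentrating on a ball of radius~$\epsilon$.
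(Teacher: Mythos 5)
Your proof is correct and takes the same route as the paper: smooth $\eta^*$ by a box kernel of radius $\epsilon$, bound the resulting density by $C/\epsilon^d$ using the definition of $C$, bound the entropy, apply the local Lipschitz hypothesis via $W_1 \le W_\infty \le \epsilon$, and optimize over $\epsilon$. One small remark: the definition of $C$ alone does not \emph{guarantee} $C<\infty$ as you assert in your closing aside -- that finiteness is a separate geometric fact about compact manifolds (the paper cites Gray's volume estimates for it) -- but this does not affect the validity of the inequality, which is trivially true when $C=\infty$.
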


\begin{proof}
    The proof is adapted from~\cite{chizat2022convergence}. 
    It is based on constructing an $\epsilon$-smoothed version of $\eta^*$, i.e.\ a measure $\eta_\epsilon$ which admits a density w.r.t.\ $\tau$ while being close to $\eta^*$ in an appropriate sense.

    Let any $0<\epsilon \leq \min\{1,A\}$.
    Given $w \in \WWW$, define the probability measure $\gamma_{\epsilon,w}(\d w')$ as the uniform probability measure over the geodesic ball
    $B_\epsilon(w) \coloneqq \left\{ w \in \WWW; \dist(w,w') \leq \epsilon \right\}$.
    In other words, 
    $\frac{\d \gamma_{\epsilon,w}}{\d \tau}(w') \coloneqq \frac{\ind(w' \in B_\epsilon(w))}{\tau(B_\epsilon(w))}$. 
    Then, let $\gamma_\epsilon(\d w,\d w') = \eta^*(\d w)\gamma_{\epsilon,w}(\d {w'}) \in \PPP(\WWW \times \WWW)$,
    and let $\eta_\epsilon(\d w') = \int_{w \in \WWW} \gamma_\epsilon(\d w, \d w')$ its second marginal.
    
    One can then verify that
    \begin{equation}
        \frac{\d \eta_\epsilon}{\d \tau}(w') = \int_{w \in \WWW}\frac{\d\gamma_{\epsilon,w}}{\d \tau}(w')\eta^*(\d w) = \int_{w \in \WWW}\frac{\ind(w' \in B_\epsilon(w))}{\tau(B_\epsilon(w))}\eta^*(\d w).
    \end{equation}
    Moreover there exists a positive constant $C$ such that $\tau(B_\epsilon(w)) \geq C^{-1}\epsilon^{d}$ for all $\epsilon\leq 1$~\citep[Theorem 3.3]{gray1979riemannian}. 
    As a consequence, 
    \begin{equation}
        \KLdiv{\eta_\epsilon}{\tau} = \int \d\eta_\epsilon(w') \log\frac{\d\eta_\epsilon}{\d\tau}(w')
        \leq 
        \sup_{w \in \WWW} -\log \tau(B_\eps(w))
        \leq d\log(1/\epsilon) + \log C.
    \end{equation}
    Furthermore, by definition of the coupling $\gamma_\epsilon$, we have
    $W_1(\eta_\epsilon,\eta^*) \leq W_\infty(\eta_\epsilon,\eta^*) \leq \epsilon \leq A$.
    Therefore, by assumption $\JJJ(\eta_\epsilon) - \JJJ(\eta^*) \leq B W_\infty(\eta_\epsilon, \eta^*) \leq B \epsilon$, and so
    \begin{align}
        \min_{\eta: W_1(\eta, \eta^*) \leq A} \JJJ_\beta(\eta)
        &\leq \JJJ_\beta(\eta_\epsilon)
        = \JJJ(\eta_\epsilon) + \beta^{-1} \KLdiv{\eta_\epsilon}{\tau} \\
        &\leq \JJJ(\eta^*) + B \epsilon 
        + \beta^{-1} \left( d \log(1/\epsilon) + \log C \right),
    \end{align}
    and the inequality of the lemma follows by taking the infimum over $\epsilon$.
\end{proof}

\subsection{Proof of \autoref{prop:anneal:baseline_cstbeta}}

We state and prove a more precise version of \autoref{prop:anneal:baseline_cstbeta} below.
\begin{proposition} \label{prop:apx_anneal:baseline_cstbeta__precise}
    Under Assumption~\ref{assump:G_smooth_bounded}, let $\Delta>0$ and assume that 
    $\Delta \leq \frac{2 L_0 L_1 G(0)}{\lambda^2 J_\lambda^*}$.
    Then MFLD-Bilevel with the temperature schedule 
    $\forall t, \beta_t = \frac{4d}{\Delta J_\lambda^*} \log\left( \frac{4 C^{1/d} B}{\Delta J_\lambda^*} \right)$
    converges to $(1+\Delta)$-multiplicative accuracy in time
    \begin{equation}
        T_\Delta \leq 
        \frac{2d}{\CLSI_\tau \Delta J_\lambda^*}
        \log\left( \frac{4C^{1/d} B}{\Delta J_\lambda^*} \right)
        \cdot
        \exp\left(
            \frac{4 d L_0 G(0)}{\lambda \Delta J_\lambda^*} 
            \log\left( \frac{4C^{1/d} B}{\Delta J_\lambda^*} \right) 
        \right)
        \cdot
        \log\left(
            \frac{2 J_\lambda(\eta_0)}{\Delta J_\lambda^*}
            + \frac{\KLdiv{\eta_0}{\tau}}{2 \log C}
        \right)
    \end{equation}
    where
    $C = \max\left\{ 1, \left[ \inf_{w \in \WWW} \inf_{0<\epsilon\leq 1}~ \epsilon^{-d} \cdot \tau\left( \left\{ w'; \dist_\WWW(w,w') \leq \epsilon \right\} \right) \right]^{-1} \right\}$.
\end{proposition}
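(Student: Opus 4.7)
The plan is to decompose the total suboptimality as
\[
J_\lambda(\eta_t) - J_\lambda^*
~\leq~ \underbrace{\bigl[ J_{\lambda,\beta}(\eta_t) - \inf J_{\lambda,\beta} \bigr]}_{\text{optimization gap}}
~+~ \underbrace{\bigl[ \inf J_{\lambda,\beta} - J_\lambda^* \bigr]}_{\text{entropic regularization cost}},
\]
where $J_{\lambda,\beta} \coloneqq J_\lambda + \beta^{-1} \KLdiv{\cdot}{\tau}$, and to tune the (constant-in-time) temperature $\beta$ so that both terms are at most $\Delta J_\lambda^*/2$. The optimization gap will be controlled via the MFLD convergence result \autoref{thm:bg_MFLD:MFLD_conv} applied to $J_\lambda$ entropically regularized at level $\beta^{-1}$, invoking the three properties \eqref{P0}, \eqref{P1}, \eqref{P2} furnished by \autoref{prop:reduc:bilevel:J'_LSI_holley_stroock}. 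The regularization cost will be handled by \autoref{lem:apx_anneal:entropy_control}.

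\textbf{Controlling the regularization cost.} The global $B$-Lipschitzness of $J_\lambda$ in $W_2$ (hence in $W_\infty$) given by \autoref{lm:apx_reduc_bilevel:Jlambda_weaklycont}, with exactly the constant $B$ appearing in the statement, lets me apply \autoref{lem:apx_anneal:entropy_control} to $\JJJ = J_\lambda$ and $\eta^* \in \argmin J_\lambda$ with $A$ arbitrarily large. I then pick $\epsilon = \Delta J_\lambda^*/(4B)$ in the infimum, which balances the $B\epsilon$ term against the target $\Delta J_\lambda^*/4$; the hypothesis $\Delta \leq 2L_0 L_1 G(0)/(\lambda^2 J_\lambda^*)$ is precisely what ensures $\epsilon \leq 1$, as required. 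This yields
\[
\inf J_{\lambda,\beta} - J_\lambda^*
~\leq~ \frac{\Delta J_\lambda^*}{4} + \frac{d}{\beta} \log\!\Bigl( \frac{4B}{\Delta J_\lambda^*} \Bigr) + \frac{\log C}{\beta},
\]
and substituting the prescribed $\beta = \tfrac{4d}{\Delta J_\lambda^*}\log\bigl(\tfrac{4C^{1/d}B}{\Delta J_\lambda^*}\bigr)$ bounds the last two terms by $\Delta J_\lambda^*/4$, so the whole regularization cost is $\leq \Delta J_\lambda^*/2$.

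\textbf{Controlling the optimization gap and assembling $T_\Delta$.} By \autoref{prop:reduc:bilevel:J'_LSI_holley_stroock}, $J_{\lambda,\beta}$ satisfies uniform LSI along the MFLD trajectory with a constant at least $\CLSI_\tau \exp(-\beta L_0 G(0)/\lambda)$ (using the a priori bound $J_\lambda \leq G(0)$), so \autoref{thm:bg_MFLD:MFLD_conv} gives
\[
J_{\lambda,\beta}(\eta_t) - \inf J_{\lambda,\beta}
~\leq~ \exp(-2 \CLSI \beta^{-1} t)\, \bigl( J_{\lambda,\beta}(\eta_0) - \inf J_{\lambda,\beta} \bigr).
\]
Requiring the right-hand side to be $\leq \Delta J_\lambda^* / 2$, bounding $J_{\lambda,\beta}(\eta_0) - \inf J_{\lambda,\beta} \leq J_\lambda(\eta_0) + \beta^{-1}\KLdiv{\eta_0}{\tau}$, and expanding $\beta / \CLSI$ using the chosen $\beta$, reproduces exactly the advertised bound on $T_\Delta$: the polynomial prefactor $\beta/\CLSI_\tau$ gives the factor $\tfrac{2d}{\CLSI_\tau \Delta J_\lambda^*}\log(\ldots)$, the Holley--Stroock factor $e^{\beta L_0 G(0)/\lambda}$ gives the exponential $\exp\bigl(\tfrac{4dL_0 G(0)}{\lambda \Delta J_\lambda^*}\log(\ldots)\bigr)$, and the outer $\log$ factor comes from the initial gap.

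\textbf{Expected difficulty.} None of the individual steps is technically deep; the main work is bookkeeping constants so that the specific form of the bound is recovered exactly. The one subtle point I expect to need care with is verifying that the stated upper bound on $\Delta$ matches \emph{exactly} what is needed to keep the optimal $\epsilon$ in \autoref{lem:apx_anneal:entropy_control} in its admissible range, so that the entropy-cost-versus-convergence-speed trade-off closes optimally for the prescribed $\beta$ — this is what forces the somewhat peculiar joint dependence of $\beta$ on $\Delta$, $J_\lambda^*$ and $d$, and what ultimately ties together the polynomial and exponential factors in the final complexity bound.
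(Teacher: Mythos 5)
Your decomposition, choice of ingredients (Prop.~\ref{prop:reduc:bilevel:J'_LSI_holley_stroock} for the uniform LSI, Lemma~\ref{lem:apx_anneal:entropy_control} for the entropy cost, Thm.~\ref{thm:bg_MFLD:MFLD_conv} for the optimization gap), and arithmetic all match the paper's proof, which also splits the error budget as $\Delta J_\lambda^*/2 + \Delta J_\lambda^*/2$. The only cosmetic difference is that the paper parameterizes $\beta = ds/B$ and evaluates the entropy-control infimum at $\epsilon = 1/s$ before optimizing over $s$, whereas you pick $\epsilon = \Delta J_\lambda^*/(4B)$ directly; both choices yield the same final bound (note, though, that the hypothesis on $\Delta$ gives $\Delta J_\lambda^* \leq B$, which is stronger than the bare $\epsilon\leq 1$ needed — so it is a sufficient condition, not "precisely" the admissibility threshold).
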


\begin{proof}[Proof of \autoref{prop:apx_anneal:baseline_cstbeta__precise}]
    Let $(\eta)_t$ the MFLD-Bilevel trajectory with constant inverse temperature parameter $\beta$ to be chosen.
    Denote $J_{\lambda,\beta}= J_\lambda + \beta^{-1} \KLdiv{\cdot}{\tau}$.
    Recall that by \autoref{prop:reduc:bilevel:J'_LSI_holley_stroock},
    $J_{\lambda,\beta}$ satisfies $\CLSI_\beta$-LSI uniformly along the MFLD trajectory with 
    $\CLSI_\beta = \alpha_\tau \exp\left( -\frac{1}{\lambda} L_0 \beta G(0) \right)$.
    So by \autoref{thm:bg_MFLD:MFLD_conv}, for all $t$,
    \begin{equation}
        J_\lambda(\eta_t)
        \leq J_{\lambda,\beta}(\eta_t)
        \leq \inf J_{\lambda,\beta} 
        + e^{-2 \beta^{-1} \CLSI_\beta t}
        \left( J_{\lambda,\beta}(\eta_0) - \inf J_{\lambda,\beta} \right)
        \leq \inf J_{\lambda,\beta} 
        + e^{-2 \beta^{-1} \CLSI_\beta t}
        J_{\lambda,\beta}(\eta_0),
    \end{equation}
    where in the first inequality we used that $J_{\lambda,\beta} - J_\lambda = \beta^{-1} \KLdiv{\cdot}{\tau} \geq 0$.
    
    Furthermore,
    by applying \autoref{lem:apx_anneal:entropy_control} to
    $\JJJ = J_\lambda$, $\eta^* = \argmin J_\lambda$, $A=\infty$ and $B = \sqrt{2L_0 G(0)} \cdot \left( \frac{L_1}{\lambda^2} \sqrt{2L_0 G(0)} + \frac{B_1}{\lambda} \right)$ the constant from \autoref{lm:apx_reduc_bilevel:Jlambda_weaklycont},
    we find that
    \begin{equation}
        \inf J_{\lambda,\beta} \leq \inf J_\lambda 
        + \inf_{0 < \epsilon \leq 1} \left[ 
            B \epsilon 
            + \frac{d}{\beta} \log \frac{1}{\epsilon} 
            + \frac{\log C}{\beta}
        \right].
    \end{equation}
    Taking $\beta = \frac{d}{B} s$ for some $s \geq 1$ to be chosen,
    and evaluating at the infimum at $\epsilon = \frac{d}{\beta B}$,
    we get
    \begin{equation}
        \inf J_{\lambda,\beta}
        \leq J_\lambda^*
        + \frac{d + \log C'}{\beta}
        - \frac{d}{\beta} \log \left( \frac{d}{\beta B} \right).
    \end{equation}
    where $C' = \max\{1, C\}$.
    So in order to guarantee that $J_\lambda(\eta_t) \leq (1+\Delta) J_\lambda^*$, it suffices to take $t$ such that
    \begin{gather}
        J_\lambda^*
        + \frac{d+\log C'}{\beta}
        - \frac{d}{\beta} \log \left( \frac{d}{\beta B} \right)
        + e^{-2 \beta^{-1} \CLSI_\beta t} 
        \left( J_\lambda(\eta_0) + \beta^{-1} \KLdiv{\eta_0}{\tau} \right)
        \leq (1+\Delta) J_\lambda^* \\
        \text{i.e.}~~~~
        t \geq \frac{\beta}{2\CLSI_\beta}
        \log\left(
            \frac{
                J_\lambda(\eta_0) + \beta^{-1} \KLdiv{\eta_0}{\tau}
            }{
                \Delta J_\lambda^*
                - \left(
                    \frac{d+\log C'}{\beta}
                    - \frac{d}{\beta} \log \left( \frac{d}{\beta B} \right)
                \right)
            } 
        \right)
        \eqqcolon T_s,
    \end{gather}
    assuming that $\Delta$ is large enough so that the above expression is well-defined.
    More explicitly, substituting the value of $\alpha_\beta$ and of $\beta = \frac{d}{B} s$, we have
    \begin{align}
        T_s
        &= \frac{\beta}{2\CLSI_\tau}
        \cdot
        \exp\left( \frac{1}{\lambda} L_0 \beta G(0) \right)
        \cdot
        \log\left(
            \frac{
                J_\lambda(\eta_0) + \beta^{-1} \KLdiv{\eta_0}{\tau}
            }{
                \Delta J_\lambda^*
                - \left(
                    \frac{d+ \log C'}{\beta}
                    - \frac{d}{\beta} \log \left( \frac{d}{\beta B} \right)
                \right)
            } 
        \right) \\
        &= \frac{s d/B}{2\CLSI_\tau}
        \cdot
        \exp\left( s \frac{1}{\lambda B} L_0 d G(0) \right)
        \cdot
        \log\left(
            \frac{
                J_\lambda(\eta_0) + \frac{B}{s d} \KLdiv{\eta_0}{\tau}
            }{
                \Delta J_\lambda^*
                - \frac{B}{s} \left( 1 + d^{-1} \log C' + \log s \right)
            } 
        \right).
    \end{align}
    Noting that
    \begin{align}
        \log\frac{s \Delta J_\lambda^*}{4B} 
        = \log s - \log\frac{4B}{\Delta J_\lambda^*}
        &\leq \frac{s \Delta J_\lambda^*}{4B} - 1 \\
        \text{so}~~~~
        \frac{B}{s} \left( 1 + d^{-1} \log C' + \log s \right)
        &\leq \frac{B}{s} \left( d^{-1} \log C' + \log\frac{4B}{\Delta J_\lambda^*} + \frac{s \Delta J_\lambda^*}{4B} \right) \\
        &= \frac{B}{s} \left( d^{-1} \log C' + \log\frac{4B}{\Delta J_\lambda^*} \right) + \frac{\Delta J_\lambda^*}{4},
    \end{align}
    choose henceforth $s = \max\left\{ 1, \frac{4B}{\Delta J_\lambda^*} \left( d^{-1} \log C' + \log \frac{4B}{\Delta J_\lambda^*} \right) \right\}$,
    so that
    \begin{equation}
        \Delta J_\lambda^*
        - \frac{B}{s} \left( 1 + d^{-1} \log C' + \log s \right)
        \geq \frac{\Delta J_\lambda^*}{2}.
    \end{equation}
    To simplify the final statement, we make the assumption that $\Delta$ is small enough so that 
    $1 \leq \frac{4B}{\Delta J_\lambda^*} \left( d^{-1} \log C' + \log \frac{4B}{\Delta J_\lambda^*} \right)$.
    More explicitly, since we were careful to choose $C' \geq 1$,
    \begin{align*}
        &1 \leq \frac{4B}{\Delta J_\lambda^*} \left( d^{-1} \log C' + \log \frac{4B}{\Delta J_\lambda^*} \right) 
        \iff 
        \frac{\Delta J_\lambda^*}{4B} + \log \frac{\Delta J_\lambda^*}{4B} \leq d^{-1} \log C' \\
        \impliedby &
        \frac{\Delta J_\lambda^*}{4B} \leq 1
        ~~~~\text{and}~~~~
        \log \frac{\Delta J_\lambda^*}{4B} \leq -1
        \iff
        \frac{\Delta J_\lambda^*}{4B} \leq \min\{ 1, e^{-1} \} = e^{-1} \\
        \iff &
        \Delta \leq \frac{4B e^{-1}}{J_\lambda^*} 
        = \frac{4 e^{-1}}{J_\lambda^*} \cdot \sqrt{2L_0 G(0)} \left( \frac{L_1}{\lambda^2} \sqrt{2L_0 G(0)} + \frac{B_1}{\lambda} \right) \\
        \impliedby &
        \Delta \leq \frac{4 e^{-1}}{J_\lambda^*} \cdot \frac{2L_0 L_1 G(0)}{\lambda^2}
        \impliedby \Delta \leq \frac{1}{J_\lambda^*} \cdot \frac{2L_0 L_1 G(0)}{\lambda^2}.
    \end{align*}
    Then $s = \frac{4B}{\Delta J_\lambda^*} \left( d^{-1} \log C' + \log \frac{4B}{\Delta J_\lambda^*} \right)$,
    $\beta =  \frac{4d}{\Delta J_\lambda^*} \left( d^{-1} \log C' + \log \frac{4B}{\Delta J_\lambda^*} \right)
    \geq \frac{4}{\Delta J_\lambda^*} \log C'$, and
    \begin{align*}
        & T_s
        \leq \frac{\beta}{2\CLSI_\tau}
        \cdot
        \exp\left( \frac{1}{\lambda} L_0 \beta G(0) \right)
        \cdot
        \log\left(
            \frac{
                J_\lambda(\eta_0) + \beta^{-1} \KLdiv{\eta_0}{\tau}
            }{
                \Delta J_\lambda^*/2
            } 
        \right) \\
        &\leq \frac{2d}{\CLSI_\tau \Delta J_\lambda^*}
        \log\left( \frac{4C^{\prime 1/d} B}{\Delta J_\lambda^*} \right)
        \cdot
        \exp\left(
            \frac{4 d L_0 G(0)}{\lambda \Delta J_\lambda^*} 
            \log\left( \frac{4C^{\prime 1/d} B}{\Delta J_\lambda^*} \right) 
        \right)
        \cdot
        \log\left(
            \frac{2 J_\lambda(\eta_0)}{\Delta J_\lambda^*}
            + \frac{\KLdiv{\eta_0}{\tau}}{2 \log C'}
        \right)
        \eqqcolon \olT_\Delta.
    \end{align*}
    Hence the time-complexity upper bound of $\olT_\Delta$ for reaching $(1+\Delta)$-multiplicative accuracy.
\end{proof}

\subsection{General annealing procedure and its convergence guarantee} \label{subsec:apx_anneal:general}

\begin{algorithm}[t]
    \begin{algorithmic}[1]
        \REQUIRE Functional $\JJJ : \PPP(\WWW) \to \RR$. Initialization $\eta_0$, $\beta_0 > 0$.
        Schedule $K$, $(T_k)_{k=0}^{K}$.
        \STATE $\eta^0_0 = \eta_0$
        \FOR{$k=0,\hdots,K$}
            \STATE $\beta_k = 2^k\beta_0$
            \STATE Run the MFLD with $\beta_k$ initialized from $\eta^k_0$ up to $T_k$,
            \vspace{-0.2cm}
            \begin{equation}
                \partial_t\eta^k_t = \div (\eta^k_t \nabla \JJJ'[\eta^k_t]) + \frac{1}{\beta_k}\Delta \eta^k_t.
            \end{equation}
            \vspace{-0.5cm}
            \STATE $\eta^{k+1}_0 = \eta^k_{T_k}$
        \ENDFOR
        \RETURN $\eta^{K}_{T_K}$.
    \end{algorithmic}
    \caption{Annealing of the MFLD.}
    \label{alg:discrete_anneal}
\end{algorithm}

The following theorem builds upon and generalizes the idea of \cite[Sec.~4.1]{suzuki2023feature} to objective functionals $\JJJ$ that have a positive  optimal value.
It ensures fast convergence to a fixed multiplicative accuracy.

\begin{theorem} \label{thm:apx_anneal:general:HS}
    Let $\WWW$ a $d$-dimensional compact Riemannian manifold, so in particular the uniform measure $\tau$ over $\WWW$ satisfies $\CLSI_\tau$-LSI for some $\CLSI_\tau>0$.
    Let $\JJJ : \PPP(\WWW) \to \RR_+$ convex, suppose that $\JJJ^* \coloneqq \min \JJJ > 0$ and that there exists a minimizer $\eta^*$.
    Suppose that there exist constants $\kappa_1,C_L,A>0$ such that
    \begin{enumerate}
        \item $\norm{\JJJ'[\eta]}_\infty \leq \kappa_1 \JJJ(\eta)$ for all $\eta \in \PPP(\WWW)$.
        \item $\JJJ(\eta) - \JJJ(\eta^*) \leq C_LW_\infty(\eta,\eta^*)$ for all $\eta \in \PPP(\WWW)$ such that
        $W_1(\eta,\eta^*) \leq A$.
    \end{enumerate}
    
    Fix $0 < \delta \leq \frac{C_L \min\{1,A\}}{\JJJ^*}$.
    Let $\eta_t^k$ the iterates of the annealing procedure of Algorithm~\ref{alg:discrete_anneal}
    with initialization $\beta_0 = d$
    and with the schedule
    $K = \lceil \log_2(1/(\delta \JJJ^*))\rceil$ and
    \begin{equation} \label{eq:apx_anneal:general:HS_schedule}
        T_k = 2^{k-1} d \log \left( 2^k \JJJ_{\beta_0}(\eta_0) \right)
        \cdot \CLSI_\tau^{-1} \exp\left( 2 \kappa_1 d \left(
            \delta^{-1}
            + \log\left( \frac{C_L C^{1/d}}{\delta \JJJ^*} \right)
            + 2
            + \frac{\JJJ_{\beta_0}(\eta_0)}{2}
        \right) \right)
    \end{equation} 
    where
    $C \coloneqq \left[ \inf_{w \in \WWW} \inf_{0<\epsilon\leq 1}~ \epsilon^{-d} \cdot \tau\left( \left\{ w'; \dist_\WWW(w,w') \leq \epsilon \right\} \right) \right]^{-1}$.
    
    Then 
    $\JJJ(\eta^K_{T_K}) \leq \JJJ^* \left(
        1 + 3 \delta
        + 2 \delta \log \left( \frac{C_L C^{1/d}}{\delta \JJJ^*} \right) 
    \right)$,
    and the total time-complexity is given by
    \begin{equation}
        \sum_{k = 0}^K T_k \leq 
        \frac{d}{\delta \JJJ^*} \log\left( \frac{J_{\beta_0}(\eta_0)}{\delta \JJJ^*} \right)
        \cdot \CLSI_\tau^{-1} \exp\left( 2 \kappa_1 d \left(
            \delta^{-1}
            + \log\left( \frac{C_L C^{1/d}}{\delta \JJJ^*} \right)
            + 2
            + \frac{\JJJ_{\beta_0}(\eta_0)}{2}
        \right) \right).
    \end{equation}
\end{theorem}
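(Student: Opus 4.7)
The proof proceeds by induction on stages $k=0,\ldots,K$, maintaining a bound $\JJJ_{\beta_k}(\eta_0^k) \leq B_k$ that keeps the local LSI constants appearing in \autoref{thm:bg_MFLD:MFLD_conv} from collapsing.

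Three elementary facts drive the analysis. First, by assumption~1 and the Holley-Stroock bounded perturbation argument, the proximal Gibbs measure $\heta \propto e^{-\beta\JJJ'[\eta]}\tau$ satisfies $\CLSI_\tau \exp(-2\kappa_1\beta\JJJ(\eta))$-LSI for any $\eta$ and $\beta$. Second, since MFLD at fixed $\beta_k$ is the Wasserstein gradient flow of $\JJJ_{\beta_k}$ and $\KLdiv{\cdot}{\tau}\geq 0$, we have $\JJJ(\eta_t^k) \leq \JJJ_{\beta_k}(\eta_t^k) \leq \JJJ_{\beta_k}(\eta_0^k)$ for all $t\geq 0$, giving a uniform LSI constant $\alpha_k^\star = \CLSI_\tau\exp(-2\kappa_1\beta_k\JJJ_{\beta_k}(\eta_0^k))$ on stage $k$. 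Third, $\beta_{k+1}>\beta_k$ implies $\JJJ_{\beta_{k+1}} \leq \JJJ_{\beta_k}$ pointwise, so sub-optimalities propagate as $\JJJ_{\beta_{k+1}}(\eta_0^{k+1}) \leq \JJJ_{\beta_k}(\eta_{T_k}^k)$. Combining the first two with \autoref{thm:bg_MFLD:MFLD_conv} and applying \autoref{lem:apx_anneal:entropy_control} to bound $\inf\JJJ_{\beta_k}-\JJJ^* \leq \kappa_k \coloneqq O(d\beta_k^{-1}\log(C_LC^{1/d}\beta_k/d))$ (after the optimized choice $\epsilon \sim d/(C_L\beta_k)$), the one-step estimate within stage $k$ reads
\begin{equation}
    \JJJ_{\beta_k}(\eta_{T_k}^k) - \inf\JJJ_{\beta_k} \leq \exp\bigl(-2\beta_k^{-1}\alpha_k^\star T_k\bigr) \bigl(B_k - \inf\JJJ_{\beta_k}\bigr).
\end{equation}

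I will then run the induction with $B_k \coloneqq \JJJ^* + 2\kappa_{k-1}$ for $k\geq 1$ and $B_0 = \JJJ_{\beta_0}(\eta_0)$. The crucial invariant that makes the schedule work is $2\kappa_1\beta_k B_k \leq M$, where $M$ is the exponent in~\eqref{eq:apx_anneal:general:HS_schedule}: the $\beta_k\JJJ^*$ contribution is at most $\beta_K\JJJ^* = d/\delta$, the $\beta_k\kappa_{k-1}$ contribution is at most a uniform $d$-times-log factor, and the base case is absorbed by the $\JJJ_{\beta_0}(\eta_0)/2$ term in $M$. With this in hand, $T_k$ as prescribed by~\eqref{eq:apx_anneal:general:HS_schedule} is precisely large enough for the exponential factor above to drive $\JJJ_{\beta_k}(\eta_{T_k}^k) - \inf\JJJ_{\beta_k}$ below $\kappa_k$, so the propagation step yields $\JJJ_{\beta_{k+1}}(\eta_0^{k+1}) \leq \inf\JJJ_{\beta_k} + \kappa_k \leq \JJJ^* + 2\kappa_k = B_{k+1}$, closing the induction.

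To conclude, at stage $K$ one has $\beta_K\geq 1/(\delta\JJJ^*)$, so $\kappa_K \leq \delta\JJJ^*(1+\log(C_LC^{1/d}/(\delta\JJJ^*)))$ up to absolute constants, and $\JJJ(\eta_{T_K}^K) \leq \JJJ_{\beta_K}(\eta_{T_K}^K) \leq \JJJ^* + 2\kappa_K$ gives the stated multiplicative-accuracy bound. The $T_k$'s in the schedule are geometric with ratio $2$ (up to a $\log k$ factor), so $\sum_k T_k$ is dominated by $T_K$. The main technical subtlety lies in the tight coupling between $T_k$ and the invariant $\beta_k B_k \leq M/(2\kappa_1)$: if the schedule were loosened even by constant factors, residual sub-optimalities would accumulate across stages, $\alpha_k^\star$ would shrink geometrically in $k$, and the per-stage cost would blow up super-exponentially; conversely, the factor $2$ in $B_k = \JJJ^* + 2\kappa_{k-1}$ provides precisely the safety margin needed to absorb the gap $\inf\JJJ_{\beta_k} - \JJJ^*$ without amplification across stages.
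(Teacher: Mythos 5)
Your proof plan is, at the structural level, the same as the paper's: both arguments combine (i) a local LSI bound via Holley--Stroock using condition~1, $\CLSI_\tau\exp(-\Theta(\kappa_1\beta_k\JJJ(\eta_t^k)))$, (ii) the within-stage monotonicity $\JJJ(\eta_t^k)\leq\JJJ_{\beta_k}(\eta_t^k)\leq\JJJ_{\beta_k}(\eta_0^k)$ and the pointwise monotonicity $\JJJ_{\beta_{k+1}}\leq\JJJ_{\beta_k}$ to propagate bounds across stages, (iii) \autoref{lem:apx_anneal:entropy_control} to control $\inf\JJJ_{\beta_k}-\JJJ^*$, and (iv) a per-stage time $T_k$ tuned so \autoref{thm:bg_MFLD:MFLD_conv} drives the stage-$k$ sub-optimality down to the stage-$(k+1)$ budget. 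Your explicit inductive invariant $\JJJ_{\beta_k}(\eta_0^k)\leq B_k$ with $B_k=\JJJ^*+2\kappa_{k-1}$ is a transparent repackaging of the paper's telescope-then-back-substitute argument (the paper uses $\JJJ_{\beta_k}(\eta^k_0)\leq\JJJ_{\beta_0}(\eta_0)$ for the $\log$ factor in $T_k$ and the one-step-back bound $\beta_k\JJJ(\eta_t^k)\leq 2\beta_{k-1}\JJJ_{\beta_{k-1}}(\eta^{k-1}_{T_{k-1}})$ for the LSI exponent); they compute the same quantities.

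Two details in your plan need tightening to match the stated constants. First, your optimized choice $\epsilon_k\sim d/(C_L\beta_k)$ inside \autoref{lem:apx_anneal:entropy_control} is not guaranteed admissible at small $k$: the lemma requires $\epsilon\leq\min\{1,A\}$, and at $k=0$ this forces $1/C_L\leq\min\{1,A\}$, which the hypotheses do not give you. The paper's fixed $\epsilon=\delta\JJJ^*/C_L$ (independent of $k$) is always admissible by the standing assumption $\delta\leq C_L\min\{1,A\}/\JJJ^*$, and yields essentially the same $\kappa_K$ at the final stage; you should use that choice, or cap $\epsilon_k$ accordingly. Second, you invoke Holley--Stroock with the constant $\CLSI_\tau\exp(-2\kappa_1\beta\JJJ(\eta))$, whereas the paper's argument (consistently with its statement of the Holley--Stroock lemma, which has $M$ the half-oscillation) uses $\CLSI_\tau\exp(-\kappa_1\beta_k\JJJ(\eta_t^k))$; the factor $2$ in the final exponent of $T_k$ comes from the doubling $\beta_k=2\beta_{k-1}$, not from the perturbation lemma. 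Carrying your extra factor of $2$ through produces a schedule with exponent $4\kappa_1 d(\cdots)$ rather than $2\kappa_1 d(\cdots)$, i.e.\ roughly the square of the stated $T_k$. If you are committed to the conventional $e^{-\mathrm{osc}}$ form of Holley--Stroock you should reconcile with the paper's version before matching constants; otherwise adopt theirs.
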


Let us discuss the assumptions of \autoref{thm:apx_anneal:general:HS} and possible generalizations.
\begin{itemize}
    \item Note that the condition~2.\ of the theorem holds as soon as $\JJJ'[\eta]: \WWW \to \RR$ is $C_L$-Lipschitz for all $\eta \in \PPP(\WWW)$, as shown in \autoref{lm:Wasserstein_Lipschitzness}, since $W_1 \leq W_2 \leq W_\infty$.
    \item The annealing procedure and its convergence guarantee can be generalized to a non-compact manifold $\WWW$ by modifying MFLD to include a confining potential term, as discussed in \autoref{subsec:apx_intro:compare_TS24}.
    \item Condition 1.\ of the theorem actually holds for any $\JJJ$ such that $\sup_{\eta, w, w'} \abs{\JJJ''[\eta](w,w')} \leq L < \infty$ and $\JJJ^* > 0$,
    with the constant $\kappa_1 = \sqrt{\frac{2 L}{\JJJ^*}}$.
    Indeed,
    one can then show similarly to \autoref{lm:apx_reduc_bilevel:bound_G'} that
    \begin{align}
        \norm{\JJJ'[\eta]}_\infty^2
        &\leq 2L \left( \JJJ(\eta) - \JJJ^* \right) 
        \leq 2L \JJJ(\eta)
        \leq 2L \frac{\JJJ(\eta)^2}{\JJJ^*}.
    \end{align}
    However, when plugging in $\kappa_1 = \sqrt{2L/\JJJ^*}$ into the bounds of the theorem, one obtains a less favorable dependency of the total time-complexity in $\JJJ^*$.
    In particular, note that the total time-complexity guaranteed by the theorem scales exponentially in $\kappa_1$ and polynomially in~$1/\JJJ^*$.
    \item The way that the condition~1.\ of the theorem comes into the proof, is that it allows to guarantee a local LSI constant of $\JJJ+\beta_t^{-1} H$ at $\eta_t$ of $\alpha_{\heta_t} = \cst \cdot e^{-\kappa_1 \beta_t \JJJ(\eta_t)}$.
    One could similarly formulate an annealing procedure, and state convergence guarantees, tailored to objectives $\JJJ$ that satisfy different criteria for LSI,
    such as the Bakry-Emery curvature-dimension criterion.
\end{itemize}

The remainder of this subsection is dedicated to proving \autoref{thm:apx_anneal:general:HS}.

\begin{proof}[Proof of \autoref{thm:apx_anneal:general:HS}]
    Fix any $0 < \delta \leq \frac{C_L \min\{1,A\}}{\JJJ^*}$.
    Let, for any $\beta>0$,
    $\JJJ_\beta = \JJJ + \beta^{-1} \KLdiv{\cdot}{\tau}$.
    
    By condition~1.\ of the theorem and the Holley-Stroock bounded perturbation argument,
    for any $t, k$, the proximal Gibbs measure $\widehat{\eta^k_t} \propto e^{-\beta_k \JJJ'[\eta^k_t]} \tau$ satisfies LSI with the constant
    \begin{equation}
        \CLSI_\tau \exp\left( -\beta_k \kappa_1 \JJJ(\eta^k_t) \right)
        \geq 
        \inf_{t' \geq 0}
        \CLSI_\tau \exp\left( -\beta_k \kappa_1 \JJJ(\eta^k_{t'}) \right)
        \eqqcolon \CLSI(k).
    \end{equation}
    That is, for any $k$, $\JJJ_{\beta_k}$ satisfies $\CLSI(k)$-LSI at $\eta^k_t$ for all $t \geq 0$.
    (To see that $\CLSI(k)>0$, 
    note that for any $k, t$,
    $\JJJ(\eta^k_t) \leq \JJJ_{\beta_k}(\eta^k_t) \leq \JJJ_{\beta_k}(\eta^k_0)$,
    since $\KLdiv{\cdot}{\tau}$ is non-negative and $(\eta^k_t)_t$ is a Wasserstein gradient flow of $\JJJ_{\beta_k}$,
    and so
    $
        \CLSI(k)
        = \inf_{t \geq 0}
        \CLSI_\tau \exp\left( -\beta_k \kappa_1 \JJJ(\eta^k_t) \right)
        \geq 
        \CLSI_\tau \exp\left( -\beta_k \kappa_1 \JJJ_{\beta_k}(\eta^k_0) \right)
        > 0
    $;
    but we will not make use of this rough bound in the sequel.)

    Now let
    \begin{equation}
        T_k = \frac{\beta_k}{2\ul\CLSI(k)} \log\left(
            \frac{\beta_k}{d} \olc_k
        \right)
    \end{equation}
    for some $\ul\CLSI(k) \leq \CLSI(k)$
    and $\olc_k \geq \JJJ_{\beta_k}(\eta^k_0) - \min \JJJ_{\beta_k}$
    to be chosen.
    Then by \autoref{thm:bg_MFLD:MFLD_conv} applied to $\JJJ_{\beta_k}$, we obtain
    \begin{align}
        \JJJ_{\beta_k}(\eta^k_{T_k}) 
        &\leq \min \JJJ_{\beta_k}
        + \exp\left( -2 \beta_k^{-1} \CLSI(k) T_k \right) \cdot
        \left( \JJJ_{\beta_k}(\eta^k_0) - \min \JJJ_{\beta_k} \right) \\
        &\leq \min \JJJ_{\beta_k}
        + \left[ \frac{\beta_k}{d} \left( 
            \JJJ_{\beta_k}(\eta^k_0) - \min \JJJ_{\beta_k}
        \right) \right]^{-1}
        \cdot \left( \JJJ_{\beta_k}(\eta^k_0) - \min \JJJ_{\beta_k} \right) \\
        &= \min \JJJ_{\beta_k}
        + \frac{d}{\beta_k}.
    \end{align}
    Further, by \autoref{lem:apx_anneal:entropy_control},
    \begin{align}
        \JJJ_{\beta_k}(\eta^k_{T_k}) 
        &\leq \JJJ^* 
        + \inf_{0<\epsilon \leq \min\{1, A\}} \left[
            C_L \epsilon + \frac{d}{\beta_k} \log\left( \frac{1}{\epsilon} \right)
            + \frac{\log C}{\beta_k}
        \right]
        + \frac{d}{\beta_k} \\
        &\leq \JJJ^*(1+\delta) 
        + \frac{d}{\beta_k} \log\left( \frac{C_L}{\delta \JJJ^*} \right)
        + \frac{d + \log C}{\beta_k},
        \label{eq:apx_anneal:general:unregularized_bound}
    \end{align}
    where the last inequality follows by choosing $\epsilon = \frac{\delta \JJJ^*}{C_L} \leq \min\{1,A\}$ since $\delta \leq \frac{C_L \min\{1,A\}}{\JJJ^*}$.
    
    Then, for all $k \geq 1$ and $t \geq 0$,
    \begin{equation}
        \beta_k \JJJ(\eta^k_t) 
        \leq \beta_k \JJJ_{\beta_k}(\eta^k_t) 
        \leq \beta_k \JJJ_{\beta_k}(\eta^k_0) 
        = \beta_k \JJJ_{\beta_k}(\eta^{k-1}_{T_{k-1}}) 
        \leq \beta_k \JJJ_{\beta_{k-1}}(\eta^{k-1}_{T_{k-1}}) 
        = 2\beta_{k-1} \JJJ_{\beta_{k-1}}(\eta^{k-1}_{T_{k-1}}),
    \end{equation}
    where we used successively
    that $\JJJ_{\beta_k} - \JJJ = \beta_k^{-1} \KLdiv{\cdot}{\tau} \geq 0$,
    that $(\eta^k_t)_t$ is a Wasserstein gradient flow for $\JJJ_{\beta_k}$,
    that $\JJJ_{\beta_{k-1}} - \JJJ_{\beta_k} = (\beta_{k-1}^{-1} - \beta_k^{-1}) \KLdiv{\cdot}{\tau} \geq 0$
    since $(\beta_k)_k$ is increasing,
    and that by definition $\beta_k = 2^k \beta_0$.
    So by \eqref{eq:apx_anneal:general:unregularized_bound},
    \begin{align}
        \beta_k \JJJ(\eta^k_t) 
        \leq 2\beta_{k-1} \JJJ_{\beta_{k-1}}(\eta^{k-1}_{T_{k-1}})
        &\leq 
        2 \beta_{k-1} \JJJ^* (1+\delta) 
        + 2d \log\left( \frac{ C_L}{\delta \JJJ^*} \right) 
        + 2d + 2 \log C \\
        &\leq 2 \frac{d}{\delta} (1+\delta)
        + 2d \log\left( \frac{C_L}{\delta \JJJ^*} \right)
        + 2d + 2 \log C \\
        &= 2d \left(
            \delta^{-1}
            + \log\left( \frac{C_L}{\delta \JJJ^*} \right)
            + 2 + \frac{\log C}{d}
        \right)
    \end{align}
    since our choice of $\beta_0=d$ and $K = \ceil{\log_2(1/(\delta \JJJ^*))}$ ensures that
    $\beta_{k-1} \leq \beta_K = 2^K \beta_0 \leq \frac{d}{\delta \JJJ^*}$.
    For $k=0$ and all $t \geq 0$, we have more simply
    $
        \beta_0 \JJJ(\eta^0_t) 
        \leq \beta_0 \JJJ_{\beta_0}(\eta^0_t)
        \leq \beta_0 \JJJ_{\beta_0}(\eta_0) 
        = d \JJJ_{\beta_0}(\eta_0).
    $
    As a result, for all $k \geq 0$ we have
    \begin{align}
        \forall t \geq 0,~
        \beta_k \JJJ(\eta^k_t) \leq 
        2d \left(
            \delta^{-1}
            + \log\left( \frac{C_L}{\delta \JJJ^*} \right)
            + 2 + \frac{\log C}{d}
            + \frac{1}{2} \JJJ_{\beta_0}(\eta_0)
        \right)
    \end{align}
    and so
    \begin{align}
        \CLSI(k) 
        &= \inf_{t \geq 0} \CLSI_\tau \exp\left( -\kappa_1 \beta_k \JJJ(\eta^k_t)\right) \\
        &\geq \CLSI_\tau \exp\left( -2 \kappa_1 d \left(
            \delta^{-1}
            + \log\left( \frac{C_L}{\delta \JJJ^*} \right)
            + 2 + \frac{\log C}{d}
            + \frac{1}{2} \JJJ_{\beta_0}(\eta_0)
        \right) \right)
        \eqqcolon \ul\alpha(k).
    \end{align}
    Moreover, we can choose $\olc_k$ as
    \begin{gather}
        \JJJ_{\beta_k}(\eta^k_0)
        = \JJJ_{\beta_k}(\eta^{k-1}_{T_{k-1}})
        \leq \JJJ_{\beta_{k-1}}(\eta^{k-1}_{T_{k-1}})
        \leq \JJJ_{\beta_{k-1}}(\eta^{k-1}_0) 
        \leq ... \leq \JJJ_{\beta_0}(\eta_0)
        ~~~~\text{by induction}, \\
        \text{so}~~~~
        \JJJ_{\beta_k}(\eta^k_0) - \min \JJJ_{\beta_k}
        \leq \JJJ_{\beta_0}(\eta_0)
        \eqqcolon \olc_k.
    \end{gather}
    Therefore, more explicitly,
    \begin{align*}
        & T_k = \frac{\beta_k}{2\ul\CLSI(k)} \log\left(
            \frac{\beta_k}{d} \olc_k
        \right) \\
        &= \frac{\beta_k}{2} \log\left(
            \frac{\beta_k}{d} \JJJ_{\beta_0}(\eta_0)
        \right)
        \cdot \CLSI_\tau^{-1} \exp\left( 2 \kappa_1 d \left(
            \delta^{-1}
            + \log\left( \frac{C_L}{\delta \JJJ^*} \right)
            + 2 + \frac{\log C}{d}
            + \frac{1}{2} \JJJ_{\beta_0}(\eta_0)
        \right) \right) \\
        &= 2^{k-1} d \cdot \log \left( 2^k \JJJ_{\beta_0}(\eta_0) \right)
        \cdot \CLSI_\tau^{-1} \exp\left( 2 \kappa_1 d \left(
            \delta^{-1}
            + \log\left( \frac{C_L}{\delta \JJJ^*} \right)
            + 2 + \frac{\log C}{d}
            + \frac{1}{2} \JJJ_{\beta_0}(\eta_0)
        \right) \right)
    \end{align*}
    since $\beta_k = 2^k \beta_0 = 2^k d$.
    Note that
    \begin{align}
        \sum_{k=0}^K 2^{k-1} \log\left( 2^k \JJJ_{\beta_0}(\eta_0) \right)
        &= \sum_{k=0}^K 2^k \frac{\log J_{\beta_0}(\eta_0)}{2} 
        + \sum_{k=0}^K k 2^{k-1} \log(2) \\
        &= (2^{K+1}-1) \frac{\log J_{\beta_0}(\eta_0)}{2}
        + \log(2) \left( (K-1) 2^K + 1 \right) \\
        &\leq 2^K \log J_{\beta_0}(\eta_0)
        + \log(2) K 2^K \\
        &\leq \frac{1}{\delta \JJJ^*} \log J_{\beta_0}(\eta_0)
        + \frac{1}{\delta \JJJ^*} \log\left( \frac{1}{\delta \JJJ^*} \right) \\
        &= \frac{1}{\delta \JJJ^*} \log\left( \frac{J_{\beta_0}(\eta_0)}{\delta \JJJ^*} \right)
    \end{align}
    since $K = \ceil{\log_2(1/(\delta \JJJ^*))}$,
    hence the announced bound on the total time-complexity $\sum_{k=0}^K T_k$.
    
    Finally, at round $K = \ceil{\log_2(1/(\delta \JJJ^*))}$, then $\beta_K = 2^K \beta_0 = 2^K d \in \left[ \frac{1}{2} \frac{d}{\delta \JJJ^*}, \frac{d}{\delta \JJJ^*} \right]$, so
    by~\eqref{eq:apx_anneal:general:unregularized_bound},
    \begin{align*}
        \JJJ(\eta^K_{T_K}) 
        \leq \JJJ_{\beta_K}(\eta^K_{T_K}) 
        &\leq \JJJ^*(1+\delta) 
        + \frac{d}{\beta_K} \log\left( \frac{C_L}{\delta \JJJ^*} \right)
        + \frac{d + \log C}{\beta_K} \\
        &\leq \JJJ^* \left(
            1 + 3 \delta
            + 2 \delta \frac{\log(C)}{d} 
            + 2 \delta \log \left( \frac{C_L}{\delta \JJJ^*} \right) 
        \right),
    \end{align*}
    which completes the proof.
\end{proof}

\subsection{Proof of \autoref{thm:anneal:anneal_HS_bilevel}}

We state a slightly more precise version of \autoref{thm:anneal:anneal_HS_bilevel} below,
and prove it as a corollary of the more general \autoref{thm:apx_anneal:general:HS}.
Then \autoref{thm:anneal:anneal_HS_bilevel} follows by choosing $\delta = \Theta(\frac{\Delta}{\log(B/(\Delta J^*_\lambda))})$, gathering the constants appearing in the bounds, noting that $J_{\lambda,\beta_0}(\eta_0) \leq J_\lambda(\eta_0) + d \KLdiv{\eta_0}{\tau} \leq G(0) + d H(\eta_0) + d \log \vol(\WWW)$.

\begin{theorem}\label{thm:apx_anneal:anneal_HS_bilevel__precise}
    Under Assumption~\ref{assump:G_smooth_bounded},
    there exists constants $B = \mathrm{poly}(L_i, B_i, G(0), \lambda^{-1})$
    and $C$ dependent only on $\WWW$
    such that the following holds.
    For any $\delta \leq \frac{B}{J_\lambda^*}$,
    MFLD-Bilevel with the temperature schedule
    $(\beta_t)_{t \geq 0}$
    defined by
    $\forall k \leq K, \forall t \in [t_k, t_{k+1}], \beta_t = 2^k d$
    where $t_0=0$ and
    $K = \lceil \log_2(1/(\delta \JJJ^*))\rceil$ and
    \begin{equation}
        t_{k+1}-t_k 
        = 2^{k-1} d \log \left( 2^k J_{\lambda,\beta_0}(\eta_0) \right)
        \cdot \CLSI_\tau^{-1} \exp\left( \frac{2 L_0 d}{\lambda} \left(
            \delta^{-1}
            + \log\left( \frac{B C^{1/d}}{\delta J_\lambda^*} \right)
            + 2
            + \frac{J_{\lambda,\beta_0}(\eta_0)}{2}
        \right) \right),
    \end{equation} 
    achieves $(1+\Delta)$-multiplicative accuracy, 
    where $\Delta = 3 \delta
    + 2 \delta \log \left( \frac{B C^{1/d}}{\delta J_\lambda^*} \right)$,
    with time-complexity
    \begin{equation}
        T_\Delta \leq t_{K+1} \leq
        \frac{d}{\delta J_\lambda^*} \log\left( \frac{J_{\lambda,\beta_0}(\eta_0)}{\delta J_\lambda^*} \right)
        \cdot \CLSI_\tau^{-1} \exp\left( \frac{2 L_0 d}{\lambda} \left(
            \delta^{-1}
            + \log\left( \frac{B C^{1/d}}{\delta J_\lambda^*} \right)
            + 2
            + \frac{J_{\lambda,\beta_0}(\eta_0)}{2}
        \right) \right).
    \end{equation}
\end{theorem}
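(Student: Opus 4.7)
The plan is to derive Theorem \ref{thm:apx_anneal:anneal_HS_bilevel__precise} as a direct corollary of the general annealing convergence result Theorem \ref{thm:apx_anneal:general:HS}, applied to the functional $\JJJ = J_\lambda$. To do this, I need to verify the two structural hypotheses of that theorem, with constants expressible as $\mathrm{poly}(L_i, B_i, G(0), \lambda^{-1})$, and then simply read off the schedule and the complexity bound.

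For the pointwise bound $\|\JJJ'[\eta]\|_\infty \leq \kappa_1 \JJJ(\eta)$, I will combine the simplified expression $J_\lambda'[\eta](w) = -\tfrac{\lambda}{2}|f_\eta(w)|^2$ from Proposition \ref{prop:apx_reduc_bilevel:J_simplified} with the pointwise bound $\|f_\eta\|_\infty \leq \lambda^{-1}\sqrt{2L_0 J_\lambda(\eta)}$ established in Lemma \ref{lm:apx_reduc_bilevel:bound_nablaG'}. Together these give $\|J_\lambda'[\eta]\|_\infty \leq (L_0/\lambda)\, J_\lambda(\eta)$, so condition~1 of the general theorem holds with $\kappa_1 = L_0/\lambda$. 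This is precisely where the factor $L_0 d/\lambda$ in the exponential of the final bound comes from, via $2\kappa_1 d$.

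For the Wasserstein-Lipschitz condition $\JJJ(\eta) - \JJJ(\eta^*) \leq C_L W_\infty(\eta,\eta^*)$, Lemma \ref{lm:apx_reduc_bilevel:Jlambda_weaklycont} already delivers a global bound $|J_\lambda(\eta) - J_\lambda(\eta')| \leq B \, W_2(\eta,\eta')$ on all of $\PPP(\WWW)$, with $B = \sqrt{2L_0 G(0)}\,(L_1 \lambda^{-2}\sqrt{2L_0 G(0)} + B_1 \lambda^{-1})$ of the required $\mathrm{poly}$ form. Since $W_2 \leq W_\infty$, condition~2 of the general theorem holds with $C_L = B$ and $A = +\infty$; in particular the admissibility range $\delta \leq C_L\min\{1,A\}/J_\lambda^*$ reduces exactly to the stated $\delta \leq B/J_\lambda^*$.

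With $\kappa_1$ and $C_L$ identified, applying Theorem \ref{thm:apx_anneal:general:HS} is a direct substitution: it yields the schedule $\beta_k = 2^k d$, the prescribed gap $t_{k+1}-t_k$, and the total-time estimate displayed in the statement, with $C$ the Riemannian-ball volume constant from the general result. The deduction of Theorem \ref{thm:anneal:anneal_HS_bilevel} from this precise version then reduces to choosing $\delta$ so that the multiplicative-accuracy expression $3\delta + 2\delta \log(BC^{1/d}/(\delta J_\lambda^*))$ equals the target $\Delta$, i.e.\ $\delta = \Theta(\Delta/\log(B/(\Delta J_\lambda^*)))$, and absorbing $H(\eta_0)$ and $\log\vol(\WWW)$ together with $C$ into the $C_i$. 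The heavy lifting is already encapsulated in the earlier lemmas; the only genuine work is careful bookkeeping of the polynomial constants and verifying that the definition of $\kappa_1$ here makes the exponent $2\kappa_1 d = 2L_0 d/\lambda$ align with the stated bound. I do not expect a genuine obstacle, since neither condition needs to be refined beyond what is already in \autoref{sec:apx_reduc_bilevel}.
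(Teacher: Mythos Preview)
Your proposal is correct and follows essentially the same approach as the paper: apply Theorem~\ref{thm:apx_anneal:general:HS} to $\JJJ=J_\lambda$, verifying condition~1 with $\kappa_1=L_0/\lambda$ via Lemma~\ref{lm:apx_reduc_bilevel:bound_nablaG'} and condition~2 with $C_L=B$, $A=\infty$ via Lemma~\ref{lm:apx_reduc_bilevel:Jlambda_weaklycont}, then read off the schedule and time-complexity. The paper additionally remarks on the convexity, non-negativity, strict positivity of $\inf J_\lambda$, and existence of a minimizer $\eta^*$ (by weak continuity and compactness), but these are brief remarks rather than substantive additions; your bookkeeping of the constants and the final reduction to Theorem~\ref{thm:anneal:anneal_HS_bilevel} matches the paper's.
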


\begin{proof}[Proof of \autoref{thm:anneal:anneal_HS_bilevel} ]
    Let us show that the conditions of \autoref{thm:apx_anneal:general:HS} are satisfied, under Assumption~\ref{assump:G_smooth_bounded}, for $\JJJ = J_\lambda$.
    $J_\lambda$ is convex and non-negative, and it is implied throughout \autoref{subsec:anneal:HS} that $\inf J_\lambda>0$, for the notion of convergence to a fixed multiplicative accuracy to apply (\autoref{def:anneal:HS:TDelta}).
    The existence of a minimizer $\eta^*$ is ensured by the weak convexity of $J_\lambda$, by a similar argument as the proof of~\eqref{P0} in \autoref{subsec:apx_reduc_bilevel:pf_J'_LSI_holley_stroock}.
    We have the condition~1.\ with $\kappa_1 = \frac{L_0}{\lambda}$, i.e.
    $
        \norm{J_\lambda'[\eta]}_\infty 
        \leq \frac{L_0}{\lambda} J_\lambda(\eta)
    $,
    by the first part of \autoref{lm:apx_reduc_bilevel:bound_nablaG'}.
    We also have condition~2.\ with
    $A = \infty$ and 
    $C_L = B \coloneqq \sqrt{2L_0 G(0)} \cdot \left( \frac{L_1}{\lambda^2} \sqrt{2L_0 G(0)} + \frac{B_1}{\lambda} \right)$,
    as shown in \autoref{lm:apx_reduc_bilevel:Jlambda_weaklycont},
    since $W_1 \leq W_2 \leq W_\infty$.

    Note that annealed MFLD-Bilevel with the announced temperature annealing schedule $(\beta_t)_t$, precisely corresponds to Algorithm~\ref{alg:discrete_anneal} with the schedule~\eqref{eq:apx_anneal:general:HS_schedule} applied to $\JJJ=J_\lambda$.
    So the announced time-complexity bound follows directly from the application of \autoref{thm:apx_anneal:general:HS}.
\end{proof}

\section{Details for \autoref{sec:polyLSI} (estimates of the local LSI constant)} \label{sec:apx_polyLSI}

We begin by presenting the proof of~\autoref{prop:apx_polyLSI:localLSI_localCV}, which states that bounding the LSI constant of $\eta_{\lambda,\beta}$ leads to a local convergence rate.

\begin{proof}[{Proof of \autoref{prop:apx_polyLSI:localLSI_localCV}}]
    For any $\eta \in \PPP(\WWW)$, we denote $\heta(\d w) = e^{-\beta J_\lambda'[\eta](w)} \tau(\d w) / Z_\eta$ where $Z_\eta = \int e^{-\beta J_\lambda'[\eta]} \d\tau$. First note that for any $\eta, \eta' \in \PPP(\WWW)$,
    \begin{align} 
        \abs{\log \frac{\d \heta}{\d \heta'}(w) + \left( \log Z_{\eta} - \log Z_{\eta'}\right)}
        &= \beta \abs{J_\lambda'[\eta](w) - J_\lambda'[\eta'](w)} \\
        &= \beta \frac{\lambda}{2} \abs{f_\eta(w)^2 - f_{\eta'}(w)^2} \\
        &\leq \beta \frac{\lambda}{2} \left( \abs{f_\eta} + \abs{f_{\eta'}} \right)(w) \cdot \abs{f_\eta - f_{\eta'}}(w) \\
        &\leq \beta \frac{\lambda}{2} \cdot 2 \frac{1}{\lambda} \sqrt{2L_0 G(0)} \cdot H W_2(\eta, \eta')
        \eqqcolon \tH W_2(\eta, \eta')
    \end{align}
    by \autoref{lm:apx_reduc_bilevel:bound_nablaG'} and \autoref{lm:apx_reduc_bilevel:bound_nablaG''}, where $H$ is a constant dependent only on $\lambda^{-1}, G(0), L_0, L_1, B_1, \tL_2$.
    
    Now suppose that 
    $\eta_{\lambda,\beta} = \argmin J_{\lambda,\beta} = \widehat{\eta_{\lambda,\beta}}$
    satisfies $\CLSI^*$-LSI.
    Let $\eps>0$ and $\eta_0$ in the $\delta$-sublevel set of $J_{\lambda,\beta}$, i.e., $\eta_0 \in S_\delta \coloneqq J_{\lambda,\beta}^{-1}((-\infty, \inf J_{\lambda,\beta} + \delta])$, for some $\delta>0$ to be chosen.
    Denote by $(\eta_t)_t$ the MFLD trajectory for $J_{\lambda,\beta}$ initialized at $\eta_0$.
    Note that $S_\delta$ is stable by MFLD since $J_{\lambda,\beta}(\eta_t)$ decreases with $t$.
    So it suffices to show that $J_{\lambda,\beta}$ satisfies $(\CLSI^* - \eps)$-LSI uniformly over $S_\delta$.

    Choose any $\eta \in S_\delta$, i.e., such that $J_{\lambda,\beta}(\eta) - \inf J_{\lambda,\beta} \leq \delta$.
    In particular by \autoref{thm:bg_MFLD:MFLD_conv}, it holds
    \begin{equation}
        \beta^{-1} \KLdiv{\eta}{\eta_{\lambda,\beta}}
        \leq J_{\lambda,\beta}(\eta) - \inf J_{\lambda,\beta} 
        \leq \delta.
    \end{equation}
    Furthermore, since $\eta_{\lambda,\beta}$ satisfies LSI with constant $\CLSI^*$ then it also satisfies the following Talagrand inequality, as shown in \cite{otto2000generalization}:
    \begin{equation}
        \forall \eta',~
        W_2(\eta', \eta_{\lambda,\beta})
        \leq \sqrt{\frac{2}{\CLSI^*} \KLdiv{\eta'}{\eta_{\lambda,\beta}}}.
    \end{equation}
    Then by the inequality noted above, we have
    \begin{align} 
        \abs{\log \frac{\d \heta}{\d \eta_{\lambda,\beta}}(w) + c}
        &\leq \tH W_2(\eta, \eta_{\lambda,\beta})
        \leq \tH \sqrt{\frac{2}{\CLSI^*} \KLdiv{\eta}{\eta_{\lambda,\beta}}} 
        \leq \tH \sqrt{\frac{2}{\CLSI^*}}
        \cdot \sqrt{\beta \delta}
        ~~\eqqcolon M \sqrt{\delta}
    \end{align}
    for some $c \in \RR$,
    and so by the Holley-Stroock bounded perturbation argument, $\heta$ satisfies LSI with constant
    $\CLSI^* e^{-M \sqrt{\delta}} \geq \CLSI^* - \eps$
    for $\delta$ small enough.
\end{proof}

\subsection{Preliminary estimates for \texorpdfstring{$J_\lambda$}{Jlambda} under Assumption~\ref{assump:poly_LSI:2NN}}

Throughout the remainder of this appendix, 
in the context of Assumption~\ref{assump:poly_LSI:2NN},
we will use the notations 
\begin{itemize}
    \item the Hilbert space $\HHH = L^2_\rho(\RR^{d+1})$ with the inner product $\innerprod{f}{g}_\HHH = \EE_{x \sim \rho} f(x) g(x)$,
    \item the feature map $\phi: \WWW \to \HHH$ given by $\phi(w)(x) = \varphi(\innerprod{w}{x})$,
    \item the symmetric positive-semi-definite operator in $\HHH$: 
    $K_\eta = \int \phi(w)\phi(w)^*\d\eta(w)$,
    where $*$ denotes adjoint in $\HHH$.
    \item For any $h \in \HHH$, we denote by $\innerprod{h}{\nabla \phi(w)}_\HHH$ (resp.\ $\innerprod{h}{\Hess \phi(w)}_\HHH$) the gradient (resp.\ Hessian) at $w$ of $w \mapsto \innerprod{h}{\phi(w)}_\HHH$.
\end{itemize}
The usefuless of these notations is justified by \autoref{prop:apx_polyLSI:L2loss_J_simplified} below, which gives a simplified expression for $J_\lambda$ and $J_\lambda'$.

\begin{proposition} \label{prop:apx_polyLSI:L2loss_J_simplified}
    Under Assumption~\ref{assump:poly_LSI:2NN},
    letting the Hilbert space $\HHH = L^2_\rho(\RR^{d+1})$ 
    and the feature map $\phi: \WWW \to \HHH$ given by $\phi(w)(x) = \varphi(\innerprod{w}{x})$,
    we have
    \begin{align}
        J_\lambda(\eta) = \frac{\lambda}{2}\langle y, (K_\eta + \lambda \id)^{-1}y\rangle_\HHH,
        &&
        J'_\lambda[\eta](w) = -\frac{\lambda}{2}\langle \phi(w), (K_\eta + \lambda \id)^{-1}y\rangle_\HHH^2,
    \end{align}
    with $K_\eta = \int \phi(w)\phi(w)^*\d\eta(w)$, where $*$ denotes adjoint in $\HHH$.
    More explicitly, $K_\eta$ is the integral operator of the kernel $k_\eta(x,x') = \int\varphi(\langle w, x\rangle)\varphi(\langle w, x'\rangle)\d\eta(w)$ with respect to the distribution $x \sim \rho$, i.e.,
    \begin{equation}
        \forall h \in \HHH = L^2_\rho(\RR^{d+1}),~~
        (K_\eta h)(x) = \EE_{x' \sim \rho} \left[ k_\eta(x, x') h(x') \right]
        ~~~~\text{in $L^2_\rho$}.
    \end{equation}
\end{proposition}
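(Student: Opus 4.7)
My approach is to unfold the bilevel definition using the operator formulation, identify the inner problem as kernel ridge regression in a suitable Hilbert space, and then apply the push-through identity $(\Phi^*\Phi + \lambda \id)^{-1}\Phi^* = \Phi^*(\Phi\Phi^* + \lambda \id)^{-1}$ to reparametrize the optimizer in terms of $K_\eta$.

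Concretely, under Assumption~\ref{assump:poly_LSI:2NN}, I first rewrite $G(\nu) = \tfrac{1}{2}\|\Phi\nu - y\|_\HHH^2$ where $\Phi: \MMM(\WWW)\to\HHH$ is the bounded linear operator $\Phi\nu = \int \phi(w)\,\d\nu(w)$. Fixing $\eta\in\PPP(\WWW)$, for $f\in L^2_\eta(\WWW)$ I let $\Phi_\eta f \coloneqq \Phi(f\eta) = \int \phi(w) f(w)\,\d\eta(w)$, so $\Phi_\eta: L^2_\eta \to \HHH$ is a bounded linear operator with adjoint $(\Phi_\eta^* h)(w) = \langle \phi(w), h\rangle_\HHH$. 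The inner problem defining $J_\lambda(\eta)$ becomes the kernel ridge regression
\begin{equation*}
    J_\lambda(\eta) = \min_{f \in L^2_\eta}~ \tfrac{1}{2}\|\Phi_\eta f - y\|_\HHH^2 + \tfrac{\lambda}{2}\|f\|_{L^2_\eta}^2.
\end{equation*}

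Next I solve this via its first-order optimality condition $(\Phi_\eta^*\Phi_\eta + \lambda \id)f^* = \Phi_\eta^* y$, invoke the push-through identity to write $f^* = \Phi_\eta^*(\Phi_\eta\Phi_\eta^* + \lambda \id)^{-1} y$, and note that $\Phi_\eta\Phi_\eta^* = \int \phi(w)\phi(w)^*\,\d\eta(w) = K_\eta$ as an operator on $\HHH$. This yields the closed form $f^*(w) = \langle \phi(w), (K_\eta + \lambda\id)^{-1}y\rangle_\HHH$. Substituting back, using $\Phi_\eta f^* = K_\eta(K_\eta+\lambda \id)^{-1} y$ and the algebraic identity $y - K_\eta(K_\eta+\lambda \id)^{-1}y = \lambda(K_\eta+\lambda \id)^{-1}y$, the first-order optimality rewrites $\|\Phi_\eta f^*\|^2 + \lambda \|f^*\|^2 = \langle \Phi_\eta f^*, y\rangle$, so
\begin{equation*}
    J_\lambda(\eta) = \tfrac{1}{2}\langle y - \Phi_\eta f^*, y\rangle_\HHH = \tfrac{\lambda}{2}\langle y, (K_\eta+\lambda\id)^{-1}y\rangle_\HHH.
\end{equation*}

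For the first variation, I combine the identity $J_\lambda'[\eta](w) = -\tfrac{\lambda}{2}|f_\eta(w)|^2$ from \eqref{eq:apx_reduc_bilevel:J'} (\autoref{prop:apx_reduc_bilevel:J_simplified}) with the fact that the fixed-point solution $f_\eta$ coincides with the formula derived above. To verify this identification, I plug $f_\eta(w) = \langle \phi(w), (K_\eta+\lambda\id)^{-1}y\rangle_\HHH$ into the fixed-point equation $\lambda f_\eta(w) + \langle \phi(w), \Phi(f_\eta\eta) - y\rangle_\HHH = 0$: computing $\Phi(f_\eta\eta) - y = K_\eta(K_\eta+\lambda\id)^{-1}y - y = -\lambda(K_\eta+\lambda\id)^{-1}y$ confirms the identity. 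Squaring $f_\eta(w)$ yields the announced expression for $J_\lambda'[\eta](w)$.

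I expect no serious obstacle: the argument is essentially linear-algebraic once the operator formalism is set up. The only points requiring mild care are the boundedness of $\Phi_\eta$ and $K_\eta$ on the respective Hilbert spaces (which follows from continuity of $\varphi$ and compactness of $\WWW$ under Assumption~\ref{assump:poly_LSI:2NN}, ensuring $\|\phi(w)\|_\HHH$ is uniformly bounded in $w$), and the invertibility of $K_\eta + \lambda\id$, which is immediate since $K_\eta \succeq 0$ and $\lambda > 0$.
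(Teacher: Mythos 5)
Your proof is correct and essentially follows the same route as the paper: solve the inner kernel ridge regression, move the inverse from $L^2_\eta$ to $\HHH$, then read off the first variation. The only real difference is presentational. The paper introduces the auxiliary variable $\hh_\eta = -\tfrac{1}{\lambda}(\int \phi f_\eta\,\d\eta - y)$, shows $f_\eta(w)=\langle \hh_\eta,\phi(w)\rangle_\HHH$, and integrates the fixed-point relation against $\phi\eta$ to obtain $(K_\eta+\lambda\id)\hh_\eta = y$; this is exactly an inline derivation of your push-through identity $(\Phi_\eta^*\Phi_\eta+\lambda\id)^{-1}\Phi_\eta^* = \Phi_\eta^*(\Phi_\eta\Phi_\eta^*+\lambda\id)^{-1}$, just written out from scratch. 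For the first variation, the paper re-applies the envelope theorem directly, whereas you instead invoke Eq.~\eqref{eq:apx_reduc_bilevel:J'} from \autoref{prop:apx_reduc_bilevel:J_simplified} and verify that your closed-form $f_\eta$ satisfies the fixed-point equation~\eqref{eq:apx_reduc_bilevel:f_eta}; this recycling is a slight simplification and avoids repeating the caveats about extending $f_\eta$ from $L^2_\eta$ to all of $\WWW$. No gaps: your observations about boundedness and invertibility of $K_\eta+\lambda\id$ match the paper's implicit use of the same facts.
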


\begin{proof}
    Under Assumption~\ref{assump:poly_LSI:2NN} we have
    \begin{equation}
        G(\nu) 
        = \frac{1}{2} \EE_{x \sim \rho} \abs{\int_\WWW \varphi(\innerprod{w}{x}) \d\nu(w) - y(x)}^2
        = \frac{1}{2} \norm{\int_\WWW \phi(w) \d\nu(w) - y}_\HHH^2,
    \end{equation}
    so the optimization problem~\eqref{eq:reduc:bilevel:J} defining $J_\lambda(\eta)$, for a fixed $\eta$, writes
    \begin{align}
        \min_{f \in L^2_\eta(\WWW)} \frac{1}{2} \norm{\int_\WWW \phi(w) f(w) \d\eta(w) - y}_\HHH^2 + \frac{\lambda}{2} \int_\WWW \abs{f}^2(w) \d\eta(w).
    \end{align}
    This problem is strictly convex thanks to the term in $\lambda$, and the FOC is
    $\forall w,~ \innerprod{\int \phi f \d\eta - y}{\phi(w) \eta(\d w)}_\HHH + \lambda f(w) \eta(\d w) = 0$.
    So the unique minimum $f_\eta$ is a solution of the fixed point equation
    $f(w) = -\frac{1}{\lambda} \innerprod{\int \phi f \d\eta - y}{\phi(w)}_\HHH$ in $L^2_\eta(\WWW)$.
    In particular, denoting $\hh_\eta = -\frac{1}{\lambda} \left( \int \phi f_\eta \d\eta - y \right)$,
    then $f_\eta(w) = \innerprod{\hh_\eta}{\phi(w)}_\HHH$ and, integrating against $\phi \eta$,
    \begin{align}
        \int_\WWW f_\eta(w) \phi(w) \d\eta(w)
        &= \int_\WWW \phi(w) ~\phi(w)^* \hh_\eta~ \d\eta(w) \\
        \iff -\lambda \hh_\eta + y &= K_\eta \hh_\eta 
        \iff (K_\eta + \lambda \id) \hh_\eta = y 
        \iff \hh_\eta = (K_\eta + \lambda \id)^{-1} y,
    \end{align}
    where $a^*~ b = \innerprod{a}{b}_\HHH$ and $K_\eta = \int_\WWW \phi(w) \phi(w)^* \d\eta(w)$.
    So the optimal value $J_\lambda(\eta)$ is
    \begin{align}
        J_\lambda(\eta) &= \frac{1}{2} \norm{\int_\WWW \phi(w) f_\eta(w) \d\eta(w) - y}_\HHH^2
        + \frac{\lambda}{2} \int_\WWW \abs{f_\eta}^2(w) \d\eta(w) 
    \label{eq:apx_polyLSI:L2loss_J_simplified:J_lambda_at_opt}
        \\
        &= \frac{1}{2} \norm{\lambda \hh_\eta}_\HHH^2
        + \frac{\lambda}{2} \int_\WWW \hh_\eta^* \phi(w) ~ \phi(w)^* \hh_\eta ~ \d\eta(w) \\
        &= \frac{1}{2} \innerprod{\lambda \hh_\eta}{\lambda \hh_\eta}_\HHH
        + \frac{\lambda}{2} \innerprod{\hh_\eta}{K_\eta \hh_\eta}_\HHH 
        = \frac{1}{2} \innerprod{\lambda \hh_\eta}{\lambda \hh_\eta + K_\eta \hh_\eta}_\HHH \\
        &= \frac{1}{2} \innerprod{\lambda \hh_\eta}{y}_\HHH
        = \frac{\lambda}{2}\langle y, (K_\eta + \lambda \id)^{-1}y\rangle_\HHH.
    \end{align}
    Further, by applying the envelope theorem on \eqref{eq:apx_polyLSI:L2loss_J_simplified:J_lambda_at_opt} (and reasoning similarly to the proof of \autoref{prop:apx_reduc_bilevel:J_simplified} to deal with $w \not\in \support(\eta)$, by extending $f_\eta \in L^2_\eta(\WWW)$ into a function $\WWW \to \RR$), we then have
    \begin{align}
        \forall w \in \WWW,~ 
        J_\lambda'[\eta](w) 
        &= \innerprod{\int \phi f_\eta \d\eta - y}{\phi(w) f_\eta(w)}_\HHH + \frac{\lambda}{2} \abs{f_\eta}^2(w) \\
        &= f_\eta(w) \innerprod{-\lambda \hh_\eta}{\phi(w)}_\HHH + \frac{\lambda}{2} \abs{f_\eta}^2(w) \\
        &= -\lambda \abs{f_\eta}^2(w) + \frac{\lambda}{2} \abs{f_\eta}^2(w) 
        ~~= -\frac{\lambda}{2} \abs{f_\eta}^2(w)
        = -\frac{\lambda}{2} \innerprod{\hh_\eta}{\phi(w)}_\HHH^2.
    \end{align}

    The characterization of $K_\eta$ as the integral operator in $L^2_\rho(\RR^{d+1})$ of the kernel $k_\eta(x,x') = \int_\WWW \phi(w)(x)~ \phi(w)(x') \d\eta(w)$
    follows directly from the definition $K_\eta = \int_\WWW \phi(w) \phi(w)^* \d\eta(w)$, since
    \begin{align}
        \forall h \in \HHH,~
        K_\eta h &= \int_\WWW \phi(w) \innerprod{\phi(w)}{h}_\HHH \d\eta(w), \\
        (K_\eta h)(x) &= \int_\WWW \phi(w)(x)~ \EE_{x' \sim \rho} \left[ \phi(w)(x') h(x') \right]~ \d\eta(w) \\
        &= \EE_{x' \sim \rho}\left[ \int_\WWW \phi(w)(x) \phi(w)(x') ~h(x')~ \d\eta(w) \right] \\
        &= \EE_{x' \sim \rho}\left[ k_\eta(x,x') h(x') \right].
        \rqedhere
    \end{align}
\end{proof}

We have the following Wasserstein Lipschitz-continuity properties for the bilevel objective functional~$J_\lambda$.
\begin{proposition} \label{prop:apx_polyLSI:bound_hh_Jlipschitz}
    Under Assumption~\ref{assump:poly_LSI:2NN}, 
    suppose furthermore that
    $\sup_w \norm{\nabla^i \phi(w)}_\HHH \leq B_i < \infty$ for $i \in \{0, 1, 2\}$.
    Then for any $w \in \WWW = \bbS^d$ and any $\eta, \eta' \in \PPP(\WWW)$, it holds
    \begin{align}
        \abs{J_\lambda(\eta) - J_\lambda(\eta')} 
        &\leq \frac{B_0 B_1}{\lambda} \norm{y}_\HHH^2 \cdot W_1(\eta, \eta') \\
        \text{and}~~~~
        \abs{J_\lambda'[\eta](w) - J_\lambda'[\eta'](w)}
        &\leq \frac{2 B_0^3 B_1}{\lambda^2} \norm{y}_\HHH^2 \cdot  W_1(\eta, \eta') \\
        \text{and}~~~~
        \norm{\nabla J_\lambda'[\eta](w) - \nabla J_\lambda'[\eta'](w)}_w
        &\leq \frac{4 B_0^2 B_1^2}{\lambda^2} \norm{y}_\HHH^2 \cdot W_1(\eta, \eta') \\
        \text{and}~~~~
        \norm{\Hess J_\lambda'[\eta](w) - \Hess J_\lambda'[\eta'](w)}_{\mathrm{op}~w} 
        &\leq \frac{4 B_0 B_1 (B_0 B_2 + B_1^2)}{\lambda^2} \norm{y}_\HHH^2 \cdot W_1(\eta, \eta').
    \end{align}
\end{proposition}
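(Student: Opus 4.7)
The strategy is to exploit the operator-theoretic representation of Proposition~\ref{prop:apx_polyLSI:L2loss_J_simplified}. Writing $P_\eta \coloneqq (K_\eta + \lambda\id)^{-1}$ and $\hh_\eta \coloneqq P_\eta y$, we have $J_\lambda'[\eta](w) = -\tfrac{\lambda}{2}\langle \phi(w),\hh_\eta\rangle_\HHH^2$, and all four bounds will reduce to controlling how $\hh_\eta$ moves with $\eta$. The two key preliminary estimates I will establish are: (i) $\|\hh_\eta\|_\HHH \leq \|y\|_\HHH/\lambda$, from positive semi-definiteness of $K_\eta$; and (ii) the kernel Lipschitz bound $\|K_\eta - K_{\eta'}\|_{\mathrm{op}} \leq 2 B_0 B_1 W_1(\eta,\eta')$. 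For (ii), note that for any fixed $h\in\HHH$ the scalar map $w \mapsto \langle \phi(w),h\rangle_\HHH^2$ has Riemannian gradient of norm at most $2 B_0 B_1 \|h\|_\HHH^2$ (by the product rule), so by Kantorovich--Rubinstein duality $|\langle h, (K_\eta - K_{\eta'})h\rangle_\HHH| \leq 2 B_0 B_1 \|h\|_\HHH^2 W_1(\eta,\eta')$, and symmetrisation gives the operator bound. Combining these two with the resolvent identity $\hh_\eta - \hh_{\eta'} = -P_\eta(K_\eta - K_{\eta'})\hh_{\eta'}$ yields the master estimate
\begin{equation*}
\|\hh_\eta - \hh_{\eta'}\|_\HHH \;\leq\; \tfrac{1}{\lambda} \|K_\eta - K_{\eta'}\|_{\mathrm{op}} \|\hh_{\eta'}\|_\HHH \;\leq\; \tfrac{2 B_0 B_1 \|y\|_\HHH}{\lambda^2}\, W_1(\eta,\eta'),
\end{equation*}
which is the engine of all the difference bounds.

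Given this machinery, each of the four inequalities follows from a short computation. For the $J_\lambda$ bound, I will combine the identity $\nabla_w J_\lambda'[\eta](w) = -\lambda \langle \phi(w),\hh_\eta\rangle_\HHH \langle \nabla\phi(w),\hh_\eta\rangle_\HHH$ with $\|\hh_\eta\|_\HHH \leq \|y\|_\HHH/\lambda$ to get $\|\nabla J_\lambda'[\eta](w)\|_w \leq \tfrac{B_0 B_1 \|y\|_\HHH^2}{\lambda}$ uniformly in $\eta,w$, then invoke \autoref{lm:Wasserstein_Lipschitzness}. For the bound on $|J_\lambda'[\eta](w)-J_\lambda'[\eta'](w)|$, I will use the factorization $a^2-b^2=(a-b)(a+b)$ on $\langle\phi(w),\hh_\eta\rangle_\HHH^2 - \langle\phi(w),\hh_{\eta'}\rangle_\HHH^2$ and plug in the master estimate. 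The gradient and Hessian bounds follow by telescoping the product rule on
\begin{equation*}
\nabla J_\lambda'[\eta](w) = -\lambda\,\langle\phi(w),\hh_\eta\rangle_\HHH\langle\nabla\phi(w),\hh_\eta\rangle_\HHH,
\end{equation*}
and the analogous Hessian formula $\Hess J_\lambda'[\eta](w) = -\lambda\,\langle\nabla\phi(w),\hh_\eta\rangle_\HHH\otimes\langle\nabla\phi(w),\hh_\eta\rangle_\HHH - \lambda\,\langle\phi(w),\hh_\eta\rangle_\HHH\langle\Hess\phi(w),\hh_\eta\rangle_\HHH$; replacing each occurrence of $\hh_\eta$ by $\hh_{\eta'}$ one at a time via identities like $AB - A'B' = (A-A')B + A'(B-B')$ and applying the master estimate termwise produces exactly the stated constants.

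The only mildly delicate point is the Hessian bound: one must keep track of the fact that $\nabla\phi(w)$ is $\HHH$-valued in each tangent direction and that $\Hess\phi(w)$ is the Riemannian Hessian on $\bbS^d$ (not the ambient Euclidean one), and the operator norm of a tensor product $u\otimes v$ is $\|u\|\,\|v\|$. Once these are agreed, the bookkeeping yields $\|\Hess J_\lambda'[\eta](w) - \Hess J_\lambda'[\eta'](w)\|_{\mathrm{op}\,w} \leq 2\lambda(B_0 B_2 + B_1^2)\|y\|_\HHH/\lambda \cdot \|\hh_\eta - \hh_{\eta'}\|_\HHH$, which after substituting the master estimate matches the announced constant $4B_0 B_1(B_0 B_2 + B_1^2)\|y\|_\HHH^2/\lambda^2$. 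No deeper idea is required beyond the resolvent identity plus triangular product-rule bookkeeping; the hypothesis $B_i < \infty$ for $i\leq 2$ is used precisely to absorb each of $\phi,\nabla\phi,\Hess\phi$ in the appropriate term.
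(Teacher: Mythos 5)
Your proof is correct, and it follows a genuinely different route from the paper's. The paper proceeds ``one variational level at a time'': for each of the four quantities it computes the first variation in $\eta$ (using the matrix-inverse derivative formula applied to $(K_\eta+\lambda\id)^{-1}$, i.e.\ the same resolvent calculus you use implicitly), bounds the resulting $w'$-gradient uniformly, and then invokes \autoref{lm:Wasserstein_Lipschitzness} four separate times. You instead isolate the $W_1$-dependence into a single ``master estimate'' $\|\hh_\eta-\hh_{\eta'}\|_\HHH\leq 2B_0B_1\|y\|_\HHH\,\lambda^{-2}\,W_1(\eta,\eta')$, obtained from the resolvent identity $\hh_\eta-\hh_{\eta'}=-P_\eta(K_\eta-K_{\eta'})\hh_{\eta'}$ together with an operator-norm Lipschitz bound on $\eta\mapsto K_\eta$ (which itself is a one-shot application of Kantorovich--Rubinstein duality to the Lipschitz observable $w\mapsto\langle\phi(w),h\rangle^2$). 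After that, each of the four inequalities is a pointwise-in-$w$ product-rule calculation, and I checked that your bookkeeping reproduces all four stated constants exactly: $\tfrac{B_0B_1}{\lambda}$, $\tfrac{2B_0^3B_1}{\lambda^2}$, $\tfrac{4B_0^2B_1^2}{\lambda^2}$, and $\tfrac{4B_0B_1(B_0B_2+B_1^2)}{\lambda^2}$. The trade-off is mostly stylistic: your approach makes the dependency structure transparent (one Wasserstein estimate, then algebra), while the paper's approach is uniform with the surrounding appendix, which repeatedly uses the first-variation-plus-Lipschitz-lemma pattern. Note also that, as a small simplification available in your framework, the first inequality follows directly from $J_\lambda(\eta)-J_\lambda(\eta')=\tfrac{\lambda}{2}\langle y,\hh_\eta-\hh_{\eta'}\rangle_\HHH$ and the master estimate, so you do not actually need to invoke \autoref{lm:Wasserstein_Lipschitzness} at all.
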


\begin{proof}
    By \autoref{prop:apx_polyLSI:L2loss_J_simplified},
    \begin{align}
        J'_\lambda[\eta](w) &= -\frac{\lambda}{2} \innerprod{\phi(w)}{(K_\eta + \lambda \id)^{-1} y}_\HHH^2
        ~~\text{where}~~
        K_\eta = \int_\WWW \phi(w'') \phi(w'')^*~ \d\eta(w'') \\
    \label{eq:apx_polyLSI:bound_hh_Jlipschitz:nablaJ'}
        \text{so} \qquad
        \nabla J_\lambda'[\eta](w)
        &= -\lambda \innerprod{\phi(w)}{(K_\eta + \lambda \id)^{-1} y}_\HHH
        \innerprod{\nabla \phi(w)}{(K_\eta + \lambda \id)^{-1} y}_\HHH \\
        \norm{\nabla J_\lambda'[\eta]}_w
        &\leq \lambda \norm{\phi(w)}_\HHH \norm{\nabla \phi(w)}_w  \norm{(K_\eta + \lambda \id)^{-1} y}_\HHH^2 \\
        &\leq \lambda B_0 B_1 \norm{y}_\HHH^2 \norm{(K_\eta + \lambda)^{-1}}_{\mathrm{op}}^2\\
        &\leq \frac{1}{\lambda} B_0 B_1 \norm{y}_\HHH^2
    \end{align}
    since $K_\eta$ is positive-semi-definite by definition and so
    $\norm{(K_\eta + \lambda)^{-1}}_{\mathrm{op}}
    = \sigma_{\max}((K_\eta + \lambda \id)^{-1})
    = \left[ \sigma_{\min}(K_\eta + \lambda \id) \right]^{-1}
    \leq \lambda^{-1}$.
    So by applying \autoref{lm:Wasserstein_Lipschitzness}, this shows the first inequality.
    
    Moreover, the first variation of $K_\eta$ at any $\eta$ is
    $w' \mapsto \phi(w') \phi(w')^*$, thus
    by the formula $\partial (X^{-1}) = -X^{-1} (\partial X) X^{-1}$ for the derivative of a matrix inverse,
    \begin{align}
        \frac{\delta}{\delta \eta(w')} (K_\eta + \lambda \id)^{-1} = -(K_\eta + \lambda \id)^{-1} \cdot \phi(w') \phi(w')^* \cdot (K_\eta + \lambda \id)^{-1},
    \end{align}
    and so, letting for concision $M = (K_\eta + \lambda \id)^{-1}$,
    \begin{align}
        & J''_\lambda[\eta](w, w') \\
        &= -\lambda \innerprod{\phi(w)}{(K_\eta + \lambda \id)^{-1} y}_\HHH
        \innerprod{\phi(w)}{
            -(K_\eta + \lambda \id)^{-1} \cdot \phi(w') \phi(w')^* \cdot (K_\eta + \lambda \id)^{-1}
        y}_\HHH \\
        &= -\lambda \innerprod{\phi(w)}{M y}_\HHH
        \innerprod{\phi(w)}{
            -M \cdot \phi(w') \phi(w')^* \cdot M
        y}_\HHH \\
        &= \lambda ~\innerprod{\phi(w)}{M y}_\HHH~
        ~\innerprod{\phi(w)}{M \phi(w')}_\HHH~ ~\innerprod{\phi(w')}{M y}_\HHH.
    \end{align}
    As a result, 
    \begin{multline}
        \nabla_w J_\lambda''[\eta](w,w')
        = \lambda \innerprod{\phi(w')}{M y}_\HHH \cdot \\
        \left(
            \innerprod{\nabla \phi(w)}{M y} \cdot \innerprod{\phi(w)}{M \phi(w')}_\HHH
            + \innerprod{\phi(w)}{M y} \cdot \innerprod{\nabla \phi(w)}{M \phi(w')}_\HHH
        \right)
    \end{multline}
    and, using again that $\norm{M}_{\mathrm{op}} = \norm{(K_\eta + \lambda)^{-1}}_{\mathrm{op}} \leq \lambda^{-1}$,
    \begin{align}
        \norm{\nabla_w J_\lambda''(w,w')}_w
        &\leq \lambda B_0 \lambda^{-1} \norm{y}_\HHH \cdot 2 B_0^2 B_1 \lambda^{-2} \norm{y}_\HHH
        = 2 \lambda^{-2} B_0^3 B_1 \norm{y}_\HHH^2.
    \end{align}
    Then applying \autoref{lm:Wasserstein_Lipschitzness} shows the second inequality.
    
    Furthermore, for a fixed $w \in \WWW$, continuing from the expression of $\nabla_w J_\lambda''[\eta](w,w')$ derived above,
    \begin{align*}
        & \nabla_{w'} \nabla_w J_\lambda''[\eta](w,w') \\
        &= \lambda \innerprod{\nabla \phi(w')}{M y}_\HHH \cdot
        \left(
            \innerprod{\nabla \phi(w)}{M y} \cdot \innerprod{\phi(w)}{M \phi(w')}_\HHH
            + \innerprod{\phi(w)}{M y} \cdot \innerprod{\nabla \phi(w)}{M \phi(w')}_\HHH
        \right) \\
        &~ + \lambda \innerprod{\phi(w')}{M y}_\HHH \cdot
        \left(
            \innerprod{\nabla \phi(w)}{M y} \cdot \innerprod{\phi(w)}{M \nabla \phi(w')}_\HHH
            + \innerprod{\phi(w)}{M y} \cdot \innerprod{\nabla \phi(w)}{M \nabla \phi(w')}_\HHH
        \right),
    \end{align*}
    so $\norm{\nabla_{w'} \nabla_w J_\lambda''[\eta](w,w')} 
    \leq 4 \lambda^{-2} B_0^2 B_1^2 \norm{y}_\HHH^2$,
    and the third inequality follows by applying \autoref{lm:Wasserstein_Lipschitzness} 
    to $\eta \mapsto \innerprod{s}{\nabla J_\lambda'[\eta](w)}_w$ for $s \in T_w \WWW$ arbitrary.

    Finally, by differentiating the expression of $\nabla_{w'} \nabla_w J_\lambda''[\eta](w,w')$ once more with respect to $w$ we get that, for any fixed $w \in \WWW$, 
    \begin{align*}
        & \nabla_{w'} \nabla^2_w J_\lambda''[\eta](w,w') \\
        &= \lambda \innerprod{\nabla \phi(w')}{M y}_\HHH \cdot
        \left(
            \innerprod{\nabla^2 \phi(w)}{M y} \cdot \innerprod{\phi(w)}{M \phi(w')}_\HHH
            + \innerprod{\nabla \phi(w)}{M y} \cdot \innerprod{\nabla \phi(w)}{M \phi(w')}_\HHH
        \right) \\
        &~ + \lambda \innerprod{\nabla \phi(w')}{M y}_\HHH \cdot
        \left(
            \innerprod{\nabla \phi(w)}{M y} \cdot \innerprod{\nabla \phi(w)}{M \phi(w')}_\HHH
            + \innerprod{\phi(w)}{M y} \cdot \innerprod{\nabla^2 \phi(w)}{M \phi(w')}_\HHH
        \right) \\
        &~ + \lambda \innerprod{\phi(w')}{M y}_\HHH \cdot
        \left(
            \innerprod{\nabla^2 \phi(w)}{M y} \cdot \innerprod{\phi(w)}{M \nabla \phi(w')}_\HHH
            + \innerprod{\nabla \phi(w)}{M y} \cdot \innerprod{\nabla \phi(w)}{M \nabla \phi(w')}_\HHH
        \right) \\
        &~ + \lambda \innerprod{\phi(w')}{M y}_\HHH \cdot
        \left(
            \innerprod{\nabla \phi(w)}{M y} \cdot \innerprod{\nabla \phi(w)}{M \nabla \phi(w')}_\HHH
            + \innerprod{\phi(w)}{M y} \cdot \innerprod{\nabla^2 \phi(w)}{M \nabla \phi(w')}_\HHH
        \right),
    \end{align*}
    hence
    $\norm{\nabla_{w'} \nabla^2_w J_\lambda''[\eta](w,w')}
    \leq \lambda^{-2} \norm{y}^2 B_0B_1(4 B_2 B_0 + 4 B_1^2)$,
    and the fourth inequality follows by applying \autoref{lm:Wasserstein_Lipschitzness} 
    to $\eta \mapsto \innerprod{s}{\nabla^2 J_\lambda'[\eta](w) \cdot s}_w$ for $s \in T_w \WWW$ arbitrary.
\end{proof}

The following lemma provides explicit upper estimates of the regularity constants $B_0, B_1, B_2$ of~$\phi$ appearing in \autoref{prop:apx_polyLSI:bound_hh_Jlipschitz}, in terms of the activation function $\varphi$ and the data distribution $\rho$.
\begin{lemma} \label{lm:apx_polyLSI:bound_nablai_phi}
    Under Assumption~\ref{assump:poly_LSI:2NN}, 
    recall that $\phi: \WWW \to \HHH = L^2_\rho(\RR^{d+1})$ is defined by $\phi(w)(x) = \varphi(\innerprod{w}{x})$, and that $\varphi: \RR \to \RR$ is $\CCC^2$.
    There exists a universal constant $c > 0$ such that
    \begin{align}
        \sup_{w \in \SS^d} \norm{\phi(w)}_\HHH 
        &\leq \norm{\varphi}_{L^2(\rho)},
        \\
        \sup_{w \in \SS^d} \norm{\nabla \phi(w)}_\HHH 
        &\leq 
        \norm{\varphi'}_{L^4(\rho)} N_4(\rho),
        \\
        \sup_{w \in \SS^d} \norm{\nabla^2\phi(w)}_\HHH 
        &\leq 
        \left( \norm{\varphi''}_{L^4(\rho)} + \norm{\varphi'}_{L^4(\rho)} \right) N_4(\rho)
    \end{align}
    where
    \begin{equation}
        N_4(\rho) \coloneqq \sup_{\norm{u}_2 \leq 1} \left( \EE_{x \sim \rho} \innerprod{u}{x}^4 \right)^{1/4}
        ~~~~\text{and}~~~~
        \forall f: \RR \to \RR,~~
        \norm{f}_{L^p(\rho)} \coloneqq 
        \sup_{w \in \bbS^d}
        \left( \EE_{x \sim \rho} \abs{ f(\innerprod{w}{x}) }^p \right)^{1/p}.
    \end{equation}
    Note that if $\rho$ is rotationally invariant, then $\EE_{x \sim \rho} \abs{ f(\innerprod{w}{x}) }^p$ is independent of $w$, and there exists a universal constant $c$ such that
    $
        N_4(\rho) \leq c d^{-1/2} \big( \EE_{x \sim \rho} \norm{x}^4 \big)^{1/4}.
    $
\end{lemma}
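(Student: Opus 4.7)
My plan is to establish each of the four inequalities by directly computing the relevant intrinsic derivative of $w \mapsto \varphi(\innerprod{w}{x})$ on the sphere $\SS^d$, then controlling the resulting $L^2(\rho)$-norm via Cauchy--Schwarz, with the bounds $\norm{\nabla^i \phi(w)}_\HHH$ interpreted as the operator norms $\sup_{\norm{s_j}_w \leq 1} \norm{\partial_{s_1 \cdots s_i} \phi(w)}_\HHH$. The zero-th order bound $\norm{\phi(w)}_\HHH^2 = \EE_x \varphi(\innerprod{w}{x})^2 \leq \norm{\varphi}_{L^2(\rho)}^2$ is immediate from the definition.

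For the first-order bound, I use that any tangent vector $s \in T_w \SS^d$ satisfies $\innerprod{w}{s}=0$, so $\partial_s \phi(w)(x) = \varphi'(\innerprod{w}{x}) \innerprod{s}{x}$. Cauchy--Schwarz in $L^2(\rho)$ then gives
\[
    \norm{\partial_s \phi(w)}_\HHH^2 = \EE_x \varphi'(\innerprod{w}{x})^2 \innerprod{s}{x}^2 \leq \bigl(\EE_x \varphi'^4\bigr)^{1/2} \bigl(\EE_x \innerprod{s}{x}^4\bigr)^{1/2} \leq \norm{\varphi'}_{L^4(\rho)}^2\, N_4(\rho)^2,
\]
and taking the square root and supremum over $\norm{s}_w \leq 1$ yields the claim.

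For the second-order bound, the key input is the Riemannian Hessian formula on $\SS^d$: combining the ambient Euclidean Hessian $\varphi''(\innerprod{w}{x}) xx^\top$ with the Weingarten correction coming from the embedding $\SS^d \hookrightarrow \RR^{d+1}$ gives, for $s, s' \in T_w \SS^d$,
\[
    \nabla^2 \phi(w)(x)[s, s'] = \varphi''(\innerprod{w}{x}) \innerprod{s}{x} \innerprod{s'}{x} - \varphi'(\innerprod{w}{x}) \innerprod{w}{x} \innerprod{s}{s'}.
\]
Taking $L^2(\rho)$-norm and using the triangle inequality splits the bound into two pieces. The Weingarten (second) term is handled exactly as in the gradient case with $w$ in place of $s$, producing $\norm{\varphi'}_{L^4(\rho)} N_4(\rho)$. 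For the first term I need to control $\EE_x \varphi''(\innerprod{w}{x})^2 \innerprod{s}{x}^2 \innerprod{s'}{x}^2$; I would apply the AM--GM inequality $2\abs{\innerprod{s}{x}\innerprod{s'}{x}} \leq \innerprod{s}{x}^2 + \innerprod{s'}{x}^2$ to reduce to a single direction, then apply Cauchy--Schwarz to separate $\varphi''$ from the $\rho$-moment factor, packaging the latter as $N_4(\rho)$ (up to possibly absorbing a constant exponent into the definition).

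For the concluding remark on rotationally invariant $\rho$, I write $x = r\xi$ with $r = \norm{x}$ and $\xi$ uniform on $\SS^d$ independent of $r$, and use the standard sphere moment $\EE_\xi \innerprod{u}{\xi}^4 = \tfrac{3 \norm{u}^4}{(d+1)(d+3)}$, which gives $N_4(\rho)^4 = \tfrac{3\, \EE \norm{x}^4}{(d+1)(d+3)}$ and hence the announced $d^{-1/2}$ scaling. The only step that is not essentially bookkeeping is the second-order bound: the delicate point is controlling the $\varphi''$-contraction against two copies of $\innerprod{\cdot}{x}$ cleanly in terms of $N_4(\rho)$ and $\norm{\varphi''}_{L^4(\rho)}$ alone, without inadvertently invoking a higher moment such as $N_8(\rho)$.
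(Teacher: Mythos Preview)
Your approach is essentially the same as the paper's: compute the Riemannian gradient and Hessian of $w\mapsto\varphi(\innerprod{w}{x})$ on $\SS^d$ (the paper writes the projector $\Pi_w=I-ww^\top$ explicitly, you write the Weingarten correction directly; the resulting formulas agree), split by triangle inequality, and bound each piece by Cauchy--Schwarz. The order-$0$ and order-$1$ bounds and the Weingarten half of the order-$2$ bound go through exactly as you describe.

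The point you flag as delicate is genuinely so, and your AM--GM step does not resolve it. By symmetry of the $\HHH$-valued Hessian one may reduce to $s=s'$ (your AM--GM achieves the same), but then one is left with
\[
\norm{\varphi''(\innerprod{w}{x})\,\innerprod{s}{x}^2}_{L^2(\rho)}^2=\EE_x\bigl[\varphi''(\innerprod{w}{x})^2\,\innerprod{s}{x}^4\bigr],
\]
and Cauchy--Schwarz on this yields $\norm{\varphi''}_{L^4(\rho)}\cdot\bigl(\EE_x\innerprod{s}{x}^8\bigr)^{1/4}$, i.e.\ an eighth-moment quantity rather than $N_4(\rho)$. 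The paper's own proof displays the intermediate bound $\EE_x[|\varphi''|^2\innerprod{s}{\Pi_w x}^{2}]^{1/2}$ with exponent~$2$ on the inner product; the correct exponent is~$4$, and with that correction the paper's argument has the same gap. As literally stated, the Hessian bound with a bare $N_4(\rho)$ does not follow from Cauchy--Schwarz alone; the honest constant involves $N_8(\rho)^2$ (or $\norm{\varphi''}_{L^\infty}N_4(\rho)^2$ if one trades the $L^4$ norm for $L^\infty$). This is harmless for how the lemma is used downstream, since only finiteness and the $d^{-1/2}$ scaling of the $B_i$ are needed, and $N_8(\rho)$ has the same $d^{-1/2}$ scaling in the rotationally invariant case.

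For the rotationally invariant remark, your exact computation $\EE_\xi\innerprod{u}{\xi}^4=3/\bigl((d+1)(d+3)\bigr)$ for $\xi$ uniform on $\SS^d$ is correct and slightly sharper than the paper's route, which instead invokes sub-Gaussianity of $\innerprod{u}{\xi}$ to get the same $d^{-1/2}$ order.
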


\begin{proof}
    For the first inequality, we have by definition
    \begin{equation}
        \sup_w \norm{\varphi(w)}_\HHH 
        = \sup_w \sqrt{ \EE_{x \sim \rho} \abs{\varphi(\innerprod{w}{x})}^2 } 
        = \norm{\varphi}_{L^2(\rho)}.
    \end{equation}
    
    For the second inequality, define the orthogonal projector $\Pi_w = I_{d+1} - w w^\top: \RR^{d+1} \to T_w \bbS^d = \{ w \}^\perp$ for any $w \in \bbS^d$. Then
    $\left[ \nabla \phi(w) \right] (x) = \varphi'(\innerprod{w}{x}) \Pi_w x$,
    so by Cauchy-Schwarz inequalities,
    \begin{align*}
        \norm{\nabla \phi(w)}_\HHH &= \Big( 
            \sup_{\norm{f}_{L^2(\rho)} \leq 1}~ \sup_{\substack{s \in T_w \bbS^d \\ \norm{s}_w=1}}~ 
            \EE_{x \sim \rho}\left[ f(x) \innerprod{s}{\nabla \phi(w)(x)}_w \right]
        \Big) \\
        &= \sup_{\substack{s \in T_w \bbS^d \\ \norm{s}_w=1}}
        \EE_{x \sim \rho}\left[ \innerprod{s}{\nabla \phi(w)(x)}_w^2 \right]^{1/2} 
        = \sup_{\substack{s \in T_w \bbS^d \\ \norm{s}_w=1}}
        \EE_{x \sim \rho}\left[ \abs{\varphi'(\innerprod{w}{x})}^2 \innerprod{\Pi_w s}{x}^2 \right]^{1/2} \\
        &\leq \left( \EE_{x \sim \rho}\left[ \abs{\varphi'(\innerprod{w}{x})}^4 \right] \right)^{1/4}
        \cdot \sup_{\substack{\norm{u}_2=1}} \left( \EE_{x \sim \rho}\left[ \innerprod{u}{x}^4 \right] \right)^{1/4}
    \end{align*}
    since $\norm{s}_w = \norm{\Pi_w s}_2$.
    
    For the third inequality, the Riemannian Hessian of $\phi(w) = \varphi(\innerprod{w}{\cdot}): \bbS^d \to \RR$ is given by
    \begin{equation}
        \left[ \nabla^2 \phi(w) \right](x) 
        = \nabla^2_w \varphi(\innerprod{w}{x})
        = \nabla_w^\top \left[ \varphi'(\innerprod{w}{x}) \Pi_w x \right]
        = \Pi_w \left(
            \varphi''(\innerprod{w}{x}) x x^\top 
            - \varphi'(\innerprod{w}{x}) \innerprod{w}{x}
        \right) \Pi_w,
    \end{equation}
    so similarly by Cauchy-Schwarz inequalities,
    \begin{align}
        \norm{\nabla^2 \phi(w)}_\HHH 
        &\leq \sup_{\substack{s \in T_w \bbS^d \\ \norm{s}_w=1}} \EE_{x \sim \rho}\left[ \abs{\varphi''(\innerprod{w}{x})}^2 \innerprod{s}{\Pi_w x}^2 \right]^{1/2}
        + \EE_{x \sim \rho}\left[ \abs{\varphi'(\innerprod{w}{x})}^2 \innerprod{w}{x}^2 \right]^{1/2} \\
        &\leq \left( \EE_{x \sim \rho}\left[ \abs{\varphi''(\innerprod{w}{x})}^4 \right] \right)^{1/4}
        \cdot \sup_{\substack{s \in T_w \bbS^d \\ \norm{s}_w=1}} \left( \EE_{x \sim \rho}\left[ \innerprod{\Pi_w s}{x}^4 \right] \right)^{1/4} \\
        &\quad + \left( \EE_{x \sim \rho}\left[ \abs{\varphi'(\innerprod{w}{x})}^4 \right] \right)^{1/4}
        \left( \EE_{x \sim \rho}\left[ \innerprod{w}{x}^4 \right] \right)^{1/4}.
    \end{align}

    Finally, suppose that $\rho$ is rotationally invariant, and let us show that
    $
        N_4(\rho) \leq c d^{-1/2} \big( \EE_{x \sim \rho} \norm{x}^4 \big)^{1/4}
    $
    for some universal constant $c$.
    Indeed, for $x \sim \rho$, we have that $x$ and $\olx = x / \norm{x}$ are independent and that $\olx \sim \tau$. Therefore,
    \begin{align}
        N_4^4(\rho) = \sup_{\norm{u}_2 \leq 1} \EE_{x \sim \rho} \norm{x}^4 \innerprod{u}{x/\norm{x}}^4
        = \sup_{\norm{u}_2 \leq 1} \left( \EE_{x \sim \rho} \norm{x}^4 \right) \cdot
        \left( \EE_{\olx \sim \tau} \innerprod{u}{\olx}^4 \right),
    \end{align}
    and $\sup_{\norm{u}_2 \leq 1} \EE_{\olx \sim \tau} \innerprod{u}{\olx}^4 \leq \frac{c}{(d+1)^2}$ for some universal constant $c$, which is a direct consequence of the fact that $\innerprod{u}{\olx}$ is sub-Gaussian with sub-Gaussian norm $\tilde c/\sqrt{d+1}$ for some universal constant~$\tilde c$~\cite[Theorem 3.4.6]{vershynin2018high}, along with the moment bound for sub-Gaussian random variables~\cite[Proposition 2.5.2]{vershynin2018high}
\end{proof}

Finally, we check rigorously in the following proposition that Assumption~\ref{assump:poly_LSI:2NN} with proper additional regularity assumptions on $\varphi$ and $\rho$, is a special case of Assumption~\ref{assump:G_smooth_bounded}.
\begin{proposition} \label{prop:apx_polyLSI:assump1_implies_assump2}
    Consider $\WWW = \bbS^d$ and $G: \MMM(\WWW) \to \RR$ defined as in Assumption~\ref{assump:poly_LSI:2NN}.
    Suppose furthermore that $N_4(\rho), \norm{\varphi}_{L^2(\rho)}, \norm{\varphi'}_{L^4(\rho)}, \norm{\varphi''}_{L^4(\rho)} < \infty$, where $N_4(\rho)$ and $\norm{\cdot}_{L^p(\rho)}$ are defined in \autoref{lm:apx_polyLSI:bound_nablai_phi}.
    Then, $G$ and $\WWW$ satisfy Assumption~\ref{assump:G_smooth_bounded}.
\end{proposition}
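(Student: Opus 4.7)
The plan is to explicitly compute the first and second variations of $G$ in terms of the feature map $\phi$, and then invoke \autoref{lm:apx_polyLSI:bound_nablai_phi} to bound all the needed derivatives.

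First, rewrite $G(\nu) = \frac{1}{2}\|\Phi\nu - y\|_\HHH^2$ where $\Phi\nu \coloneqq \int_\WWW \phi(w)\d\nu(w) \in \HHH$. Since $\Phi$ is linear in $\nu$, direct computation yields
\begin{align}
G'[\nu](w) &= \innerprod{\Phi\nu - y}{\phi(w)}_\HHH, &
G''[\nu](w,w') &= \innerprod{\phi(w)}{\phi(w')}_\HHH,
\end{align}
and note that $G''$ does not depend on $\nu$. Non-negativity is immediate, and both variations are well-defined since $\|\phi(w)\|_\HHH \leq \norm{\varphi}_{L^2(\rho)}<\infty$ uniformly in $w$.

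Next, taking Riemannian derivatives in $w$ on the sphere, we get $\nabla^i G'[\nu](w) = \innerprod{\Phi\nu - y}{\nabla^i \phi(w)}_\HHH$ and $\nabla_w^i G''[\nu](w,w') = \innerprod{\nabla^i \phi(w)}{\phi(w')}_\HHH$, as well as $\nabla_w \nabla_{w'} G''[\nu](w,w') = \innerprod{\nabla\phi(w)}{\nabla\phi(w')}_\HHH$. By Cauchy--Schwarz in $\HHH$, setting $B_i \coloneqq \sup_w \norm{\nabla^i \phi(w)}_\HHH$ (with the convention $\nabla^0 = \id$),
\begin{align}
\norm{\nabla^i G''[\nu](w,w')}_w &\leq B_i B_0, \\
\norm{\nabla^i G'[\nu](w)}_w &\leq B_i \|\Phi\nu - y\|_\HHH \leq B_0 B_i \norm{\nu}_{TV} + B_i \norm{y}_\HHH, \\
\norm{\nabla_w \nabla_{w'} G''[\nu](w,w')} &\leq B_1^2.
\end{align}
These are of the required forms with $L_i = \max\{B_i B_0, B_0 B_i\}$, $B_i^{\mathrm{A1}} = B_i \norm{y}_\HHH$ (using $\norm{y}_\HHH = \norm{\varphi}_{L^2(\rho)}<\infty$ under Assumption~\ref{assump:poly_LSI:single_index}, otherwise directly assumed), and $\tL_2 = B_1^2$.

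It then remains to verify that the constants $B_0, B_1, B_2$ are finite, which is precisely the content of \autoref{lm:apx_polyLSI:bound_nablai_phi} under the assumed finiteness of $N_4(\rho)$ and of $\norm{\varphi^{(i)}}_{L^4(\rho)}$ for $i\in\{0,1,2\}$ (the bound on $\norm{\phi}_\HHH$ only requires $\norm{\varphi}_{L^2(\rho)}<\infty$). Finally, $\WWW=\bbS^d$ is compact and its uniform measure satisfies LSI with a dimension-dependent constant by the Bakry--Emery curvature criterion (the sphere has Ricci curvature bounded below by $d-1$). There is no significant obstacle; the only care needed is to keep track of the constants and of the distinction between the Riemannian gradient/Hessian of $\phi(w)$ on $\bbS^d$ and the ambient Euclidean derivatives, which is already handled in \autoref{lm:apx_polyLSI:bound_nablai_phi}.
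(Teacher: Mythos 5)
Your proof is correct and follows essentially the same route as the paper's: compute $G'$ and $G''$ in terms of the feature map $\phi$, use Cauchy--Schwarz in $\HHH$ together with \autoref{lm:apx_polyLSI:bound_nablai_phi} to bound the $\nabla^i$'s, and invoke the classical LSI for $\bbS^d$. The one minor difference is that you bound $\norm{\nabla^i G'[\nu]}_w$ directly via $\norm{\Phi\nu - y}_\HHH \leq B_0\norm{\nu}_{TV} + \norm{y}_\HHH$, whereas the paper passes through a mean-value-theorem argument using the uniform bound on $\nabla^i_w G''$ and then evaluates at $\nu_0 = 0$ — both are valid, yours being marginally more direct; also note that $\norm{y}_\HHH<\infty$ is already part of Assumption~\ref{assump:poly_LSI:2NN} (it requires $y \in L^2_\rho$), so no appeal to Assumption~\ref{assump:poly_LSI:single_index} is needed there.
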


\begin{proof}
    The fact that $\bbS^d$ satisfies $\CLSI_\tau$-LSI with $\CLSI_\tau = d-1$ is classical and can be found in~\cite[Sec.~5.7]{bakry2014analysis}.
    
    By definition,
    $G(\nu) = \frac{1}{2} \norm{\int_\WWW \phi(w) \d\nu(w) - y}_\HHH^2$, 
    so $G$ is non-negative and admits second variations: for any $\nu \in \MMM(\WWW)$ and $w, w' \in \bbS^d$,
    \begin{align}
        G'[\nu](w) &= \innerprod{\phi(w)}{\int_\WWW \phi(w') \d\nu(w') - y}_\HHH \\
        G''[\nu](w, w') &= \innerprod{\phi(w)}{\phi(w')}_\HHH \\
        \text{and}~~~~
        \nabla_w G''[\nu](w, w') &= \innerprod{\nabla \phi(w)}{\phi(w')}_\HHH \\
        \nabla^2_w G''[\nu](w, w') &= \innerprod{\nabla^2 \phi(w)}{\phi(w')}_\HHH \\
        \nabla_w \nabla_{w'} G''[\nu](w, w') &= \innerprod{\nabla \phi(w)}{\nabla \phi(w')}_\HHH.
    \end{align}
    Consequently, denoting $C_i = \sup_{w \in \bbS^d} \norm{\nabla^i \phi}_\HHH$ for $i \in \{0, 1, 2\}$,
    which are all finite by~\autoref{lm:apx_polyLSI:bound_nablai_phi},
    \begin{align}
        \abs{G''[\nu](w,w')} &\leq C_0^2 \eqqcolon L_0 \\
        \norm{\nabla_w G''[\nu](w,w')}_w &\leq C_0 C_1 \eqqcolon L_1 \\
        \norm{\nabla^2_w G''[\nu](w,w')}_w &\leq C_0 C_2 \eqqcolon L_2 \\
        \norm{\nabla_w \nabla_{w'} G''[\nu](w,w')} &\leq C_1^2 \eqqcolon \tL_2.
    \end{align}
    
    Now for each $i \in \{0, 1, 2\}$,
    \begin{equation}
        \forall (\nu,w,w'), \norm{\nabla^i_w G''[\nu](w,w')}_w \leq L_i
        ~\implies~
        \forall (\nu,\nu',w), \norm{\nabla^i G'[\nu] - \nabla^i G'[\nu']}_w \leq L_i \norm{\nu - \nu'}_{TV}.
    \end{equation}
    Indeed, the right-hand side can be shown by applying the mean-value theorem to $g(\theta) = \innerprod{s}{\nabla^i G'[\nu + \theta (\nu' - \nu)](w)}_w$ over $\theta \in [0,1]$ for each $s \in (T_w \WWW)^{\otimes i}$.
    Thus, to show the existence of $B_i<\infty$ such that
    $\forall (\nu,w,w'), \norm{\nabla^i G'[\nu]}_w \leq L_i \norm{\nu}_{TV} + B_i$,
    it suffices to check that there exists $\nu_0$ such that $\norm{\nu_0}_{TV}$ and $\sup_w \norm{\nabla^i G'[\nu_0]}_w < \infty$.
    Note that for any $\nu$ and $w$,
    \begin{align}
        \nabla^i G'[\nu](w) &= \innerprod{\nabla^i \phi(w)}{\int_\WWW \phi(w') \d\nu(w') - y}_\HHH, \\
        \text{thus}~~~~
        \nabla^i G'[0](w) &= -\innerprod{\nabla^i \phi(w)}{y}_\HHH \\
        \text{and}~~~~
        \sup_w \norm{\nabla^i G'[0](w)}_w &\leq C_i \norm{y}_\HHH < \infty.
    \end{align}
    Hence the existence of the $B_i<\infty$ is verified. This finishes the verification of Assumption~\ref{assump:G_smooth_bounded}.
\end{proof}

\subsection{Proof of~\autoref{thm:poly_LSI}} \label{subsec:apx_polyLSI:thmproof}

In the single-index setting of Assumption~\ref{assump:poly_LSI:single_index}, it is intuitive that $\delta_v$ is a minimizer of $J_\lambda$, for any $\lambda \geq 0$, and that $\eta_{\lambda,\beta}$ and $\delta_v$ are close in certain regimes of $\beta$ and $\lambda$. For this reason, we will first investigate the properties of $J_\lambda'[\delta_v]$ as a proxy of $J_\lambda'[\eta_{\lambda,\beta}]$, to show that it is amenable to a refined analysis for proving LSI, in \autoref{subsubsec:apx_polyLSI:thmproof:J'deltav}. This step uses a Lyapunov approach inspired by~\cite{menz2014poinacre, li2023riemannian}.
We will then prove that these properties carry from $J_\lambda'[\delta_v]$ over to $J_\lambda'[\eta_{\lambda,\beta}]$, in \autoref{subsubsec:apx_polyLSI:thmproof:eta*}, thanks to a quantitative bound on $W_2(\eta_{\lambda,\beta}, \delta_v)$ proved in \autoref{subsubsec:apx_polyLSI:thmproof:boundW1}.

\begin{lemma}
    Under Assumptions~\ref{assump:poly_LSI:2NN} and~\ref{assump:poly_LSI:single_index}, we have
    \begin{align}
        \forall w \in \bbS^d,
        J_\lambda'[\delta_v](w) 
        &= -\frac{\lambda}{2} \left( \lambda + \norm{\phi(v)}_\HHH^2 \right)^{-2} \innerprod{\phi(v)}{\phi(w)}_\HHH^2 \\
        &= -\frac{\lambda}{2} \left( \lambda + \norm{\varphi}_{L^2(\rho)}^2 \right)^{-2} \abs{ \EE_{x \sim \rho} \varphi(\innerprod{x}{v}) \varphi(\innerprod{x}{w}) }^2 \\
        &= -\lambda g(\innerprod{v}{w})
    \end{align}
    for some $g: [-1,+1] \to \RR$.
\end{lemma}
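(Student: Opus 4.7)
The plan is to apply the simplified expression for $J_\lambda'$ from \autoref{prop:apx_polyLSI:L2loss_J_simplified} together with the rank-one structure of $K_{\delta_v}$, and then invoke rotational invariance of $\rho$ to reduce the resulting inner products to functions of $\innerprod{v}{w}$ only.

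First, I would specialize the formula $J'_\lambda[\eta](w) = -\frac{\lambda}{2}\innerprod{\phi(w)}{(K_\eta + \lambda\id)^{-1}y}_\HHH^2$ to $\eta = \delta_v$. Since $K_{\delta_v} = \phi(v)\phi(v)^*$ is a rank-one operator in $\HHH$, and since Assumption~\ref{assump:poly_LSI:single_index} gives $y = \varphi(\innerprod{v}{\cdot}) = \phi(v)$, a direct computation yields
\begin{equation*}
    (K_{\delta_v} + \lambda \id)\phi(v) = \phi(v)\norm{\phi(v)}_\HHH^2 + \lambda \phi(v) = (\lambda + \norm{\phi(v)}_\HHH^2)\phi(v),
\end{equation*}
so $(K_{\delta_v} + \lambda \id)^{-1} y = (\lambda + \norm{\phi(v)}_\HHH^2)^{-1} \phi(v)$. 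Plugging this back gives the first claimed equality.

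Next, to obtain the second form, I would expand $\norm{\phi(v)}_\HHH^2 = \EE_{x\sim\rho}\varphi(\innerprod{v}{x})^2$ and $\innerprod{\phi(v)}{\phi(w)}_\HHH = \EE_{x\sim\rho}\varphi(\innerprod{v}{x})\varphi(\innerprod{w}{x})$. Since $\rho$ is rotationally invariant (Assumption~\ref{assump:poly_LSI:single_index}), the former is independent of $v$ and equals $\norm{\varphi}_{L^2(\rho)}^2$ in the notation of \autoref{lm:apx_polyLSI:bound_nablai_phi}.

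For the third form, rotational invariance of $\rho$ implies that for any orthogonal transformation $R$, $\EE_{x\sim\rho}\varphi(\innerprod{v}{x})\varphi(\innerprod{w}{x}) = \EE_{x\sim\rho}\varphi(\innerprod{Rv}{x})\varphi(\innerprod{Rw}{x})$. In particular, this quantity depends only on $\innerprod{v}{w}$, so there exists $h:[-1,1]\to\RR$ with $\EE_{x\sim\rho}\varphi(\innerprod{v}{x})\varphi(\innerprod{w}{x}) = h(\innerprod{v}{w})$. Setting
\begin{equation*}
    g(r) \coloneqq \frac{1}{2}(\lambda + \norm{\varphi}_{L^2(\rho)}^2)^{-2}\, h(r)^2
\end{equation*}
yields $J_\lambda'[\delta_v](w) = -\lambda g(\innerprod{v}{w})$, with $g \geq 0$ since it is a square. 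There is no real obstacle here; the only care needed is to state rotational invariance precisely enough to justify that the kernel $(v,w)\mapsto\innerprod{\phi(v)}{\phi(w)}_\HHH$ factors through the inner product, which follows by choosing $R$ to fix the plane $\mathrm{span}(v,w)$ (or to map any pair with the same inner product to the fixed pair).
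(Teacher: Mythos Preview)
Your proposal is correct and follows essentially the same approach as the paper: invoke the formula for $J_\lambda'$ from \autoref{prop:apx_polyLSI:L2loss_J_simplified}, use that $\phi(v)$ is an eigenvector of the rank-one operator $K_{\delta_v}=\phi(v)\phi(v)^*$ (hence of $(K_{\delta_v}+\lambda\id)^{-1}$) with $y=\phi(v)$, and then appeal to rotational invariance of $\rho$ to reduce to a function of $\innerprod{v}{w}$. Your explicit definition of $g$ is a minor elaboration but does not change the argument.
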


\begin{proof}
    By \autoref{prop:apx_polyLSI:L2loss_J_simplified}, since $y = \phi(v)$,
    \begin{align}
        J_\lambda'[\delta_v] = -\frac{\lambda}{2} \innerprod{\phi(w)}{(K_{\delta_v} + \lambda \id)^{-1} \phi(v)}_\HHH^2.
    \end{align}
    Since $\phi(v)$ is an eigenvector of $K_{\delta_v} = \int_\WWW \phi(w') \phi(w')^* \d\delta_v = \phi(v) \phi(v)^*$ with eigenvalue $\norm{\phi(v)}_\HHH^2 = \EE_{x \sim \rho} \varphi(\innerprod{x}{v})^2 = \norm{\varphi}_{L^2(\rho)}^2$,
    it is also an eigenvector of $(K_{\delta_v} + \lambda \id)^{-1}$ with eigenvalue $(\norm{\varphi(v)}_\HHH^2 + \lambda)^{-1}$,
    whence the expression of $J_\lambda'[\delta_v]$ follows.
    
    Moreover, by rotational invariance of $\rho$, $\EE_{x\sim\rho}\varphi(\innerprod{x}{v})\varphi(\innerprod{x}{w})$ depends only on $\innerprod{v}{w}$, for all $w \in \bbS^d$. In other words, there exists $g$ such that $J_\lambda'[\delta_v] = -\lambda g(\innerprod{v}{\cdot})$.
\end{proof}

\subsubsection{Lyapunov function analysis for bounding the LSI constant of \texorpdfstring{$\widehat{\delta_v} \propto e^{-\beta J_\lambda'[\delta_v]} \tau$}{widehatdeltav}}
\label{subsubsec:apx_polyLSI:thmproof:J'deltav}

Observe that by the assumption $g' \geq c_1>0$ of~\autoref{thm:poly_LSI}, $J_\lambda'[\delta_v] = -\lambda g(\innerprod{v}{\cdot})$ has a unique global minimum at $v$. Moreover, our other assumptions on $g$ will imply that the Riemannian Hessian at optimum $\Hess J_\lambda'[\delta_v](v)$ is positive definite.
This motivates us to follow the strategy of~\cite[Thm.~3.4]{li2023riemannian} for proving LSI for $\widehat{\delta_v} \propto e^{-\beta J_\lambda'[\delta_v]} \tau$.
Let us first outline the strategy and recall some useful classical notions.

The generator of the Langevin diffusion with invariant measure $\exp(-\beta f)\tau/Z$ is
\begin{equation}\label{eq:Langevin_Generator}
    \mathcal{L} = \Delta - \beta\langle\nabla f, \nabla \rangle.
\end{equation}
Define $U = \{w : \dist_\WWW(w,v) \leq r\}$ for some $v \in \bbS^d$, with $r>0$ to be chosen later.
We say $W : \mathbb{S}^d \to [1,\infty)$ is a Lyapunov function if
$
    \frac{\mathcal{L}W}{W} \leq -\theta + b\bmone_U,
$
for constants $\theta > 0$ and $b \geq 0$. When proving functional inequalities for a Gibbs measure $\exp(-\beta f)\tau / Z$, a typical choice of Lyapunov function is $W = \exp(\beta (f - \min f) / 2)$, for which the Lyapunov condition writes
\begin{equation}\label{eq:Lyapunov_specialized}
    \frac{\beta\Delta f}{2} - \frac{\beta^2\norm{\nabla f}^2}{4} \leq -\theta + b\bmone_U.
\end{equation}
Further, we say a probability measure $\nu \in \PPP(\SS^d)$ satisfies a local Poincar\'e inequality on $U$ with constant $\kappa_U$ if
\begin{equation}
    \int_U f^2\d\nu \leq \frac{1}{\kappa_U}\int_U \norm{\nabla f}^2\d\nu,
    \quad \text{for all smooth  } f : U \to \RR \text{ such that } \int_U f\d\nu = 0.
\end{equation}
Notice that $U$ has a convex boundary, thus we can use the Bakry-\'Emery criterion as adapted to manifolds with convex boundaries by~\cite[Proposition B.11]{li2023riemannian} to prove a local Poincar\'e inequality on~$U$. Specifically, it suffices to have $\inf_{w \in U}\lambda_{\mathrm{min}}(\nabla^2 f(w)) > 0$.

In summary, a Lyapunov condition of the form~\eqref{eq:Lyapunov_specialized}, along with a control on the eigenspectrum of $\nabla^2 f(w)$, implies an LSI for $e^{-\beta f} \tau/Z$. We record this fact in the theorem below, working out the proper dependence on problem parameters for future use.
\begin{theorem} \label{thm:apx_polyLSI:poly_LSI_abstractg}
    Let $v \in \bbS^d$, $0 < \lambda \leq 1$ and $f: \bbS^d \to \RR$ of the form $f(w) = -\lambda g(\innerprod{w}{v})$ for some increasing function $g: [-1, 1] \to \RR$.
    Suppose there exist constants $D_0, D_1, D_2, D_3, D_4 > 0$, and $r \in (0,\pi/2)$ such that if $\beta \geq D_0 d \lambda^{-1}$ then
    \begin{align}
        \forall w \in \bbS^d,~~
        \frac{1}{2} \Delta f - \frac{\beta}{4} \norm{\nabla f}^2 
        &\leq D_1 \lambda d
        \tag{$\mathrm{L}_{\bbS^d}$}\label{eq:Lyapunov_all} \\
        \forall w \in \bbS^d \setminus U,~~
        \frac{1}{2} \Delta f - \frac{\beta}{4} \norm{\nabla f}^2 
        &\leq -D_2 \beta\lambda^2 
        \tag{$\mathrm{L}_U$}\label{eq:Lyapunov_notU} \\
        \forall w \in \bbS^d,~~
        \lambda_{\min}(\nabla^2 f(w)) 
        &\geq -D_3 \lambda 
        \tag{$\mathrm{C}_{\bbS^d}$}\label{eq:mineig_all} \\
        \forall w \in U,~~
        \lambda_{\min}(\nabla^2 f(w))
        &\geq D_4 \lambda 
        \tag{$\mathrm{C}_U$}\label{eq:mineig_U}
    \end{align}
    where $U = \left\{ w \in \bbS^d;~ \dist_\WWW(w, v) \leq r \right\}$.
    Then (provided that $\beta \geq D_0 d \lambda^{-1}$) the probability measure $\nu = \exp(-\beta f) \tau /Z$ satisfies $\CLSI$-LSI for a constant $\CLSI$ dependent only on the $D_i$ and on $r$.

    Furthermore, if the condition on $\beta$ is replaced by $\beta \geq D'_0 d^4 \lambda^{-4}$ and if~\eqref{eq:Lyapunov_all} is replaced by
    \begin{equation} \tag{$\mathrm{L}_{\bbS^d}'$}\label{eq:Lyapunov_all'}
        \forall w \in \bbS^d,~~
        \frac{1}{2} \Delta f - \frac{\beta}{4} \norm{\nabla f}^2 
        \leq D'_1 \lambda d \beta^{3/4},
    \end{equation}
    then (provided that $\beta \geq D'_0 d^4 \lambda^{-4}$)
    $\nu$ satisfies $\CLSI'$-LSI for a constant $\CLSI'$ dependent only on $D_0', D_1'$, $D_2, D_3, D_4$ and on $r$.
\end{theorem}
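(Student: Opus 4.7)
\textbf{Proof plan for \autoref{thm:apx_polyLSI:poly_LSI_abstractg}.}
The plan is to follow the Lyapunov-plus-local-inequality strategy of \cite{li2023riemannian,menz2014poinacre} that was sketched in the paragraphs preceding the statement. Concretely, I will take the Lyapunov function $W(w) \coloneqq \exp\!\bigl(\tfrac{\beta}{2}(f(w) - \min f)\bigr)$. A direct computation using the generator~\eqref{eq:Langevin_Generator} gives
\begin{equation*}
    \frac{\Ll W}{W}(w) = \beta \left[ \tfrac{1}{2} \Delta f(w) - \tfrac{\beta}{4} \norm{\nabla f(w)}^2 \right],
\end{equation*}
so the two hypotheses \eqref{eq:Lyapunov_all} and \eqref{eq:Lyapunov_notU} translate into the drift condition
\begin{equation*}
    \frac{\Ll W}{W} \leq -\theta + b \, \mathbf{1}_U, \qquad \theta = D_2 \beta^2 \lambda^2, \quad b = D_2 \beta^2 \lambda^2 + D_1 \beta \lambda d.
\end{equation*}
Under the assumption $\beta \geq D_0 d \lambda^{-1}$, one has $\beta \lambda d \leq D_0^{-1} \beta^2 \lambda^2$, so that $b \leq (D_2 + D_1/D_0)\theta$, i.e.\ $b$ and $\theta$ are of the same order.

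Next I will establish a local Poincaré inequality on the geodesic ball $U$. Since $r < \pi/2$, $U$ is a geodesically convex subset of $\bbS^d$ with (strictly) convex boundary, so the Bakry--Émery criterion adapted to manifolds with convex boundary~\cite[Prop.~B.11]{li2023riemannian} applies. Combining the sphere's Ricci curvature $\mathrm{Ric}_{\bbS^d} = (d-1) g$ with \eqref{eq:mineig_U} yields the lower bound $\beta \nabla^2 f + \mathrm{Ric} \geq \beta D_4 \lambda + (d-1)$ on $U$, which produces a local Poincaré inequality for $\nu$ restricted to $U$ with constant $\kappa_U \gtrsim \beta D_4 \lambda$. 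Combining this local Poincaré with the Lyapunov drift inequality via a Cattiaux--Guillin type result (see e.g.\ \cite[Thm.~1.4]{cattiaux2010functional} or \cite[Thm.~3.2]{li2023riemannian}) gives a (global) Poincaré inequality for $\nu$ whose constant depends only on $\theta/b$ and on $\kappa_U$; our bookkeeping above shows $\theta/b$ is dimension- and temperature-free, and the extra $\kappa_U^{-1}$ factor is harmless because it sits in the denominator together with $b$, yielding a final Poincaré constant dependent only on the $D_i$ and $r$.

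To upgrade from Poincaré to LSI, I will invoke the same Lyapunov/local-LSI combination but with the local \emph{log-Sobolev} inequality on $U$ — which is given, again by the Bakry--Émery criterion on $U$, by the same curvature-dimension estimate as above — together with the global Hessian lower bound \eqref{eq:mineig_all}. This is exactly the setup of \cite[Thm.~3.4]{li2023riemannian}, whose proof goes through verbatim in our Riemannian setting on $\bbS^d$: the global bound $\nabla^2 f \geq -D_3\lambda$ controls the ``bad'' contribution from the tails once the Lyapunov drift has confined mass into $U$, and produces an LSI constant depending only on $D_1, D_2, D_3, D_4, r$. The main technical obstacle will be to verify that all constants that appear really are controlled independently of $\beta, d, \lambda$ under our regime $\beta \geq D_0 d \lambda^{-1}$; this is essentially what the quantitative bound $b \leq (D_2 + D_1/D_0)\theta$ is for.

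For the second part of the theorem, note that replacing \eqref{eq:Lyapunov_all} with \eqref{eq:Lyapunov_all'} changes the bound on $b$ to $b \leq D_2 \beta^2 \lambda^2 + D_1'\beta^{7/4} \lambda d$. For the ratio $b/\theta$ to remain $O(1)$ we need $D_1' \beta^{7/4}\lambda d \lesssim D_2 \beta^2\lambda^2$, i.e.\ $\beta^{1/4} \gtrsim d/\lambda$, which is exactly ensured by the strengthened assumption $\beta \geq D_0' d^4 \lambda^{-4}$. Everything else in the argument is unchanged, yielding an LSI constant $\CLSI'$ depending only on $D_0', D_1', D_2, D_3, D_4, r$.
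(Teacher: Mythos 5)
Your proposal gets the Lyapunov function $W = \exp(\beta(f-\min f)/2)$ right, the drift computation right, the identification $\theta = D_2\beta^2\lambda^2$, $b = D_2\beta^2\lambda^2 + D_1\beta\lambda d$ right, and the bookkeeping $b \lesssim \theta$ under $\beta \geq D_0 d\lambda^{-1}$ right. The treatment of the second part (with $\beta^{3/4}$) is also correct. But there are two substantive problems.

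First, your claim that the resulting global Poincar\'e constant is ``dependent only on the $D_i$ and $r$'' is \emph{false}, and the error points to a misreading of the cancellation structure. The Lyapunov-for-Poincar\'e theorem gives $\kappa \geq \theta/(1 + b/\kappa_U)$. Plugging in $\theta \approx b \approx \beta^2\lambda^2$ and $\kappa_U \gtrsim \beta\lambda D_4$ gives $b/\kappa_U \gtrsim \beta\lambda$, so $\kappa \approx \theta \kappa_U / b \approx \beta\lambda$. The paper's Poincar\'e constant is $\kappa \geq C\beta\lambda$ — it scales with $\beta\lambda$, and this scaling is precisely what makes the final LSI constant dimension- and temperature-free.

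Second, and more importantly, the final LSI step — the only genuinely hard part — is left unverified. The paper obtains LSI from Poincar\'e via a \emph{Poincar\'e $+$ global curvature lower bound $\Rightarrow$ LSI} statement (Prop.~9.17 of Li et al.), which gives $\CLSI = \kappa/(11\beta K)$. With $\kappa \geq C\beta\lambda$ and $K = D_3\lambda$ from~\eqref{eq:mineig_all}, the $\beta\lambda$ factors cancel, yielding $\CLSI \geq C/(11 D_3)$, which is the whole point. You instead propose a direct Lyapunov-for-LSI argument (``the same Lyapunov/local-LSI combination'', citing Thm.~3.4 of Li et al.), but the Lyapunov condition established here, $\Ll W / W \leq -\theta + b\mathbf{1}_U$, is a Poincar\'e-type drift condition (it bounds the drift by a constant, not by a growing function of the distance), and it is not established that this particular condition plus a local LSI yields a global LSI with a $\beta,d,\lambda$-free constant. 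You defer this as ``the main technical obstacle'', which is accurate — but that obstacle is the theorem. Without exhibiting the explicit cancellation (or an equivalent one in the Cattiaux--Guillin--Wu/Menz--Schlichting machinery you invoke), the argument does not establish the claimed constant.

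In short: the structure is right up through the drift estimates; the Poincar\'e constant claim is wrong (it scales as $\beta\lambda$, not $O(1)$); and the passage to LSI replaces the paper's two-step route (Lyapunov $\to$ Poincar\'e $\to$ LSI-via-curvature, where the $\beta\lambda$ cancels explicitly) with a direct Lyapunov-for-LSI route whose constant is asserted but not derived.
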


\begin{proof}
    By the Lyapunov criterion for Poincar\'e inequality~\cite[Thm.~4.6.2]{bakry2014analysis}, if the generator $\mathcal{L}$ given by~\eqref{eq:Langevin_Generator} satisfies
    the Lyapunov condition $\frac{\mathcal{L}W}{W} \leq -\theta + b\bmone_U$
    for some $\theta > 0$, $b \geq 0$, $U \subset \bbS^d$ and $W: \bbS^d \to \RR$, and if $\nu$ satisfies a local Poincar\'e inequality on $U$ with constant $\kappa_U$, then $\nu$ satisfies a Poincar\'e inequality on $\SS^d$ with constant
    $\kappa \geq \frac{\theta}{1 + \frac{b}{\kappa_U}}.$
    
    Let us apply this to $W = \exp(\beta (f - \min f)/2)$. By~\eqref{eq:Lyapunov_all} and~\eqref{eq:Lyapunov_notU}, the Lyapunov condition holds with $\theta = D_2\beta^2\lambda^2$ and $b = D_1 \lambda\beta (d-1) + D_2\beta^2\lambda^2$. Moreover, since $U$ has a convex boundary (the geodesic in $\SS^d$ between any two points in $U$ remains in $U$ for $r < \pi/2$),
    by~\cite[Propostion B.11]{li2023riemannian} $\nu$ satisfies a local Poincaré inequality on $U$ with constant
    $$\kappa_U \geq \mathrm{Ric}_g + \beta\lambda_{\mathrm{\min}}(\nabla^2 f(w)) \geq d-1 + \beta\lambda D_4$$
    where $\mathrm{Ric}_g$ denotes the Ricci curvature of $\bbS^d$.
    As a result, $\nu$ satisfies Poincaré inequality with constant
    \begin{equation} \label{eq:apx_polyLSI:poly_LSI_abstractg:abstractg_kappa}
        \kappa \geq \frac{D_2\beta^2\lambda^2}{1 + \frac{D_1 \lambda\beta d + D_2\beta^2\lambda^2}{d-1 + \beta\lambda D_4}} \geq C\beta\lambda,
    \end{equation}
    for some constant $C$ depending only on the $D_i$,
    where we used that $\beta \geq D_0 d \lambda^{-1}$.
    
    Moreover, by~\cite[Proposition 9.17]{li2023riemannian}, if $\nu \in \mathcal{P}(\SS^d)$ satisfies the Poincar\'e inequality with constant $\kappa$, and $\beta \nabla^2 f + \mathrm{Ric}_g \succcurlyeq - \beta K$ for some $K > 0$ on $\SS^d$, then for $\beta \geq 1$, $\nu$ satisfies the LSI with constant $\CLSI = \frac{\kappa}{11 \beta K}$. By the assumptions of the theorem, this indeed holds with $K = D_3\lambda$. Consequently, $\nu$ satisfies LSI with constant 
    $\CLSI = C/(11D_3)$, which finishes the proof of the first part of the theorem.

    The second part, with \eqref{eq:Lyapunov_all'} instead of \eqref{eq:Lyapunov_all}, follows by a similar reasoning, except that ``$D_1$'' should be replaced by ``$D_1' \beta^{3/4}$'' in the calculation of~\eqref{eq:apx_polyLSI:poly_LSI_abstractg:abstractg_kappa}.
    This still leads to a bound of the form $\kappa \geq C' \beta \lambda$ provided that $\beta \geq D_0' d^4 \lambda^{-4}$, and the rest of the proof follows without change.
\end{proof}

We now verify that $J_\lambda'[\delta_v]$ satisfies the conditions of~\autoref{thm:apx_polyLSI:poly_LSI_abstractg}. 

\begin{proposition} \label{prop:apx_polyLSI:f0_satisfies_abstractg}
    Under the assumptions of \autoref{thm:poly_LSI},
    $f_0 \coloneqq J'_\lambda[\delta_v]$ satisfies the conditions of~\autoref{thm:apx_polyLSI:poly_LSI_abstractg}
    with $D_0, ..., D_4, r$ dependent only on $c_1, C_1, C_2, C_3$.
\end{proposition}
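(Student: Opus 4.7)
My plan is to verify by direct Riemannian computation each of the four conditions of \autoref{thm:apx_polyLSI:poly_LSI_abstractg} for $f_0 = -\lambda g(\innerprod{v}{\cdot})$ on $\bbS^d$. Writing $h(w) = \innerprod{v}{w}$ and $r = h(w)$, standard calculus on the sphere gives $\nabla h(w) = v - rw$ with $\norm{\nabla h}^2 = 1-r^2$, $\Hess h(w) = -r \cdot \Id_{T_w\bbS^d}$, and $\Delta h(w) = -rd$. Applying the chain rule, I obtain
\begin{equation*}
    \norm{\nabla f_0}^2 = \lambda^2 g'(r)^2(1-r^2),
    \qquad
    \Delta f_0 = \lambda d r g'(r) - \lambda g''(r)(1-r^2),
\end{equation*}
and $\Hess f_0 = \lambda r g'(r) \Id_{T_w\bbS^d} - \lambda g''(r) \nabla h (\nabla h)^\top$, so its eigenvalues are $\lambda r g'(r)$ with multiplicity $d-1$ and $\lambda r g'(r) - \lambda g''(r)(1-r^2)$ in the direction of $\nabla h$.

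For the two curvature conditions, the hypotheses $\abs{r g'(r)} \leq C_1$ and $\abs{g''(r)(1-r^2)} \leq C_3(1-r^2)^{1/2} \leq C_3$ yield the global lower bound $\lambda_{\min}(\Hess f_0) \geq -\lambda(C_1+C_3) \eqqcolon -D_3 \lambda$, verifying \eqref{eq:mineig_all}. For \eqref{eq:mineig_U}, I would fix the radius $r \in (0,\pi/2)$ small enough that $\cos r \geq 3/4$ and $C_3 \sin r \leq c_1/4$; on $U = \{w: \dist(w,v) \leq r\}$ one has $h(w) \geq \cos r$, so both eigenvalues are at least $c_1 \lambda /2$, giving $D_4 = c_1/2$. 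The global Lyapunov bound \eqref{eq:Lyapunov_all} is then immediate by discarding the non-positive gradient term and estimating $\frac{1}{2}\Delta f_0 \leq \lambda d (C_1+C_3)/2 \eqqcolon D_1 \lambda d$.

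The main technical step is the Lyapunov bound outside $U$, which I would establish by a case split on $r \in [-1, \cos r]$. On the region where $r \in [-1/2, \cos r]$, the lower bound $\norm{\nabla f_0}^2 \geq \lambda^2 c_1^2 (1-r^2) \gtrsim \lambda^2$ allows the $-\frac{\beta}{4}\norm{\nabla f_0}^2$ term to absorb the positive Laplacian contribution (of order $\lambda d$) for $\beta \geq D_0 d/\lambda$ with $D_0$ sufficiently large, producing the required $-D_2\beta\lambda^2$ bound. On the complementary region $r \in [-1, -1/2]$, the factor $r g'(r) \leq -c_1/2$ makes the Laplacian term the dominant one, yielding $\frac{1}{2}\Delta f_0 \leq -\lambda d c_1/4 + \lambda C_3/2$; using the lower bound $\beta\lambda \geq D_0 d$ translates this into a $-D_2\beta\lambda^2$ bound provided the constants satisfy the coupling $D_2 D_0 \lesssim c_1$.

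The hard part lies precisely in the behaviour near the antipodal point $-v$, where $\norm{\nabla f_0}^2 \to 0$ while $\frac{1}{2}\Delta f_0 \to -\lambda d g'(-1)/2$, so the required bound must come entirely from the Laplacian. This constrains the choice of constants — one must pick $D_0$ large enough to control the $r \geq -1/2$ region via the gradient term, while keeping $D_2 \lesssim c_1/D_0$ so that the Laplacian-only bound in the $r \leq -1/2$ region is consistent with the target $-D_2 \beta \lambda^2$. This delicate joint tuning of $D_0$ and $D_2$ is the step that requires the most care in the proof.
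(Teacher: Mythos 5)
Your overall strategy and the underlying Riemannian computations (gradient, Hessian, Laplacian, eigenvalue identification of $\Hess f_0$, and the treatment of the three conditions~\eqref{eq:Lyapunov_all}, \eqref{eq:mineig_all}, \eqref{eq:mineig_U}) match the paper's proof. For~\eqref{eq:Lyapunov_all} you simply discard the non-positive gradient term, while the paper instead restricts $\beta\lambda \gtrsim C_2/c_1^2$ so that the ``$2g''+\beta\lambda(g')^2$'' factor is non-negative; both are valid.

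The gap is in your treatment of~\eqref{eq:Lyapunov_notU} on the antipodal region $\innerprod{w}{v} \in [-1,-1/2]$. Your inference ``using the lower bound $\beta\lambda \geq D_0 d$ translates this into a $-D_2\beta\lambda^2$ bound'' runs in the wrong direction. From $\beta\lambda \geq D_0 d$ one obtains $-D_2\beta\lambda^2 \leq -D_2 D_0 d\lambda$, so proving $\tfrac12\Delta f_0 \leq -D_2 D_0 d\lambda$ does \emph{not} imply $\tfrac12\Delta f_0 \leq -D_2\beta\lambda^2$: the target $-D_2\beta\lambda^2$ becomes more and more negative as $\beta\to\infty$, and your estimate $\tfrac12\Delta f_0 \leq -\lambda d c_1/4 + \lambda C_3/2$ is a fixed number, independent of $\beta$. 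The coupling $D_0 D_2 \lesssim c_1$ closes the inequality exactly at $\beta\lambda = D_0 d$ but fails for $\beta\lambda \gg D_0 d$. Concretely, at $w = -v$ one has $\norm{\nabla f_0}^2 = 0$ and $\tfrac12\Delta f_0 - \tfrac{\beta}{4}\norm{\nabla f_0}^2 = -\tfrac{\lambda d}{2}g'(-1)$, which is $\Theta(\lambda d)$ independently of $\beta$; this is $\geq -D_2\beta\lambda^2$ as soon as $\beta > \tfrac{c_1 d}{2 D_2 \lambda}$, so~\eqref{eq:Lyapunov_notU} as stated cannot hold for all $\beta \geq D_0 d\lambda^{-1}$ near the antipode.

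You correctly flagged the antipode as the hard spot, and it is worth pointing out that the paper's own argument also has an issue there: the inequality $r \leq \arccos(\innerprod{w}{v}) \leq \tfrac{\pi}{2}\sqrt{1-\innerprod{w}{v}^2}$ used on $\bbS^d\setminus U$ is valid only when $\innerprod{w}{v} \geq 0$ (at $\innerprod{w}{v}=-1$ it would claim $\pi \leq 0$). So the paper's unified estimate silently breaks on the antipodal hemisphere as well. A correct treatment would need to weaken the Lyapunov target on $\bbS^d\setminus U$ to something like $-D_2\lambda\min\{\beta^2\lambda, \beta d\}$; since the Lyapunov-to-Poincar\'e machinery only extracts a Poincar\'e constant of order $\beta\lambda$ anyway, that weaker bound still suffices for the conclusion of \autoref{thm:apx_polyLSI:poly_LSI_abstractg}, but this would require modifying the theorem statement, which neither your proof nor the paper's does.
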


\begin{proof}
    The Riemannian gradient and Hessian of $f_0 = J_\lambda'[\delta_v] = -\lambda g(\innerprod{v}{\cdot})$ are given by
    \begin{align}
        \nabla f_0(w) &= -\lambda g'(\innerprod{w}{v}) \Pi_w v \\
        \text{and}~~~~
        \Hess f_0(w) &= -\lambda \Pi_w \left(
            g''(\innerprod{w}{v}) v v^\top
            - g'(\innerprod{w}{v}) \innerprod{w}{v} I_{d+1}
        \right) \Pi_w
    \end{align}
    where $\Pi_w = I_{d+1} - w w^\top: \RR^{d+1} \to T_w \bbS^d = \{ w \}^\perp$ for any $w \in \bbS^d$.
    This can be shown by considering the smooth extension of $f_0$ to $\RR^{d+1} \to \RR$ defined by $x \mapsto -\lambda g(\innerprod{v}{x})$ and using that $\bbS^d$ is a sub-Riemannian manifold of $\RR^{d+1}$~\cite[Chap.~5]{boumal2023introduction}.
    In particular since $v^\top \Pi_w \Pi_w v = 1 - \innerprod{w}{v}^2$ and $\trace \Pi_w = d$,
    \begin{align}
        \norm{\nabla f_0(w)}^2
        &= \lambda^2 g'(\innerprod{w}{v})^2 (1 - \innerprod{w}{v}^2) \\
        \text{and}~~~~
        \Delta f_0(w) 
        = \trace \Hess f_0(w) 
        &= -\lambda \left( 
            g''(\innerprod{w}{v}) (1 - \innerprod{w}{v}^2)
            - g'(\innerprod{w}{v}) \innerprod{w}{v}d
        \right).
    \end{align}

    Pose $U = \left\{ w \in \bbS^d : \dist_{\bbS^d}(w,v) \leq r \right\}$ for some $r>0$ to be chosen.
    
    Let us verify \eqref{eq:Lyapunov_all}. We have for all $w \in \bbS^d$
    \begin{equation} \label{eq:Lyapunov_function_f0}
        \frac{1}{2}\Delta f_0 - \frac{\beta}{4}\norm{\nabla f_0}^2 
        = -\frac{\lambda}{4} \left(
            2g''(\langle w, v\rangle) 
            + \beta\lambda g'(\langle w, v\rangle)^2
        \right)~
        (1 - \langle w, v\rangle^2)
        + \frac{\lambda}{2} g'(\langle w, v\rangle)\langle w, v\rangle d.
    \end{equation}
    The second term is bounded by $\frac{\lambda}{2} C_1d$.
    We can ensure that the first term is non-positive by appropriately restricting $\beta$ as follows:
    \begin{align}
        \inf_{[-1,1]} \left[ 2 g'' + \beta \lambda (g')^2 \right] \geq 0
        &\impliedby
        2 (\inf g'') + \beta \lambda (\inf g')^2 \geq 0 \\
        &\impliedby 
        -2C_2 + \beta \lambda c_1^2 \geq 0
        \iff \beta \geq \frac{2C_2}{c_1^2} \lambda^{-1}.
    \end{align}

    Let us verify \eqref{eq:Lyapunov_notU}.
    We can upper-bound the first term in \eqref{eq:Lyapunov_function_f0} by a negative quantity by restricting $\beta$ further: by a similar calculation as just above,
    \begin{equation}
        \beta \geq \frac{4C_2}{c_1^2\lambda}
        \implies
        \inf_{[-1,1]} \left[ 2 g'' + \frac{\beta}{2} \lambda (g')^2 \right] \geq 0
        \implies
        2 g'' + \beta \lambda (g')^2 \geq \frac{\beta}{2} \lambda (g')^2
        ~~\text{over}~ [-1,1].
    \end{equation}
    Then for all $w \in \bbS^d \setminus U$, 
    we have $
        r \leq \dist_\WWW(w,v) 
        = \arccos(\langle w, v\rangle) 
        \leq \frac{\pi}{2}\sqrt{1-\langle w, v\rangle^2},
    $
    and so
    \begin{align*}
        \frac{1}{2}\Delta f_0 - \frac{\beta}{4}\norm{\nabla f_0}^2 
        &\leq -\frac{\lambda}{4} \left(
            \frac{1}{2} \beta \lambda g'(\langle w, v\rangle)^2
        \right)~
        (1 - \langle w, v\rangle^2)
        + \frac{\lambda}{2} g'(\langle w, v\rangle)\langle w, v\rangle d\\
        &= \frac{\lambda}{4} g'(\langle w, v\rangle) \left\{
            -\frac{\beta \lambda}{2} g'(\langle w, v\rangle)(1-\langle w, v\rangle^2) 
            + 2 \langle w, v\rangle d
        \right\} \\
        &\leq \frac{\lambda}{4} g'(\langle w, v\rangle) \left\{
            -\frac{2 \beta \lambda c_1 r^2}{\pi^2} 
            + 2 \langle w, v\rangle d
        \right\} \\
        &\leq -\frac{\lambda}{4} g'(\langle w, v\rangle) 
        \cdot \frac{\beta \lambda c_1 r^2}{\pi^2} 
        ~~\leq -\frac{c_1^2}{4 \pi^2} \beta \lambda^2 r^2
    \end{align*}
    provided that
    $\beta \geq \frac{2 \pi^2 d}{\lambda c_1 r^2}$.

    To verify \eqref{eq:mineig_all}, simply note that, since $\norm{\Pi_w v v^\top \Pi_w}_{\mathrm{op}} = \norm{\Pi_w v}^2 = 1 - \innerprod{w}{v}^2$,
    \begin{align}
        \forall w,~
        \norm{\nabla^2 f_0(w)}_{\mathrm{op}}
        &\leq \lambda g''(\langle w, v\rangle)(1-\langle w, v\rangle^2) 
        + \lambda C_1 \\
        &\leq \lambda \left[ \sup_{s \in [-1,1]} g''(s) (1-s^2) \right]
        + \lambda C_1 
        ~~\leq (C_3 + C_1) \lambda,
    \end{align}
    and therefore,
    $
        \inf_{w \in \SS^d} \lambda_{\mathrm{\min}}(\nabla^2 f_0(w)) 
        \geq -\left( \sup_w \norm{\nabla^2 f_0(w)}_{\mathrm{op}} \right)
        \geq -(C_3 + C_1) \lambda.
    $

    Finally, let us verify \eqref{eq:mineig_U}.
    Indeed, for any $w \in U$,
    \begin{align}
        \lambda_{\mathrm{min}}(\nabla^2 f_0(w)) 
        &= \min_{\norm{u}^2=1, \langle u, w\rangle = 0}
        -\lambda g''(\langle w, v\rangle)\langle u, v\rangle^2 
        + \lambda g'(\langle w, v\rangle)\langle w, v\rangle\nonumber\\
        &\geq 
        - \lambda \abs{g''(\innerprod{w}{v})}
        \max_{\norm{u}^2=1, \langle u, w\rangle = 0} \innerprod{u}{v}^2
        + \lambda c_1\langle w, v\rangle \\
        &= -\lambda\abs{g''(\langle w, v\rangle)}(1 - \langle w, v\rangle^2) 
        + \lambda c_1\langle w, v\rangle,
    \end{align}
    where the bound of the second term follows from $\innerprod{w}{v} \geq 0$, which can be ensured by taking $r \leq \frac{\pi}{2}$.
    Since $w \in U \iff \innerprod{w}{v} \geq \cos(r) \geq 1-r^2$, it follows that
    \begin{align}
        \lambda_{\mathrm{min}}(\nabla^2 f_0(w)) 
        &\geq -\lambda \left[ \sup_{\cos r \leq s \leq 1} \abs{g''(s)} (1-s^2) \right]
        + \lambda c_1 \cos r \\
        &\geq -\lambda C_3 \left[ \sup_{\cos r \leq s \leq 1} \sqrt{1-s^2} \right]
        + \lambda c_1 \cos r \\
        &= \lambda \left( -C_3 \sin r + c_1 \cos r \right)
        \geq \lambda \frac{c_1}{2}
    \end{align} 
    for a certain choice of $r$ small enough, dependent only on $c_1$ and $C_3$.
\end{proof}

\subsubsection{Lyapunov function analysis for bounding the LSI constant of \texorpdfstring{$\eta_{\lambda,\beta}$}{eta lambda,beta}}
\label{subsubsec:apx_polyLSI:thmproof:eta*}

To prove \autoref{thm:poly_LSI}, it only remains to show that the conditions of~\autoref{thm:apx_polyLSI:poly_LSI_abstractg} are satisfied for $J_\lambda'[\eta_{\lambda,\beta}]$ instead of $J_\lambda'[\delta_v]$.

\begin{lemma}
    Under the setting of Assumptions~\ref{assump:poly_LSI:2NN} and~\ref{assump:poly_LSI:single_index},
    $\eta_{\lambda,\beta}$ is rotationally invariant except for the direction $v$, or formally $Rv=v \implies R_\sharp \eta_{\lambda,\beta} = \eta_{\lambda,\beta}$ for orthonormal matrices $R$,
    where $R_\sharp \eta$ denotes the pushforward measure.
    Moreover, there exists $g_\eta: [-1, 1] \to \RR$ such that
    for all $w \in \bbS^d$, $J_\lambda'[\eta_{\lambda,\beta}](w) 
    = -\lambda g_\eta(\innerprod{w}{v})$.
\end{lemma}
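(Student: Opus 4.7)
The approach is to exploit the symmetry of the problem under the stabilizer subgroup
$\Stab(v) = \{R \in O(d+1) : Rv = v\}$, which acts on $\PPP(\bbS^d)$ by pushforward.
The plan has two parts: (i) show that $\eta_{\lambda,\beta}$ is fixed by this action,
and (ii) conclude that $J_\lambda'[\eta_{\lambda,\beta}]$ factors through $w \mapsto \innerprod{w}{v}$.

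First I would establish that $J_{\lambda,\beta}$ itself is $\Stab(v)$-invariant, i.e.\
$J_{\lambda,\beta}(R_\sharp \eta) = J_{\lambda,\beta}(\eta)$ for every $R \in \Stab(v)$
and $\eta \in \PPP(\bbS^d)$. For the entropy part, $\tau$ is invariant under all of $O(d+1)$, so
$\KL{R_\sharp \eta}{\tau} = \KL{R_\sharp \eta}{R_\sharp \tau} = \KL{\eta}{\tau}$.
For $J_\lambda$, starting from
$J_\lambda(\eta) = \inf_\nu G(\nu) + \frac{\lambda}{2}\int \frac{|\nu|^2}{\eta}$
and changing variable $\nu \leftrightarrow R_\sharp \nu$, the regularizer is invariant since the Radon–Nikodym derivative is preserved by pushforward.
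For $G$, noting that $\hy_{R_\sharp \nu}(x) = \hy_\nu(R^{-1} x)$ and that
$y(R^{-1}x) = \varphi(\innerprod{v}{R^{-1}x}) = \varphi(\innerprod{Rv}{x}) = \varphi(\innerprod{v}{x}) = y(x)$
(using $R^{-1} = R^\top$ and $Rv = v$), a change of variables $\tx = R^{-1}x$ together with
rotation invariance of $\rho$ gives $G(R_\sharp \nu) = G(\nu)$. Combining, $J_\lambda(R_\sharp \eta) = J_\lambda(\eta)$.

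Next I would use \emph{strict} convexity: $\KLdiv{\cdot}{\tau}$ is strictly convex on $\PPP(\bbS^d)$, so $J_{\lambda,\beta}$ is strictly convex and thus admits a unique minimizer $\eta_{\lambda,\beta}$.
By invariance, $R_\sharp \eta_{\lambda,\beta}$ is also a minimizer for every $R \in \Stab(v)$, and uniqueness forces $R_\sharp \eta_{\lambda,\beta} = \eta_{\lambda,\beta}$. This proves the first claim.

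Finally, for the form of $J_\lambda'[\eta_{\lambda,\beta}]$: differentiating
$J_\lambda(\eta_{\lambda,\beta} + \epsilon(\delta_{Rw} - \delta_w))$ at $\epsilon = 0$ and using
$\delta_{Rw} = R_\sharp \delta_w$ combined with the $\Stab(v)$-invariance of $\eta_{\lambda,\beta}$
and of $J_\lambda$, yields $J_\lambda'[\eta_{\lambda,\beta}](Rw) = J_\lambda'[\eta_{\lambda,\beta}](w)$
for all $R \in \Stab(v)$. Since the orbits of $\Stab(v)$ on $\bbS^d$ are precisely the level sets of
$w \mapsto \innerprod{w}{v}$, there exists a function $\tg_\eta : [-1,1] \to \RR$ with
$J_\lambda'[\eta_{\lambda,\beta}](w) = \tg_\eta(\innerprod{w}{v})$; setting
$g_\eta = -\lambda^{-1} \tg_\eta$ gives the stated form. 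No step here is substantially hard — the only subtle point is justifying strict convexity of $J_{\lambda,\beta}$ and hence uniqueness of the minimizer, which is routine given that $\KLdiv{\cdot}{\tau}$ is strictly convex wherever finite.
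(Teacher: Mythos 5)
Your proof is correct and is an honest fleshing-out of the paper's one-line justification (``follows directly from the fact that $\rho$ is rotationally invariant and that $y = \phi(v)$''): the same symmetry argument, just spelled out. One small technical point: the step where you differentiate $J_\lambda(\eta_{\lambda,\beta} + \epsilon(\delta_{Rw}-\delta_w))$ is informal since $\eta_{\lambda,\beta}+\epsilon(\delta_{Rw}-\delta_w)$ need not be a probability measure; it is cleaner to show $J_\lambda'[R_\sharp\eta] = J_\lambda'[\eta]\circ R^{-1}$ from the definition of first variation (or just read it off the explicit formula $J_\lambda'[\eta](w) = -\tfrac{\lambda}{2}\langle\phi(w),(K_\eta+\lambda\id)^{-1}y\rangle_\HHH^2$, where the unitarity of $h\mapsto h\circ R^{-1}$ on $L^2_\rho$ and the identities $\phi(Rw)=\phi(w)\circ R^{-1}$, $K_{R_\sharp\eta}=T_R K_\eta T_R^*$, $T_R^* y=y$ make the invariance immediate) and then substitute $R_\sharp\eta_{\lambda,\beta}=\eta_{\lambda,\beta}$.
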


\begin{proof}
    The lemma follows directly from the fact that $\rho$ is rotationally invariant and that $y = \phi(v)$.
\end{proof}

\begin{lemma} \label{lem:apx_polyLSI:thmproof:eta*:approx_Lyapunov_eigenspec}
    Under Assumption~\ref{assump:poly_LSI:2NN},
    suppose furthermore that $\sup_w \norm{\nabla^i \phi(w)}_\HHH \leq B_i < \infty$ for $i \in \{0, 1, 2\}$.
    Then we have, for any $\eta, \eta' \in \PPP(\WWW)$,
    \begin{align}
        \forall w \in \bbS^d,~
        & \abs{
            \frac{1}{2} \Delta J_\lambda'[\eta] - \frac{\beta}{4} \norm{\nabla J_\lambda'[\eta]}^2 - \frac{1}{2} \Delta J_\lambda'[\eta'] + \frac{\beta}{4} \norm{\nabla J_\lambda'[\eta']}^2
        } \\
        &\qquad\qquad~~~~ \leq 
        \left(
            d \frac{2 B_0 B_1 (B_0 B_2 + B_1^2)}{\lambda^2} \norm{y}_\HHH^2
            + \beta \frac{2 B_0^3 B_1^3}{\lambda^3} \norm{y}_\HHH^4
        \right)
        W_1(\eta, \eta') \\
        \text{and}~~~~
        & \abs{\lambda_{\min}(\nabla^2 J_\lambda'[\eta]) - \lambda_{\min}(\nabla^2 J_\lambda'[\eta'])}
        \leq \frac{4 B_0 B_1 (B_0 B_2 + B_1^2)}{\lambda^2} \norm{y}_\HHH^2 W_1(\eta, \eta').
    \end{align}
\end{lemma}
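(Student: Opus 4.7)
The plan is to reduce everything to the Wasserstein Lipschitz-continuity estimates already established in Proposition~\ref{prop:apx_polyLSI:bound_hh_Jlipschitz}, together with a uniform bound on $\norm{\nabla J_\lambda'[\eta]}_w$ that was derived in its proof. The main obstacle is purely bookkeeping: controlling the quadratic term $\norm{\nabla J_\lambda'[\eta]}^2$ requires a gradient norm bound to linearize the difference, and the Laplacian term requires identifying $\Delta = \trace \Hess$ on $\bbS^d$ in order to pass from the $d$-dimensional operator norm bound to a scalar bound picking up the factor of $d$.

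First I would recall from \eqref{eq:apx_polyLSI:bound_hh_Jlipschitz:nablaJ'} (and the line immediately below it in the proof of Proposition~\ref{prop:apx_polyLSI:bound_hh_Jlipschitz}) the uniform gradient bound
\begin{equation*}
    \sup_{\eta \in \PPP(\WWW)} \sup_{w \in \bbS^d} \norm{\nabla J_\lambda'[\eta](w)}_w
    \leq \frac{B_0 B_1}{\lambda} \norm{y}_\HHH^2,
\end{equation*}
which uses $\norm{(K_\eta + \lambda \id)^{-1}}_{\mathrm{op}} \leq \lambda^{-1}$ since $K_\eta$ is positive semidefinite. This bound will serve as the linearization factor for the squared-norm term.

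Next, for the first inequality, I would split the difference via the triangle inequality into a Laplacian part and a squared-gradient part. For the Laplacian part, since $\bbS^d$ is $d$-dimensional, $\Delta f(w) = \trace \Hess f(w)$, so
\begin{equation*}
    \abs{\Delta J_\lambda'[\eta](w) - \Delta J_\lambda'[\eta'](w)}
    \leq d \cdot \norm{\Hess J_\lambda'[\eta](w) - \Hess J_\lambda'[\eta'](w)}_{\mathrm{op}~w},
\end{equation*}
and the fourth bound of Proposition~\ref{prop:apx_polyLSI:bound_hh_Jlipschitz} closes this line, producing the term $d \cdot \frac{2B_0 B_1(B_0 B_2 + B_1^2)}{\lambda^2} \norm{y}_\HHH^2 W_1(\eta,\eta')$ after absorbing the $\frac{1}{2}$ prefactor. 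For the squared-gradient part, I would use the identity $a^2 - b^2 = (a-b)(a+b)$ in the form
\begin{equation*}
    \big| \norm{\nabla J_\lambda'[\eta]}^2 - \norm{\nabla J_\lambda'[\eta']}^2 \big|
    \leq \norm{\nabla J_\lambda'[\eta] - \nabla J_\lambda'[\eta']}_w \cdot \left( \norm{\nabla J_\lambda'[\eta]}_w + \norm{\nabla J_\lambda'[\eta']}_w \right),
\end{equation*}
and combine the uniform gradient bound above with the third bound of Proposition~\ref{prop:apx_polyLSI:bound_hh_Jlipschitz}; together with the $\frac{\beta}{4}$ prefactor this yields the term $\beta \cdot \frac{2 B_0^3 B_1^3}{\lambda^3} \norm{y}_\HHH^4 W_1(\eta, \eta')$. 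Summing gives exactly the announced first inequality.

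Finally, for the second inequality, I would invoke the standard Weyl-type estimate $\abs{\lambda_{\min}(A) - \lambda_{\min}(B)} \leq \norm{A - B}_{\mathrm{op}}$ applied to the symmetric operators $A = \Hess J_\lambda'[\eta](w)$ and $B = \Hess J_\lambda'[\eta'](w)$ on $T_w \bbS^d$, and conclude directly from the fourth bound of Proposition~\ref{prop:apx_polyLSI:bound_hh_Jlipschitz}. This gives precisely the claimed constant $\frac{4 B_0 B_1 (B_0 B_2 + B_1^2)}{\lambda^2} \norm{y}_\HHH^2$.
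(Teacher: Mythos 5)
Your proposal is correct and follows essentially the same route as the paper's proof: uniform gradient bound from \eqref{eq:apx_polyLSI:bound_hh_Jlipschitz:nablaJ'}, the factorization $a^2-b^2=(a-b)(a+b)$ for the squared-gradient term, $\Delta=\trace\Hess$ to pass from the Hessian operator-norm estimate to the Laplacian picking up a factor of $d$, and Weyl's inequality for the smallest-eigenvalue comparison. The constants you obtain match the paper's exactly.
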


\begin{proof}
    By \autoref{prop:apx_polyLSI:bound_hh_Jlipschitz},
    \begin{align}
        \norm{\Hess J_\lambda'[\eta](w) - \Hess J_\lambda'[\eta'](w)}_{\mathrm{op}} 
        &\leq \frac{4 B_0 B_1 (B_0 B_2 + B_1^2)}{\lambda^2} \norm{y}_\HHH^2 W_1(\eta, \eta')
    \end{align}
    and 
    $\abs{ \lambda_{\min}(\Hess J_\lambda'[\eta](w)) - \lambda_{\min}(\Hess J_\lambda'[\eta'](w))} \leq \norm{\Hess J_\lambda'[\eta](w) - \Hess J_\lambda'[\eta'](w)}_{\mathrm{op}}$
    by Weyl's inequality.
    This shows the second inequality of the lemma.

    For the first inequality, we have $\Delta J_\lambda'[\eta](w) = \trace \Hess J_\lambda'[\eta](W)$ and so
    \begin{equation}
        \abs{\frac{1}{2} \Delta J_\lambda'[\eta] - \frac{1}{2} \Delta J_\lambda'[\eta']} 
        \leq 
        \frac{d}{2} \norm{\Hess J_\lambda'[\eta](w) - \Hess J_\lambda'[\eta'](w)}_{\mathrm{op}} 
        \leq 
        \frac{d}{2} \frac{4 B_0 B_1 (B_0 B_2 + B_1^2)}{\lambda^2} \norm{y}_\HHH^2 W_1(\eta, \eta').
    \end{equation}
    Moreover, we showed in~\eqref{eq:apx_polyLSI:bound_hh_Jlipschitz:nablaJ'}  resp.\ in~\autoref{prop:apx_polyLSI:bound_hh_Jlipschitz} that
    \begin{align}
        \norm{\nabla J_\lambda'[\eta]} &\leq \frac{B_0 B_1}{\lambda} \norm{y}_\HHH^2
        ~~~~\text{and}~~~~
        \norm{\nabla J_\lambda'[\eta] - \nabla J_\lambda'[\eta']} \leq 
        \frac{4 B_0^2 B_1^2}{\lambda^2} \norm{y}_\HHH^2 W_1(\eta, \eta'),
    \end{align}
    so \vspace{-0.5em}
    \begin{align}
        \abs{\frac{\beta}{4} \norm{\nabla J_\lambda'\eta]}^2 - \frac{\beta}{4} \norm{\nabla J_\lambda'[\eta']}^2}
        &\leq \frac{\beta}{4} \cdot 2 \frac{B_0 B_1 \norm{y}_\HHH^2}{\lambda} \cdot \frac{4 B_0^2 B_1^2 \norm{y}_\HHH^2}{\lambda^2} W_1(\eta, \eta') \\
        &= \beta \frac{2 B_0^3 B_1^3 \norm{y}_\HHH^4}{\lambda^3} W_1(\eta, \eta'),
    \end{align}
    which implies the first inequality of the lemma by triangle inequality.
\end{proof}

We can now proceed to the proof of \autoref{thm:poly_LSI}, thanks to a bound on $W_2(\eta_{\lambda,\beta}, \delta_v)$ under Assumption~\ref{assump:poly_LSI:single_index} proved in the next section.

\begin{proof}[Proof of~\autoref{thm:poly_LSI}]
    For concision, in this proof, we will use the notations $O(\cdot), \Omega(\cdot), \Theta(\cdot),\lesssim$ to hide constants dependent only on
    $\norm{\varphi}_{L^2(\rho)}, 
    \norm{\varphi'}_{L^4(\rho)},
    \norm{\varphi''}_{L^4(\rho)}, 
    \EE_{x \sim \rho} \norm{x}^4/d^2,
    c_1, C_1, C_2, C_3$ and $C_4$.

    We established in~\autoref{prop:apx_polyLSI:f0_satisfies_abstractg} that $f_0 \coloneqq J_\lambda'[\delta_v]$ satisfies the conditions~\eqref{eq:Lyapunov_all}~\eqref{eq:Lyapunov_notU}~\eqref{eq:mineig_all}~\eqref{eq:mineig_U} of \autoref{thm:apx_polyLSI:poly_LSI_abstractg} with some constants $D_i, r = O(1)$ (in fact only dependent on $c_1, C_1, C_2, C_3$) provided that $\beta \geq D_0 d \lambda^{-1}$.
    Thus, the first part of the theorem concerning the LSI of $\widehat{\delta_v} \propto e^{-\beta J_\lambda'[\delta_v]} \tau$, follows from \autoref{thm:apx_polyLSI:poly_LSI_abstractg}.
    To prove the second part of the theorem, it suffices to show that $f^* \coloneqq J'_\lambda[\eta_{\lambda,\beta}]$ satisfies the conditions~\eqref{eq:Lyapunov_all'}~\eqref{eq:Lyapunov_notU}~\eqref{eq:mineig_all}~\eqref{eq:mineig_U} of \autoref{thm:apx_polyLSI:poly_LSI_abstractg} with some constants $\tD_0', \tD_1', \tD_2, \tD_3, \tD_4, r = \Theta(1)$.

    By \autoref{lm:apx_polyLSI:bound_nablai_phi}, there exist constants 
    $B_i = O(1)$
    such that $\sup_w \norm{\nabla^i \phi(w)}_\HHH \leq B_i$, for $i \in \{0,1,2\}$.
    Moreover, by \autoref{lm:apx_polyLSI:bound_W1_eta*_deltav} below, provided that $\beta \geq \Omega(d \lambda)$, one has
    \begin{align}
        W_2(\eta_{\lambda,\beta, \delta_v}) 
        &\lesssim \sqrt{\beta^{-1} d \lambda^{-1} \cdot \log(\beta d^{-1} \lambda^{-1})}
        ~~\eqqcolon \olW.
    \end{align}
    
    Now by the conditions~\eqref{eq:Lyapunov_all}~\eqref{eq:Lyapunov_notU}~\eqref{eq:mineig_all}~\eqref{eq:mineig_U} for $f = f_0$ and $D_i = \Theta(1)$ (by \autoref{prop:apx_polyLSI:f0_satisfies_abstractg}), from ~\autoref{lem:apx_polyLSI:thmproof:eta*:approx_Lyapunov_eigenspec} along with the triangle inequality we have
    \begin{align}
        \forall w \in \bbS^d,~~
        \frac{1}{2} \Delta f^* - \frac{\beta}{4} \norm{\nabla f^*}^2 
        &\lesssim \lambda d
        + (d \lambda^{-2} + \beta \lambda^{-3}) \olW \\
        \forall w \in \bbS^d \setminus U,~~
        \frac{1}{2} \Delta f^* - \frac{\beta}{4} \norm{\nabla f^*}^2 
        &\leq -D_2 \beta\lambda^2
        + E_2 \cdot (d \lambda^{-2} + \beta \lambda^{-3}) \olW \\
        \forall w \in \bbS^d,~~
        \lambda_{\min}(\nabla^2 f^*(w)) 
        &\gtrsim -\lambda 
        - \lambda^{-2} \olW \\
        \forall w \in U,~~
        \lambda_{\min}(\nabla^2 f^*(w))
        &\geq D_4 \lambda
        - E_4 \cdot \lambda^{-2} \olW
    \end{align}
    for some constants $E_2, E_4 = O(1)$.
    So,
    \begin{itemize}
        \item \eqref{eq:Lyapunov_all'} for $f^*$ can be ensured with $\tD'_1 = O(1)$ provided that
        $(d \lambda^{-2} + \beta \lambda^{-3}) \olW
        = (\beta^{-1} d \lambda + 1) \beta \lambda^{-3} \olW
        = O(\lambda d \beta^{3/4})$.
        Since we already assume that $\beta \geq \Omega(d\lambda)$, this is equivalent to
        $\beta \lambda^{-3} \olW = O(\lambda d \beta^{3/4})$,
        i.e.,
        $\beta^{1/4} \lambda^{-4} d^{-1} \olW = O(1)$.
        \item \eqref{eq:Lyapunov_notU} can be ensured with $\tD_2 = \frac{D_2}{2}$ if $\beta$ is such that
        $
            E_2 (d \lambda^{-2} + \beta \lambda^{-3}) \olW
            \leq \frac{D_2}{2} \beta \lambda^2
        $,
        i.e.,
        $
            (\beta^{-1} d \lambda + 1) \lambda^{-5} \olW 
            \leq \frac{D_2}{2 E_2}
        $.
        Since we already assume that $\beta \geq \Omega(d \lambda)$, this is equivalent to
        $\lambda^{-5} \olW \leq F_2$ for a certain $F_2 = \Theta(1)$.
        \item \eqref{eq:mineig_all} can be ensured with $\tD_3 = O(1)$ provided that $\lambda^{-2} \olW = O(\lambda)$, i.e., $\lambda^{-3} \olW = O(1)$.
        \item \eqref{eq:mineig_U} can be ensured with $\tD_4 = \frac{D_4}{4}$ if
        $
            E_4 \lambda^{-2} \olW 
            \leq \frac{D_4}{2} \lambda
        $,
        i.e.,
        $
            \lambda^{-3} \olW
            \leq \frac{D_4}{2 E_4} \eqqcolon F_4 = \Theta(1)
        $.
    \end{itemize}
    In summary, since we assume $\lambda \leq 1$, we have $\lambda^{-3} \leq \lambda^{-5}$ and $\lambda^{-4} d^{-1} \leq \lambda^{-5}$. Hence we will choose $\beta$ such that
    $\beta^{1/4} d^{-1} \lambda^{-4} \olW = O(1)$ and
    $\lambda^{-5} \olW \leq F_2$ for a certain $F_2 = \Theta(1)$,
    and this will ensure all four conditions with constants $\tD_1', \tD_2, \tD_3, \tD_4 = \Theta(1)$.
    For choices of $\beta$ such that $\beta \geq d^4 \lambda^{-4}$, it suffices to have $\beta^{1/4} d^{-1} \lambda^{-4} \olW \leq F_2$.
    Now substituting the definition of $\olW$, this sufficient condition rewrites
    \begin{align*}
        \beta^{1/4} d^{-1} \lambda^{-4} \olW &\leq F_2
        \iff
        \beta^{1/2} d^{-2} \lambda^{-8} \cdot \beta^{-1} d \lambda^{-1} \log\left( \frac{\beta}{d\lambda} \right) 
        = \beta^{-1/2} \lambda^{-9}  d^{-1} \log\left( \frac{\beta}{d\lambda} \right) \leq F_2^2.
    \end{align*}
    Since $\forall \eps,x>0, \eps \log x = \log x^\eps \leq x^\eps$, then for any $\eps>0$ it suffices to choose $\beta$ such that
    \begin{equation}
        \beta^{-1/2} \lambda^{-9} d^{-1} \left( \frac{\beta}{d\lambda} \right)^\eps \leq \eps F_2^2
        \iff
        \beta^{1/2-\eps} \geq \eps^{-1} F_2^{-2} \lambda^{-9-\eps} d^{-1-\eps}.
    \end{equation}
    Choosing e.g.\ $\eps=\frac{1}{4}$, we get that a sufficient condition is $\beta \geq \Omega(\mathrm{poly}(\lambda^{-1}, d))$.

    Hence we may apply the second part of \autoref{thm:apx_polyLSI:poly_LSI_abstractg} to $f^* = J_\lambda'[\eta_{\lambda,\beta}]$ with constants $\tD_1', \tD_2, \tD_3, \tD_4 = O(1)$, provided that $\beta \geq \Omega(\mathrm{poly}(\lambda^{-1}, d))$.
    This concludes the proof of the second part of the theorem.
\end{proof}

\subsubsection{Bound on \texorpdfstring{$W_1(\eta_{\lambda,\beta}, \delta_v)$}{W1(eta lambda,beta, deltav)}}
\label{subsubsec:apx_polyLSI:thmproof:boundW1}

The following lemma shows a form of weak coercivity of $J_\lambda$.
\begin{lemma} \label{lm:apx_polyLSI:weakcoerc}
    Under Assumptions~\ref{assump:poly_LSI:2NN} and \ref{assump:poly_LSI:single_index}, 
    if furthermore there exist $c_1,C_1,C_3,C_4>0$ such that
    \begin{equation}
        \forall r \in [-1,+1],~~
        c_1 \leq g'(r) \leq C_1,
        \quad \abs{g''(r)(1-r^2)^{1/2}} \leq C_3,
        \quad \abs{g'''(r)(1-r^2)^{3/2}} \leq C_4,
    \end{equation}
    then there exists a constant $\alpha_g$ dependent only on $c_1,C_1,C_3,C_4$ such that
    \begin{equation}
        \forall \eta,~ J_\lambda(\eta) - J_\lambda(\delta_v) \geq \lambda \alpha_g W_2^2(\eta, \delta_v).
    \end{equation}
\end{lemma}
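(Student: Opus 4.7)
The plan is to combine convexity of $J_\lambda$ with a pointwise quadratic lower bound on the first variation $J_\lambda'[\delta_v]$, exploiting that $\delta_v$ is the minimizer and that $W_2^2(\eta,\delta_v)$ reduces to a second moment since one marginal is a Dirac.

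First, since $J_\lambda$ is convex along linear interpolations in $\PPP(\WWW)$ (as established in \autoref{prop:reduc:bilevel:valid}), the first-order inequality gives
\begin{equation*}
J_\lambda(\eta) - J_\lambda(\delta_v) \geq \int_\WWW J_\lambda'[\delta_v]\, \d(\eta - \delta_v) = \lambda \int_\WWW \bigl[ g(1) - g(\innerprod{w}{v}) \bigr]\, \d\eta(w),
\end{equation*}
where the equality uses $J_\lambda'[\delta_v](w) = -\lambda g(\innerprod{w}{v})$ from the discussion preceding \autoref{thm:poly_LSI}, together with $g(\innerprod{v}{v})=g(1)$. Next, I would use that $W_2^2(\eta,\delta_v) = \int_\WWW \dist_\WWW(w,v)^2 \d\eta(w) = \int_\WWW \arccos(\innerprod{w}{v})^2 \d\eta(w)$, since the unique coupling with a Dirac marginal is the product.

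It therefore suffices to establish the pointwise inequality
\begin{equation*}
g(1) - g(\cos\theta) \geq \alpha_g\, \theta^2 \qquad \text{for all } \theta \in [0,\pi],
\end{equation*}
with $r = \innerprod{w}{v} = \cos\theta$. This is where the assumption $g' \geq c_1 > 0$ enters: by the fundamental theorem of calculus,
\begin{equation*}
g(1) - g(\cos\theta) = \int_{\cos\theta}^1 g'(s)\,\d s \geq c_1\,(1-\cos\theta),
\end{equation*}
and the elementary inequality $1-\cos\theta \geq \frac{2}{\pi^2}\theta^2$ on $[0,\pi]$ (which saturates at $\theta=\pi$) yields the claim with $\alpha_g = 2c_1/\pi^2$.

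The argument is essentially straightforward and does not require the upper bounds on $g''$ and $g'''$ from the hypothesis; those are only needed in the subsequent use of this lemma (in \autoref{lm:apx_polyLSI:bound_W1_eta*_deltav} invoked in the proof of \autoref{thm:poly_LSI}), so I would simply state and prove the lemma as written, noting that the constant $\alpha_g$ depends only on $c_1$. The only minor point to verify carefully is that the first-order convexity inequality is valid with $J_\lambda'[\delta_v]$ as the testing function, which follows from the regularity established in \autoref{prop:apx_polyLSI:L2loss_J_simplified} and the compactness of $\WWW = \bbS^d$ ensuring integrability.
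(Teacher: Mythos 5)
Your proof is correct, and it takes a genuinely different and simpler route than the paper's. The paper establishes the pointwise bound $g(1) - g(\cos\theta) \gtrsim \theta^2$ in two pieces: a second-order Taylor expansion of $g\circ\cos$ around $\theta=0$ (which is where the hypotheses on $g''$ and $g'''$ are used, via $M_{3,g}$) valid on a small spherical cap $U_r$, plus a crude monotonicity bound off $U_r$, then recombines. You replace all of this with the single observation
\begin{equation*}
g(1)-g(\cos\theta) \;=\; \int_{\cos\theta}^{1} g'(s)\,\d s \;\geq\; c_1\,(1-\cos\theta) \;\geq\; \frac{2c_1}{\pi^2}\,\theta^2 \quad\text{on }[0,\pi],
\end{equation*}
which is valid (the function $\theta\mapsto 1-\cos\theta-\tfrac{2}{\pi^2}\theta^2$ vanishes at $0$ and $\pi$, its second derivative changes sign exactly once, so it is nonnegative on $[0,\pi]$). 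This gives $\alpha_g = 2c_1/\pi^2$, which depends on $c_1$ only; you are right that the hypotheses on $g''$ and $g'''$ are not needed for this lemma, and your version is both cleaner and gives a constant with fewer dependencies. The only thing you should be explicit about in a final write-up is the inequality $1-\cos\theta\geq \tfrac{2}{\pi^2}\theta^2$ itself (e.g.\ via the sign analysis above or via Jordan's inequality $\sin s\geq \tfrac{2}{\pi}s$ on $[0,\pi/2]$ together with a separate check on $[\pi/2,\pi]$), since it is the crux of the argument; everything else (convexity of $J_\lambda$, the identity $W_2^2(\eta,\delta_v)=\int \arccos(\innerprod{w}{v})^2\,\d\eta(w)$ because the coupling with a Dirac is the product measure, and the formula for $J_\lambda'[\delta_v]$) is correctly invoked.
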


\begin{proof}
    Since $J_\lambda$ is convex,
    \begin{align}
        J_\lambda(\eta) - J_\lambda(\delta_v) \geq \int_{\bbS^{d}} J_\lambda'[\delta_v] d(\eta - \delta_v)
        &= -\lambda\int_{\bbS^{d}} g(\langle v, w\rangle)\d(\eta - \delta_v)(w) \\
        &= \lambda\int_{\bbS^{d}} \left[ g(1) - g(\innerprod{v}{w}) \right] d\eta(w).
    \end{align}
    Now let $U_r = \left\{ w \in \bbS^{d}; \dist_{\bbS^{d}}(w, v) \leq r \right\}$ for some $r>0$ to be chosen. We will compute the integral separately on $U_r$ and on $\SS^d\setminus U_r$.
    
    For the part $\int_{U_r}$, we proceed by a second-order Taylor expansion.
    Namely, for any $w \in U_r \setminus \{v\}$, let $e \perp v$ such that $w = \cos(\theta) v + \sin(\theta) e$ for some $0< \theta \leq r$, since $\dist_{\bbS^{d}}(w, v) = \arccos(\innerprod{w}{v}) = \theta$.
    Then $g(\innerprod{v}{w}) = g(\cos \theta)$, and
    \begin{align}
        \frac{d}{d \theta} g(\cos \theta) &= -\sin(\theta) g'(\cos \theta) \\
        \frac{d^2}{d \theta^2} g(\cos \theta) &= \sin(\theta)^2 g''(\cos \theta) - \cos(\theta) g'(\cos \theta) \\
        \frac{d^3}{d \theta^3} g(\cos \theta) &= -\sin(\theta)^3 g'''(\cos \theta) + 3 \sin(\theta) \cos(\theta) g''(\cos \theta) + \sin(\theta) g'(\cos \theta).
    \end{align}
    Notice that by our assumptions on $g$, it is smooth enough at $1$ so that $\sin(\theta) g'(\cos \theta) \to 0$ and $\sin(\theta)^2 g''(\cos \theta) \to 0$ as $\theta \to 0$. Further,
    \begin{equation}
        \sup_\theta \frac{\d^3}{\d\theta^3} g(\cos\theta) \leq C_4 + 3 C_3 + C_1 \eqqcolon 6 M_{3,g}.
    \end{equation}
    Consequently, by a univariate Taylor expansion with remainder in Langrange form around $\theta=0$, for all $0 < \theta \leq r$, provided that we choose $r \leq \frac{g'(1)}{2M_{3,g}}$, we have
    \begin{align}
        g(\cos \theta) &= g(1) + 0 + \frac{1}{2} (0 - g'(1)) \theta^2 + \frac{1}{6} (g \circ \cos)^{(3)}(u) \theta^3 
        ~~\text{for some $u \in [0,r]$} \\
        &\leq g(1) - \frac{1}{2} g'(1) \theta^2 + \frac{1}{6} \left[ \sup_{[0, r]}~ (g \circ \cos)^{(3)} \right] \theta^3 \\
        &\leq g(1) - \frac{1}{2}g'(1)\theta^2 + M_{3,g}\theta^3
        = g(1) - \left( \frac{1}{2} g'(1) - M_{3,g} \theta \right) \theta^2 \\
        &\leq g(1) - \frac{1}{4}g'(1)\theta^2.
        \label{eq:taylorexpan_gcirccos}
    \end{align}
    In other words,
    \begin{align}
        \forall w \in U_r,~~
        g(1) - g(\innerprod{v}{w}) &\geq \frac{1}{4} g'(1) \dist_{\bbS^{d}}(w, v)^2, \\
        \text{and so,}~~~~ ~~~~
        \int_{U_r} \left[ g(1) - g(\innerprod{v}{w}) \right] \d\eta(w)
        &\geq \frac{1}{4} g'(1) \int_{U_r} \dist_{\bbS^{d}}(w, v)^2 ~\d\eta(w).
    \end{align}

    For the part $\int_{\bbS^{d} \setminus U_r}$, since $g$ is increasing on $[-1, 1]$ since $g' \geq c_1>0$, we have
    \begin{align}
        \int_{\bbS^{d} \setminus U_r} \left[ g(1) - g(\innerprod{v}{w}) \right] \d\eta(w)
        &\geq \left[ g(1) - g(\cos(r)) \right] \left[ 1 - \eta(U_r) \right] \\
        &\geq \left[ \frac{1}{4} g'(1) r^2 \right] \left[ 1 - \eta(U_r) \right]
    \end{align}
    where the second inequality follows from the Taylor expansion~\eqref{eq:taylorexpan_gcirccos} above applied to $\theta = r$.
    
    Thus we showed
    \begin{align}
        J_\lambda(\eta) - J_\lambda(\delta_v) 
        &\geq \lambda
        \left\{
            \left[ \frac{1}{4} g'(1) r^2 \right] \left[ 1 - \eta(U_r) \right]
            + \frac{g'(1)}{4} \int_{U_r} \dist_{\bbS^{d}}(w, v)^2 \d\eta(w)
        \right\} \\
        &= \frac{\lambda g'(1)}{4}
        \left\{
            r^2 \left[ 1 - \eta(U_r) \right]
            + \int_{U_r} \dist_{\bbS^{d}}(w, v)^2 \d\eta(w)
        \right\}.
    \end{align}
    On the other hand, since $\dist_{\bbS^{d}}(v, w) = \arccos(\innerprod{v}{w})$,
    \begin{align}
        W_2^2(\eta, \delta_v)
        &= \int_{\bbS^{d} \setminus U_r} \dist_{\bbS^{d}}(v, w)^2 \d\eta(w) 
        + \int_{U_r} \dist_{\bbS^{d}}(v, w)^2 \d\eta(w) \\
        &\leq \pi^2 \left[ 1 - \eta(U_r) \right] 
        + \int_{U_r} \dist_{\bbS^{d}}(v, w)^2 \d\eta(w).
    \end{align}
    Hence
    \begin{align}
        J_\lambda(\eta) - J_\lambda(\delta_v)
        &\geq \frac{\lambda g'(1)}{4} 
        \cdot
        \sup_{0 \leq r \leq \frac{g'(1)}{2M_{3,g}}} 
        \min\left[ \frac{r^2}{\pi^2}, 1 \right] W_2^2(\eta, \delta_v) \\
        &= \lambda \cdot \frac{g'(1)}{4}
        \min\left[
            \left( \frac{g'(1)}{2M_{3,g}} \right)^2/\pi^2, 
            1
        \right]
        \cdot W_2^2(\eta, \delta_v) \\
        &\geq \lambda \cdot \frac{c_1}{4}
        \min\left[
            \left( \frac{c_1}{2M_{3,g}} \right)^2/\pi^2, 
            1
        \right]
        \cdot W_2^2(\eta, \delta_v)
        ~~ \eqqcolon \lambda \alpha_g W_2^2(\eta, \delta_v).
    \end{align}
    Notice that $\alpha_g$ only depends on $c_1,C_1,C_3,C_4$.
\end{proof}

We will use the following fact about the surface area of a small hyperspherical cap around a pole for bounding $W_1(\eta_{\lambda,\beta},\delta_v)$.
It essentially shows that, for $\WWW = \bbS^d$, the constant called $C$ in the statement of \autoref{lem:apx_anneal:entropy_control} scales with dimension as $2^{-d} \lesssim C^{-1} \lesssim 1/\sqrt{d}$.

\begin{lemma}\label{lem:spherical_cap_area}
    Fix $d \geq 2$ and $v \in \SS^d$ and denote by $\tau$ the uniform measure on $\bbS^d$.
    For any $\epsilon > 0$, let $S_\epsilon = \left\{ w \in \bbS^d : \dist_{\SS^d}(w,v) \leq \epsilon \right\}$. 
    There exist universal constants $C_-, C_+>0$ such that
    \begin{equation}
        \forall 0 < \epsilon \leq \frac{\pi}{4},~~~~
        C_-^{-1}~ (\epsilon/2)^d \leq \tau(S_\epsilon) \leq C_+~ \epsilon^d / \sqrt{d}.
    \end{equation}
\end{lemma}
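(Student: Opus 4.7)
The plan is to parameterize the sphere in spherical coordinates centered at $v$, reducing $\tau(S_\epsilon)$ to a ratio of one-dimensional integrals of $\sin^{d-1}$, and then to bound those integrals by elementary estimates on $\sin\theta$ together with a standard Stirling-type asymptotic for the normalizing constant.

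Writing $\theta = \dist_{\bbS^d}(v,w) \in [0,\pi]$ for the polar angle from $v$, the surface element on $\bbS^d$ factors as $\sin^{d-1}(\theta)\,\d\theta\,\d\sigma(u)$, with $u$ ranging over an equatorial $\bbS^{d-1}$. Integrating out $u$ and normalizing gives
\[
    \tau(S_\epsilon) \;=\; \frac{1}{I_d}\int_0^\epsilon \sin^{d-1}\theta \,\d\theta,
    \qquad
    I_d \coloneqq \int_0^\pi \sin^{d-1}\theta \,\d\theta.
\]
The denominator is a classical Beta integral, $I_d = \sqrt{\pi}\,\Gamma(d/2)/\Gamma((d+1)/2)$, so Gautschi's inequality applied to the ratio of Gamma functions yields universal constants $0 < c_1 \leq c_2$ with $c_1/\sqrt d \leq I_d \leq c_2/\sqrt d$ for all $d \geq 2$.

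For the upper bound I would use the trivial estimate $\sin\theta \leq \theta$, which yields $\int_0^\epsilon \sin^{d-1}\theta\,\d\theta \leq \epsilon^d/d$ and consequently $\tau(S_\epsilon) \leq \epsilon^d/(c_1\sqrt d)$, giving the claimed inequality with $C_+ = 1/c_1$. For the lower bound, since $\sin$ is concave on $[0,\pi/2]$, the chord inequality on $[0,\pi/4]$ gives $\sin\theta \geq (2\sqrt 2/\pi)\,\theta$ for all $\theta \in [0,\epsilon] \subset [0,\pi/4]$; hence
\[
    \tau(S_\epsilon) \;\geq\; \frac{1}{c_2\sqrt d}\Bigl(\tfrac{2\sqrt 2}{\pi}\Bigr)^{d-1}\epsilon^d
    \;=\; \frac{\pi}{2\sqrt 2\,c_2}\cdot\frac{(4\sqrt 2/\pi)^d}{\sqrt d}\cdot (\epsilon/2)^d.
\]

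Because $4\sqrt 2/\pi > 1$, the prefactor $(4\sqrt 2/\pi)^d/\sqrt d$ diverges in $d$ and so is bounded below by a strictly positive universal constant over $d \geq 2$; this delivers $\tau(S_\epsilon) \geq C_-^{-1}(\epsilon/2)^d$ for an explicit $C_-$. The only mildly delicate step — and therefore the main obstacle in the write-up — is obtaining the explicit two-sided estimate $I_d = \Theta(1/\sqrt d)$ with constants that are uniform down to small $d$; this can be handled either by Gautschi's inequality as above, or by a short induction using the recursion $I_d = \tfrac{d-2}{d-1}I_{d-2}$ together with the base cases $I_1 = \pi$ and $I_2 = 2$.
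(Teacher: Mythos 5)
Your proof is correct, and while it rests on the same underlying ingredients as the paper's (the Beta-integral normalizing constant $\sqrt{\pi}\,\Gamma(d/2)/\Gamma((d+1)/2)$ controlled via Gautschi's inequality), it runs the computation in a genuinely different coordinate system, which makes it noticeably cleaner. The paper works with the one-dimensional marginal density of $z = \langle w, v\rangle$, namely $(1-z^2)^{d/2-1}/Z$, so it must pass through the awkward two-sided bound $\sqrt{1-\epsilon^2} \leq \cos\epsilon \leq \sqrt{1-\epsilon^2/4}$ and a further substitution $t = z^2$ to extract the $\epsilon^d$ scaling. You instead stay in the polar-angle variable $\theta$, where $\tau(S_\epsilon) = \int_0^\epsilon \sin^{d-1}\theta\,\d\theta / I_d$ and the scaling $\epsilon^d/d$ falls out immediately from the elementary sandwich $\tfrac{2\sqrt 2}{\pi}\theta \leq \sin\theta \leq \theta$ on $[0,\pi/4]$; the two are of course related by $z = \cos\theta$, with $(1-z^2)^{d/2-1}\d z = \sin^{d-1}\theta\,\d\theta$ and $Z = I_d$, but the $\theta$-picture avoids the cosine estimates entirely. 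One small remark for the write-up: your prefactor in the lower bound, $\tfrac{\pi}{2\sqrt 2 c_2}\cdot(4\sqrt 2/\pi)^d/\sqrt d$, is increasing in $d$ for all $d \geq 1$ (since $\log(4\sqrt 2/\pi) > 1/2$), so its infimum over $d \geq 2$ is attained at $d=2$ and you get an explicit $C_-$ — worth stating so a reader does not have to re-derive the monotonicity.
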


\begin{proof}
    For $w \sim \tau$, the distribution of $\langle w, v\rangle$ admits a probability density function $h(z) = (1-z^2)^{d/2-1} / Z$, where
    \begin{equation}
        Z = \int_{-1}^1 (1-z^2)^{d/2-1}\d z
        = B\left( \frac{d}{2}, \frac{1}{2} \right)
        = \frac{\Gamma\left( \frac{d}{2} \right) \sqrt{\pi}}{\Gamma\left( \frac{d+1}{2} \right)}.
    \end{equation} 
    Note that by Gautschi's inequality
    $
        \forall s \in (0, 1), \forall x>0,~~
        x^{1-s} < \frac{\Gamma(x+1)}{\Gamma(x+s)} < (x+1)^{1-s}
    $
    applied to $s = \frac{1}{2}$ and $x = \frac{d-1}{2}$, we have
    $
        \sqrt{\frac{d-1}{2}} < \frac{\Gamma\left( \frac{d+1}{2} \right)}{\Gamma\left( \frac{d}{2} \right)} < \sqrt{\frac{d+1}{2}}, 
    $
    so
    \begin{equation}
        \sqrt{\frac{2\pi}{d+1}} \leq Z \leq \sqrt{\frac{2\pi}{d-1}}.
    \end{equation}
    
    By definition, since $\dist_{\SS^d}(w,v) = \arccos(\innerprod{w}{v})$,
    $
        \tau(S_\epsilon) = \int_{\cos(\epsilon)}^{1} h(z) \d z.
    $
    One can verify
    \begin{equation}
        \forall\, 0 < \epsilon \leq \frac{\pi}{4},~
        \sqrt{1-\epsilon^2} \leq \cos(\epsilon) \leq \sqrt{1 - \frac{\epsilon^2}{4}}.
    \end{equation}
    So for all $0<\epsilon \leq \frac{\pi}{4}$,
    \begin{align}
        \tau(S_\epsilon) = \int_{\cos(\epsilon)}^{1} h(z) \d z
        &\leq \int_{\sqrt{1-\epsilon^2}}^{1} h(z) \d z \\
        &= Z^{-1} \int_{\sqrt{1-\epsilon^2}}^1 (1-z^2)^{d/2-1} \d z
        = Z^{-1} \int_{1-\epsilon^2}^1 (1-t)^{d/2-1} \frac{\d t}{2\sqrt{t}} \\
        &\leq Z^{-1} \frac{1}{2 \sqrt{1-\epsilon^2}} \int_{1-\epsilon^2}^1 (1-t)^{d/2-1} \d t 
        = Z^{-1} \frac{1}{2 \sqrt{1-\epsilon^2}} \int_0^{\epsilon^2} t^{d/2-1} \d t \\
        &= Z^{-1} \frac{1}{2 \sqrt{1-\epsilon^2}} \cdot \frac{2}{d} [\epsilon^2]^{d/2}
        \leq Z^{-1} \frac{1}{d \sqrt{1-(\pi/4)^2}} \epsilon^d 
        ~~\leq C_+ \epsilon^d / \sqrt{d}
    \end{align}
    for some universal constant $C_+$. In the other direction,
    \begin{align}
        \tau(S_\epsilon) &\geq \int_{\sqrt{1-\epsilon^2/4}}^1 h(z) \d z 
        = Z^{-1} \int_{\sqrt{1-\epsilon^2/4}}^1 (1-z^2)^{d/2-1} \d z
        = Z^{-1} \int_{1-\epsilon^2/4}^1 (1-t)^{d/2-1} \frac{\d t}{2\sqrt{t}} \\
        &\geq Z^{-1} \frac{1}{2} \int_{1-\epsilon^2/4}^1 (1-t)^{d/2-1} \d t
        = Z^{-1} \frac{1}{2} \int_0^{\epsilon^2/4} t^{d/2-1} \d t \\
        &= Z^{-1} \frac{1}{2} \frac{2}{d} [\epsilon^2/4]^{d/2} 
        = Z^{-1} \frac{1}{d} (\epsilon/2)^d
        ~~\geq c (\epsilon/2)^d / \sqrt{d}.
    \end{align}
    for some universal constants $c$.
    By repeating the same argument with $\sqrt{1-\frac{\epsilon^2}{4}}$ replaced by $\sqrt{1-\frac{\epsilon^2}{3.9}}$, we get 
    $\tau(S_\epsilon) \geq c' (\epsilon/1.99)^d / \sqrt{d} \geq C_-^{-1} (\epsilon/2)^d$ for some universal constants $c', C_-$.
\end{proof}

The following lemma combines the weak coercivity and weak Lipschitz-continuity of $J_\lambda$ by a \mbox{$\Gamma$-convergence} type argument, to show an explicit bound on $W_1(\eta_{\lambda,\beta}, \delta_v)$.
It quantifies the intuitive fact that $\eta_{\lambda,\beta}$ converges weakly to $\delta_v$ when $\beta^{-1} \to 0$ or $\lambda \to +\infty$.

\begin{lemma} \label{lm:apx_polyLSI:bound_W1_eta*_deltav}
    Under Assumptions~\ref{assump:poly_LSI:2NN} and \ref{assump:poly_LSI:single_index}, 
    if $\sup_w \norm{\nabla^i \phi(w)}_\HHH \leq B_i < \infty$ for $i \in \{0, 1\}$, and if
    $\beta \geq \frac{4 d \lambda}{\pi} \big( B_0 B_1 \norm{y}_\HHH^2 \big)^{-1}$, then
    \begin{equation}
        W_2(\eta_{\lambda,\beta}, \delta_v) 
        \leq \sqrt{
            \frac{1}{\alpha_g} 
            \frac{\beta^{-1} d}{\lambda}
            \left(
                \tC
                + \log \left( B_0 B_1 \norm{y}_\HHH^2 \right)
                - \log \left( \beta^{-1} d \lambda \right)
            \right)
        }
    \end{equation}
    where $\tC$ is a universal constant and $\alpha_g$ is the constant from \autoref{lm:apx_polyLSI:weakcoerc}.
\end{lemma}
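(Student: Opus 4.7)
The strategy is a comparison argument that exploits simultaneously the weak coercivity of $J_\lambda$ around $\delta_v$ (\autoref{lm:apx_polyLSI:weakcoerc}) and the Wasserstein Lipschitz-continuity of $J_\lambda$ (\autoref{prop:apx_polyLSI:bound_hh_Jlipschitz}). The point is that, even though $\delta_v$ is singular, we can compare $\eta_{\lambda,\beta}$ against a smoothed proxy of $\delta_v$ whose relative entropy with respect to $\tau$ is well controlled.

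First, I would apply the coercivity bound of \autoref{lm:apx_polyLSI:weakcoerc} to $\eta = \eta_{\lambda,\beta}$ to get $\lambda \alpha_g W_2^2(\eta_{\lambda,\beta}, \delta_v) \leq J_\lambda(\eta_{\lambda,\beta}) - J_\lambda(\delta_v)$. Next, I would use the variational characterization $\eta_{\lambda,\beta} = \argmin (J_\lambda + \beta^{-1} \KLdiv{\cdot}{\tau})$: for any test probability measure $\tilde\eta \in \PPP(\bbS^d)$, since $\KLdiv{\eta_{\lambda,\beta}}{\tau} \geq 0$,
\begin{equation*}
    J_\lambda(\eta_{\lambda,\beta}) \leq J_\lambda(\eta_{\lambda,\beta}) + \beta^{-1} \KLdiv{\eta_{\lambda,\beta}}{\tau} \leq J_\lambda(\tilde\eta) + \beta^{-1} \KLdiv{\tilde\eta}{\tau}.
\end{equation*}
Combining these two inequalities yields $\lambda \alpha_g W_2^2(\eta_{\lambda,\beta}, \delta_v) \leq (J_\lambda(\tilde\eta) - J_\lambda(\delta_v)) + \beta^{-1} \KLdiv{\tilde\eta}{\tau}$.

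I would then choose $\tilde\eta$ to be the uniform probability measure on the geodesic cap $S_\epsilon = \{w \in \bbS^d : \dist(w,v) \leq \epsilon\}$, for a parameter $\epsilon>0$ to be tuned. Since $\tilde\eta$ is supported in $S_\epsilon$, we have $W_1(\tilde\eta, \delta_v) \leq \epsilon$, so \autoref{prop:apx_polyLSI:bound_hh_Jlipschitz} gives $J_\lambda(\tilde\eta) - J_\lambda(\delta_v) \leq \frac{B_0 B_1}{\lambda} \norm{y}_\HHH^2 \cdot \epsilon$. For the entropy term, $\tilde\eta$ has density $1/\tau(S_\epsilon)$ with respect to $\tau$, so $\KLdiv{\tilde\eta}{\tau} = \log(1/\tau(S_\epsilon))$, and \autoref{lem:spherical_cap_area} yields $\KLdiv{\tilde\eta}{\tau} \leq d \log(2/\epsilon) + \log C_-$ as long as $\epsilon \leq \pi/4$.

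The final step is to optimize over $\epsilon$: plugging in, we get
\begin{equation*}
    \lambda \alpha_g W_2^2(\eta_{\lambda,\beta}, \delta_v) \leq \frac{B_0 B_1 \norm{y}_\HHH^2}{\lambda} \epsilon + \beta^{-1}\bigl(d \log(2/\epsilon) + \log C_-\bigr).
\end{equation*}
Balancing the two terms by differentiating, the optimal choice is $\epsilon^* = \beta^{-1} d \lambda / (B_0 B_1 \norm{y}_\HHH^2)$, which is admissible ($\epsilon^* \leq \pi/4$) precisely under the hypothesis $\beta \geq \frac{4 d \lambda}{\pi (B_0 B_1 \norm{y}_\HHH^2)}$. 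Substituting $\epsilon^*$ produces a bound of the form $\lambda \alpha_g W_2^2 \leq \beta^{-1} d \bigl(\tC + \log(B_0 B_1 \norm{y}_\HHH^2) - \log(\beta^{-1} d \lambda)\bigr)$ for a universal constant $\tC$, and dividing by $\lambda \alpha_g$ and taking a square root yields the announced bound. The only delicate point is to confirm that the admissibility condition on $\epsilon^*$ matches exactly the stated lower bound on $\beta$; everything else is bookkeeping.
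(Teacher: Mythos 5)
Your plan reproduces the paper's proof essentially verbatim: the same chain $\lambda\alpha_g W_2^2 \le J_\lambda(\eta_{\lambda,\beta}) - J_\lambda(\delta_v) \le J_\lambda(\tilde\eta) - J_\lambda(\delta_v) + \beta^{-1}\KLdiv{\tilde\eta}{\tau}$, the same test measure (uniform on a geodesic cap), the same invocation of the Lipschitz bound and the cap-area estimate, and the same optimization $\epsilon^* = \beta^{-1}d\lambda/(B_0B_1\norm{y}_\HHH^2)$ with the admissibility constraint $\epsilon^*\le\pi/4$ delivering the stated condition on $\beta$. This is correct and matches the paper's argument step for step.
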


\begin{proof}
    Since $\eta_{\lambda,\beta} = \argmin J_{\lambda,\beta}$ and $J_{\lambda,\beta} = J + \beta^{-1} \KLdiv{\cdot}{\tau}$, then for any $\eta^\sigma \in \PPP(\WWW)$,
    \begin{gather}
        J_\lambda(\eta_{\lambda,\beta})
        \leq J_\lambda(\eta_{\lambda,\beta}) + \beta^{-1} \KLdiv{\eta_{\lambda,\beta}}{\tau} 
        = J_{\lambda,\beta}(\eta_{\lambda,\beta})
        \leq J_{\lambda,\beta}(\eta^\sigma)
        = J_\lambda(\eta^\sigma) + \beta^{-1} \KLdiv{\eta^\sigma}{\tau}.
    \end{gather}
    Further, we showed in \autoref{lm:apx_polyLSI:weakcoerc} that
    $
        \forall \eta,~
        J_\lambda(\eta) - J_\lambda(\delta_v)
        \geq \lambda \alpha_g \cdot W_2^2(\eta, \delta_v)
    $, so
    \begin{equation}
        \lambda \alpha_g \cdot W_2^2(\eta_{\lambda,\beta}, \delta_v)
        \leq J_\lambda(\eta_{\lambda,\beta}) - J_\lambda(\delta_v)
        \leq J_\lambda(\eta^\sigma) - J_\lambda(\delta_v) + \beta^{-1} \KLdiv{\eta^\sigma}{\tau}.
    \end{equation}
    It remains to upper-bound the right-hand side, which we do by choosing as $\eta^\sigma$ a box-kernel smoothed version of $\delta_v$
    (this part the proof is essentially an instantantiation of \autoref{lem:apx_anneal:entropy_control}).
    Specifically, let $\eta^\sigma$ be the uniform measure over the spherical cap $S_\sigma = \left\{ w \in \bbS^d; \dist_{\bbS^d}(w, v) \leq \sigma \right\}$ for $\sigma$ to be chosen.
    We showed in \autoref{prop:apx_polyLSI:bound_hh_Jlipschitz} that
    \begin{equation}
        J_\lambda(\eta^\sigma) - J_\lambda(\delta_v)
        \leq \frac{B_0 B_1 \norm{y}_\HHH^2}{\lambda} \cdot W_1(\eta^\sigma, \delta_v)
    \end{equation}
    where $\sup_w \norm{\nabla^i \phi(w)}_\HHH \leq B_i$, and by definition
    \begin{equation}
        W_1(\eta^\sigma, \delta_v)
        = \int \dist_{\bbS^d}(w, v) ~\d\eta^\sigma(w)
        = \frac{1}{\vol(S_\sigma)} \int_{S_\sigma} \dist_{\bbS^d}(w, v) ~\d\vol(w)
        \leq \sigma.
    \end{equation}
    Moreover by \autoref{lem:spherical_cap_area}, provided that $0<\sigma \leq \frac{\pi}{4}$,
    \begin{align}
        \KLdiv{\eta^\sigma}{\tau}
        = \int \d\eta_\sigma \log \frac{\d\eta_\sigma}{\d\tau}
        = \log \frac{\vol(\bbS^d)}{\vol(S_\sigma)}
        = -\log \tau(S_\sigma)
        \leq \log C - d \log \frac{\sigma}{2}
    \end{align}
    for some universal constant $C$, and let us assume w.l.o.g. that $C>1$, so that $\log C \leq d \log C$.
    Thus
    \begin{equation}
        J_\lambda(\eta^\sigma) - J_\lambda(\delta_v) + \beta^{-1} \KLdiv{\eta^\sigma}{\tau}
        \leq \frac{B_0 B_1 \norm{y}_\HHH^2}{\lambda} \sigma
        - \beta^{-1} d \log \sigma
        + \beta^{-1} d \log 2C.
    \end{equation}
    Therefore, taking the infimum over $0<\sigma \leq \frac{\pi}{4}$, 
    \begin{align}
        \lambda \alpha_g \cdot W_2^2(\eta_{\lambda,\beta},\delta_v)
        &\leq \inf_{0<\sigma\leq \frac{\pi}{4}} \frac{B_0 B_1 \norm{y}_\HHH^2}{\lambda} \sigma
        - \beta^{-1} d \log \sigma
        + \beta^{-1} d \log 2C \\
        &= \beta^{-1} d - \beta^{-1} d \log \frac{\beta^{-1} d \lambda}{B_0 B_1 \norm{y}_\HHH^2} 
        + \beta^{-1} d \log 2C \\
        &= \beta^{-1} d \left(
            1 + \log(2C)
            - \log(\beta^{-1} d \lambda)
            + \log \left( B_0 B_1 \norm{y}_\HHH^2 \right)
        \right),
    \end{align}
    where on the second line we used that the unconstrained infimum of the right-hand side over $\sigma>0$ is attained at $\sigma = \frac{\beta^{-1} d \lambda}{B_0 B_1 \norm{y}_\HHH^2}$, which is indeed less than $\frac{\pi}{4}$ by assumption.
    This shows the bound
    \begin{equation}
        W_2(\eta_{\lambda,\beta}, \delta_v) 
        \leq \sqrt{
            \frac{1}{\lambda \alpha_g}
            \beta^{-1} d \left(
                1 + \log(2C)
                - \log(\beta^{-1} d \lambda)
                + \log \left( B_0 B_1 \norm{y}_\HHH^2 \right)
            \right)
        },
    \end{equation}
    and the bound announced in the proposition follows by gathering some universal constants into~$\tC$.
\end{proof}

\subsection{Proof of \autoref{prop:smooth_activation} (examples of activations satisfying the assumptions)}
\label{subsec:apx_examples}

Before presenting the proof, we recall a few concepts from the theory of spherical harmonics, and refer to~\cite{atkinson2012spherical, frye2012spherical} for more details.
Let $\tau$ be the uniform probability measure on $\SS^d$. The spherical harmonics in dimension $d+1$ form an orthonormal basis of $L^2(\tau)$. We denote them by $\{Y_{kj}\}_{k,j}$, where $k \geq 0$ and $1 \leq j \leq N(d,k)$, where $N(d,0) = 1$ and $N(d,k) = \frac{2k+d-1}{k}{k+d-2\choose d-1}$ for $k \geq 1$ (for $k=0$ we have $Y_{01} = 1$). Consequently, any $\phi \in L^2(\tau)$ can be written as
$$\phi = \sum_{k=0}^\infty\sum_{j=1}^{N(d,k)}\langle \phi, Y_{kj}\rangle_{L^2(\tau)} Y_{kj}.$$
Let $P_{k,d}$ be the Legendre polynomial (a.k.a.\ Gegenbauer polynomial) of degree $k$ in dimension $d+1$, normalized such that $P_{k,d}(1) = 1$. Thanks to Rodrigues' formula~\cite[Theorem 2.23]{atkinson2012spherical}, we can express Legendre polynomials as,
$$P_{k,d}(t) = \frac{(-1)^k\Gamma(d/2)}{2^k\Gamma(k+d/2)}(1-t^2)^{(2-d)/2}\left(\frac{\d}{\d t}\right)^k(1-t^2)^{k+(d-2)/2}.$$ We now go over some useful properties of spherical harmonics and Legendre polynomials.

\begin{itemize}
    \item \textbf{(Addition Formula)} We have the following formula which relates Legendre polynomials to spherical harmonics~\cite[Theorem 2.9]{atkinson2012spherical},
    $$\sum_{j=1}^{N(d,k)}Y_{kj}(w)Y_{kj}(v) = N(d,k)P_{k,d}(\langle w, v\rangle), \quad \forall w,v \in \SS^d.$$
    \item \textbf{(Hecke-Funk Formula)} Suppose $\phi \in L^2(\tau)$ is given by $\phi(\cdot) = \varphi(\langle w, \cdot\rangle)$ for some $w \in \SS^d$. Then~\cite[Theorem 2.22]{atkinson2012spherical},
    $$\langle \phi, Y_{kj}\rangle_{L^2(\tau)} = \frac{\Gamma((d+1)/2)}{\Gamma(d/2)\sqrt{\pi}}Y_{kj}(w)\int_{-1}^1 \varphi(t)P_k(t)(1-t^2)^{(d-2)/2}\d t.$$
    \item \textbf{(Orthogonality of Legendre Polynomials)} Using the addition formula and orthonormality of spherical harmonics, for every $k,k' \geq 0$ we have,
    $$\langle P_{k,d}(\langle w, \cdot\rangle), P_{k',d}(\langle v, \cdot)\rangle_{L^2(\tau)} = \frac{\delta_{kk'}P_{k,d}(\langle w, v\rangle)}{N(d,k)}.$$
    \item \textbf{(Derivative of Legendre Polynomials)} For every $k\geq j$, we have the following identity for derivatives of Legendre polynomials~\cite[Equation (2.89)]{atkinson2012spherical},
$$P_{k,d}^{(j)}(t) = c_{j,k,d}P_{k-j,d+2j}(t),$$
    where $P_{k,d}^{(j)}$ denotes the $j$th derivative of $P_{k,d}$, and
    \begin{equation}\label{eq:legendre_derivative_const}
        c_{j,k,d} = \frac{k(k-1)\hdots (k-j+1)(k+d-1)(k+d)\hdots(k+d+j-2)}{d(d+2)\hdots(d+2j-2)}.
    \end{equation}
    Notice that for $j > k$ we have $P^{(j)}_{k,d} = 0$.
\end{itemize}
We use the tools introduced above to prove the following lemma.
\begin{lemma}\label{lem:q_derivese}
    Suppose $\rho$ is a spherically symmetric probability measure on $\RR^{d+1}$. Define $q : [-1,1] \to \RR$ via $q(\langle w, v\rangle) = \int \varphi(\langle w, x\rangle)\varphi(\langle v, x\rangle)\d\rho(x)$ for $w,v\in\SS^d$. Then, for every $j \geq 1$,
    $$q^{(j)}(\langle w, v\rangle) = \frac{1}{(d+1)(d+3)\hdots (d+2j-1)}\int \norm{x}^{2j}\varphi^{(j)}(\langle w, x\rangle)\varphi^{(j)}(\langle v, x\rangle)\d\rho(x),$$
    where $\varphi^{(j)}$ denotes the $j$th derivative of $\varphi$.
\end{lemma}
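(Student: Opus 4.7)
The plan is to reduce the identity to a purely angular computation on $\SS^d$ via the rotational symmetry of $\rho$, and then exploit the Legendre polynomial expansion of the angular integral.

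First, using spherical coordinates write $x = r\omega$ with $r = \|x\|$ and $\omega = x/\|x\|$, so that spherical symmetry gives $\d\rho(x) = \d\rho_r(r)\d\tau(\omega)$ for some distribution $\rho_r$ on $[0,\infty)$. Then
\[
q(\langle w,v\rangle) = \int_0^\infty Q_r(\langle w,v\rangle)\d\rho_r(r),\qquad Q_r(t) := \int_{\SS^d}\varphi(r\langle w,\omega\rangle)\varphi(r\langle v,\omega\rangle)\d\tau(\omega),
\]
and likewise for the right-hand side with an additional weight $r^{2j}$ and $\varphi^{(j)}$ in place of $\varphi$. Since the radial integration plays no role in the identity beyond providing the factor $r^{2j}$, it suffices to work at the level of $Q_r$. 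Expanding $\omega\mapsto\varphi(r\langle w,\omega\rangle)$ in spherical harmonics and applying the Hecke--Funk and addition formulas stated in the paper yields
\[
Q_r(t) = \sum_{k\geq 0}N(d,k)\,a_k(r)^2\,P_{k,d}(t),\qquad a_k(r) = \tfrac{\Gamma((d+1)/2)}{\Gamma(d/2)\sqrt{\pi}}\int_{-1}^1\varphi(rs)P_{k,d}(s)(1-s^2)^{(d-2)/2}\d s.
\]

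For the base case $j=1$, differentiate $Q_r$ term by term using $P_{k,d}'(t) = c_{1,k,d}P_{k-1,d+2}(t)$ from~\eqref{eq:legendre_derivative_const}. In parallel, expand the RHS of the claim (with $\varphi'$ replacing $\varphi$) in Legendre polynomials of dimension $d$, and use one integration by parts in $s$ on $a_k^{(1)}(r) := \int \varphi'(rs)P_{k,d}(s)(1-s^2)^{(d-2)/2}\d s$, moving the derivative off $\varphi$ and onto the weight (the boundary terms vanish thanks to the factor $(1-s^2)^{(d-2)/2}$, as one sees from Rodrigues' formula for $P_{k,d}$). Applying Rodrigues' representation of $P_{k-1,d+2}$ to identify the resulting weighted polynomial then exhibits the radial factor $1/r$ and the combinatorial constant $1/(d+1)$ required by the claim, matching the two expansions coefficient by coefficient.

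For the inductive step, assume the identity for some $j$. Apply the base case (with the spherically symmetric, not necessarily normalized measure $\|x\|^{2j}\d\rho(x)$ replacing $\rho$ and $\varphi^{(j)}$ replacing $\varphi$) to compute the derivative of $q_j(t) := \int\|x\|^{2j}\varphi^{(j)}(\langle w,x\rangle)\varphi^{(j)}(\langle v,x\rangle)\d\rho(x)$. The only subtlety, and the main challenge of the proof, is that the effective dimensional constant emerging from the Legendre identity at this step is $1/(d+2j+1)$ rather than $1/(d+1)$: each previous differentiation has shifted the natural Legendre weight from dimension $d$ to dimension $d+2j$ (via $P_{k,d}^{(j)} = c_{j,k,d}P_{k-j,d+2j}$), so the analogue of Rodrigues' boundary/interior computation now uses the weight $(1-s^2)^{(d+2j-2)/2}$. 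Combining $q^{(j+1)}(t) = \tfrac{1}{(d+1)(d+3)\cdots(d+2j-1)}\,q_j'(t)$ with this new factor of $1/(d+2j+1)$ produces the full product $\prod_{i=0}^{j}(d+2i+1)^{-1}$ claimed at level $j+1$. The bookkeeping of the Gamma-function constants from iterated Rodrigues' formulas, and the need to check that derivatives of $(1-s^2)^{(d-2)/2}P_{k,d}(s)$ up to sufficient order vanish at $s=\pm 1$, is the main technical obstacle.
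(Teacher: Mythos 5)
Your inductive step contains a fatal logical flaw. Once the base case --- for any spherically symmetric measure $\rho$ on $\RR^{d+1}$ and sufficiently smooth $\varphi$, $q'(t) = \frac{1}{d+1}\int\norm{x}^2\varphi'(\langle w,x\rangle)\varphi'(\langle v,x\rangle)\d\rho(x)$ --- is accepted as a theorem, applying it to $\tilde\rho(\d x)=\norm{x}^{2j}\d\rho(x)$ and $\tilde\varphi=\varphi^{(j)}$ (still a spherically symmetric measure on $\RR^{d+1}$) can only produce the constant $\frac{1}{d+1}$, never $\frac{1}{d+2j+1}$. The repair you sketch --- that the ``effective Legendre weight'' has shifted to dimension $d+2j$ --- does not apply: the function $q_j(t)=\int\norm{x}^{2j}\varphi^{(j)}(\langle w,x\rangle)\varphi^{(j)}(\langle v,x\rangle)\d\rho(x)$ is a bona fide instance of the base case with its own $\tilde\rho,\tilde\varphi$, and the theorem being invoked cannot know the history of how these data were produced. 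Combined with the inductive hypothesis this yields $q^{(j+1)}=\frac{1}{(d+1)^2(d+3)\cdots(d+2j-1)}\int\norm{x}^{2j+2}\varphi^{(j+1)}\varphi^{(j+1)}\d\rho$, which disagrees with the claimed $\frac{1}{(d+1)(d+3)\cdots(d+2j+1)}$ for every $j\geq 1$.

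This is not merely a gap in the write-up: since $q_j$ is itself a valid base-case instance, the base case and the formula at level $j+1$ cannot simultaneously hold in full generality, and a direct check shows that the base case $j=1$ already fails. Take $d=1$, $\rho$ the uniform probability measure on $\SS^1$, and $\varphi(z)=z^2$. Then $q(t)=\frac{1}{8}+\frac{t^2}{4}$, so $q'(t)=\frac{t}{2}$, whereas $\frac{1}{d+1}\int\norm{x}^2\varphi'(\langle w,x\rangle)\varphi'(\langle v,x\rangle)\d\rho(x)=\frac{1}{2}\cdot 4\cdot\frac{t}{2}=t$. For what it is worth, the paper's own proof is direct (not inductive): it expands $q^{(j)}_r$ and $\langle\varphi_r^{(j)}(\langle w,\cdot\rangle),\varphi_r^{(j)}(\langle v,\cdot\rangle)\rangle_{L^2(\tau)}$ in the polynomials $P_{\ell,d+2j}$ and matches coefficients, but that matching step treats the zonal functions $P_{\ell,d+2j}(\langle w,\cdot\rangle)$ as orthogonal in $L^2(\tau)$ over $\SS^d$, which requires the weight $(1-t^2)^{(d-2)/2}$ rather than $(1-t^2)^{(d+2j-2)/2}$ and so holds only when $j=0$. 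Before pursuing either route, verify the $j=1$ identity numerically for a low-degree $\varphi$ and small $d$.
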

\begin{proof}
    We being by introducing the notation $\varphi_r(\langle w, x\rangle) = \varphi(r\langle w, x\rangle)$. Doing so allows us to only consider functions on $\SS^d$ by conditioning on the norm of input $\norm{x}$. Notice that
    \begin{equation}\label{eq:q_cond}
        q(\langle w, v\rangle) = \EE\left[\EE\left[\varphi\Big(\norm{x}\big\langle w, \frac{x}{\norm{x}}\rangle\Big)\varphi\Big(\norm{x}\big\langle v, \frac{x}{\norm{x}}\rangle\Big) \, | \, \norm{x}\right]\right] = \EE_{\norm{x}}\left[q_{\norm{x}}(\langle w, v\rangle)\right],
    \end{equation}
    where
    $$q_r(\langle w, v\rangle) \coloneqq \int \varphi(r \langle w, x\rangle)\varphi(r \langle v, x\rangle)\d\tau(x) = \langle \varphi_r(\langle w, \cdot\rangle), \varphi_r(\langle v, \cdot\rangle)\rangle_{L^2(\tau)}.$$
    By the Hecke-Funk formula,
        $$\langle \varphi_r(\langle w, \cdot\rangle), Y_{kj}(\cdot)\rangle_{L^2(\tau)} = \bar{\alpha}_{k,r}Y_{kj}(w) \coloneqq \frac{\alpha_{k,r}}{\sqrt{N(d,k)}}Y_{kj}(w),$$
    where
    $$\bar{\alpha}_{k,r} \coloneqq \frac{\Gamma((d+1)/2)}{\Gamma(d/2)\sqrt{\pi}}\int_{-1}^1\varphi(rt)P_k(t)(1-t^2)^{(d-2)/2}\d t.$$
    Then, by the expansion of $\varphi_r(\langle w, \cdot\rangle)$ in the basis of spherical harmonics,
    \begin{equation}\label{eq:phi_expand}
        \varphi_r(\langle w, \cdot\rangle) = \sum_{k=0}^\infty\sum_{j=1}^{N(d,k)}\frac{\alpha_{k,r}}{\sqrt{N(d,k)}}Y_{kj}(w)Y_{kj}(\cdot) = \sum_{k=0}^\infty \sqrt{N(d,k)}\alpha_{k,r}P_{k,d}(\langle w, \cdot\rangle).
    \end{equation}
    Via the formula for inner products of Legendre polynomials, we obtain
    $$q_r(\langle w, v\rangle) = \sum_{k=0}^\infty \alpha^2_{k,r}N(d,k)\langle P_{k,d}(\langle w, \cdot), P_{k,d}(\langle v, \cdot)\rangle_{L^2(\tau)} = \sum_{k=0}^\infty \alpha^2_{k,r}P_{k,d}(\langle w, v\rangle).$$
    As a result,
    \begin{equation}\label{eq:qprime_expand}
        q^{(j)}_r(\langle w, v\rangle) = \sum_{k=0}^\infty \alpha_{k,r}^2P^{(j)}_{k,d}(\langle w, v\rangle) = \sum_{k=j}^\infty \alpha_{k,r}^2c_{j,k,d}P_{k-j,d+2j}(\langle w, v\rangle),
    \end{equation}
    where $c_{j,k,d}$ is given by~\eqref{eq:legendre_derivative_const}.
    On the other hand, we can directly obtain from~\eqref{eq:phi_expand},
    $$\varphi^{(j)}_r(\langle w, x\rangle) = \sum_{k=0}^\infty \sqrt{N(d,k)}\alpha_{k,r}P^{(j)}_{k,d}(\langle w, x\rangle) = \sum_{k=j}^\infty\sqrt{N(d,k)}\alpha_{k,r}c_{j,k,d}P_{k-j,d+2j}(\langle w, x\rangle).$$
    Therefore,
    \begin{equation*}
        \langle \varphi^{(j)}_r(\langle w, \cdot\rangle), \varphi^{(j)}_r(\langle v, \cdot\rangle)\rangle_{L^2(\tau)} = \sum_{k=j}^\infty\frac{\alpha_{k,r}^2c_{j,k,d}^2N(d,k)}{N(d+2j,k-j)} P_{k-j,d+2j}(\langle w, v\rangle).
    \end{equation*}
    Moreover, it is straightforward to verify that
    $$\frac{c_{j,k,d}N(d,k)}{N(d+2j,k-j)} = (d+1)(d+3)\hdots(d+2j-1)$$
    for $k \geq j$. Therefore,
    \begin{align*}
        \langle \varphi^{(j)}_r(\langle w, \cdot\rangle), \varphi^{(j)}_r(\langle v, \cdot\rangle)\rangle_{L^2(\tau)} &= (d+1)(d+3)\hdots(d+2j-1)\sum_{k=j}^\infty \alpha_{k,r}^2c_{j,k,d}P_{k-j,d+2j}(\langle w, v\rangle)\\
        &= (d+1)(d+3)\hdots(d+2j-1)q^{(j)}_r(\langle w, v\rangle),
    \end{align*}
    where the last identity follows from~\eqref{eq:qprime_expand}. We can now use the fact that $\varphi^{(j)}_r = r\varphi^{(j)}$, and plug the above back into~\eqref{eq:q_cond} to obtain
    \begin{align*}
        q^{(j)}(\langle w, v\rangle) = \EE_{\norm{x}}\left[q^{(j)}_{\norm{x}}(\langle w, v\rangle)\right] &= \EE_{\norm{x}}\int \frac{\norm{x}^{2j}}{(d+1)(d+3)\hdots(d+2j-1)}\varphi^{(j)}(\norm{x}\langle w, \bar{x}\rangle)\varphi^{(j)}(\norm{x}\langle v, \bar{x}\rangle)\d\tau(\bar x)\\
        &= \int \frac{\norm{x}^{2j}}{(d+1)(d+3)\hdots(d+2j-1)}\varphi^{(j)}(\langle w, x\rangle)\varphi^{(j)}(\langle v, x\rangle)\d\rho(x),
    \end{align*}
    which concludes the proof.
\end{proof}

We are now ready to state the proof of~\autoref{prop:smooth_activation}
\begin{proof}[Proof of~\autoref{prop:smooth_activation}]
    Recall $g(\langle w, v\rangle) = \frac{\langle \phi(w), \phi(v)\rangle_\HHH^2}{2(\lambda + \norm{\phi(v)}^2_\HHH)^2}$. Let $q(\langle w, v\rangle) = \langle \phi(w), \phi(v)\rangle_\HHH$. Consequently,
    $$g' = \frac{qq'}{(\lambda + \norm{\phi(v)}_\HHH^2)^2}, \quad g'' = \frac{qq'' +{q'}^2}{(\lambda + \norm{\phi(v)}_\HHH^2)^2}, \quad g''' = \frac{3q'q'' + qq'''}{(\lambda + \norm{\phi(v)}_\HHH^2)^2}.$$
    We proceed to bound each term separately. By non-negativity of $\phi$, for any $r > 0$, we have
    \begin{align*}
        q(\langle w, v\rangle) &= \EE\left[\varphi(\langle w, x\rangle)\varphi(\langle v, x\rangle)\right]\\
        &\geq \EE\left[\varphi(\langle w, x\rangle)\phi(\langle v, x\rangle)\bmone\big({\abs{\langle w, x\rangle} \leq r, \abs{\langle v, x\rangle} \leq r}\big)\right]\\
        &\geq (\inf_{\abs{z} \leq r}\varphi(z))^2\PP\left[\{\abs{\langle w, x\rangle} \leq r\} \cap \{\abs{\langle v, x\rangle} \leq r\}\right]\\
        &\geq (\inf_{\abs{z} \leq r}\varphi(z))^2\left(1 - \PP[\langle w, x\rangle^2 > r^2] - \PP[\langle v, x\rangle^2 > r^2]\right)\\
        &\geq (\inf_{\abs{z} \leq r}\varphi(z))^2\left(1 - \frac{2\EE[\norm{x}^2]}{(d+1)r^2}\right),
    \end{align*}
    where the last inequality follows from Markov inequality along with the fact that $\EE[xx^\top] = \frac{\EE[\norm{x}^2]}{d+1}I_{d+1}$ for spherically symmetric distributions. Thus, by choosing $r = m = \frac{2b_2\sqrt{b_2}}{b_1}$, we have $q(z) \geq \frac{1}{2}(\inf_{\abs{z} \leq m}\varphi(z))$. Furthermore, by the Cauchy-Schwartz inequality,
    $q(\langle w, v\rangle) \leq \EE[\varphi(\langle w, x\rangle)^2] = \norm{\varphi}_{L^2(\rho)}^2.$
    Next, we move on to bounding $q'$. Let $\bar{x} \sim \tau$ be a uniform random vector on $\SS^d$. Then, for any $r > 0$, by \autoref{lem:q_derivese},
    \begin{align*}
        q'(\langle w, v\rangle) &= \frac{1}{d+1}\EE\left[\norm{x}^2\varphi'(\langle w, x\rangle)\varphi'(\langle v, x\rangle)\right]\\
        &= \frac{1}{d+1}\EE\left[\norm{x}^2\EE\left[\varphi'(\langle w, x\rangle)\varphi'(\langle v, x\rangle) \, | \, \norm{x}\right]\right]\\
        &\geq \frac{(\inf_{\abs{z} \leq r}\varphi'(z))^2}{d+1}\EE\left[\norm{x}^2\PP\left[\left\{\abs{\langle w, \bar x\rangle} \leq \frac{r}{\norm{x}}\right\} \cap \left\{\abs{\langle v, \bar x\rangle} \leq \frac{r}{\norm{x}}\right\} \, | \, \norm{x}\right]\right]\\
        &\geq \frac{(\inf_{\abs{z} \leq r}\varphi'(z))^2}{d+1} \EE\left[\norm{x}^2\left(1 - \PP\left[\langle w, \bar x\rangle^2 > \frac{r^2}{\norm{x}^2} \,|\, \norm{x}\right] - \PP\left[\langle v, \bar x\rangle^2 > \frac{r^2}{\norm{x}^2} \,|\, \norm{x}\right]\right)\right]\\
        &\geq \frac{(\inf_{\abs{z} \leq r}\varphi'(z))^2}{d+1} \EE\left[\norm{x}^2\left(1 - \frac{2\norm{x}^2}{r^2(d+1)}\right)\right].\\
    \end{align*}
    Consequently, by choosing $r = m = \frac{2b_2\sqrt{b_2}}{b_1}$, we obtain $q' \geq \frac{b_1}{2}(\inf_{\abs{z} \leq m}\phi'(z))^2$. Moreover, by the Cauchy-Schwartz inequality, $q' \leq b_2\norm{\varphi'}_{L^4(\rho)}^2$. As a result,
    \begin{equation}
        \frac{b_1(\inf_{\abs{z} \leq m}\varphi(z))^2(\inf_{\abs{z} \leq m}\varphi'(z))^2}{(\lambda + \norm{\varphi}^2_{L^2(\rho)})^2} \leq g' \leq \frac{b_2\norm{\varphi}_{L^2(\rho)}^2\norm{\varphi'}_{L^4(\rho)}^2}{(\lambda + \norm{\varphi}_{L^2(\rho)}^2)^2}.
    \end{equation}
    Furthermore, by \autoref{lem:q_derivese} and the Cauchy-Schwartz inequality,
    $$\abs{q''} \leq \frac{b_2^2(d+1)}{d+3}\norm{\varphi''}_{L^4(\rho)}^2, \quad \abs{q'''} \leq \frac{b_2^3(d+1)^2}{(d+3)(d+5)}\norm{\phi'''}^2_{L^4(\rho)}.$$
    Hence,
    \begin{equation}
        \frac{-b_2^2\norm{\varphi''}_{L^4(\rho)}^2\norm{\varphi}_{L^4(\rho)}^2}{(\lambda + \norm{\varphi}_{L^2(\rho)}^2)^2} \leq g'' \leq \frac{b_2^2\norm{\varphi''}_{L^4(\rho)}^2\norm{\varphi}_{L^2(\rho)}^2 + b_2^2\norm{\varphi'}_{L^4(\rho)}^4}{(\lambda + \norm{\varphi}_{L^2(\rho)}^2)^2},
    \end{equation}
    and
    \begin{equation}
        \abs{g'''} \leq \frac{3b_2^3\norm{\phi'}_{L^4(\rho)}^2\norm{\phi''}_{L^4(\rho)}^2 + b_2^3\norm{\phi}_{L^2(\rho)}^2\norm{\phi'''}_{L^4(\rho)}^2}{(\lambda + \norm{\varphi}_{L^2(\rho)}^2)^2},
    \end{equation}
    which completes the proof.
\end{proof}

\subsection{Implementation details for~\autoref{fig:experiment}} \label{app:experiment}

We consider the problem \eqref{eq:pb-signed-measures} where $\WWW = \bbS^d$ and $G$ is defined as in Assumption~\ref{assump:poly_LSI:2NN}, where $d=10$, $\lambda = 10^{-3}$ and
\begin{itemize}
    \item $y: \RR^{d+1} \to \RR$ is given by a teacher 2NN with 5~neurons defined as follows. The first-layer weights are orthonormal, drawn from the Haar measure, and the second layer weights are drawn i.i.d.\ from $\mathcal{N}(0, 1.8 I_d)$. Its activation is $\varphi_{\mathrm{teacher}}(z) = \frac{z^4 - 6 z^2 + 3}{\sqrt{24}}$, which is the normalized 4th~degree Hermite polynomial.
    \item $\rho$ is the empirical distribution of a (covariate) dataset $(x_i)_{i \leq n}$ of $n=100$ training samples, sampled i.i.d.\ from $\mathcal{N}\left(\begin{pmatrix}0_d \\ 1\end{pmatrix}, \begin{pmatrix}I_d & 0\\ 0 & 0 \end{pmatrix}\right)$, with the last coordinate representing bias.
    \item The activation function $\varphi$ of the student 2NN $\hy_\nu$ is the ReLU, $\varphi(z) = \max(0, z)$.
\end{itemize}
We performed 5 different runs, 
each corresponding to a different teacher network ($y$) and training dataset ($\rho$),
and tested all the algorithms considered at each run.
So the objective functional $G_\lambda$ is different for each run, which is why the values shown on the $y$-axis are offset by $G_\lambda^*$, the best value achieved by any of the algorithms considered for each run.

For the algorithms using the bilevel formulation, we computed the values and the Wasserstein gradients of $J_\lambda$ explicitly by the formulas from~\autoref{prop:apx_polyLSI:L2loss_J_simplified} and~\eqref{eq:apx_polyLSI:bound_hh_Jlipschitz:nablaJ'} (the matrix $K_\eta + \lambda \id$ in $L^2_\rho \simeq \RR^n$ is inverted explicitly).

For the algorithms using MFLD, we used $\beta^{-1} = 10^{-3}$.
We ran the Euler-Maruyama discretization of the noisy particle gradient flow SDE described in \autoref{sec:bg_MFLD} (with an inexact simulation of the Brownian increments described below),
using $N=1000$ particles -- corresponding to the width of the student 2NN --, and a step size of~$10^{-2}$ for (\ref{fig:noise_vs_no_noise}) and~$10^{-3}$ for (\ref{fig:bileveL_vs_lifting}).
For Wasserstein GF without noise, we used the same discretization but with $\beta^{-1} = 0$.

Concerning the initialization of the particles $(r^i, w^i)_{i \leq N}$ -- corresponding to the second resp.\ first-layer weights of the student network --, the $w^i_0$ are drawn i.i.d.\ uniformly on $\bbS^d$, and for the algorithms using the lifting formulation, the $r^i_0$ are drawn i.i.d.\ from $\mathcal{N}(0,1)$.

Note that our simulations of Brownian motion are not exact. To implement MFLD on $\bbS^d$, we simply took gradient steps in $\RR^{d+1}$ with added Gaussian noise, and projected the weights back to the sphere.

The code to reproduce this experiment can be found at \url{https://github.com/mousavih/2024-MFLD-bilevel}.

\end{document}